\documentclass[12pt]{article}
\title{Empirical estimation of entropy\\ functionals with confidence}
\pdfoutput=1
\author{\textbf{Kumar Sricharan}, Department of EECS, University of Michigan
\\ \textbf{Raviv Raich}, School of EECS, Oregon State University
\\ \textbf{Alfred O. Hero III}, Department of EECS, University of Michigan
}

\date{February 25, 2011}

\usepackage[hmargin=2.5cm,vmargin=3cm]{geometry}
\usepackage[british]{babel}
\usepackage{latexsym,amssymb,amsmath}
\usepackage[mathcal]{euscript}
\usepackage{amsmath, amsthm, amssymb}
\usepackage{graphicx}
\usepackage[absolute,overlay]{textpos}
\usepackage[mathcal]{euscript}
\usepackage{algorithm}
\usepackage{algorithmic}
\usepackage{appendix}
\usepackage{color}
\usepackage{subfigure}

\newcommand{\expect}{{{\mathbb{E}}}}
\newcommand{\mb}{\mathbf}
\newcommand{\var}{{{\mathbb{V}}}}

\newcommand{\es}{\epsilon}
\newcommand{\gleq}{\,\raisebox{-.6ex}{$\stackrel{>}{\scriptstyle <}$}\,}

\setlength{\parindent}{0in}
\setlength{\parskip}{.075in}


\newtheorem{theorem}{Theorem}[section]
\newtheorem{lemma}[theorem]{Lemma}

\newtheorem{corollary}[theorem]{Corollary}


\begin{document}
\maketitle



\begin{abstract}
This paper introduces a class of k-nearest neighbor ($k$-NN) estimators called bipartite plug-in (BPI) estimators for estimating integrals of non-linear functions of a probability density, such as Shannon entropy and R\'enyi entropy. The density is assumed to be smooth, have bounded support, and be uniformly bounded from below on this set. Unlike previous $k$-NN estimators of non-linear density functionals, the proposed estimator uses data-splitting and boundary correction to achieve lower mean square error. Specifically, we assume that $T$ i.i.d. samples $\mb{X}_i \in \mathbb{R}^d$ from the density are split into two pieces of cardinality $M$ and $N$ respectively, with $M$ samples used for computing a k-nearest-neighbor density estimate and the remaining $N$ samples used for empirical estimation of the integral of the density functional.   By studying the statistical properties of k-NN balls, explicit rates for the bias and variance of the BPI estimator are derived in terms of the sample size, the dimension of the samples and the underlying probability distribution. Based on these results, it is possible to specify optimal choice of tuning parameters $M/T$, $k$ for maximizing the rate of decrease of the mean square error (MSE). The resultant optimized BPI estimator  converges faster and achieves lower mean squared error than previous $k$-NN entropy estimators. In addition, a central limit theorem is established for the BPI estimator that allows us to specify tight asymptotic confidence intervals.
\end{abstract}

\section{Introduction}
Non-linear functionals of a multivariate density $f$ of the form $\int g(f(x),x) f(x) dx$ arise in applications including machine learning, signal processing, mathematical statistics, and statistical communication theory. Important examples of such functionals include Shannon and R\'enyi entropy. Entropy based applications for image matching, image registration and texture classification are developed in~\cite{heroapp, neem}. Entropy functional estimation is fundamental to independent component analysis in signal processing~\cite{milfish}. Entropy has also been used in Internet anomaly detection~\cite{lakh} and data and image compression applications~\cite{jainak}. Several entropy based nonparametric statistical tests have been developed for testing statistical models including uniformity and normality~\cite{vasi,dude}. Parameter estimation methods based on entropy have been developed in~\cite{amin,ranneby}.  For further applications, see, for example, Leonenko~{\it etal}~\cite{leo2}. 

In these applications, the functional of interest must be estimated empirically from sample realizations of the underlying densities. Several estimators of entropy measures have been proposed for general multivariate densities $f$. These include consistent estimators based on entropic graphs \cite{hero, pal}, gap estimators \cite{vanes}, nearest neighbor distances \cite{leo, leo2, litt, wang}, kernel density plug-in estimators ~\cite{ahmad,egg,bickel,hallmarr,birge,gini}, Edgeworth approximations~\cite{edge}, convex risk minimization~\cite{long2} and orthogonal projections~\cite{laurent}.

The class of density-plug-in estimators considered in this paper are based on $k$-nearest neighbor ($k$-NN) distances and, more specifically, bipartite k-nearest neighbor graphs over the random sample. The basic construction of the proposed bipartite plug-in (BPI) estimator is as follows (see Sec. II.A for a precise definition). Given a total of $T$ data samples we split the data into two parts of size $N$ and size $M$, $N+M=T$. On the part of size $M$ a $k$-NN density estimate is constructed. The density functional is then estimated by plugging the $k$-NN density estimate into the functional and approximating the integral by an empirical average  over the remaining $N$ samples. This can be thought of as computing the estimator over a bipartite graph with the $M$ density estimation nodes connected to the $N$ integral approximating nodes. The BPI estimator exploits a close relation between density estimation and the geometry of proximity neighborhoods in the data sample. The BPI estimator is designed to automatically incorporate boundary correction, {\emph{without}} requiring prior knowledge of the support of the density. Boundary correction compensates for bias due to distorted $k$-NN neighborhoods that occur for points  near the boundary of the density support set. Furthermore, this boundary correction is {\emph{adaptive}} in that we achieve the same MSE rate of convergence that can be attained using an oracle BPI estimator having knowledge of boundary of the support. Since the rate of convergence relates the number of samples $T =N+M$  to the performance of the estimator, convergence rates have great practical utility. A statistical analysis of the bias and variance, including rates of convergence, is presented for this class of boundary compensated BPI estimators. In addition, results on weak convergence (CLT) of BPI estimators are established. These results are applied to optimally select estimator tuning parameters $M/T, k$ and to derive confidence intervals. {For arbitrary smooth functions $g$, we show that by choosing $k$ increasing in $T$ with order $O(T^{-2/(2+d)})$, an optimal MSE rate of order $O(T^{-4/(2+d)})$ is attained by the BPI estimator. For certain specific functions $g$ including Shannon entropy ($g(u) = \log(u)$) and R\'enyi entropy ($g(u) = u^{\alpha-1}$), a faster MSE rate of order $O(((\log T)^{6}/T)^{4/d})$ is achieved by BPI estimators by correcting for bias.} 

\subsection{Previous work on $k$-NN functional estimation}

The authors of~\cite{sing, leo, leo2, litt} propose $k$-NN estimators for Shannon entropy ($g(u) = \log(u)$) and R\'enyi entropy($g(u) = u^{\alpha-1}$). Evans~{\it etal}~\cite{evjo} consider positive moments of the $k$-NN distances ($g(u) = u^k, k \in \mathbb{N}$). Recently, Baryshnikov~{\it etal}~\cite{bar} proposed $k$-NN estimators for estimating $f$-divergence $\int \phi(f_0(x)/f(x)) f(x) dx $ between an unknown density $f$, from which sample realizations are available, and a known density $f_0$. Because $f_0$ is known, the $f$-divergence $\int \phi(f_0(x)/f(x)) f(x) dx $ is equivalent to a entropy functional $\int g(f(x),x) dx$ for a suitable choice of $g$. Wang~{\it etal}~\cite{wang} developed a $k$-NN based estimator of $\int g(f_1(x)/f_2(x),x) f_2(x) dx$ when both $f_1$ and $f_2$ are unknown. The authors of these works ~\cite{sing,leo,evjo,wang} sestablish that the estimators they propose are asymptotically unbiased and consistent. The authors of ~\cite{litt} analyze estimator bias for $k$-NN estimation of Shannon and R\'enyi entropy. For smooth functions $g(.)$, Evans~{\it etal}~\cite{evans} show that the variance of the sums of these functionals of $k$-NN distances is bounded by the rate $O(k^5/T)$. Baryshnikov~{\it etal}~\cite{bar} improved on the results of Evans~{\it etal} by determining the exact variance up to the leading term ($c_k/T$ for some constant $c_k$ which is a function of $k$). Furthermore, Baryshnikov~{\it etal} show that the entropy estimator they propose converges weakly to a normal distribution. However, Baryshnikov~{\it etal} do not analyze the bias of the estimators, nor do they show that the estimators they propose are consistent. Using the results obtained in this paper, we provide an expression for this bias in Section~\ref{sec:compareb} and show that the optimal MSE for Baryshnikov's estimators is $O(T^{-2/(1+d)})$.

In contrast, the main contribution of this paper is the analysis of a general class of BPI estimators of smooth density functionals.  We provide asymptotic bias and variance expressions and a central limit theorem. The bipartite nature of the BPI estimator enables us to correct for bias due to truncation of $k$-NN neighborhoods near the boundary of the support set; a correction that does not appear straightforward for previous $k$-NN based entropy estimators. We show that the BPI estimator is MSE consistent and that the MSE is guaranteed to converge to zero as $T \rightarrow \infty$ and $k \rightarrow \infty$ with a rate that is minimized for a specific choice of $k$, $M$ and $N$ as a function of $T$. Therefore, the thus optimized BPI estimator can be implemented without any tuning parameters. In addition a CLT is established that can be used to construct confidence intervals to empirically assess the quality of the BPI estimator. Finally, our method of proof is very general and it is likely that it can be extended to kernel density plug-in estimators, $f$-divergence estimation and mutual information estimation.

{Another important distinction between the BPI estimator and the $k$-NN estimators of Shannon and R\'enyi entropy proposed by the authors of ~\cite{sing, leo, leo2} is that these latter estimators are  consistent for finite $k$, while the proposed BPI estimator requires the condition that $k \to \infty$ for MSE convergence. {By allowing $k \to \infty$, the BPI estimators of Shannon and R\'enyi entropy achieve MSE rate of order $O(((\log T)^{6}/T)^{4/d})$}.  This asymptotic rate is faster than the $O(T^{-2/d})$ MSE convergence rate~\cite{litt} of the previous $k$-NN estimators~\cite{sing, leo, leo2} that use a fixed value of $k$. It is shown by simulation that BPI's asymptotic performance advantages, predicted by our theory, also hold for small sample regimes. 

\subsection{Organization}

The remainder of the paper is organized as follows. Section~\ref{sec:prel} formulates the entropy estimation problem and introduces the BPI estimator. The main results concerning the bias, variance and asymptotic distribution of these estimators are stated in Section~\ref{sec:main} and the consequences of these results are discussed. The proofs are given in the Appendix. The MSE is analyzed in Section 4. We discuss bias correction of the BPI estimator for the case of Shannon and R\'enyi entropy estimation in Section~\ref{sec:compareRS}. Estimation of Shannon MI is briefly discussed in Section 6. We numerically validate our theory by simulation in Section~\ref{sec:exp}. Applications to structure discovery and dimension estimation are discussed in Sections 8 and 9 respectively. A conclusion is given in Section~\ref{sec:conc}.

\subsubsection*{Notation}
Bold face type will indicate random variables and random vectors and regular type face will be used for non-random quantities. Denote the expectation operator by the symbol $\expect$ and conditional expectation given $\mb{Z}$ by  $\expect_{\mb{Z}}$. Also define the variance operator as $\var[\mb{X}] = \expect[(\mb{X}-\expect[\mb{X}])^2] $ and the covariance operator as $Cov[\mb{X},\mb{Y}] = \expect[(\mb{X}-\expect[\mb{X}])(\mb{Y}-\expect[\mb{Y}])] $. Denote the bias of an estimator by $\mathbb{B}$.

\section{Preliminaries}
\label{sec:prel}
We are interested in estimating non-linear functionals $G(f)$ of $d$-dimensional multivariate densities $f$ with support ${\cal S}$, where $G(f)$ has the form
\begin{equation}
\label{eq:oracle}
G(f) = \int g(f(x),x) f(x) d\mu(x) = \expect[g(f(x),x)], \nonumber
\end{equation}
for some smooth function $g(f(x),x)$. Let ${\cal B}$ denote the boundary of ${\cal S}$. Here, $\mu$ denotes the Lebesgue measure and $\expect$ denotes statistical expectation w.r.t density $f$. We assume that i.i.d realizations  $\{\mb{X}_1, \ldots, \mb{X}_N, \mb{X}_{N+1}, \ldots, \mb{X}_{N+M}\}$ are available from the density $f$.  Neither $f$ nor its support set are known.

The plug-in estimator is constructed using a data splitting approach as follows. The data is randomly subdivided into two parts ${\cal X}_N = \{\mb{X}_1, \ldots, \mb{X}_N\}$ and ${\cal X}_M = \{\mb{X}_{N+1}, \ldots, \mb{X}_{N+M}\}$ of $N$ and $M$ points respectively.
In the first stage, a boundary compensated $k$-NN density estimator ${\tilde{\mb{f}}_k}$ is estimated at the $N$ points $\{\mb{X}_1, \ldots, \mb{X}_N\}$ using the $M$ realizations $\{\mb{X}_{N+1}, \ldots, \mb{X}_{N+M}\}$. Subsequently, the $N$ samples $\{\mb{X}_1, \ldots, \mb{X}_N\}$ are used to approximate the functional $G(f)$ to obtain the basic Bipartite Plug-In (BPI) estimator:
\begin{eqnarray}
\label{eq:plugin}
  \hat{\mb{G}}_N(\mb{\tilde{f}}_k) &=& \frac{1}{N} \sum_{i=1}^N g({\tilde{\mb{f}}{_k}(\mb{X}_i)},\mb{X}_i). 
\end{eqnarray}
As the above estimator performs an average over the $N$ variables $X_i$ of the function $g(\tilde{f}(X_i),X_i)$, which is estimated from the other $M$ variables, this estimator can be viewed as averaging over the edges of a bipartite graph with $N$ and $M$ nodes on its left and right parts.  
\subsection{Boundary compensated {$k$}-NN density estimator}

\label{sec:knn}
Since the probability density $f$ is bounded above, the observations will lie strictly on the interior of the support set ${\mathcal S}$. However, some observations that occur close to the boundary of ${\mathcal S}$ will have $k$-NN balls that intersect the boundary. This leads to significant bias in the $k$-NN density estimator. In this section we describe a method that compensates for this bias. The method can be interpreted as extrapolating the location of the boundary from extreme points in the sample and suitably reducing the volumes of their $k$-NN balls.  

Let $d(X,Y)$ denote the Euclidean distance between points $X$ and $Y$ and $\mb{d}_k(X)$ denote the Euclidean distance between a point X and its $k$-th nearest neighbor amongst the $M$ realizations $\mb{X}_{N+1},..,\mb{X}_{N+M}$. Define a ball with radius $r$ centered at $X$: $S_r(X) =  \{Y :d(X,Y) \leq r\}$. The $k$-NN region is $\mb{S}_k(X) = \{Y:d(X,Y)\leq \mb{d}_k(X)\}$ and the volume of the $k$-NN region is $\mb{\mb{V}}_k(X) = \int_{\mb{S}{_k}(X)}{dZ}$. The standard {$k$-NN} density estimator~\cite{quu} is defined as $$\hat{\mb{f}}_{k}(X) = \frac{k-1}{M\mb{\mb{V}}_k(X)}.$$ 

If a probability density function has bounded support, the $k$-NN balls $\mb{S}_k(X)$ centered at points $X$ close to the boundary may intersect with the boundary ${\cal B}$, or equivalently $\mb{S}_k(X) \cap {\cal S}^c \neq \phi$, where ${\cal S}^c$ is the complement of ${\cal S}$. As a consequence, the $k$-NN ball volume $\mb{\mb{V}}_k(X)$ will tend to be higher for points $X$ close to the boundary leading to significant bias of the $k$-NN density estimator.

{ Let $R_k(X)$ correspond to the coverage value $(1+p_k)k/M$, i.~e.~, $R_k(X)  = \inf \{r: \int_{S_r(X)} f(Z)dZ = (1+p_k)k/M\}$, where $p_k = {\sqrt{6}}/(k^{\delta/2})$ for some fixed $\delta \in (2/3,1)$.  Define $$\epsilon_{BC} = N \exp(-3k^{(1-\delta)}).$$ Define $N_k(X)$ as the region corresponding to the coverage value $(1+p_k)k/M$, i.e. ${N}_k(X) = \{Y:d(X,Y) \leq R_k(X)\}$. Finally, define the interior region ${\cal S}_I$ \begin{equation} {\cal S}_I = \{X \in {\cal S}: {N}_k(X) \cap {\cal S}^c = \phi\}. \label {sbdefine} \end{equation} We show in Appendix B that the bias of the standard $k$-NN density estimate is of order $O((k/M)^{(2/d)})$ for points $X \in {\cal S}_I$ and is of order $O(1)$ at points $X \in {\cal S-S}_I$. } { This motivates the following method for compensating for this bias. This compensation is done in two stages: (i) the set of interior points ${\cal I}_N \subset {\cal X}_N$ are identified using variation in $k$-nearest neighbor distances in Algorithm~\ref{alg1H} (see Appendix B for details) and it is show that ${\cal I}_N \notin {\cal S-S}_I$ with probability $1-O(\epsilon_{BC})$; and (ii) the density estimator at points in ${\cal B}_N = {\cal X}_N - {\cal I}_N$ are corrected by extrapolating to the density estimates at interior points ${\cal I}_N$ that are close to the boundary points. We emphasize that this nonparametric correction strategy { {does not}} assume knowledge about the support of the density $f$.}

For each boundary point $\mb{X}_{i} \in {\cal B}_N$, let $\mb{X}_{n(i)} \in \cal{I}_N$ be the interior sample point that is closest to $\mb{X}_{i}$. The corrected density estimator $\tilde{\mb{{f}}}_k$ is defined as follows. 
\begin{eqnarray}
\label{correcdec3}
\mb{\tilde{f}}_k(\mb{X}_i) = \left\{ \begin{array}{lll}
 \mb{\hat{f}}_k(\mb{X}_i) & \mbox{ $\{\mb{X}_i \in \cal{I}_N\}$} \\
 \mb{\hat{f}}_k(\mb{X}_{n(i)}) & \mbox{ $\{\mb{X}_i \in {\cal B}_N\} $}
       \end{array} \right. 
\end{eqnarray}

\begin{figure}[!t]
\centering
\includegraphics[scale=.28]{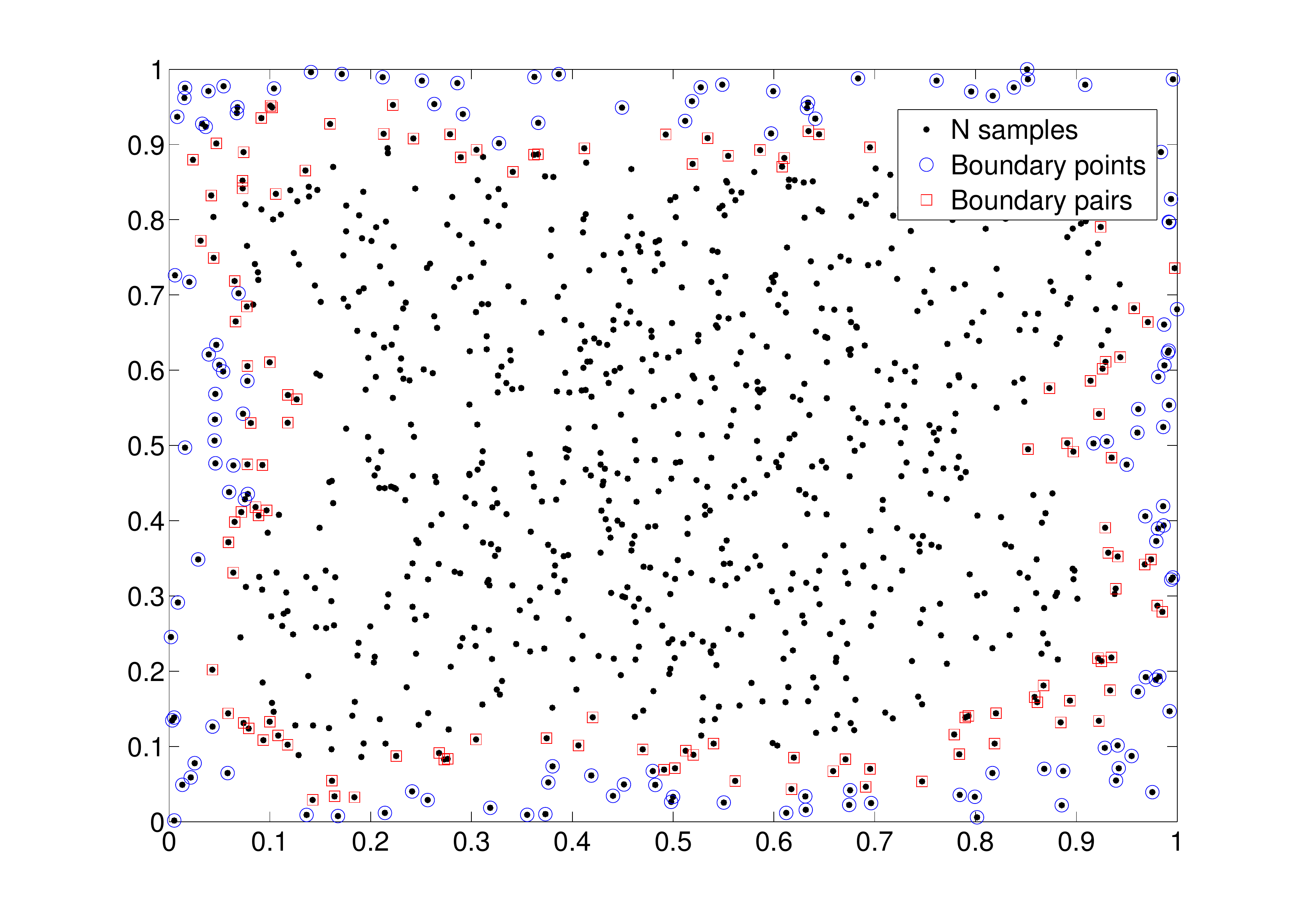}
\label{fig-labelcorrecpairnot}


\caption{Detection of boundary points using Algorithm~\ref{alg1H} for 2d beta distribution.}
\end{figure}

\begin{figure}[!t]
  \begin{center}
    \includegraphics[width=5in]{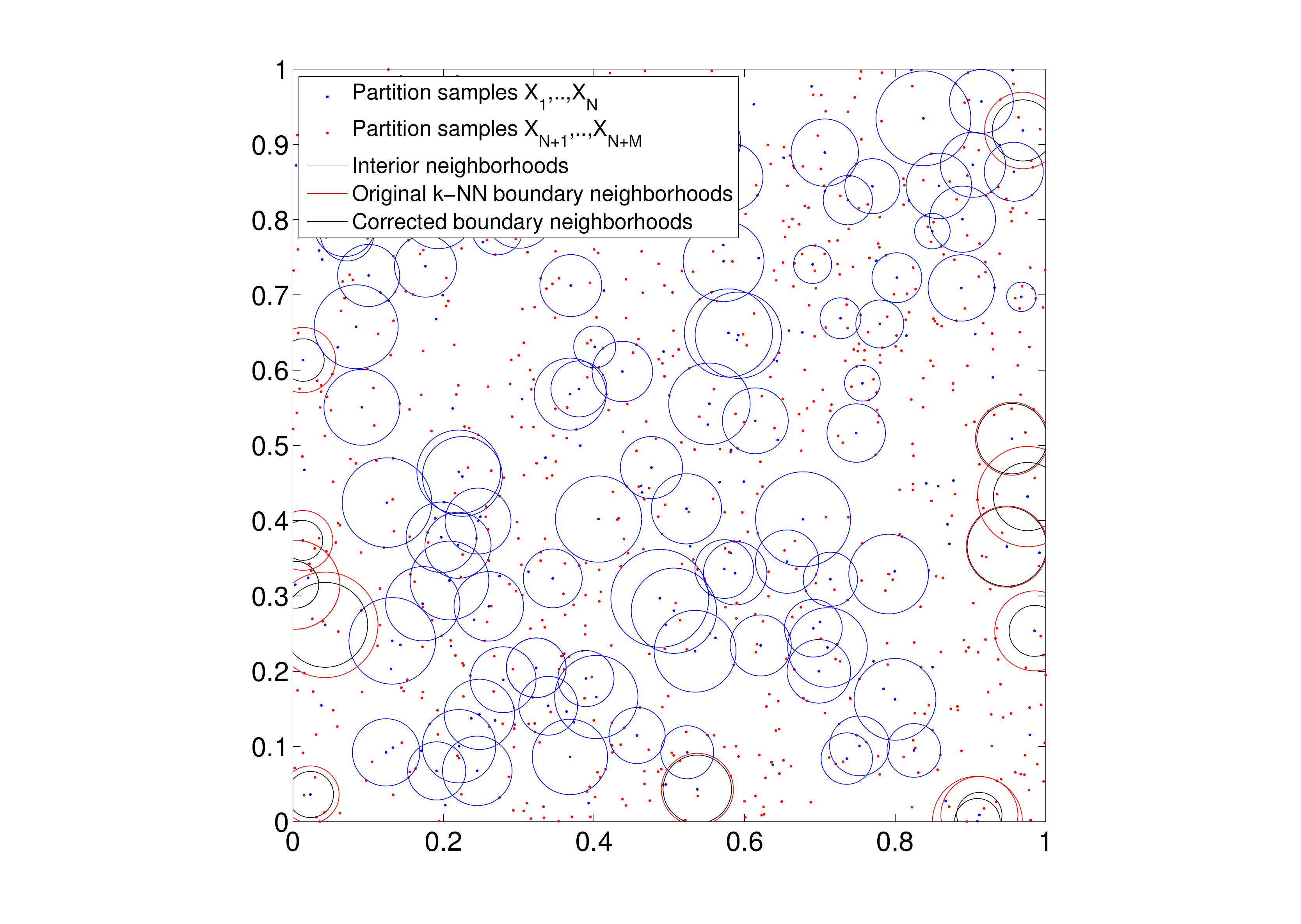}
  \end{center}
  \caption{\small $k$-NN balls centered around a subsample of $2$d uniformly distributed points. Note that the original $k$-NN balls centered at points close to boundary (red) over spill the boundary. The modified $k$-NN neighborhoods (black) corresponding to the corrected corrected density estimate $\mb{\tilde{f}}_k$ compensate for the over spill.}
  \label{fig-knnballs}
\end{figure}

\section{Main results}
\label{sec:main}
Let $\mb{Z}$ denote an independent realization drawn from $f$. Also, define $\mb{Z}_{-1} \in {\cal S}_I$ to be $\mb{Z}_{-1} = \text{arg} \min_{x \in {\cal S}_I} d(x,{\mb{Z}})$.  Define $h(X) = \Gamma^{(2/d)}((d+2)/2)f^{-2/d}(X)tr[\nabla^2(f(X))]$. Denote the $n$-th partial derivative of $g(x,y)$ wrt $x$ by $g^{(n)}(x,y)$. Also, let $g'(x,y) := g^{(1)}(x,y)$ and $g''(x,y) := g^{(2)}(x,y)$. For some fixed $0 < \epsilon< 1$, define $p_l = ((k-1)/M)(1-\epsilon)\epsilon_0$ and $p_u = ((k-1)/M)(1+\epsilon)\epsilon_\infty $. Also define $\epsilon_1 = 1/(c_d{\cal D}^d)$, where ${\cal D}$ is the diameter of the bounded set ${\cal S}$ and define $q_l = ((k-1)/M)\epsilon_1$ and $q_u = (1+\epsilon)\epsilon_\infty$. Let $\mb{p}$ be a beta random variable with parameters $k,M-k+1$. 
\subsection{Assumptions}  
\label{sec:assump}  
$({\cal {A}}.0)$ : Assume that $M$, $N$ and $T$ are linearly related through the proportionality constant $\alpha_{frac}$ with: $0 < \alpha_{frac} < 1$, $M = \alpha_{frac}T$ and $N = (1-\alpha_{frac})T$. $({\cal {A}}.1)$ : Let the density $f$ be uniformly bounded away from $0$ and finite on the set ${\cal S}$, i.e., there exist constants $\epsilon_0$, $\epsilon_\infty$ such that $0 < \epsilon_0 \leq f(x) \leq \epsilon_\infty < \infty$ $\forall x \in {\cal S}$. $({\cal {A}}.2)$: Assume that the density $f$ has continuous partial derivatives of order $2\nu$ in the interior of the set ${\cal S}$ where $\nu$ satisfies the condition $(k/M)^{2\nu/d} = o(1/M)$, and that these derivatives are upper bounded. $({\cal {A}}.3)$: Assume that the function $g(x,y)$ has $\lambda$ partial derivatives w.r.t. $x$, where $\lambda$ satisfies the conditions  $k^{-\lambda} = o(1/M)$ and  $O(({\lambda^2((k/M)^{2/d} + 1/M)})/{M}) = o(1/M)$. $({\cal {A}}.4)$: Assume that $\max\{6,2\lambda\} < k <= M$. $({\cal {A}}.5)$: Assume that the absolute value of the functional $g(x,y)$ and its partial derivatives are strictly bounded away from $\infty$ in the range $\epsilon_0 < x < \epsilon_\infty$ for all $y$. $({\cal {A}}.6)$: Assume that $\sup_{x \in (q_l,q_u) } |(g^{(r)}/r!)^2(x,y)|e^{-3k^{(1-\delta)}}  < \infty,$ $\expect[\sup_{x \in (p_l,p_u)} |(g^{(r)}/r!)^2(x/\mb{p},y)|]  < \infty,$ for $r=3, \lambda$.

\subsection{Bias and Variance}
{ 
Below the asymptotic bias and variance of the BPI estimator of general functionals of the density $f$ are specified. These asymptotic forms will be used to establish a form for the asymptotic MSE. 
\begin{theorem}
\label{knnbiasH}
The bias of the BPI estimator $\hat{\mb{G}}_k(f)$ is given by
\begin{eqnarray}
\label{Bias}
\mathbb{B}[\hat{\mb{G}}_N(\mb{\tilde{f}}_k)] &=& c_{1}\left({\frac{k}{M}}\right)^{2/d} + c_{2}\left(\frac{1}{k}\right) + c_3(k,M,N) +  O(\epsilon_{BC}) + o\left(\frac{1}{k} + \left(\frac{k}{M}\right)^{2/d}\right), \nonumber 
\end{eqnarray}
where $c_3(k,M,N) = \expect[1_{\{\mb{Z} \in {{\cal S-S}_I}\}} (g(f(\mb{Z}_{-1}),\mb{Z}_{-1}) - g(f(\mb{Z}),\mb{Z}))] = O(k/M)^{2/d}$, and the constants $c_1 = \expect{[g'(f(\mb{Z}),\mb{Z})h(\mb{Z})]}$, $c_2 = \expect{[f^2(\mb{Z})g''(f(\mb{Z}),\mb{Z})/2]}$.
\end{theorem}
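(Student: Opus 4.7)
By linearity of expectation and the i.i.d. structure of the data, conditioning on the $M$ density-estimation samples and then taking expectation gives
$$\mathbb{B}[\hat{\mb{G}}_N(\mb{\tilde{f}}_k)] \;=\; \expect\bigl[g(\mb{\tilde f}_k(\mb{Z}),\mb{Z}) - g(f(\mb{Z}),\mb{Z})\bigr],$$
where $\mb{Z}\sim f$ is drawn independently of the $M$ samples used to form $\mb{\tilde f}_k$. The plan is to split this expectation according to whether $\mb{Z}\in{\cal S}_I$ or $\mb{Z}\in{\cal S}-{\cal S}_I$ and to analyze each piece separately. The error from miscategorizing a point, bounded above in Appendix~B by $O(\epsilon_{BC})$, is absorbed into the stated remainder.

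For $\mb{Z}\in{\cal S}_I$, the boundary-correction construction yields $\mb{\tilde f}_k(\mb{Z})=\mb{\hat f}_k(\mb{Z})$ with probability $1-O(\epsilon_{BC})$. I would then Taylor-expand $g(\cdot,\mb{Z})$ about $f(\mb{Z})$ up to order $\lambda$ (assumption $({\cal A}.3)$),
$$g(\mb{\hat f}_k(\mb{Z}),\mb{Z}) - g(f(\mb{Z}),\mb{Z}) \;=\; \sum_{r=1}^{\lambda-1}\frac{g^{(r)}(f(\mb{Z}),\mb{Z})}{r!}(\mb{\hat f}_k(\mb{Z})-f(\mb{Z}))^r + R_\lambda,$$
and take conditional expectation in $\mb{Z}$. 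Only the $r=1$ and $r=2$ terms contribute to leading order; the remainder $R_\lambda$ is controlled by $({\cal A}.3),({\cal A}.6)$ and will be $o(1/k + (k/M)^{2/d})$.

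The key moment computation is the bias and variance of the $k$-NN density estimator at an interior point. Using the change of variable $\mb{P}_k:=\int_{\mb{S}_k(\mb{Z})} f(x)\,dx\sim\mathrm{Beta}(k,M-k+1)$, the volume $\mb{V}_k(\mb{Z})$ can be expressed as $\mb{P}_k/f(\mb{Z})$ times a correction arising from the Taylor expansion of $f$ inside the ball. Solving for $\mb{\hat f}_k(\mb{Z})=(k-1)/(M\mb{V}_k(\mb{Z}))$ and using Beta moments $\expect[\mb{P}_k]=k/(M+1)$, $\expect[1/\mb{P}_k^{2/d}]\sim (M/k)^{2/d}$, $\mathrm{Var}[(k{-}1)/(M\mb{P}_k)]\sim f^2(\mb{Z})/k$, I obtain
$$\expect[\mb{\hat f}_k(\mb{Z})-f(\mb{Z})\mid\mb{Z}] \;=\; h(\mb{Z})\left(\tfrac{k}{M}\right)^{2/d}\!+\,o\bigl((k/M)^{2/d}\bigr),\qquad \expect[(\mb{\hat f}_k(\mb{Z})-f(\mb{Z}))^2\mid\mb{Z}] \;=\; \tfrac{f^2(\mb{Z})}{k}+\,o(1/k).$$
Taking the outer expectation against $f$ yields the $c_1(k/M)^{2/d}$ and $c_2/k$ terms with precisely the claimed constants.

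For $\mb{Z}\in{\cal S}-{\cal S}_I$, the boundary-correction rule sets $\mb{\tilde f}_k(\mb{Z})=\mb{\hat f}_k(\mb{Z}_{n})$ at the nearest interior sample. Replacing $\mb{Z}_n$ by its population analogue $\mb{Z}_{-1}$ (with the sample-to-population gap absorbed into $o(\cdot)$ via $({\cal A}.2)$) and applying the interior analysis at $\mb{Z}_{-1}$, the contribution of this event becomes
$$\expect\bigl[1_{\{\mb{Z}\in{\cal S}-{\cal S}_I\}}(g(f(\mb{Z}_{-1}),\mb{Z}_{-1})-g(f(\mb{Z}),\mb{Z}))\bigr] \;+\; o\bigl((k/M)^{2/d}+1/k\bigr),$$
which is exactly $c_3(k,M,N)$; that this is itself $O((k/M)^{2/d})$ follows because $\mathrm{Pr}(\mb{Z}\in{\cal S}-{\cal S}_I)$ scales as $(k/M)^{1/d}$ while $g(f(\mb{Z}_{-1}),\mb{Z}_{-1})-g(f(\mb{Z}),\mb{Z})=O((k/M)^{1/d})$ on this event. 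Summing the interior and boundary pieces gives the theorem.

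\textbf{Main obstacle.} The technical heart lies in the conditional moment computation for $\mb{\hat f}_k(\mb{Z})$: one must justify the Taylor expansion of the coverage $\mb{P}_k$ in terms of $\mb{V}_k$ uniformly in the randomness of the $k$-NN distance, convert moments of $\mb{V}_k$ to moments of the Beta variable $\mb{P}_k$, and verify that the Taylor remainders in both the $g$-expansion and the $f$-expansion are $o(1/k+(k/M)^{2/d})$ under assumptions $({\cal A}.2)$--$({\cal A}.6)$. The second delicate point is ensuring that the boundary-detection algorithm of Appendix~B misidentifies interior/boundary points only on an event of probability $O(\epsilon_{BC})$, so that the replacement of $\mb{\tilde f}_k$ by $\mb{\hat f}_k$ on ${\cal S}_I$ (and by $\mb{\hat f}_k(\mb{Z}_{-1})$ on ${\cal S}-{\cal S}_I$) is valid up to the stated error.
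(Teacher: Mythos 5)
Your proposal is correct and takes essentially the same route as the paper: reduce the bias to the single-sample expectation $\expect[g(\tilde{\mb{f}}_k(\mb{Z}),\mb{Z})-g(f(\mb{Z}),\mb{Z})]$, Taylor-expand $g$, invoke the Beta-distributed coverage $\mb{P}(\mb{Z})$ to compute the conditional bias $h(\mb{Z})(k/M)^{2/d}$ and conditional variance $f^2(\mb{Z})/k$ of the $k$-NN estimate, and isolate the boundary contribution $c_3$ via the nearest interior point together with $\Pr(\mb{Z}\in{\cal S}-{\cal S}_I)=O((k/M)^{1/d})$. The only cosmetic difference is that you expand $g$ about $f(\mb{Z})$ while the paper expands about the conditional mean $\expect_{\mb{Z}}[\tilde{\mb{f}}_k(\mb{Z})]$ (so the linear term vanishes conditionally and the bias/variance contributions separate cleanly), but since $(\expect_{\mb{Z}}[\hat{\mb{f}}_k(\mb{Z})]-f(\mb{Z}))^2=O((k/M)^{4/d})=o((k/M)^{2/d})$ the two expansions yield identical leading terms.
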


\begin{theorem}
\label{knnvarH}
The variance of the BPI estimator $\hat{\mb{G}}_N(\mb{\tilde{f}}_k)$ is given by
\begin{eqnarray}
\label{Variance}
\var[\hat{\mb{G}}_N(\mb{\tilde{f}}_k)] &=& c_4\left(\frac{1}{N}\right)+ c_5\left(\frac{1}{M}\right) + O(\epsilon_{BC}) + o\left(\frac{1}{M} + \frac{1}{N}\right), \nonumber
\end{eqnarray}
where the constants $c_4=\var[g(f(\mb{Z}),\mb{Z})]$ and $c_5=\var[f(\mb{Z})g'(f(\mb{Z}),\mb{Z})]$.
\end{theorem}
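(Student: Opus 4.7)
The plan is to apply the law of total variance with conditioning on the density-estimation sample ${\cal X}_M$:
\begin{equation*}
\var[\hat{\mb{G}}_N(\mb{\tilde{f}}_k)] \;=\; \expect\!\left[\var[\hat{\mb{G}}_N(\mb{\tilde{f}}_k) \mid {\cal X}_M]\right] \;+\; \var\!\left[\expect[\hat{\mb{G}}_N(\mb{\tilde{f}}_k) \mid {\cal X}_M]\right].
\end{equation*}
The first term on the right will yield the $c_4/N$ contribution, and the second the $c_5/M$ contribution.

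First I would handle the inner (conditional-variance) term. Given ${\cal X}_M$, the function $\mb{\tilde{f}}_k$ is deterministic and the query points $\mb{X}_1,\ldots,\mb{X}_N$ are i.i.d.\ from $f$, so $\var[\hat{\mb{G}}_N \mid {\cal X}_M] = (1/N)\,\var_{\mb{Z}}[g(\mb{\tilde{f}}_k(\mb{Z}),\mb{Z}) \mid {\cal X}_M]$. Taking outer expectation and applying a second-order Taylor expansion of $g(u,\mb{Z})$ about $u=f(\mb{Z})$, together with the $L^2$-consistency of $\mb{\tilde{f}}_k$ at interior points (from Appendix~B, which underpins Theorem~\ref{knnbiasH}) and the uniform integrability supplied by $({\cal A}.5)$--$({\cal A}.6)$, one shows this term equals $\var[g(f(\mb{Z}),\mb{Z})]/N + o(1/N)$ up to an $O(\epsilon_{BC})$ boundary remainder coming from points in ${\cal B}_N$ (handled exactly as in the bias theorem by the misidentification probability bound).

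Next I would handle the outer term, which is the delicate part. The conditional expectation collapses to the deterministic functional $\Psi({\cal X}_M) := \int g(\mb{\tilde{f}}_k(z),z)\,f(z)\,d\mu(z)$ of ${\cal X}_M$ alone. To evaluate $\var[\Psi]$ I would use the Efron--Stein inequality: let ${\cal X}_M^{(j)}$ replace $\mb{X}_{N+j}$ with an i.i.d.\ copy $\mb{X}_{N+j}'$, and Taylor-expand,
\begin{equation*}
\Psi({\cal X}_M) - \Psi({\cal X}_M^{(j)}) \;\approx\; \int g'(f(z),z)\bigl(\mb{\tilde{f}}_k(z) - \mb{\tilde{f}}_k^{(j)}(z)\bigr)\,f(z)\,d\mu(z).
\end{equation*}
Since $\mb{\tilde{f}}_k(z)$ changes only when $\mb{X}_{N+j}$ (or $\mb{X}_{N+j}'$) lies in the $k$-NN ball of $z$, one localizes the integrand and then invokes the $k\to\infty$ asymptotics of the $k$-NN density estimator (so that $\mb{\tilde{f}}_k(z)$ behaves as a local average of the density at $\mb{X}_{N+j}$). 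The leading contribution works out to $\expect[(\Psi-\Psi^{(j)})^2] = (1/M^2)\var[f(\mb{X}_{N+j})g'(f(\mb{X}_{N+j}),\mb{X}_{N+j})] + o(1/M^2)$; summing over $j$ produces the $c_5/M$ term.

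The main obstacle is the outer-variance calculation. A naive Efron--Stein bound — which merely notes that $\mb{X}_{N+j}$ is a $k$-NN of $O(k)$ query points, so pointwise $|\Psi-\Psi^{(j)}| = O(k/M)$ — gives only $\var[\Psi] = O(k^2/M)$, which is too crude by a factor of $k^2$. The sharp rate requires exploiting the cancellations within the integral: the quantity $\mb{\tilde{f}}_k(z) - \mb{\tilde{f}}_k^{(j)}(z)$, once integrated against the smooth weight $g'(f(z),z)f(z)$, has typical size $1/M$ rather than $k/M$. This refined estimate rests on the moment expansions for $k$-NN ball volumes developed in the bias analysis (Appendix~B/C), applied in a bivariate form to pairs of query points whose $k$-NN balls overlap at a common sample $\mb{X}_{N+j}$. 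Once this variance contribution and the analogous boundary remainder $O(\epsilon_{BC})$ are in hand, the two pieces recombine to give the stated expression.
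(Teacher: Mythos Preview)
Your law-of-total-variance decomposition is a plausible alternative to what the paper actually does, but as written it has two genuine gaps.

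First, the claim that ``given ${\cal X}_M$, the function $\mb{\tilde{f}}_k$ is deterministic'' is false for the boundary-corrected estimator used here. The interior/boundary labeling ${\cal I}_N,{\cal B}_N$ is produced by Algorithm~\ref{alg1H} from a $K$-NN graph built on ${\cal X}_N$ itself, and for $\mb{X}_i\in{\cal B}_N$ one sets $\mb{\tilde{f}}_k(\mb{X}_i)=\mb{\hat{f}}_k(\mb{X}_{n(i)})$ with $\mb{X}_{n(i)}\in{\cal I}_N$. Hence $\mb{\tilde{f}}_k(\mb{X}_i)$ depends on all of ${\cal X}_N$, and conditional on ${\cal X}_M$ the summands $g(\mb{\tilde{f}}_k(\mb{X}_i),\mb{X}_i)$ are exchangeable but \emph{not} i.i.d. Your identity $\var[\hat{\mb{G}}_N\mid{\cal X}_M]=(1/N)\var_{\mb Z}[\,\cdot\,]$ therefore fails as stated. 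This is repairable (the cross-dependence is confined to boundary points and feeds into the $O(\epsilon_{BC})$ term), but it has to be argued, not assumed.

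Second, and more seriously, the Efron--Stein inequality is only an upper bound: it gives $\var[\Psi]\le \tfrac12\sum_j\expect[(\Psi-\Psi^{(j)})^2]$, so even a sharp evaluation of each $\expect[(\Psi-\Psi^{(j)})^2]$ yields $\var[\Psi]\le c_5/M+o(1/M)$, not the exact asymptotic the theorem asserts. To recover the constant $c_5$ you would also need a matching lower bound (e.g.\ via the H\'ajek projection and a proof that higher-order Hoeffding terms are $o(1/M)$). You do not mention this, and it is not free: establishing it ultimately requires precisely the bivariate $k$-NN moment analysis you allude to at the end.

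The paper avoids both problems by a different organization. It uses the exchangeable-sum identity
\[
\var[\hat{\mb{G}}_N(\mb{\tilde{f}}_k)]=\frac{1}{N}\var\bigl[g(\mb{\tilde{f}}_k(\mb{X}_1),\mb{X}_1)\bigr]+\frac{N-1}{N}\,\mathrm{Cov}\bigl[g(\mb{\tilde{f}}_k(\mb{X}_1),\mb{X}_1),\,g(\mb{\tilde{f}}_k(\mb{X}_2),\mb{X}_2)\bigr],
\]
Taylor-expands each $g(\mb{\tilde{f}}_k(\mb{X}_i),\mb{X}_i)$ about $\expect[\mb{\tilde{f}}_k(\mb{X}_i)\mid\mb{X}_i]$, and then invokes the pre-established cross-moment formula (\ref{bknncross}) from Appendix~B, which gives $\mathrm{Cov}[\gamma_1(\mb X)\mb e(\mb X),\gamma_2(\mb Y)\mb e(\mb Y)]=\mathrm{Cov}[\gamma_1(\mb X)f(\mb X),\gamma_2(\mb Y)f(\mb Y)]/M+o(1/M)$ as an \emph{equality}. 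That formula, derived via the disjoint/intersecting $k$-NN-ball analysis and the $\epsilon$-ball coupling, is the real engine; it delivers $c_5/M$ with the exact constant directly, and the boundary dependence through ${\cal X}_N$ is already absorbed there into the $O(N{\cal C}(k))=O(\epsilon_{BC})$ remainder. Your route, once patched, would end up re-deriving essentially the same covariance identity.
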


\begin{proof}
We briefly sketch the proof here. The above theorems have been stated more generally and proved in Appendix D. The principal idea here involves Taylor series expansions of the functional $g(\tilde{\mb{f}}_{k}(X),X)$ about the true value $g({{f}(X)},X)$, and subsequently (a) using the moment properties of density estimates derived in Appendix A to obtain the leading terms, and (b) bounding the remainder term in the Taylor series and showing that it can be ignored in comparison to the leading terms. 
\end{proof}

The leading terms $c_1(k/M)^{2/d} + c_2/k$ arise due to the bias and variance of $k$-NN density estimates respectively (see Appendix A), while the term $c_3(k,M,N)$ arises due to boundary correction (see Appendix B). Henceforth, we will refer to $c_3(k,M,N)$ by $c_3$. It is shown in Appendix B that $c_3 = O((k/M)^{2/d})$ (\ref{IIorder}). The term $O(\epsilon_{BC})$ arises from a concentration inequality that gives the probability of the event ${\cal I}_N \notin {\cal S-S}_I$ as $1-O(\epsilon_{BC})$.  Observe that if $k$ increases logarithmically in $M$, specifically $(\log(M))^{2/(1-\delta)}/k \to 0$, then $ O(\epsilon_{BC}) = o(N/M^3) = o(1/T)$.

The term $c_4/N$ is due to approximation of the integral $\int g(f(x),x)f(x) dx$ by the sample mean $(1/N) \sum_{i=1}^{N} g(f(\mb{X}_i),\mb{X}_i)$. The term $c_5/M$ on the other hand is due to the covariance between density estimates $\tilde{\mb{f}}(\mb{X}_i)$ and $\tilde{\mb{f}}(\mb{X}_j)$, $i \neq j$.

The constants $c_2, c_4$ and $c_5$ are once again functionals of the form $\int \tilde{g}(f(x),x) f(x) d\mu(x)$ and can be estimated using the proposed BPI estimator (\ref{eq:plugin}). On the other hand, the constant $c_1$ requires estimation of second order partial derivatives of $f$ in addition to estimating the density $f$. The partial derivatives might be estimated using the methods described in~\cite{raykar}, $c_{1}$ could in principle be estimated in this manner. 

To estimate $c_3$, we observe that $||\mb{Y} - \mb{Y}_{-1}|| = O((k/M)^{1/d})$ with probability $1-O(N{\cal C}(k))$, and that $Pr(\mb{Y} \in {{\cal S-S}_I}) = O((k/M)^{1/d})$. Let $h = \mb{Y} - \mb{Y}_{-1}$. Then,
\begin{eqnarray}
c_3 &=& \expect[1_{\{\mb{Y} \in {{\cal S-S}_I}\}} (g(f(\mb{Y}_{-1}),\mb{Y}_{-1}) - g(f(\mb{Y}),\mb{Y}))] \nonumber \\
&=& \expect[1_{\{\mb{Y} \in {{\cal S-S}_I}\}} g'(f(\mb{Y}_{-1}),\mb{Y}_{-1})(f(\mb{Y}) - f(\mb{Y}_{-1}))]  + O((k/M)^{3/d}) + O({\cal C}(k)) \nonumber \\
&=& \expect[1_{\{\mb{X}_1 \in {{\cal S-S}_I}\}} g'(f(\mb{Y}_{-1}),\mb{Y}_{-1}) <\nabla f(\mb{Y}_{-1}),h>]  + O((k/M)^{3/d}) + O({\cal C}(k)) \nonumber.
\end{eqnarray}
The constant $c_{3}$ can then be estimated as
$$\hat{c}_{3} = (1/N) \sum_{\mb{X}_i \in {\cal B}_N} g'(\hat{\mb{f}}_k(\mb{X}_{n(i)}),\mb{X}_{n(i)}) <\widehat{\nabla f}(\mb{X}_{n(i)}),\mb{X}_i - \mb{X}_{n(i)}>,$$
where the estimate $\widehat{\nabla f}$ of the gradient ${\nabla f}$ of $f$ might once again be estimated using the methods described in~\cite{raykar}.

\subsection{Central limit theorem}
In addition to the results on bias and variance shown in the previous section, it is shown here that the BPI estimator, appropriately normalized, weakly converges to the normal distribution. The asymptotic behavior of the BPI estimator is studied under the following limiting conditions: (a) $k/M \to 0$, (b) $k \to \infty$ and (c) $N \to \infty$. As shorthand, the above limiting assumptions will be collectively denoted by $\Delta\to 0$. 
\begin{theorem}
\label{knncltH}
The asymptotic distribution of the BPI estimator $\hat{\mb{G}}_N(\mb{\tilde{f}}_k)$ is given by
\begin{eqnarray}   
\label{CLT}
\lim_{\Delta \to 0} Pr\left(\frac{\hat{\mb{G}}_N(\mb{\tilde{f}}_k)-\expect[\hat{\mb{G}}_N(\mb{\tilde{f}}_k)]}{{\sqrt{\var[\hat{\mb{G}}_N(\mb{\tilde{f}}_k)]}}} \leq \alpha \right) = Pr(\mb{S} \leq \alpha), \nonumber
\end{eqnarray}
where $\mb{S}$ is a standard normal random variable.
\end{theorem}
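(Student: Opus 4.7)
The plan is to decompose the centered BPI estimator into two asymptotically independent components, one driven by the integration sample ${\cal X}_N$ and the other by the density-estimation sample ${\cal X}_M$, and to establish a marginal CLT for each. Condition on ${\cal X}_M$ and write
\[
\mb{W}_M := \expect\bigl[\hat{\mb{G}}_N(\mb{\tilde{f}}_k)\,\big|\,{\cal X}_M\bigr], \qquad \mb{U}_N := \hat{\mb{G}}_N(\mb{\tilde{f}}_k) - \mb{W}_M.
\]
On the event $\{{\cal I}_N \subset {\cal S}_I\}$, whose complement has probability $O(\epsilon_{BC}) = o(1/T)$ under $\Delta\to 0$, the boundary-correction rule in (\ref{correcdec3}) is determined by ${\cal X}_M$ and $\mb{X}_i$ alone, so the $N$ summands $g(\mb{\tilde{f}}_k(\mb{X}_i),\mb{X}_i)$ become conditionally i.i.d.\ given ${\cal X}_M$ (up to an $o_p(1/\sqrt{T})$ boundary contribution). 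A conditional Lindeberg--Feller CLT, powered by the moment bounds of $({\cal A}.5)$--$({\cal A}.6)$ and the $L^2$-consistency of $\mb{\tilde{f}}_k$ proved in Appendix~A, then yields $\sqrt{N}\,\mb{U}_N \mid {\cal X}_M \Rightarrow N(0,c_4)$ for typical ${\cal X}_M$, since $\var_{{\cal X}_M}[g(\mb{\tilde{f}}_k(\mb{Z}),\mb{Z})] \to c_4 = \var[g(f(\mb{Z}),\mb{Z})]$ in probability.

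Next I would establish a CLT for $\mb{W}_M$ itself via a Hajek projection onto linear statistics of ${\cal X}_M$. A first-order Taylor expansion of $g$ about $f(\mb{Z})$, parallel to the expansions in the proofs of Theorems~\ref{knnbiasH} and~\ref{knnvarH}, gives
\[
\mb{W}_M - \expect[\mb{W}_M] = \expect_{\mb{Z}}\bigl[g'(f(\mb{Z}),\mb{Z})\bigl(\mb{\tilde{f}}_k(\mb{Z}) - \expect[\mb{\tilde{f}}_k(\mb{Z})\mid \mb{Z}]\bigr)\bigr] + o_p(M^{-1/2}).
\]
Setting $\hat{\mb{W}}_M := \sum_{j=1}^M \expect[\mb{W}_M \mid \mb{X}_{N+j}] - (M-1)\,\expect[\mb{W}_M]$ and using the covariance estimates for $k$-NN density values at two independent locations from Appendix~A, one should obtain $\var[\mb{W}_M - \hat{\mb{W}}_M] = o(1/M)$. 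Since $\hat{\mb{W}}_M$ is a sum of $M$ i.i.d.\ bounded random variables, the classical CLT delivers $\sqrt{M}(\mb{W}_M - \expect[\mb{W}_M]) \Rightarrow N(0,c_5)$ with $c_5 = \var[f(\mb{Z})g'(f(\mb{Z}),\mb{Z})]$, matching Theorem~\ref{knnvarH}.

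Finally, I would assemble the two limits via the joint characteristic function: because $\expect[\mb{U}_N \mid {\cal X}_M] = 0$ and $\mb{W}_M$ is ${\cal X}_M$-measurable,
\[
\expect\bigl[e^{it_1\sqrt{N}\mb{U}_N + it_2\sqrt{M}(\mb{W}_M-\expect[\mb{W}_M])}\bigr] = \expect\bigl[e^{it_2\sqrt{M}(\mb{W}_M-\expect[\mb{W}_M])}\,\phi_{{\cal X}_M}(t_1)\bigr],
\]
where $\phi_{{\cal X}_M}(t_1) := \expect[e^{it_1\sqrt{N}\mb{U}_N} \mid {\cal X}_M] \to e^{-t_1^2 c_4/2}$ in probability by Step~1. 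Dominated convergence factors this limit into a product of two Gaussian characteristic functions, so $(\sqrt{N}\mb{U}_N,\sqrt{M}(\mb{W}_M-\expect[\mb{W}_M]))$ is asymptotically independent bivariate normal; Slutsky's theorem and Theorem~\ref{knnvarH} (with $N = (1-\alpha_{frac})T$, $M = \alpha_{frac}T$) then yield the stated CLT. The main obstacle I foresee is the Hajek-projection residual bound in Step~2: controlling $\var[\mb{W}_M - \hat{\mb{W}}_M]$ requires sharp estimates of $\text{Cov}[\mb{\tilde{f}}_k(\mb{Z}_1),\mb{\tilde{f}}_k(\mb{Z}_2)]$ at two independent integration points $\mb{Z}_1,\mb{Z}_2$, which hinge on the joint probability that a single $\mb{X}_{N+j}$ sits in two overlapping $k$-NN balls; the boundary correction adds a further bookkeeping layer but can be absorbed into the $O(\epsilon_{BC})$ remainder throughout.
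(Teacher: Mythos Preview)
Your strategy is genuinely different from the paper's. The paper does \emph{not} split the randomness into an ${\cal X}_N$-driven piece and an ${\cal X}_M$-driven piece. Instead it normalizes each summand to $\mb{Y}_{M,i}=(g(\tilde{\mb{f}}_k(\mb{X}_i),\mb{X}_i)-\expect[\cdot])/\sqrt{\var[\cdot]}$, observes that the $\mb{Y}_{M,i}$ form an \emph{exchangeable} (not i.i.d.) array, and invokes De~Finetti's representation together with the criterion of Blum \emph{et al.}: for exchangeable zero-mean unit-variance variables, $N^{-1/2}\sum \mb{Y}_{M,i}\Rightarrow N(0,1)$ iff $Cov(\mb{Y}_{M,1},\mb{Y}_{M,2})=0$ and $Cov(\mb{Y}_{M,1}^2,\mb{Y}_{M,2}^2)=0$. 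The paper then shows, using the covariance computations already done for Theorem~\ref{knnvarH}, that both covariances are $O(1/M)$, and extends the Blum criterion to this asymptotic setting via a Chebyshev argument on the De~Finetti mixing measure. The virtue of this route is that exchangeability is automatic (the construction is symmetric in $\mb{X}_1,\dots,\mb{X}_N$), so no Hajek projection and no two-stage CLT are needed; the only analytic input is the pairwise covariance bound.

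Your approach is in principle viable, but there is a concrete gap in Step~1. You assert that on $\{{\cal I}_N\subset{\cal S}_I\}$ ``the boundary-correction rule is determined by ${\cal X}_M$ and $\mb{X}_i$ alone,'' so that the summands are conditionally i.i.d.\ given ${\cal X}_M$. This is false as written: Algorithm~\ref{alg1H} builds a $K$-NN graph on ${\cal X}_N$ itself, so whether $\mb{X}_i\in{\cal I}_N$ depends on \emph{all} of ${\cal X}_N$, and for $\mb{X}_i\in{\cal B}_N$ the corrected value $\tilde{\mb{f}}_k(\mb{X}_i)=\hat{\mb{f}}_k(\mb{X}_{n(i)})$ uses the nearest interior sample $\mb{X}_{n(i)}\in{\cal I}_N\subset{\cal X}_N$. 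Even on the high-probability event where classification is perfect, the second dependence persists. To salvage your argument you would need an explicit coupling to an oracle estimator that replaces ${\cal I}_N$ by the deterministic set ${\cal S}_I$ and $\mb{X}_{n(i)}$ by, say, the metric projection onto ${\cal S}_I$, together with a bound showing the coupling error is $o_p(T^{-1/2})$; this is not the same as the $O(\epsilon_{BC})$ event bound you quote, since the boundary fraction $\Pr(\mb{X}\in{\cal S}\setminus{\cal S}_I)=O((k/M)^{1/d})$ is in general much larger than $T^{-1/2}$. The paper's exchangeability route sidesteps this entirely because symmetry in ${\cal X}_N$ is preserved regardless of how the boundary correction couples the summands.
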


\begin{proof}
Define the random variables $\{\mb{Y}_{M,i}; i = 1,\ldots,N\}$ for any fixed $M$ 
\begin{equation}
\mb{Y}_{M,i} = \frac{g({\mb{\tilde{f}}_k(\mb{X}_i)},\mb{X}_i) - \expect[g({\mb{\tilde{f}}_k(\mb{X}_i)},\mb{X}_i)]}{\sqrt{\var[g({\mb{\tilde{f}}_k(\mb{X}_i)},\mb{X}_i)]}}, \nonumber
\end{equation}
The key idea here is to recognize that $\mb{Y}_{M,i}$ are exchangeable random variables. Blum et.al. \cite{chernoff} showed that for exchangeable $0$ mean, unit variance random variables $\mb{Z_i}$, the sum $\mb{S}_N = \frac{1}{\sqrt{N}}\sum_{i=1}^N \mb{Z}_i$ converges in distribution to $N(0,1)$ if and only if $Cov(\mb{Z}_1,\mb{Z}_2) = 0$ and $Cov(\mb{Z}_1^2,\mb{Z}_2^2) = 0$. In our case, 
\begin{eqnarray}
Cov(\mb{Y}_{M,i},\mb{Y}_{M,j}) &=& O(1/M), \nonumber \\
Cov(\mb{Y}_{M,i}^2,\mb{Y}_{M,j}^2) &=& O(1/M). \nonumber 
\end{eqnarray}
As $M$ gets large, we then have that $Cov(\mb{Y}_{M,i},\mb{Y}_{M,j}) \to 0$ and $Cov(\mb{Y}_{M,i}^2,\mb{Y}_{M,j}^2) \to 0$. We then extend the work by Blum et.al. to show that convergence in distribution to $N(0,1)$ holds in our case as both $N$ and $M$ get large. These ideas are rigorously treated in Appendix E.
\end{proof}

The CLT for $k$-NN estimators of R\'enyi entropy was alluded to by Leonenko~et.al.~\cite{leo} by inferring from experimental results. Theorem 3.3 establishes the CLT for BPI estimators of arbitrary functionals, including R\'enyi entropy. This result allows one to define approximate finite sample confidence intervals on the estimated values of the functionals and define p-values .

\section{Analysis of M.S.E}

Theorem \ref{knnbiasH} implies that $k \to \infty$ and $k/M \to 0$ in order that the BPI estimator $\hat{\mb{G}}_N(\mb{\tilde{f}}_k)$ be asymptotically unbiased. Likewise, Theorem \ref{knnvarH} implies that $N \to \infty$ and $M \to \infty$ in order that the variance of the estimator converge to $0$. It is clear from Theorem \ref{knnbiasH} that the MSE is minimized when $k$ grows in polynomially in $M$. Throughout this section, we assume that $k=k_0M^{r}$ for some $r \in (0,1)$. This implies that $O(\epsilon_{BC}) = O(N{\cal C}(k)) = o(1/M) = o(1/T)$.  Figures \ref{fig-label6} and \ref{fig-label7} illustrate the asymptotic behavior of the density estimate and the plug-in estimate with increasing sample size.

\begin{figure}[!t]
  \begin{center}
    \includegraphics[width=3in]{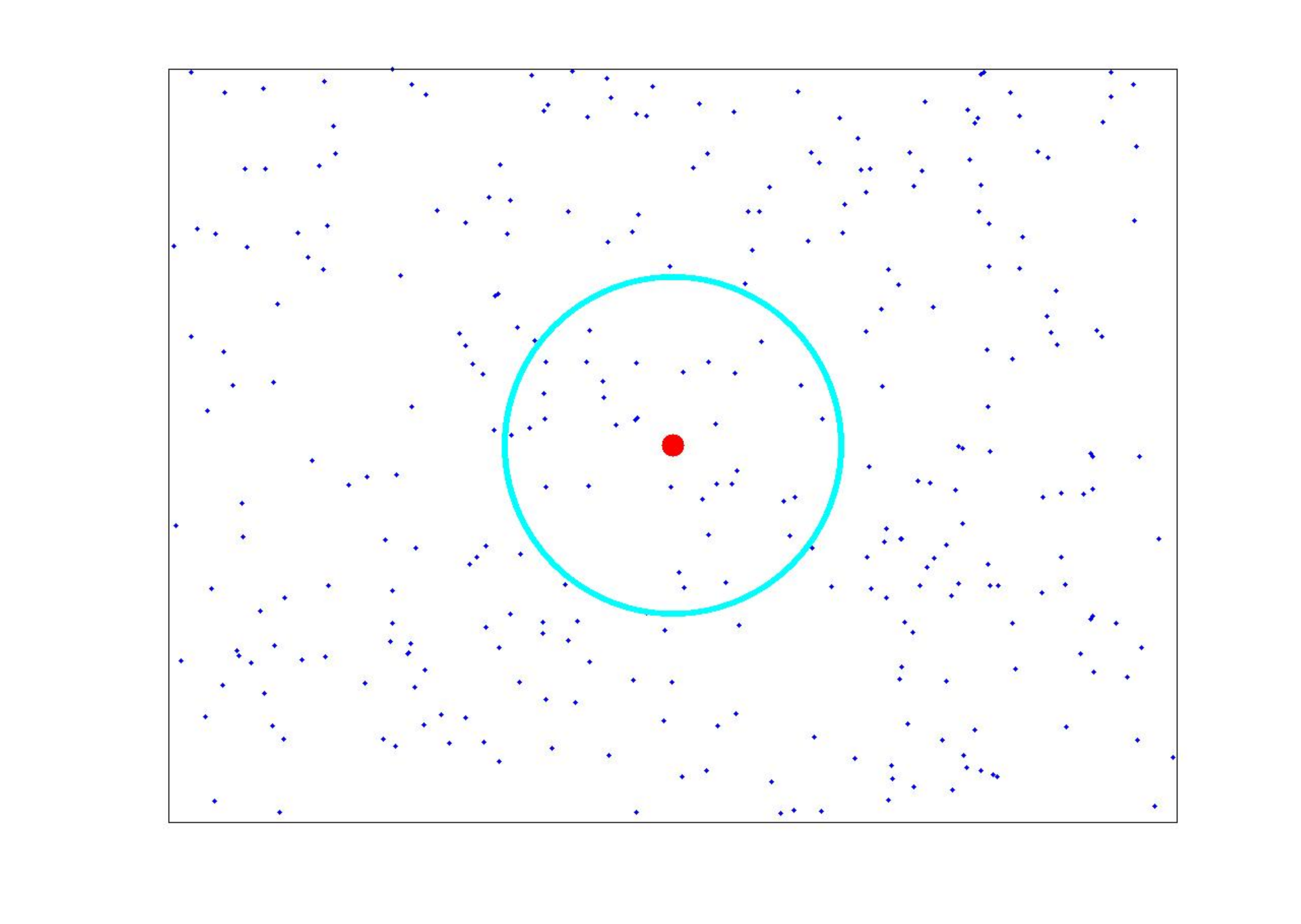}
    \includegraphics[width=3in]{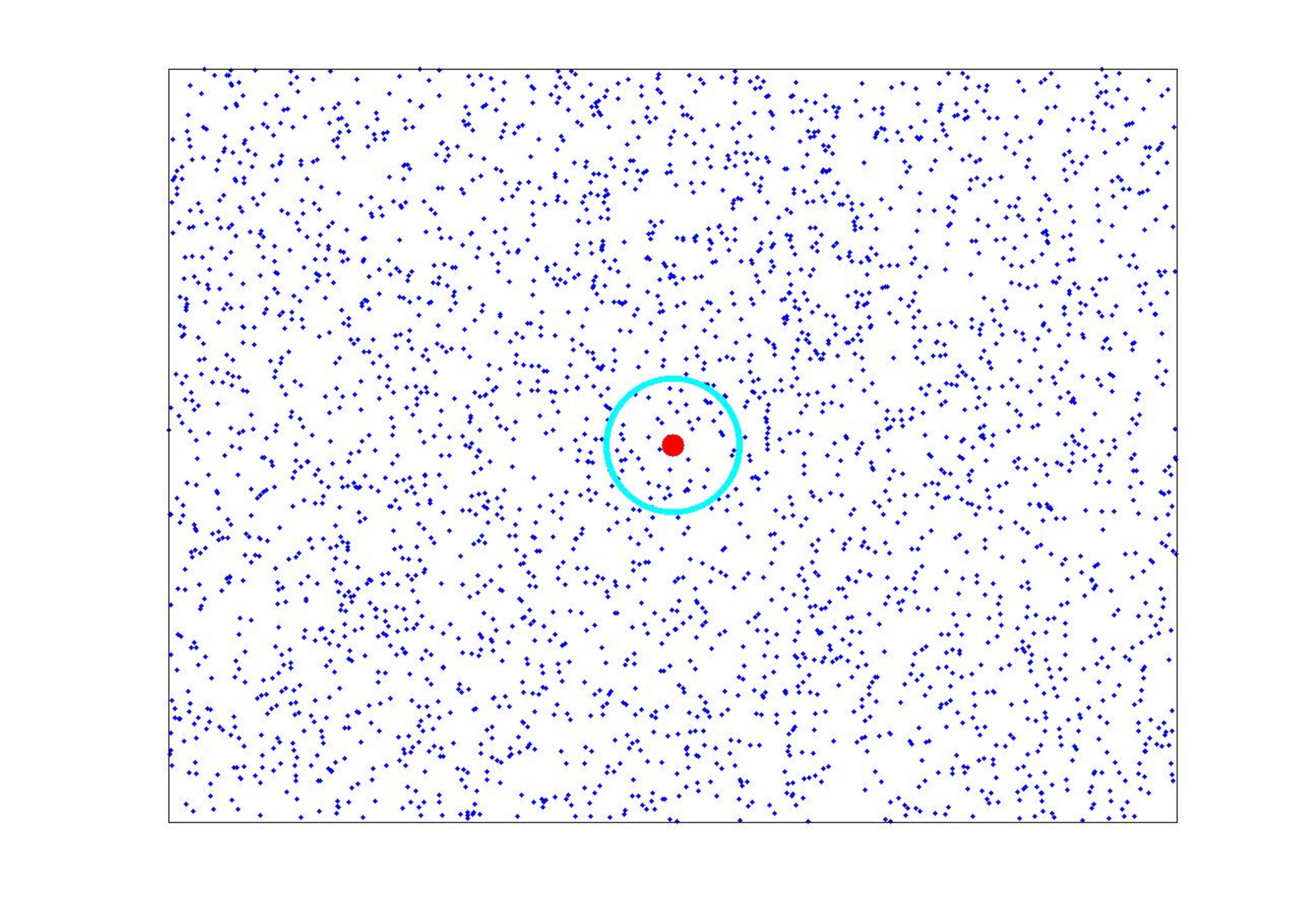}
  \end{center}

  \caption{\small Asymptotics. Variation of density estimate with increasing $k$ and $M$}
  \label{fig-label6}
\end{figure}

\begin{figure}[!t]
  \begin{center}
    \includegraphics[width=3in]{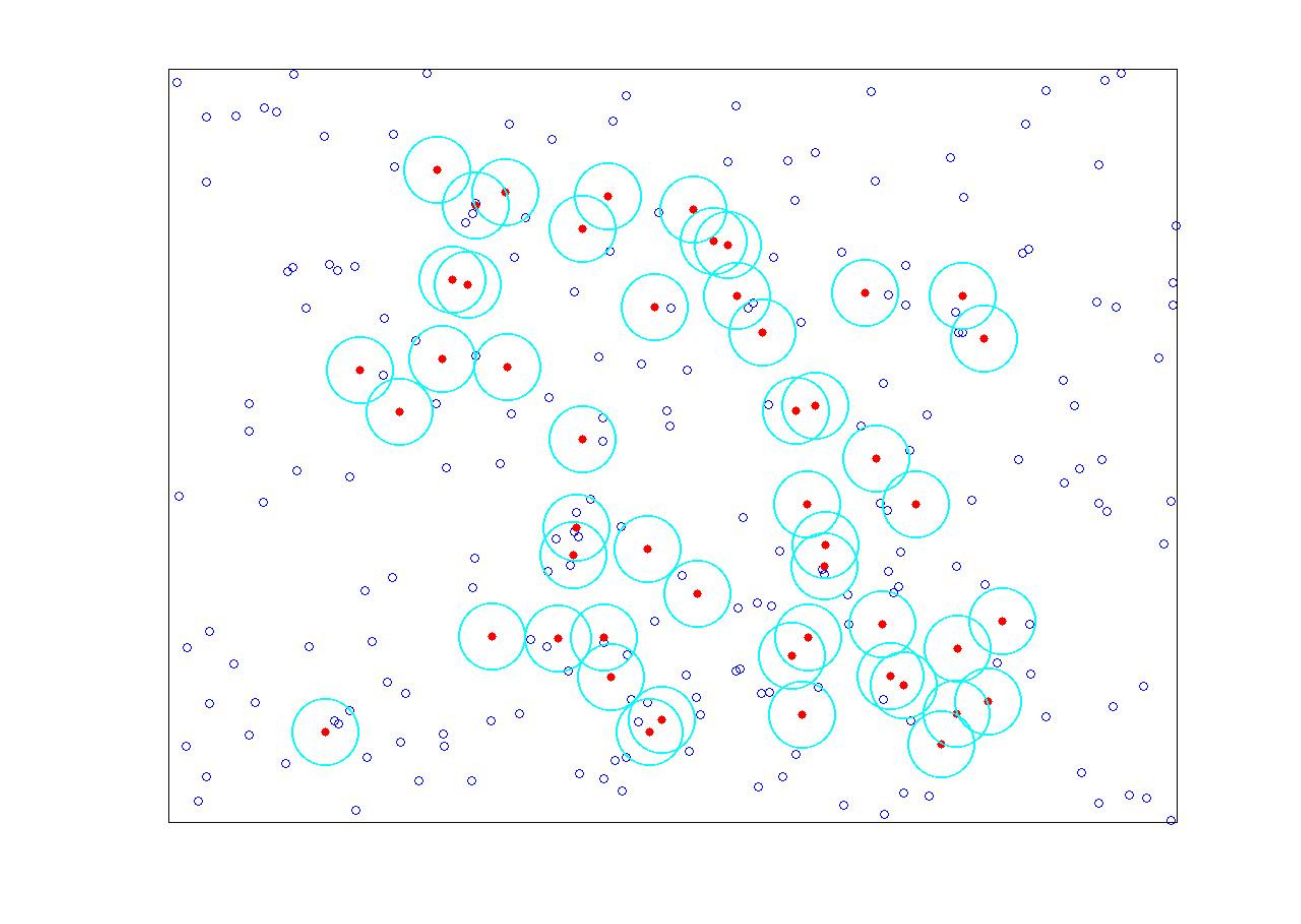}
    \includegraphics[width=3in]{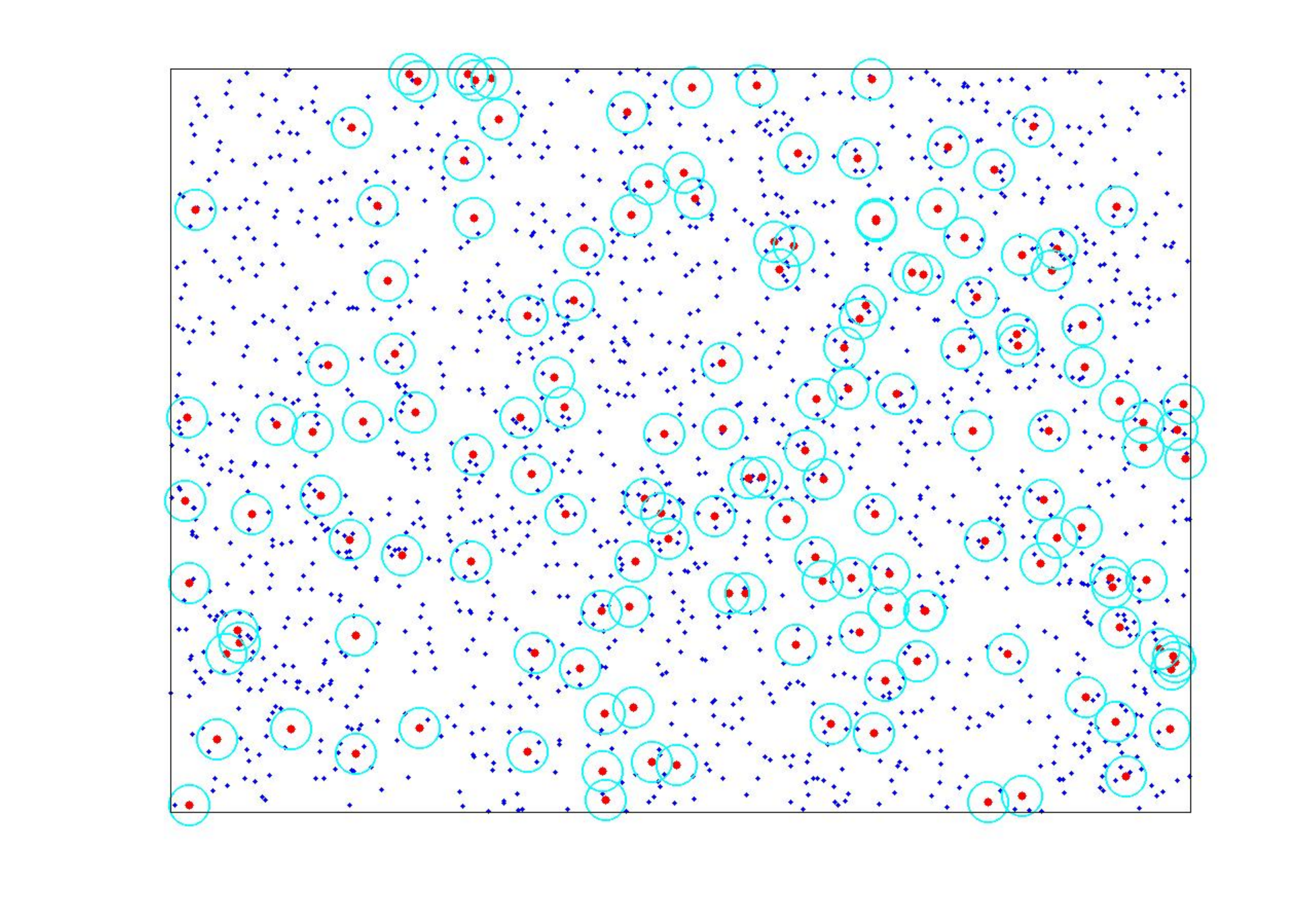}
  \end{center}

  \caption{\small Asymptotics. Variation of plug-in estimate with increasing $k$, $M$ and $N$}
  \label{fig-label7}
\end{figure}

\subsection{Assumptions}
Under the condition $k=k_0M^{r}$, the assumptions $({\cal {A}}.2)$ and $({\cal {A}}.3)$ reduce to the following equivalent conditions: $({\cal {A}}.2)$: Let the density $f$ have continuous partial derivatives of order $2r$ in the interior of the set ${\cal S}$ where $r$ satisfies the condition ${2r(1-t)/d} > 1$. $({\cal {A}}.3)$: Let the functional $g(x,y)$ have $\lambda$ partial derivatives w.r.t. $x$, where $\lambda$ satisfies the conditions  $t\lambda >1$. 

\subsection{Optimal choice of parameters}
In this section, we obtain optimal values for $k$,$M$ and $N$ for minimum M.S.E.

\subsubsection{Optimal choice of $k$}
Theorems III.1 and III.2 provide an optimal choice of $k$ that minimizes asymptotic MSE. Minimizing the MSE over $k$ is equivalent to minimizing the square of the bias over $k$. Define $c_o = c_1 + c_3/(k/M)^{2/d}.$ The optimal choice of $k$ is given by 
\begin{eqnarray}
k_{opt} &=& \underset{k}{\operatorname{arg\,min}} \, {\mathbb{B}}(\hat{\mb{G}}_N(\mb{\tilde{f}}_k)) = \lfloor{k_0M^{\frac{2}{2+d}}}\rfloor, 
\label{eq:optk}
\end{eqnarray} where $\lfloor x \rfloor$ is the closest integer to $x$, and the constant $k_0$ is defined as $k_0 = (|c_2|d/2|c_0|)^{\frac{d}{d+2}}$ when $c_0c_2 > 0$ and as $k_0 = (|c_2|/|c_0|)^{\frac{d}{d+2}}$ when $c_0c_2 < 0$. 

Observe that the constants $c_0$ and $c_2$ can possibly have opposite signs. When $c_0c_2 > 0$, the bias evaluated at $k_{opt}$ is  $b_0^+M^{\frac{-2}{2+d}}(1+o(1))$ where $b_0^+ = c_0k_0^{2/d}+c_2/k_0$. Let $k_{frac} = k_0M^{\frac{2}{2+d}} - k_{opt}$. When $c_0c_2 < 0$, observe that $c_0 ((k_{frac} + k_{opt})/M)^{2/d} + c_2/(k_{frac} + k_{opt})$ is equal to zero. 
When $c_0c_2 < 0$, a higher order asymptotic analysis is required to specify the bias at the optimal value of $k$. In particular, 
\begin{eqnarray}
\mathbb{B}({\mb{\hat{G}}}_N(\mb{\tilde{f}}_k)) &=& c_{1}\left({\frac{k}{M}}\right)^{2/d} + c_{2}\left(\frac{1}{k}\right) \nonumber \\
&& + h_1\left({\frac{k}{M}}\right)^{4/d} + h_{2}\left(\frac{1}{k^2}\right) + h_3\left(\left(\frac{k}{M}\right)^{2/d}\frac{1}{k}\right) \nonumber \\
&& + o\left( \left(\frac{k}{M}\right)^{4/d} + \frac{1}{k^2} + \left(\frac{k}{M}\right)^{2/d}\frac{1}{k} \right) \nonumber
\end{eqnarray}
where the constants are given by $$h_1 = \expect[(1/2)g''(f(\mb{Y}))h^2(X) + g'(f(\mb{Y}))h_o(\mb{Y})],$$ $$h_{2} = \expect[(2/3)g'''(f(\mb{Y}))f^3(\mb{Y})]$$ and $$h_3 = (1-2/d) \expect[g''(f(\mb{Y}))f(\mb{Y})c(\mb{Y})].$$ 
The bias evaluated at $k_{opt}$ is then given by $b_0^-M^{\frac{-4}{2+d}}(1+o(1))$ where the constant $b_0^- = h_1k_0^{4/d} + (h_2+c_2k_{frac})/k_0^2 + (h_3+2c_1k_{frac}/d)k_0^{2/d-1}$.

{Even though the optimal choice $k_{opt}$ depends on the unknown density $f$ (via the constant $k_0$), { we observe from simulations that simply matching the rates, i.e. choosing $k = \bar{k} = M^{2/(2+d)}$, leads to significant MSE improvement}. This is illustrated in Section~\ref{sec:exp}.}

\subsubsection{Choice of $\alpha_{frac} = M/T$}
Observe that the MSE of $\hat{\mb{G}}_N(\mb{\tilde{f}}_k)$ is dominated by the squared bias $(O(M^{-4/(2+d)}))$ as contrasted to the variance $(O(1/N+1/M))$. This implies that the MSE rate of convergence is invariant to the choice of $\alpha_{frac}$. This is corroborated by the experimental results shown in Fig.~\ref{a-compare}.

\subsubsection{Discussion on optimal choice of $k$}
The optimal choice of $k$ grows at a smaller rate as compared to the total number of samples $M$ used for the density estimation step. Furthermore, the rate at which $k/M$ grows decreases as the dimension $d$ increases. This can be explained by observing that the choice of $k$ primarily controls the bias of the entropy estimator. For a fixed choice of $k$ and $M$ $(k<M)$, one expects the bias in the density estimates (and correspondingly in the estimates of the functional $G(f)$) to increase as the dimension increases. For increasing dimension an increasing number of the $M$ points will be near the boundary of the support set. This in turn requires choosing a smaller $k$ relative to $M$ as the dimension $d$ grows.


\subsection{Optimal rate of convergence}

Observe that the optimal bias decays as $b_0^+(T^{\frac{-2}{2+d}})(1+o(1))$ when $c_0c_2 > 0$ and $b_o^-(T^{\frac{-4}{2+d}})(1+o(1))$ when $c_0c_2 < 0$. The variance decays as $\Theta(1/T)(1+o(1))$. 

\subsection{Comparison with results by Baryshnikov~{\it etal}}
\label{sec:compareb}
Recently, Baryshnikov~{\it etal}~\cite{bar} have developed asymptotic convergence results for estimators of $f$-divergence $G(f_0,f) = \int f(x) \phi(f_0(x)/f(x)) dx$ for the case where $f_0$ is known. Their estimators are based on sums of functionals of $k$-NN distances. They assume that they have $T$ i.i.d realizations from the unknown density $f$, and that $f$ and $f_0$ are bounded away from 0 and $\infty$ on their support. The general form of the estimator of Baryshnikov~{\it etal} is given by $$\tilde{\mb{G}}_N(\mb{\hat{f}}_{kS}) = \frac{1}{T} \sum_{i=1}^T g(\hat{\mb{f}}_{kS}(\mb{X}_i)),$$ where $\hat{\mb{f}}_{kS}(\mb{X}_i)$ is the standard $k$-NN density estimator~\cite{fuk} estimated using the $T-1$ samples $\{\mb{X}_1,..,\mb{X}_T\} - \{\mb{X}_i\}$.

Baryshnikov~{\it etal} do not show that their estimator is consistent and do not analyze the bias of their estimator. They show that the leading term in the variance is given by $c_k/T$ for some constant $c_k$ which is a function of the number of nearest neighbors $k$. Finally they show that their estimator, when suitably normalized, is asymptotically normal. 
In contrast, we assume higher order conditions on continuity of the density $f$ and the functional $g$ (see Section 3) as compared to Baryshnikov~{\it etal} and provide results on bias, variance and asymptotic distribution of data-split $k$-NN functional estimators of entropies of the form $G(f) = \int g(f(x)) f(x) dx$.  Note that we also require the assumption that $f$ is bounded away from 0 and $\infty$ on its support. Because we are able to establish expressions on both the bias and variance of the BPI estimator, we are able to specify optimal choice of free parameters $k,N,M$ for minimum MSE. 

For estimating the functional $G(f) = \int g(f(x)) f(x) dx$, the estimator of Baryshnikov can be used by restricting $f_0$ to be uniform. In Appendix C it is shown that under the additional assumption that $({\cal {A}}.6)$ is satisfied by $\tilde{g} = g$, the bias of $\tilde{\mb{G}}_N(\mb{\hat{f}}_{kS})$ is 
\begin{equation}
\mathbb{B}(\tilde{\mb{G}}_N(\mb{\hat{f}}_{kS})) = O((k/T)^{1/d}) + O(1/k).
\label{eq:baryshbias}
\end{equation}
In contrast, Theorem III.~1 establishes that the bias of the BPI estimator $\hat{\mb{G}}_N(\tilde{\mb{f}}_{k})$ decays as $\Theta((k/M)^{2/d} + 1/k) + O(\epsilon_{BC})$ and the variance decays as $\Theta(1/T)$. The bias of the BPI estimator has a higher exponent ($2/d$ as opposed to $1/d$) and this is a direct consequence of using the boundary compensated density estimator $\tilde{\mb{f}}_{k}$ in place of $\mb{\hat{f}}_{k}$. 

It is clear from \ref{eq:baryshbias} that the estimator of Baryshnikov will be unbiased iff $k \to \infty$ as $T \to \infty$. Furthermore, the optimal rate of growth of $k$ is given by $k=T^{1/(1+d)}$. Furthermore, $c_k = \Theta(1)$ and therefore the overall optimal bias and variance of $\tilde{\mb{G}}_N(\mb{\hat{f}}_{kS})$ is given by $\Theta(T^{-1/(1+d)})$ and $\Theta(T^{-1})$ respectively. On the other hand, the optimal bias of the BPI estimator decays as $b_0^+(T^{\frac{-2}{2+d}})(1+o(1))$ when $c_1c_2 > 0$ and $b_o^-(T^{\frac{-4}{2+d}})(1+o(1))$ when $c_1c_2 < 0$ and the optimal variance decays as $\Theta(1/T)$. The BPI estimator therefore has faster rate of MSE convergence. Experimental MSE comparison of Baryshnikov's estimator against the proposed BPI estimator is shown in Fig.~\ref{a-compare}.

\section{Bias correction factors}
\label{sec:compareRS}
\label{sec:entropy}
When the density functional of interest is the Shannon entropy ($g(u) = -\log(u)$) or the R\'enyi -$\alpha$ entropy($g(u) = u^{\alpha-1}$), a bias correction can be added to the BPI estimator that accelerates rate of convergence. Goria~et.al.~\cite{leo2} and Leonenko~et.al.~\cite{leo} developed consistent Shannon and R\'enyi estimators with bias correction. The authors of ~\cite{litt} analyzed the bias for these estimators. When combined with the results of Baryshnikov~{\it etal}, one can easily deduce the variance of these estimators and establish a CLT.

Let $\hat{\mb{H}}_S$ be the Shannon entropy estimate $\tilde{\mb{G}}_N(\mb{\hat{f}}_{kS})$ with the choice of functional $g(x) = -\log(x)$. Let $\hat{\mb{I}}_{\alpha,S}$ be the estimate of the R\'enyi $\alpha$-integral estimate $\tilde{\mb{G}}_N(\mb{\hat{f}}_{kS})$ with the choice of functional $g(x) = x^{\alpha-1}$. Define $\tilde{\mb{H}}_S = \hat{\mb{H}}_S + [\log(k-1) - \Psi(k)]$, where $\psi(.)$ is the digamma function, and $\tilde{\mb{I}}_{\alpha,S} = [(\Gamma(k+(1-\alpha))/\Gamma(k))(k-1)^{\alpha-1}]^{-1} \hat{\mb{I}}_{\alpha,S}$. Also define the R\'enyi entropy estimator to be $\tilde{\mb{H}}_{\alpha,S} = (1-\alpha)^{-1} \log(\tilde{\mb{I}}{_{\alpha,S}})$.  The estimators $\tilde{\mb{H}}_S$ and $\tilde{\mb{H}}_{\alpha,S}$ are the Shannon and R\'enyi entropy estimators of Goria~{\it {etal}}~\cite{leo} and Leonenko~{\it {etal}}~\cite{leo2} respectively. In~\cite{litt}, it is shown that the bias of $\tilde{\mb{H}}_S$ and $\tilde{\mb{I}}_{\alpha,S}$ is given by $\Theta((k/T)^{1/d})$, while the variance was shown by Baryshnikov~{\it etal} to be  $O(1/T)$. In contrast, by~(\ref{eq:baryshbias}), the bias of $\hat{\mb{H}}_S$ and $\hat{\mb{I}}_{\alpha,S}$ is given by $\Theta((k/T)^{1/d} + (1/k))$~(\ref{eq:baryshbias}). This can be understood as follows. From the results by~\cite{litt}, we have
\begin{equation}
 \mathbb{E}[\hat{\mb{H}}_S] = I - [\log(k-1) - \Psi(k)] + c_{0,0}(k/T)^{1/d} + o((k/T)^{1/d})
\end{equation}
and
\begin{equation}
\label{correctionterm}
 \mathbb{E}[\hat{\mb{I}}{_{\alpha,S}}] = [(\Gamma(k+(1-\alpha))/\Gamma(k))(k-1)^{\alpha-1}] I_\alpha + c_{0,\alpha}(k/T)^{1/d} + o((k/T)^{1/d})
\end{equation}
for some functionals of the density $c_{0,0}$ and $c_{0,\alpha}$. Note that $[(\Gamma(k+(1-\alpha))/\Gamma(k))(k-1)^{\alpha-1}] = 1 + O(1/k)$  and $\Psi(k) = \log(k-1) + O(1/k)$ as $k \to \infty$. From the above equations, the scale factor $[(\Gamma(k+(1-\alpha))/\Gamma(k))(k-1)^{\alpha-1}]$ and the additive factor $[\log(k-1) - \Psi(k)]$ account for the $O(1/k)$ terms in the expressions for bias of $\hat{\mb{H}}_S$ and $\hat{\mb{I}}_{\alpha,S}$, thereby removing the requirement that $k \to \infty$ for asymptotic unbiasedness. These bias corrections can be incorporated into the BPI estimator as follows.

\subsection{Main results}
For a general function $g(x,y)$, if there exist functions $g_1(k,M)$ and $g_2(k,M)$, such that 
\begin{eqnarray}
&(i)& \expect[g((k-1)x/M\mb{p},y)] = g(x,y)g_1(k,M) + g_2(k,M) + o(1/M), \nonumber \\
&(ii)& ((k-1)/M)\expect[g'((k-1)x/M\mb{p},y)\mb{p}^{2/d-1}] = g'(x,y)(k/M)^{2/d} + o((k/M)^{2/d}), \nonumber \\
&(iii)& \lim_{k \to \infty} g_1(k,M) = 1, \nonumber \\
&(iv)& \lim_{k \to \infty} g_2(k,M) = 0, 
\label{eq:gcond}
\end{eqnarray}
then define the BPI estimator with bias correction as 
\begin{eqnarray}
\label{eq:pluginwithbc}
  \hat{\mb{G}}_{N,BC}(\mb{\tilde{f}}_k) &=& \frac{\hat{\mb{G}}_{N}(\mb{\tilde{f}}_k) - g_2(k,M)}{g_1(k,M)}.
\end{eqnarray}

\subsubsection{Bias and Variance}
In addition to the assumptions listed in section~\ref{sec:assump}, assume that $k = O((\log(M))^{2/(1-\delta)})$. Below the asymptotic bias and variance of the BPI estimator with bias correction are specified.
\begin{theorem}
\label{knnbiasHRS}
The bias of the BPI estimator $\hat{\mb{G}}_{N,BC}(\mb{\tilde{f}}_k)$ is given by
\begin{eqnarray}
\label{BiasRS}
\mathbb{B}[\hat{\mb{G}}_{N,BC}(\mb{\tilde{f}}_k)] &=& c_{1}\left({\frac{k}{M}}\right)^{2/d} + c_{3}(k,M,N) + o\left(\left(\frac{k}{M}\right)^{2/d}\right).
\end{eqnarray}
\end{theorem}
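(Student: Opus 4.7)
The plan is to refine the bias analysis of Theorem \ref{knnbiasH} by exploiting conditions (i)--(iv) to exactly cancel the $c_2/k$ term that appears there. In Theorem \ref{knnbiasH}, a second-order Taylor expansion of $g(\tilde{\mb{f}}_k(X),X)$ about $g(f(X),X)$ produces both the leading $c_1(k/M)^{2/d}$ term, coming from the bias of the $k$-NN density estimate, and the $c_2/k$ term, coming from its variance. Conditions (i) and (ii) furnish closed-form moments (up to $o(1/M)$) of $g$ and $g'$ composed with the beta-distributed quantity $(k-1)x/(M\mb{p})$, and the correction factors $(g_1,g_2)$ are calibrated precisely to absorb these moment contributions, including the $O(1/k)$ component, leaving only the genuine $(k/M)^{2/d}$ density-estimate-bias term.

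First, I would decompose $\expect[\hat{\mb{G}}_N(\tilde{\mb{f}}_k)] = \expect[g(\tilde{\mb{f}}_k(\mb{Z}),\mb{Z})]$ according to whether $\mb{Z} \in {\cal S}_I$ or $\mb{Z} \in {\cal S} - {\cal S}_I$, working on the high-probability event $\{{\cal I}_N \not\subset {\cal S}-{\cal S}_I\}$, which holds with probability $1-O(\epsilon_{BC})$. For interior $\mb{Z}$, the coverage $\mb{p} := \int_{\mb{S}_k(\mb{Z})} f\, d\mu$ is exactly $\textrm{Beta}(k,M-k+1)$ and independent of $\mb{Z}$. A Taylor expansion of $f$ inside the $k$-NN ball (as in Appendix A) yields the representation
\begin{equation*}
\tilde{\mb{f}}_k(\mb{Z}) = \frac{(k-1) f(\mb{Z})}{M\mb{p}} + \frac{k-1}{M}\, a(\mb{Z})\, \mb{p}^{2/d-1} + R(\mb{Z},\mb{p}),
\end{equation*}
where $a(\mb{Z})$ is proportional to $h(\mb{Z})$ via constants from the volume-of-ball computation and $R$ is of negligible order. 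A first-order Taylor expansion of $g$ around $(k-1) f(\mb{Z})/(M\mb{p})$, followed by taking the conditional expectation over $\mb{p}$, allows me to invoke condition (i) on the zeroth-order piece, producing $g(f(\mb{Z}),\mb{Z}) g_1(k,M) + g_2(k,M) + o(1/M)$, and condition (ii) on the first-order perturbation, producing $g'(f(\mb{Z}),\mb{Z}) h(\mb{Z})(k/M)^{2/d} + o((k/M)^{2/d})$.

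Boundary contributions are handled exactly as in Theorem \ref{knnbiasH} via extrapolation to the nearest interior point $\mb{Z}_{-1}$, producing the $c_3(k,M,N)$ term plus an $O(\epsilon_{BC})$ error. Assembling the pieces,
\begin{equation*}
\expect[\hat{\mb{G}}_N(\tilde{\mb{f}}_k)] = G(f)\, g_1(k,M) + g_2(k,M) + c_1 (k/M)^{2/d} + c_3(k,M,N) + o((k/M)^{2/d}) + O(\epsilon_{BC}).
\end{equation*}
Applying the definition $\hat{\mb{G}}_{N,BC}(\tilde{\mb{f}}_k) = (\hat{\mb{G}}_N(\tilde{\mb{f}}_k) - g_2(k,M))/g_1(k,M)$, and using conditions (iii)--(iv) so that $g_1(k,M) \to 1$, then yields the claimed bias expression, with $O(\epsilon_{BC})$ absorbed into $o((k/M)^{2/d})$ under the hypothesis $k = O((\log M)^{2/(1-\delta)})$.

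The main obstacle will be controlling the first-order Taylor remainder of $g$ about $(k-1)f(\mb{Z})/(M\mb{p})$. The perturbation is of order $((k-1)/M)\, \mb{p}^{2/d-1}$, which is not uniformly small because $\mb{p}^{2/d-1}$ blows up near $\mb{p}=0$. Verifying that the residual is genuinely $o((k/M)^{2/d})$ in expectation, rather than of the same order as the leading term, requires assumption $({\cal A}.6)$, which bounds higher moments of $g^{(r)}$ against the beta-weighted integrands. This delicate moment control, together with the bookkeeping needed to show that the $o(1/M)$ slack in condition (i) is genuinely dominated by $(k/M)^{2/d}$ once we divide by $g_1(k,M)$, is where the bulk of the technical work will reside.
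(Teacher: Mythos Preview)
Your proposal is correct and follows essentially the same route as the paper: expand $g$ to first order about the coverage density estimate $\hat{\mb{f}}_c(\mb{Z})=(k-1)f(\mb{Z})/(M\mb{p})$, apply condition (i) to the zeroth-order term and condition (ii) to the first-order term, handle the boundary via the extrapolation argument of Theorem~\ref{knnbiasH}, and then divide through by $g_1(k,M)$ using (iii)--(iv). The only refinement worth noting is that the paper controls the Taylor remainder not through assumption $({\cal A}.6)$ and moment bounds on $\mb{p}^{2/d-1}$, but by conditioning on the concentration event $\natural(\mb{Z})=\{\mb{P}(\mb{Z})<(1+p_k)k_M\}$, on which the perturbation is deterministically $O((k/M)^{2/d})$, with the complement contributing only $O({\cal C}(k))$; this sidesteps the blow-up near $\mb{p}=0$ that you flagged as the main obstacle.
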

\begin{theorem}
\label{knnvarHRS}
The variance of the BPI estimator $\hat{\mb{G}}_{N,BC}(\mb{\tilde{f}}_k)$ is given by
\begin{eqnarray}
\label{VarianceRS}
\var[\hat{\mb{G}}_{N,BC}(\mb{\tilde{f}}_k)] &=& c_4\left(\frac{1}{N}\right)+ c_5\left(\frac{1}{M}\right) + o\left(\frac{1}{M} + \frac{1}{N}\right). \nonumber
\end{eqnarray}
\end{theorem}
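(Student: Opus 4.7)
The plan is to exploit the fact that $\hat{\mb{G}}_{N,BC}(\mb{\tilde{f}}_k)$ is a deterministic affine transformation of $\hat{\mb{G}}_N(\mb{\tilde{f}}_k)$, since $g_1(k,M)$ and $g_2(k,M)$ depend only on the (non-random) sample sizes. First I would note that subtracting the constant $g_2(k,M)$ leaves the variance unchanged, while dividing by the constant $g_1(k,M)$ scales the variance by $1/g_1^2(k,M)$, giving
\begin{equation*}
\var[\hat{\mb{G}}_{N,BC}(\mb{\tilde{f}}_k)] = \frac{1}{g_1^2(k,M)}\,\var[\hat{\mb{G}}_N(\mb{\tilde{f}}_k)].
\end{equation*}
This identity reduces the claim to a straightforward manipulation of the variance expansion for the uncorrected BPI estimator.

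Next I would substitute the expansion supplied by Theorem \ref{knnvarH}, namely $\var[\hat{\mb{G}}_N(\mb{\tilde{f}}_k)] = c_4/N + c_5/M + O(\epsilon_{BC}) + o(1/M + 1/N)$. The added assumption on the growth of $k$ relative to $\log M$ (recall $\epsilon_{BC} = N e^{-3k^{1-\delta}}$, and see the discussion immediately following Theorem \ref{knnvarH}) renders $O(\epsilon_{BC})$ super-polynomially smaller than $1/M$ and $1/N$, so this term is absorbed into $o(1/M + 1/N)$.

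Finally I would invoke condition (iii) in (\ref{eq:gcond}), $\lim_{k \to \infty} g_1(k,M) = 1$, to conclude $1/g_1^2(k,M) = 1 + o(1)$ under the standing regime $k \to \infty$. Multiplying this factor through the expansion, the $o(1)$ perturbation of the leading $c_4/N + c_5/M$ terms contributes a quantity that is itself $o(1/M + 1/N)$, so the leading terms pass through at the stated order. This yields
\begin{equation*}
\var[\hat{\mb{G}}_{N,BC}(\mb{\tilde{f}}_k)] = c_4/N + c_5/M + o(1/M + 1/N),
\end{equation*}
as claimed.

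The argument is essentially pure bookkeeping once Theorem \ref{knnvarH} is available; no fresh moment computations or concentration inequalities beyond those already used to establish Theorem \ref{knnvarH} are required. The only step that warrants care is verifying that the deterministic rescaling factor $1/g_1^2(k,M)$ is well-defined and behaves as $1 + o(1)$ for all sufficiently large $k$, so that it does not inflate the remainder beyond $o(1/M + 1/N)$; this follows directly from condition (iii) together with $k \to \infty$, which guarantees $g_1(k,M)$ stays bounded away from zero for large $k$.
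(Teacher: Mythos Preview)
Your proposal is correct and matches the paper's own argument essentially line for line: the paper first establishes Theorem~\ref{knnvarH} in full and then derives Theorem~\ref{knnvarHRS} in a single concluding sentence by observing that $\hat{\mb{G}}_{N,BC}(\mb{\tilde{f}}_k)$ is an affine transformation of $\hat{\mb{G}}_N(\mb{\tilde{f}}_k)$ with $g_1(k,M)=1+o(1)$ under the logarithmic growth condition on $k$ (which also kills the $O(\epsilon_{BC})$ term). Your treatment is in fact slightly more explicit than the paper's about why the $O(\epsilon_{BC})$ remainder and the $1/g_1^2(k,M)$ rescaling do not disturb the $o(1/M+1/N)$ error.
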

\subsubsection{CLT}
\begin{theorem}
\label{knncltHRS}
The asymptotic distribution of the BPI estimator $\hat{\mb{G}}_{N,BC}(\mb{\tilde{f}}_k)$ is given by
\begin{eqnarray}   
\label{CLTRS}
\lim_{\Delta \to 0} Pr\left(\frac{\hat{\mb{G}}_{N,BC}(\mb{\tilde{f}}_k)-\expect[\hat{\mb{G}}_{N,BC}(\mb{\tilde{f}}_k)]}{{\sqrt{\var[\hat{\mb{G}}_{N,BC}(\mb{\tilde{f}}_k)]}}} \leq \alpha \right) = Pr(\mb{S} \leq \alpha), \nonumber
\end{eqnarray}
where $\mb{S}$ is a standard normal random variable.
\end{theorem}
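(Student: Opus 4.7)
The plan is to reduce Theorem 5.3 directly to Theorem 3.3 (the CLT for the uncorrected BPI estimator) by exploiting the fact that the bias correction
$$\hat{\mb{G}}_{N,BC}(\mb{\tilde{f}}_k) = \frac{\hat{\mb{G}}_N(\mb{\tilde{f}}_k) - g_2(k,M)}{g_1(k,M)}$$
is an affine transformation of $\hat{\mb{G}}_N(\mb{\tilde{f}}_k)$ whose coefficients $g_1(k,M)$ and $g_2(k,M)$ are deterministic functions of the tuning parameters only, not of the data. Standardization is invariant under any such affine transformation with positive scale factor, so the normalized version of $\hat{\mb{G}}_{N,BC}$ coincides, up to sign, with the normalized version of $\hat{\mb{G}}_N$ already analyzed in Theorem 3.3.

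First, by linearity of expectation and the standard variance-scaling identity,
$$\expect[\hat{\mb{G}}_{N,BC}(\mb{\tilde{f}}_k)] = \frac{\expect[\hat{\mb{G}}_N(\mb{\tilde{f}}_k)] - g_2(k,M)}{g_1(k,M)}, \qquad \var[\hat{\mb{G}}_{N,BC}(\mb{\tilde{f}}_k)] = \frac{\var[\hat{\mb{G}}_N(\mb{\tilde{f}}_k)]}{g_1^2(k,M)}.$$
Substituting these into the quantity appearing inside the probability in the statement yields
$$\frac{\hat{\mb{G}}_{N,BC}(\mb{\tilde{f}}_k)-\expect[\hat{\mb{G}}_{N,BC}(\mb{\tilde{f}}_k)]}{\sqrt{\var[\hat{\mb{G}}_{N,BC}(\mb{\tilde{f}}_k)]}} = \mathrm{sign}(g_1(k,M)) \cdot \frac{\hat{\mb{G}}_N(\mb{\tilde{f}}_k)-\expect[\hat{\mb{G}}_N(\mb{\tilde{f}}_k)]}{\sqrt{\var[\hat{\mb{G}}_N(\mb{\tilde{f}}_k)]}},$$
which expresses the standardized bias-corrected estimator in terms of the standardized basic BPI estimator.

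The final step is to note that condition (iii) of (\ref{eq:gcond}) guarantees $g_1(k,M) \to 1$ as $k \to \infty$. Since the limiting regime $\Delta \to 0$ enforces $k \to \infty$, for all sufficiently large $k$ we have $g_1(k,M) > 0$ and the sign factor equals $+1$. Hence the standardized variable is eventually identical to the one appearing in Theorem 3.3, and invoking that theorem delivers the desired weak convergence to a standard normal random variable.

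I do not anticipate a significant obstacle, since the bias correction is a deterministic affine map that preserves any distributional limit once standardization is applied. The only minor verifications are (a) positivity of $g_1(k,M)$ in the limit, which follows directly from condition (iii); and (b) that $\var[\hat{\mb{G}}_N(\mb{\tilde{f}}_k)] > 0$ so the standardization is well-defined, which follows from Theorem 3.2 under the generic non-degeneracy of the leading constants $c_4$ and $c_5$ established there.
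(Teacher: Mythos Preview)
Your proposal is correct and takes essentially the same approach as the paper: both reduce Theorem~\ref{knncltHRS} to Theorem~\ref{knncltH} by observing that $\hat{\mb{G}}_{N,BC}(\mb{\tilde{f}}_k)$ is a deterministic affine transformation of $\hat{\mb{G}}_{N}(\mb{\tilde{f}}_k)$, so the standardized versions coincide. Your treatment is in fact slightly more careful than the paper's, which simply asserts equality of the asymptotic distributions without explicitly checking the sign of $g_1(k,M)$.
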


\subsubsection{MSE}
Theorem IV.~1 specifies the bias of the BPI estimator, $\hat{\mb{G}}_{N,BC}(\mb{\tilde{f}}_k)$, as $\Theta((k/M)^{2/d})$. Theorem IV.~2 specifies the variance as $\Theta(1/N+1/M)$. By making $k$ increase logarithmically in $M$, specifically, $k = O((\log(M))^{2/(1-\delta)})$ for any value $\delta \in (2/3,1)$, the MSE is given by the rate $\Theta(((\log(T))^{2/(1-\delta)}/T)^{4/d})$. The BPI estimator therefore has a faster rate of convergence in comparison to both Baryshnikov~{\it etal}'s estimators $\hat{\mb{H}}_{S}$ and $\hat{\mb{I}}_{\alpha,S}$ (MSE $= \Theta(T^{-2/(1+d)})$) and Leonenko~{\it etal}'s and Goria~{\it etal}'s estimators $\tilde{\mb{H}}_{S}$ and $\tilde{\mb{I}}_{\alpha,S}$ (MSE $= \Theta(T^{-2/d})$). Experimental MSE comparison of Leonenko's estimator against the BPI estimator in Section V shows the MSE of the BPI estimator to be significantly lower. Finally, note that such bias correction cannot be applied for general entropy functionals, and the bias correction factors cannot in general be incorporated. In the next section, the application of BPI estimators for estimation of Shannon and R\'enyi entropies is illustrated. 

\subsection{Shannon and R\'enyi entropy estimation}

For the case of Shannon entropy ($g(u) = -\log(u)$), it can be verified that $g_1(k,M) = 1$, $g_2(k,M) = \psi(k) - \log(k-1)$ satisfy (\ref{eq:gcond}). Similarly, for the case of R\'enyi entropy ($g(u) = u^{\alpha-1}$), $g_1(k,M) = (\Gamma(k)/\Gamma(k+1-\alpha))(1/(k-1)^{\alpha-1})$, $g_2(k,M)=0$ satisfy (\ref{eq:gcond}). 

For Shannon entropy ($g(u) = -\log(u)$) and R\'enyi entropy ($g(u) = u^{\alpha-1}$), the assumptions in Section \ref{sec:assump} reduce to the following under the condition $k = O((\log(M))^{2/(1-\delta)})$. Assumption $({\cal {A}}.1)$ is unchanged. Assumption $({\cal {A}}.2)$ holds for any $r$ such that $2r>d$. The assumption $({\cal {A}}.3)$ is satisfied by the choice of $\lambda = \log(M)$. Assumption $({\cal {A}}.4)$ holds for ($g(u) = -\log(u)$) and ($g(u) = u^{\alpha-1}$). Next, it will be shown that  $({\cal {A}}.5)$ is also satisfied by ($g(u) = -\log(u)$) and ($g(u) = u^{\alpha-1}$). 

We note that $\tilde{g} = (g^{(3)}/6)^2$ for the choice of $g(u) = -\log(u)$ is given by $\tilde{g} = cu^{-6}$ for some constant $c$. Therefore, 
\begin{eqnarray}
\sup_{x \in (q_l,q_u) } |\tilde{g}(x,y)|e^{-3k^{(1-\delta)}} &=& |c \epsilon^{-6}_1| (M/k)^{6} O(e^{-3k^{(1-\delta)}}) \nonumber \\
&=& |c \epsilon^{-6}_1| (M/k)^{6} O(e^{-3(\log(M))^{2}}) \nonumber \\
&=& |c \epsilon^{-6}_1| O(e^{-3(\log(M))^{2} + 6\log(M) - 6\log(k)}) = o(1), \nonumber
\end{eqnarray}
and by (\ref{eq:existence1}), $\expect[\sup_{x \in (p_l,p_u) } |\tilde{g}(x/\mb{p},y)|] = |c|((1-\epsilon)\epsilon_0)^{-6}\expect[(M\mb{p}/(k-1))^6] = |c|((1-\epsilon)\epsilon_0)^{-6}O(1) = O(1).$ Similarly, $\tilde{g} = (g^{(\lambda)}/(\lambda!))^2$ for the choice of $g(u) = -\log(u)$ is given by $\tilde{g} = \lambda^{-2}u^{-2\lambda}$. Then, 
\begin{eqnarray}
\sup_{x \in (q_l,q_u) } |\tilde{g}(x,y)|e^{-3k^{(1-\delta)}} &=&  O((M/k)^{2\lambda} e^{-3k^{(1-\delta)}}) \nonumber \\
&=& O((M/k)^{2\lambda} e^{-3(\log(M))^{2}}) \nonumber \\
&=& O(e^{-3(\log(M))^{2} + 2(\log(M))^2 - 2\log(M)\log(k)}) = o(1), \nonumber
\end{eqnarray}
and by (\ref{eq:existence1}), $\expect[\sup_{x \in (p_l,p_u) } |\tilde{g}(x/\mb{p},y)|] = O(\expect[(M\mb{p}/(k-1))^{2\lambda})] = O(1)$. In an identical manner, $({\cal {A}}.5)$ is satisfied when $g(u) = u^{\alpha-1}$. 

To summarize, for functions $g(u) = -\log(u)$ and $g(u) = u^{\alpha-1}$, Theorem~\ref{knnbiasHRS}, ~\ref{knnvarHRS} and ~\ref{knncltHRS} hold under the following assumptions: (i) $({\cal A}.0)$, (ii) $({\cal {A}}.1)$, (iii) the density $f$ has bounded continuous partial derivatives of order greater than $d$ and (iv) $k = O((\log(M))^{2/(1-\delta)})$. Furthermore the proposed BPI estimator $\hat{\mb{G}}_{N,BC}(\mb{\tilde{f}}_k)$ can be used to estimate Shannon entropy ($g(u) = -\log(u)$) and R\'enyi entropy ($g(u) = u^{\alpha-1}$) at MSE rate of $\Theta(((\log(T))^{2/(1-\delta)}/T)^{4/d})$. 

\section{Estimation of Shannon Mutual information}

The joint entropy of random vectors $\mb{X}$ and $\mb{Y}$ with joint density $f_{XY}$ is given by
\begin{equation}
H(\mb{X},\mb{Y}) = -\int f_{XY} \log(f_{XY}) d\mu,
\end{equation}
where $f_{XY}$ is the joint density of $\mb{X}$ and $\mb{Y}$. The Shannon MI between two random vectors $\mb{X}$ and $\mb{Y}$ is then given by
\begin{equation}
    I(\mb{X};\mb{Y}) = H(\mb{X}) + H(\mb{Y}) - H(\mb{X},\mb{Y}).
\end{equation}

We use the following BPI estimator to estimate Shannon MI from $N+M$ $d$-dimensional i.i.d samples $\{(\mb{X_i},\mb{Y_i}); i=1,\ldots,N+M\}$ of the underlying joint density $ f_{XY}$. We estimate the Shannon MI by estimating the individual entropies. We estimate the joint Shannon entropy $H(\mb{X},\mb{Y})$ from samples using the \emph{plug-in} estimate
\begin{equation}
\label{plug}
\mb{\hat{H}(X,Y)} = \frac{1}{N}\sum_{i=1}^N -\log(\mb{{\tilde{f}{_{k}}(X_i,Y_i)}}) + \log(k-1) -\psi(k),
\end{equation}
where $\mb{\hat{f}{_{XY}}}$ is a $k$ nearest neighbor density estimate ($k$NN) estimated using the remaining $M$ samples. 

The $k$NN density estimate \cite{quu} is given by
\begin{equation}
  \mb{\tilde{f}_{k}}(X,Y) = \frac{k-1}{M\mb{\mb{V_k}}(X,Y)},
\end{equation}
where $\mb{V_k}(X,Y)$ is the volume corresponding to the $k$th nearest neighbor distance between the point of density estimation $(X,Y)$ and the $M$ i.i.d samples $\{(\mb{X_i},\mb{Y_i}); i=N+1,\ldots,N+M\}$.  

We estimate the marginal entropies by first obtaining estimates of the marginal density using $k$NN density estimates 
\begin{equation}
\mb{\tilde{f}{_{k}}}(X) = \frac{k-1}{M\mb{\mb{V_k}}(X)},
\end{equation}
where $\mb{V_k}(X)$ is the volume corresponding to the $k$th nearest neighbor distance between the point of density estimation $X$ and the $M$ i.i.d samples $\{\mb{X_i}; i=N+1,\ldots,N+M\}$, and then plugging the estimated marginals into Eq. \ref{plug2}.
\begin{equation}
\label{plug2}
\mb{\hat{H}(X)} = \frac{1}{N}\sum_{i=1}^N -\log(\mb{{\tilde{f}{_{k}}(X_i)}}) + \log(k-1) - \psi(k).
\end{equation}

Define the BPI estimator of Shannon MI:
\begin{equation}
\mb{\hat{I}}_N = \mb{\hat{H}(X)} + \mb{\hat{H}(Y)} - \mb{\hat{H}(X,Y)}.
\end{equation}

We make the following assumptions: (i) $({\cal A}.0)$, (ii) $({\cal {A}}.1)$, (iii) the density $f_{XY}$ has bounded continuous partial derivatives of order greater than $d$ and (iv) $k = O((\log(M))^{2/(1-\delta)})$. Note that the results here require  cross moments between density estimates of the joint and marginal densities, which while not discussed in this report, can be obtained in exactly the same manner as computing cross moments between the same density.  

\begin{theorem}
\label{knnbiasHMI}
The bias of the BPI estimator $\mb{\hat{I}}_N$ is given by
\begin{eqnarray}
\label{BiasMI}
\mathbb{B}[\mb{\hat{I}}_N] &=& c_{1}\left({\frac{k}{M}}\right)^{2/d} + c_{3}(k,M,N) + o\left(\left(\frac{k}{M}\right)^{2/d}\right).
\end{eqnarray}
\end{theorem}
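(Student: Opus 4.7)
The plan is to derive the claim by linearity of expectation from the already-established bias result for the bias-corrected BPI Shannon-entropy estimator (Theorem~\ref{knnbiasHRS}), applied separately to each of the three entropies appearing in the decomposition of Shannon mutual information, and then to observe that the joint-entropy contribution strictly dominates the two marginal-entropy contributions in rate.

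First, write by linearity
$$\mathbb{B}[\mb{\hat{I}}_N] = \mathbb{B}[\mb{\hat{H}(X)}] + \mathbb{B}[\mb{\hat{H}(Y)}] - \mathbb{B}[\mb{\hat{H}(X,Y)}].$$
Each of the three terms on the right is an instance of $\hat{\mb{G}}_{N,BC}(\mb{\tilde{f}}_k)$ for the Shannon choice $g(u)=-\log u$, applied to the relevant density ($f_{XY}$ for the joint, $f_X$ and $f_Y$ for the marginals). The verification in Section~\ref{sec:entropy} shows that the pair $g_1(k,M)=1,\;g_2(k,M)=\psi(k)-\log(k-1)$ satisfies (\ref{eq:gcond}), so Theorem~\ref{knnbiasHRS} is applicable to each of the three summands.

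Second, I would check that the hypotheses transfer from the joint density to its marginals. Let $d_X,d_Y$ denote the dimensions of $X$ and $Y$, with $d_X+d_Y=d$ and $d_X,d_Y<d$. Boundedness of $f_X,f_Y$ away from $0$ and $\infty$ on their supports follows by integrating $f_{XY}$ over the complementary coordinate on the bounded joint support; smoothness of order greater than $d_X$ (resp.\ $d_Y$) is inherited from $f_{XY}$ by differentiation under the integral sign, which is permissible since the joint partials are bounded on the compact support. With these facts, Theorem~\ref{knnbiasHRS} yields
\begin{align*}
\mathbb{B}[\mb{\hat{H}(X,Y)}] &= c_1^{XY}(k/M)^{2/d} + c_3^{XY}(k,M,N) + o((k/M)^{2/d}),\\
\mathbb{B}[\mb{\hat{H}(X)}] &= c_1^{X}(k/M)^{2/d_X} + c_3^{X}(k,M,N) + o((k/M)^{2/d_X}),\\
\mathbb{B}[\mb{\hat{H}(Y)}] &= c_1^{Y}(k/M)^{2/d_Y} + c_3^{Y}(k,M,N) + o((k/M)^{2/d_Y}).
\end{align*}

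Third, since $k/M\to 0$ and $d_X,d_Y<d$, both $(k/M)^{2/d_X}$ and $(k/M)^{2/d_Y}$ are $o((k/M)^{2/d})$. By (\ref{IIorder}) the corresponding boundary-correction terms $c_3^{X}$ and $c_3^{Y}$ inherit rates $O((k/M)^{2/d_X})$ and $O((k/M)^{2/d_Y})$ and are therefore also $o((k/M)^{2/d})$. The marginal contributions thus absorb into the remainder, leaving
$$\mathbb{B}[\mb{\hat{I}}_N] = -c_1^{XY}(k/M)^{2/d} - c_3^{XY}(k,M,N) + o((k/M)^{2/d}),$$
which is the stated form after identifying $c_1 := -c_1^{XY}$ and $c_3(k,M,N) := -c_3^{XY}(k,M,N)$.

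The main obstacle will be the careful bookkeeping required to verify that assumptions $({\cal A}.1)$--$({\cal A}.6)$ genuinely transfer from $f_{XY}$ to its marginals $f_X, f_Y$ when those marginals live in strictly lower dimensions; in particular, $({\cal A}.2)$ needs enough marginal smoothness and $({\cal A}.6)$ needs the stated integrability bounds on $g(u)=-\log u$ and its derivatives evaluated at the marginal densities, which is handled exactly as in the Shannon verification already carried out in Section~\ref{sec:entropy}. Once this transfer is justified, the rest is elementary linearity together with comparison of polynomial rates in $k/M$; the cross-moment issues flagged in the paragraph preceding the theorem do not enter the bias calculation since $\mathbb{B}$ is linear and requires no covariance-type bounds between joint and marginal density estimates.
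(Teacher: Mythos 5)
Your decomposition of the bias via linearity into the three entropy biases, followed by dimensional dominance $(k/M)^{2/d_X}, (k/M)^{2/d_Y} = o((k/M)^{2/d})$ since $d_X, d_Y < d$, is the natural route and matches what the paper's terse remark preceding the theorem indicates; you are also right that the cross-moment caveat mentioned there affects only the variance and CLT, not the bias. The identification $c_1 = -c_1^{XY}$, $c_3 = -c_3^{XY}$ so the joint-entropy term carries the leading rate is correct.

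The one step that would fail as written is your transfer of assumption $({\cal A}.1)$ to the marginals. Boundedness of $f_X$ away from zero does \emph{not} follow merely from integrating $f_{XY}$ over the $y$-coordinate: one has $f_X(x) = \int_{\{y:(x,y)\in{\cal S}\}} f_{XY}(x,y)\,dy \geq \epsilon_0$ times the Lebesgue measure of the $y$-slice of ${\cal S}$ at $x$, and when ${\cal S}$ is not a product set (take it to be a simplex or a ball in $\mathbb{R}^d$) this slice can have vanishing measure near the boundary of the projected support, so $f_X$ need not be bounded away from zero. This gap is also present in the paper's own hypotheses, which impose $({\cal A}.1)$ only on $f_{XY}$; a complete proof must either add the marginal versions of $({\cal A}.1)$ and the marginal smoothness condition as explicit hypotheses, or restrict to supports whose coordinate slices have measure uniformly bounded below.
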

\begin{theorem}
\label{knnvarHMI}
The variance of the BPI estimator $\mb{\hat{I}}_N$ is given by
\begin{eqnarray}
\label{VarianceMI}
\var[\mb{\hat{I}}_N] &=& c_4\left(\frac{1}{N}\right)+ c_5\left(\frac{1}{M}\right) + o\left(\frac{1}{M} + \frac{1}{N}\right), \nonumber
\end{eqnarray}
where 
\begin{eqnarray}
c_v &=& Var\left[\log\left(\frac{f_X(\mb{X})f_Y(\mb{Y})}{f_{XY}(\mb{X},\mb{Y})}\right)\right]. \nonumber
\end{eqnarray}
\end{theorem}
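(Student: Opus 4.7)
The plan is to write $\mb{\hat{I}}_N$ as a single empirical average of a plug-in functional and then apply the same variance decomposition strategy used to prove Theorems \ref{knnvarH} and \ref{knnvarHRS}, extended to handle the three coupled density estimates (joint and two marginals). Because the additive constants $\log(k-1)-\psi(k)$ in $\mb{\hat{H}(X)}$, $\mb{\hat{H}(Y)}$, $\mb{\hat{H}(X,Y)}$ are deterministic, they drop out of the variance, so we may rewrite
\begin{equation*}
\mb{\hat{I}}_N - \text{const} = \frac{1}{N}\sum_{i=1}^N \mb{T}_i, \qquad \mb{T}_i = \log\mb{\tilde{f}}_{k,XY}(\mb{X}_i,\mb{Y}_i) - \log\mb{\tilde{f}}_{k,X}(\mb{X}_i) - \log\mb{\tilde{f}}_{k,Y}(\mb{Y}_i).
\end{equation*}
Viewed in this way, $\mb{\hat{I}}_N$ is a BPI-style estimator with a vector-valued plug-in, and the composite functional $g(a,b,c)=\log a - \log b - \log c$ evaluated at the true densities returns $\log(f_{XY}/(f_X f_Y))$ whose expectation is the mutual information.

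First I would split the variance into a within-term part and a between-term part,
\begin{equation*}
\var[\mb{\hat{I}}_N] = \frac{1}{N}\var[\mb{T}_1] + \frac{N-1}{N}\operatorname{Cov}[\mb{T}_1,\mb{T}_2].
\end{equation*}
For $\var[\mb{T}_1]$, I would Taylor expand each of the three log-density estimates around its true value; assumption $({\cal A}.1)$ together with the $k$-NN density estimator moment bounds from Appendix A shows that the density-estimation error contributes $o(1)$, so $\var[\mb{T}_1] = \var[\log(f_{XY}(\mb{X},\mb{Y})/(f_X(\mb{X})f_Y(\mb{Y})))] + o(1) = c_v + o(1)$, which identifies the $c_v/N$ leading term. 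For the covariance term, the \emph{true-log-density} parts of $\mb{T}_1$ and $\mb{T}_2$ are independent (they depend only on $(\mb{X}_1,\mb{Y}_1)$ and $(\mb{X}_2,\mb{Y}_2)$ respectively), so the covariance between $\mb{T}_1$ and $\mb{T}_2$ arises entirely from the shared dependence of the three $k$-NN density estimators on the same pool of $M$ samples.

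After Taylor expansion, $\operatorname{Cov}[\mb{T}_1,\mb{T}_2]$ reduces to a weighted sum of nine cross-covariances of the form $\operatorname{Cov}[\mb{\tilde{f}}_{k,u}(\mb{X}_1)-f_u(\mb{X}_1),\,\mb{\tilde{f}}_{k,v}(\mb{X}_2)-f_v(\mb{X}_2)]$ with $u,v\in\{X,Y,XY\}$, each multiplied by partial derivatives of $g$ evaluated at the true densities. Each such cross-covariance is shown to be $\Theta(1/M)$ by adapting the local $k$-NN-ball moment calculations that produced the $c_5/M$ term in Theorem \ref{knnvarHRS} to the case where the two density estimators may be of different marginal or joint type. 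Summing these nine pieces identifies the aggregated constant $c_5$, while the residual boundary-correction contributions of order $O(\epsilon_{BC})$ are absorbed into the $o(1/M+1/N)$ remainder under the assumption $k = O((\log M)^{2/(1-\delta)})$.

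The main obstacle I anticipate is cleanly controlling these cross-moments between the joint $k$-NN density estimate $\mb{\tilde{f}}_{k,XY}$ and its marginal $k$-NN density estimates $\mb{\tilde{f}}_{k,X},\mb{\tilde{f}}_{k,Y}$, since the joint and marginal $k$-NN balls, though built from the same $M$ samples, live in spaces of different dimension and the marginal estimators are not simple projections of the joint. The key technical point is to verify that the coupling through the common sample pool does not inflate the leading cross-covariance order beyond $O(1/M)$, which can be argued by conditioning on one $k$-NN ball and showing that the perturbation induced in the other density estimate by the shared sample points is spatially localized and of the same asymptotic order as in the single-density calculations of Appendix A. Once these cross-moment bounds are in hand, the remainder of the proof is a direct adaptation of the Taylor-expansion-plus-moment-substitution argument used for Theorem \ref{knnvarHRS}.
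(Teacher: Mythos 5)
Your proposal takes essentially the same approach as the paper. In fact the paper does not actually write out a proof of Theorem~\ref{knnvarHMI}: it states the three MI theorems without proof and simply remarks that ``the results here require cross moments between density estimates of the joint and marginal densities, which while not discussed in this report, can be obtained in exactly the same manner as computing cross moments between the same density.'' You independently reproduce exactly this reduction --- decompose $\var[\hat{\mb{I}}_N]$ into a $\var[\mb{T}_1]/N$ piece and an $(N-1)\operatorname{Cov}[\mb{T}_1,\mb{T}_2]/N$ piece, observe the deterministic digamma corrections drop out, identify the $c_v/N$ term from the empirical-average variance of $\log(f_{XY}/(f_Xf_Y))$, and locate the $\Theta(1/M)$ term in the nine cross-covariances between the shared-pool density estimates --- and you correctly single out the cross-moment bound between joint and marginal $k$-NN estimates as the one step that is not literally covered by the Appendix A/B machinery and must be re-verified. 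Your conditioning/localization heuristic for why the joint--marginal coupling stays at order $1/M$ is exactly the argument the authors are gesturing at, so your sketch is, if anything, more explicit than the paper's own treatment of this theorem.
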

\subsubsection{CLT}
\begin{theorem}
\label{knncltHMI}
The asymptotic distribution of the BPI estimator $\mb{\hat{I}}_N$ is given by
\begin{eqnarray}   
\label{CLTMI}
\lim_{\Delta \to 0} Pr\left(\frac{\mb{\hat{I}}_N-\expect[\mb{\hat{I}}_N]}{{\sqrt{\var[\mb{\hat{I}}_N]}}} \leq \alpha \right) = Pr(\mb{S} \leq \alpha), \nonumber
\end{eqnarray}
where $\mb{S}$ is a standard normal random variable.
\end{theorem}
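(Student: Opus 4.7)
The plan is to follow the same exchangeable-random-variable strategy used in the proofs of Theorem \ref{knncltH} and Theorem \ref{knncltHRS}. The key observation is that $\mb{\hat{I}}_N$ can be rewritten as a single empirical average,
\begin{equation}
\mb{\hat{I}}_N = \frac{1}{N}\sum_{i=1}^N \phi\left(\tilde{\mb{f}}_k(\mb{X}_i),\, \tilde{\mb{f}}_k(\mb{Y}_i),\, \tilde{\mb{f}}_k(\mb{X}_i,\mb{Y}_i)\right) \;+\; C(k), \nonumber
\end{equation}
where $\phi(a,b,c) = -\log(a) - \log(b) + \log(c)$ and $C(k) = \log(k-1) - \psi(k)$ collects the (deterministic) bias-correction constants that remain after the digamma terms of $\mb{\hat{H}(X)}$, $\mb{\hat{H}(Y)}$ and $\mb{\hat{H}(X,Y)}$ are combined. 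Because all three density estimates inside a single summand are built from the common auxiliary sample $\{(\mb{X}_j,\mb{Y}_j)\}_{j=N+1}^{N+M}$, the summands form an exchangeable sequence once centered and scaled.

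First I would define, for fixed $M$,
\begin{equation}
\mb{Y}_{M,i} = \frac{\phi\!\left(\tilde{\mb{f}}_k(\mb{X}_i),\tilde{\mb{f}}_k(\mb{Y}_i),\tilde{\mb{f}}_k(\mb{X}_i,\mb{Y}_i)\right) - \expect\!\left[\phi\!\left(\tilde{\mb{f}}_k(\mb{X}_i),\tilde{\mb{f}}_k(\mb{Y}_i),\tilde{\mb{f}}_k(\mb{X}_i,\mb{Y}_i)\right)\right]}{\sqrt{\var\!\left[\phi\!\left(\tilde{\mb{f}}_k(\mb{X}_i),\tilde{\mb{f}}_k(\mb{Y}_i),\tilde{\mb{f}}_k(\mb{X}_i,\mb{Y}_i)\right)\right]}}, \nonumber
\end{equation}
so that $\{\mb{Y}_{M,i}\}_{i=1}^N$ are zero-mean, unit-variance, and exchangeable. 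Invoking the result of Blum et~al.\ (as used in Theorem \ref{knncltH}), it suffices to show that $\mathrm{Cov}(\mb{Y}_{M,1},\mb{Y}_{M,2}) \to 0$ and $\mathrm{Cov}(\mb{Y}_{M,1}^{2},\mb{Y}_{M,2}^{2}) \to 0$ as $M \to \infty$; the extension of the Blum et~al.\ theorem to the double limit in $N$ and $M$ proceeds exactly as in Appendix E of the paper.

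Second I would bound these two covariances by $O(1/M)$. Taylor-expanding $\phi$ around the true density values $f_X(\mb{X}_i),\, f_Y(\mb{Y}_i),\, f_{XY}(\mb{X}_i,\mb{Y}_i)$, the leading cross-terms reduce to covariances of the form $\mathrm{Cov}(\tilde{\mb{f}}_k(U_i),\tilde{\mb{f}}_k(U_j))$ where $U_i$ ranges over $\{\mb{X}_i,\mb{Y}_i,(\mb{X}_i,\mb{Y}_i)\}$, including mixed pairs in which one factor is a marginal density estimate and the other is a joint density estimate. The marginal--marginal and joint--joint cases are covered directly by the cross-moment calculations for $k$-NN density estimates in Appendix A, giving the $O(1/M)$ rate already used in Theorem \ref{knncltH}. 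For the squared variables, the same Taylor expansion applied to $\mb{Y}_{M,i}^{2}$ produces products of up to four density-estimate factors, all of which are controlled by the same Appendix A machinery together with the smoothness of $\log$ on the interval $[\epsilon_0,\epsilon_\infty]$ guaranteed by $({\cal A}.1)$ applied to $f_X$, $f_Y$ and $f_{XY}$. The logarithmic growth $k = O((\log M)^{2/(1-\delta)})$ ensures the Taylor remainders are $o(1/M)$.

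The main obstacle is the mixed marginal--joint covariances, since $\tilde{\mb{f}}_k(\mb{X}_i)$ and $\tilde{\mb{f}}_k(\mb{X}_i,\mb{Y}_i)$ are not independent: they are computed from $k$-NN balls drawn from the same auxiliary sample but in different ambient dimensions ($d$ versus $2d$). This is precisely the computation that the paper flags at the end of Section~6 as being of the same flavor as those in Appendix A but not worked out explicitly. Once the $O(1/M)$ bound is verified for this mixed case by essentially repeating the Appendix~A argument with the pair $(\mb{X}_i,\mb{Y}_i)$ replacing a single $\mb{X}_i$, all remaining steps---verifying the two Blum--type covariance conditions, invoking their theorem, and passing to the joint limit in $N$ and $M$---are exact analogs of the corresponding steps in the proofs of Theorems \ref{knncltH} and \ref{knncltHRS}. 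The deterministic constant $C(k)$ plays no role in the CLT since it cancels in the numerator of the standardized statistic.
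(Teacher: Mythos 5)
Your proposal correctly reconstructs what the paper's proof would be: fold $\mb{\hat{I}}_N$ into a single empirical average plus a deterministic constant, set up the exchangeable sequence $\{\mb{Y}_{M,i}\}$, invoke the Blum et al.\ criterion and its double-limit extension from Appendix E, and reduce the two required covariance conditions to $O(1/M)$ bounds via Taylor expansion of $\log$ and the cross-moment machinery of Appendix A. The paper itself supplies no proof of Theorem \ref{knncltHMI}---it only remarks, just before stating the MI theorems, that the necessary cross moments between joint and marginal density estimates ``can be obtained in exactly the same manner as computing cross moments between the same density''---and you correctly identify this mixed marginal--joint cross-moment bound as the one genuinely new ingredient, leaving it, as the paper does, as an asserted but unverified extension.
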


\section{Simulations}
\label{sec:exp}
\begin{figure}[!t]
\centering
\includegraphics[scale=.3]{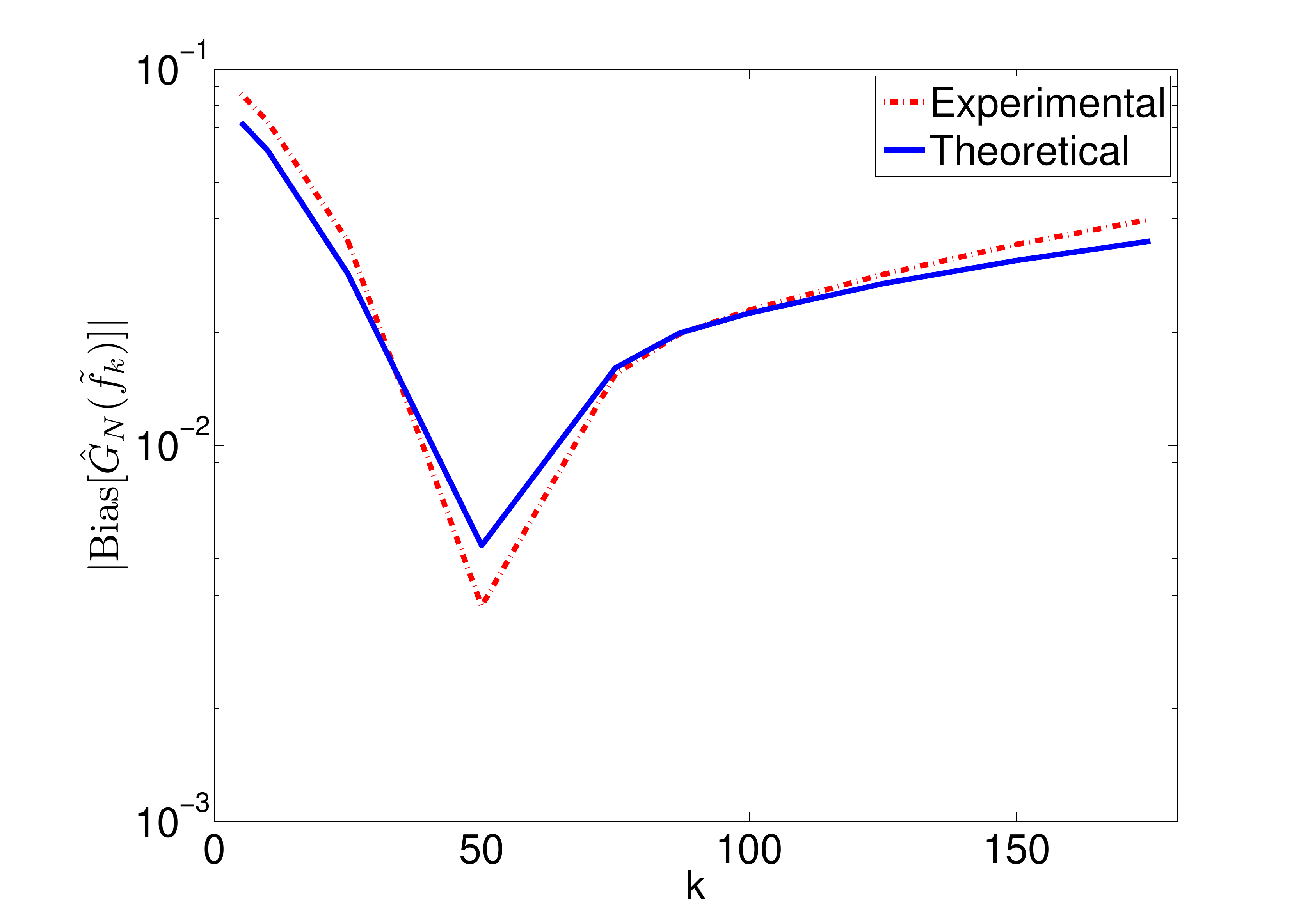}
\caption{Comparison of theoretically predicted bias of BPI estimator $\hat{\mb{G}}_N(\mb{\tilde{f}}_k)$ against experimentally observed bias as a function of $k$. The Shannon entropy ($g(u) = -\log(u)$) is estimated using the BPI estimator $\hat{\mb{G}}_N(\tilde{\mb{f}}_k)$ on $T=10^4$ i.~i.~d.~ samples drawn from the $d=3$ dimensional uniform-beta mixture density (\ref{mixturesimul}). $N,M$ were fixed as $N=3000$, $M=7000$ respectively. The theoretically predicted bias agrees well with experimental observations. {The predictions of our asymptotic theory  therefore extend to the finite sample regime}. The theoretically predicted optimal choice of $k_{opt} = 52$ also minimizes the empirical bias. }
\label{S4}
\end{figure}
\begin{figure}[!t]
\centering
\includegraphics[scale=.3]{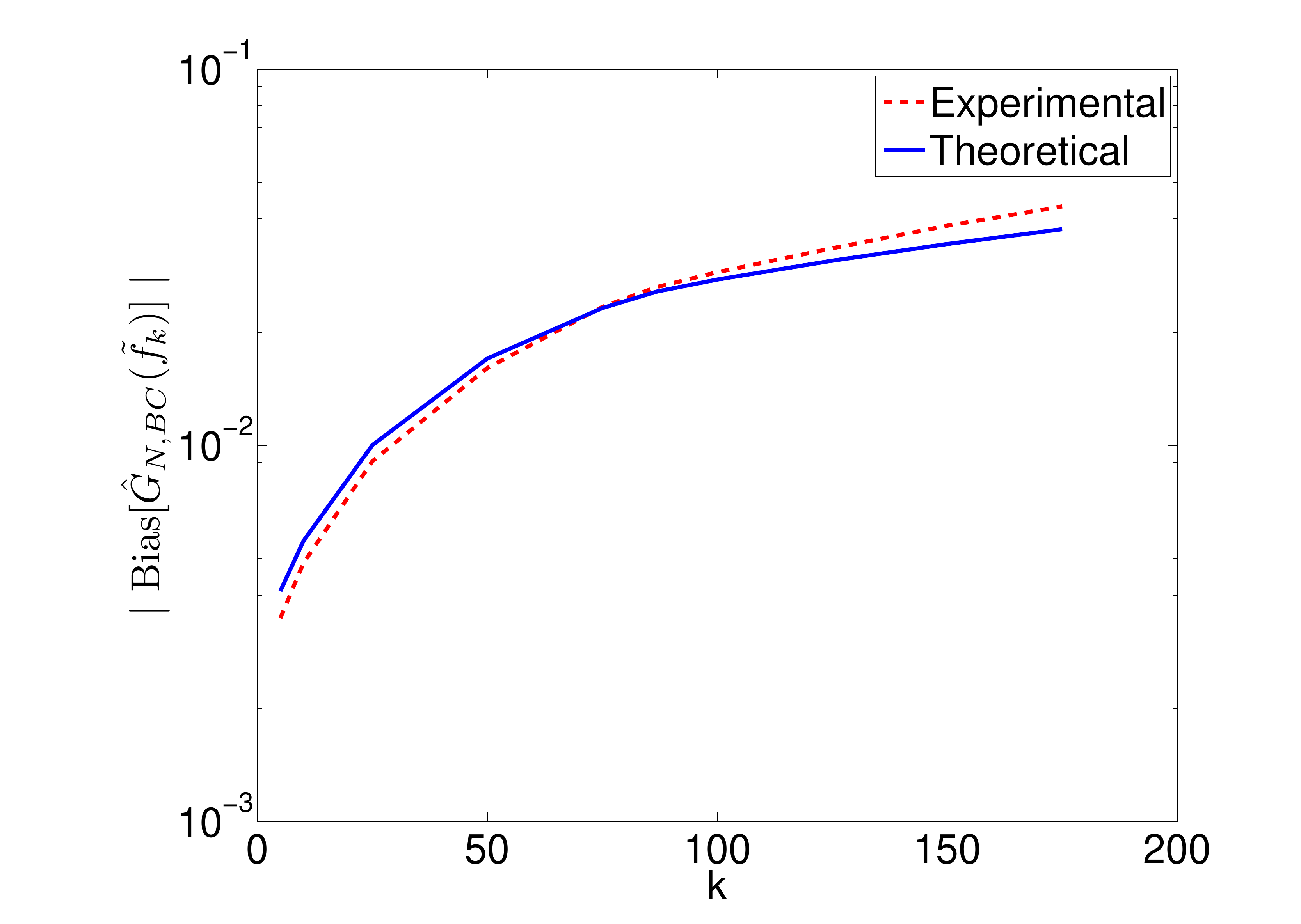}
\caption{Comparison of theoretically predicted bias of BPI estimator $\hat{\mb{G}}_{N,BC}(\mb{\tilde{f}}_k)$ against experimentally observed bias as a function of $k$. The Shannon entropy ($g(u) = -\log(u)$) is estimated using the proposed BPI estimator $\hat{\mb{G}}_{N,BC}(\tilde{\mb{f}}_k)$ on $T=10^4$ i.~i.~d.~ samples drawn from the $d=3$ dimensional uniform-beta mixture density (\ref{mixturesimul}). $N,M$ were fixed as $N=3000$, $M=7000$ respectively. The empirical bias is in agreement with the bias approximations of Theorem IV.~1 and monotonically increases with $k$.}
\label{S44}
\end{figure}

\begin{figure}[!t]
\centering
\includegraphics[scale=.3]{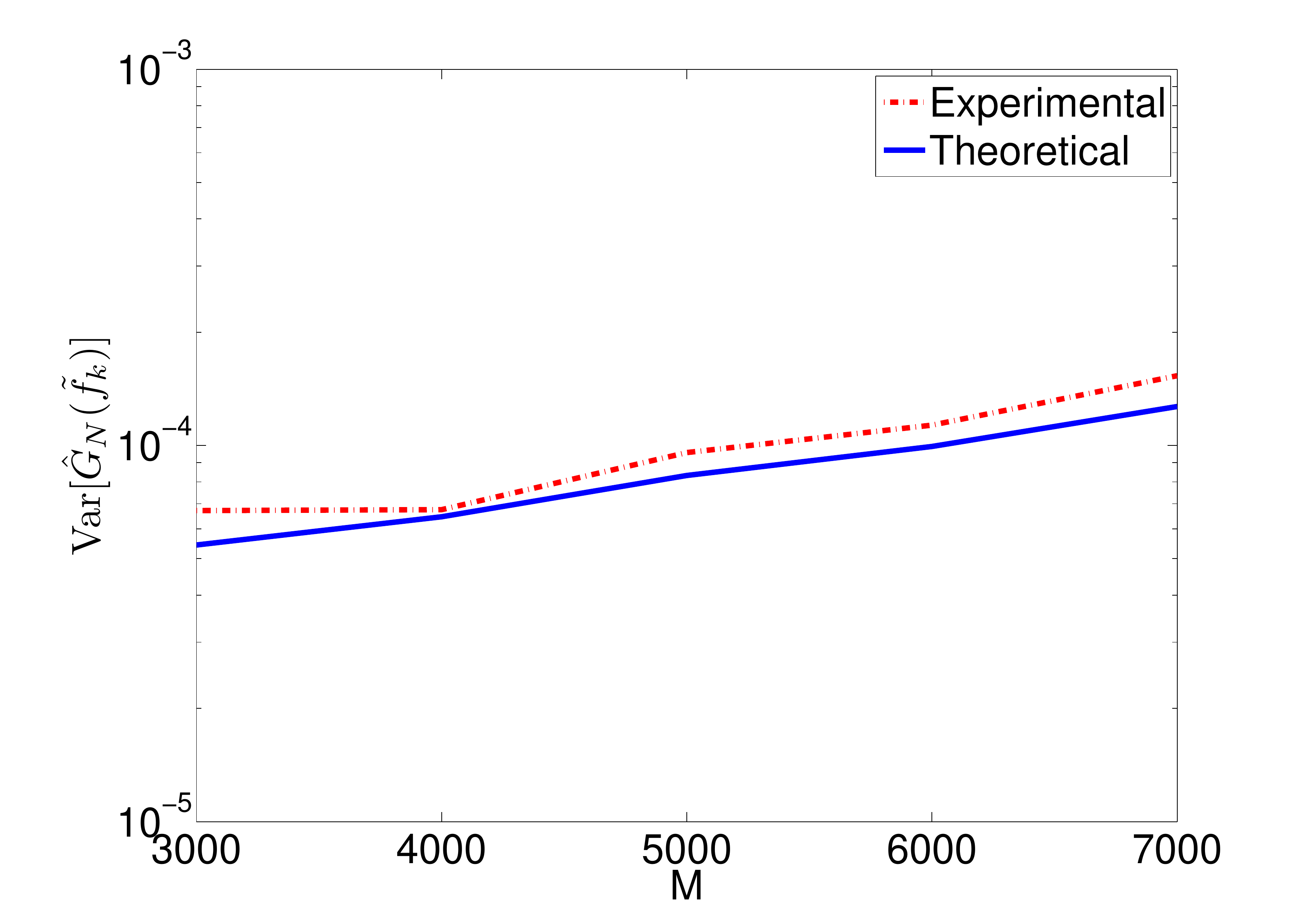}
\caption{Comparison of theoretically predicted variance of BPI estimator $\hat{\mb{G}}_N(\mb{\tilde{f}}_k)$ against experimentally observed variance as a function of $M$. The Shannon entropy ($g(u) = -\log(u)$) is estimated using the proposed BPI estimator $\hat{\mb{G}}_N(\tilde{\mb{f}}_k)$ on $T=10^4$ i.~i.~d.~ samples drawn from the $d=3$ dimensional uniform-beta mixture density (\ref{mixturesimul}). $k$ is chosen to be $k_{opt} = k_0M^{2/(2+d)}$. The theoretically predicted variance agrees well with experimental observations.}
\label{S5}
\end{figure}
Here the theory established in Section 3 and Section 4 is validated. A three dimensional vector $\underline{X}=[X_1, X_2, X_3]^T$ was generated on the unit cube according to the i.i.d. Beta plus i.i.d. uniform mixture model: \begin{equation} f(x_1,x_2,x_3)= (1-\epsilon) \prod_{i=1}^3 f_{a,b}(x_i) + \epsilon, \label{mixturesimul} \end{equation} where $f_{a,b}(x)$ is a univariate Beta density with shape parameters $a$ and $b$. For the experiments the parameters were set to $a=4,b=4$, and $\epsilon=0.2$. {The Shannon entropy ($g(u) = -\log(u)$) is estimated using the BPI estimators  $\hat{\mb{G}}_N(\tilde{\mb{f}}_k)$ and $\hat{\mb{G}}_{N,BC}(\tilde{\mb{f}}_k)$}. 



In Fig.~\ref{S4}, the bias approximations of Theorem III.~1 are compared to the empirically determined estimator bias of $\hat{\mb{G}}_N(\tilde{\mb{f}}_k)$. $N$ and $M$ are fixed as $N=3000$, $M=7000$. Note that the theoretically predicted optimal choice of $k_{opt} = 52$ minimizes the experimentally obtained bias curve. Thus, even though our theory is asymptotic it provides useful predictions for the case of finite sample size, specifying bandwidth parameters that achieve minimum bias. {Further note that by matching rates, i.e. choosing $k=\bar{k}= M^{2/(2+d)} = 83$ also results in significantly lower MSE when compared to choosing $k$ arbitrarily ($k<10$ or $k>150$)}. {In Fig.~\ref{S44}, the bias approximations of Theorem IV.~1 are compared to the empirically determined estimator bias of $\hat{\mb{G}}_{N,BC}(\tilde{\mb{f}}_k)$. Observe that the empirical bias, in agreement with the bias approximations of Theorem IV.~1, monotonically increases with $k$.}

In Fig.~\ref{S5}, the empirically determined variance of $\hat{\mb{G}}_N(\tilde{\mb{f}}_k)$ is compared with the variance expressed by Theorem III.~2 for varying choices of $N$ and $M$, with fixed $N+M=10,000$.  The theoretically predicted variance agrees well with experimental observations.
\begin{figure}[!t]
\centering
\includegraphics[scale=.3]{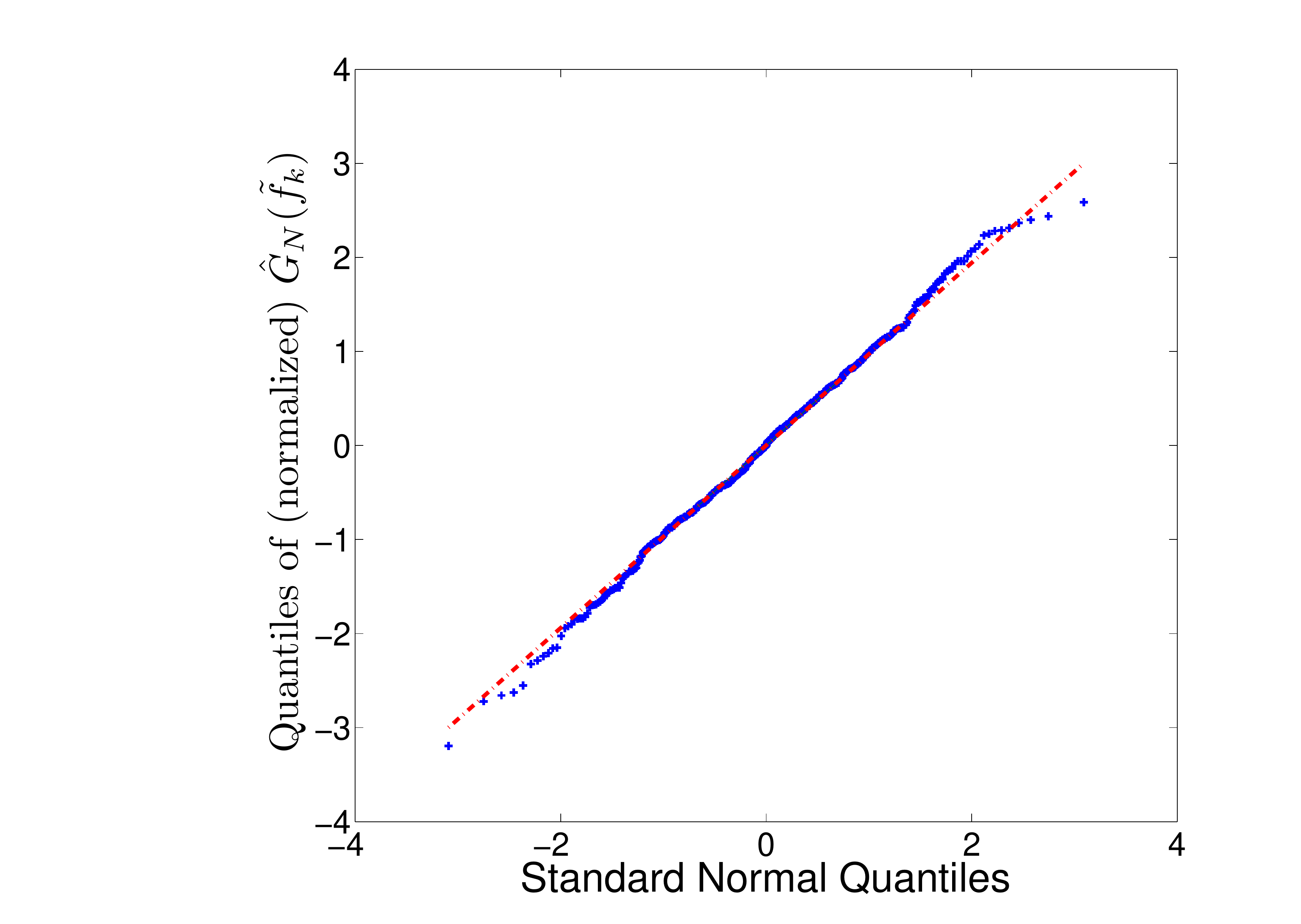}
\caption{Q-Q plot comparing the quantiles of the BPI estimator $\hat{\mb{G}}_N(\mb{\tilde{f}}_k)$ (with $g(u) = -\log(u)$) on the vertical axis to a standard normal population on the horizontal axis. The Shannon entropy ($g(u) = -\log(u)$) is estimated using the proposed BPI estimator $\hat{\mb{G}}_N(\tilde{\mb{f}}_k)$ on $T=10^4$ i.~i.~d.~ samples drawn from the $d=3$ dimensional uniform-beta mixture density (\ref{mixturesimul}). $k,N,M$ are fixed as $k=k_{opt}=52$, $N=3000$ and $M=7000$ respectively. The approximate linearity of the points validates our central limit theorem~\ref{knncltH}.}
\label{S6}
\end{figure}
A Q-Q plot of the normalized BPI estimate $\hat{\mb{G}}_N(\tilde{\mb{f}}_k)$ and the standard normal distribution is shown in Fig.~\ref{S6}. The linear Q-Q plot validates the Central Limit Theorem III.~3 on the uncompensated BPI estimator. 
\begin{figure}[!t]
\centering
\includegraphics[scale=.3]{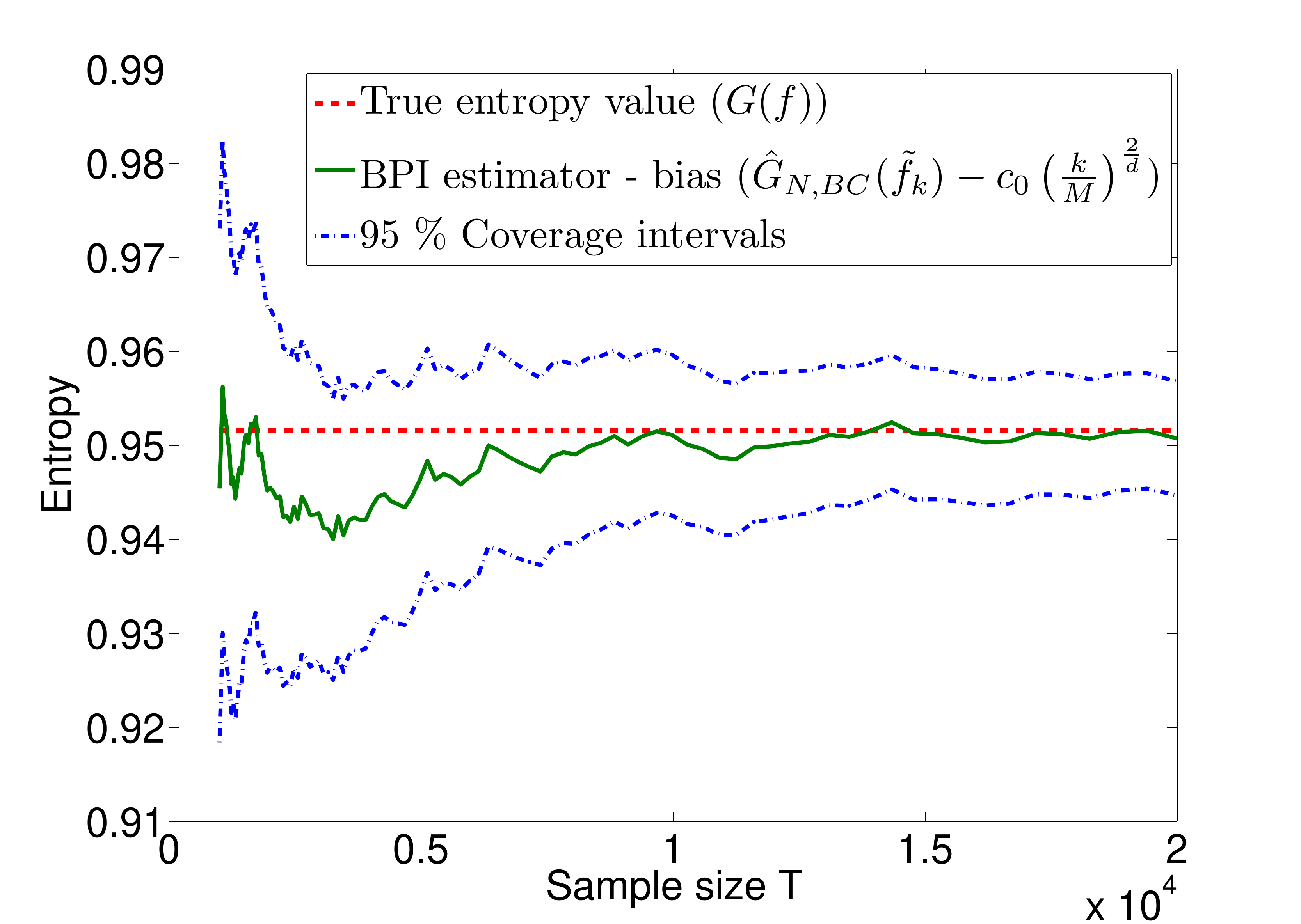}
\caption{{$95\%$ coverage intervals of BPI estimator $\hat{\mb{G}}_{N,BC}(\tilde{\mb{f}}_k)$, predicted using the Central limit theorem~\ref{knncltH}, as a function of sample size $T$. The Shannon entropy ($g(u) = -\log(u)$) is estimated using the proposed BPI estimator $\hat{\mb{G}}_{N,BC}(\tilde{\mb{f}}_k)$ on $T$ i.~i.~d.~ samples drawn from the $d=3$ dimensional uniform-beta mixture density (\ref{mixturesimul}). The lengths of the coverage intervals are accurate to within $12\%$ of the empirical confidence intervals obtained from the empirical distribution of the BPI estimator}.}
\label{S8}
\end{figure}
To verify that the predicted confidence intervals were indeed as advertised, the empirically determined and theoretically predicted confidence intervals were compared in Fig.~\ref{S9A}.  The lengths of the predicted confidence intervals are accurate to within $12\%$ of the length of the true confidence intervals. 
\begin{figure}[!t]
\centering
\includegraphics[scale=.3]{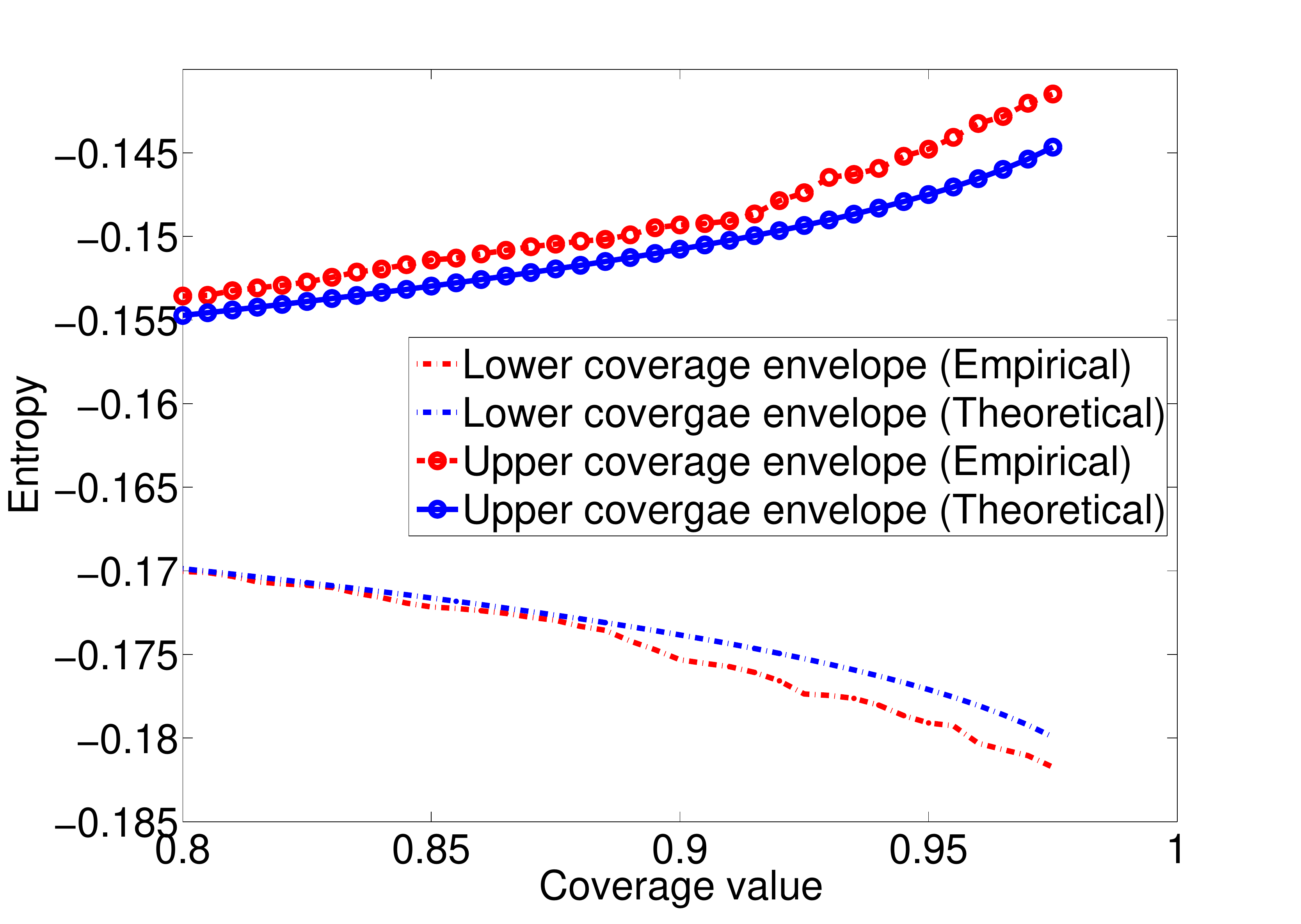}
\caption{Empirically determined and theoretically predicted coverage envelopes as a function of coverage values. There is good agreement between the theoretically predicted and empirical coverage intervals.}
\label{S9A}
\end{figure}

We additionally show in Fig.~\ref{S8A} a plot of the empirically determined  estimator bias (via simulation) vs the bias predicted by our theory as a function of sample size $T$, which matches the theoretical prediction.

\begin{figure}[!t]
\centering
\includegraphics[scale=.3]{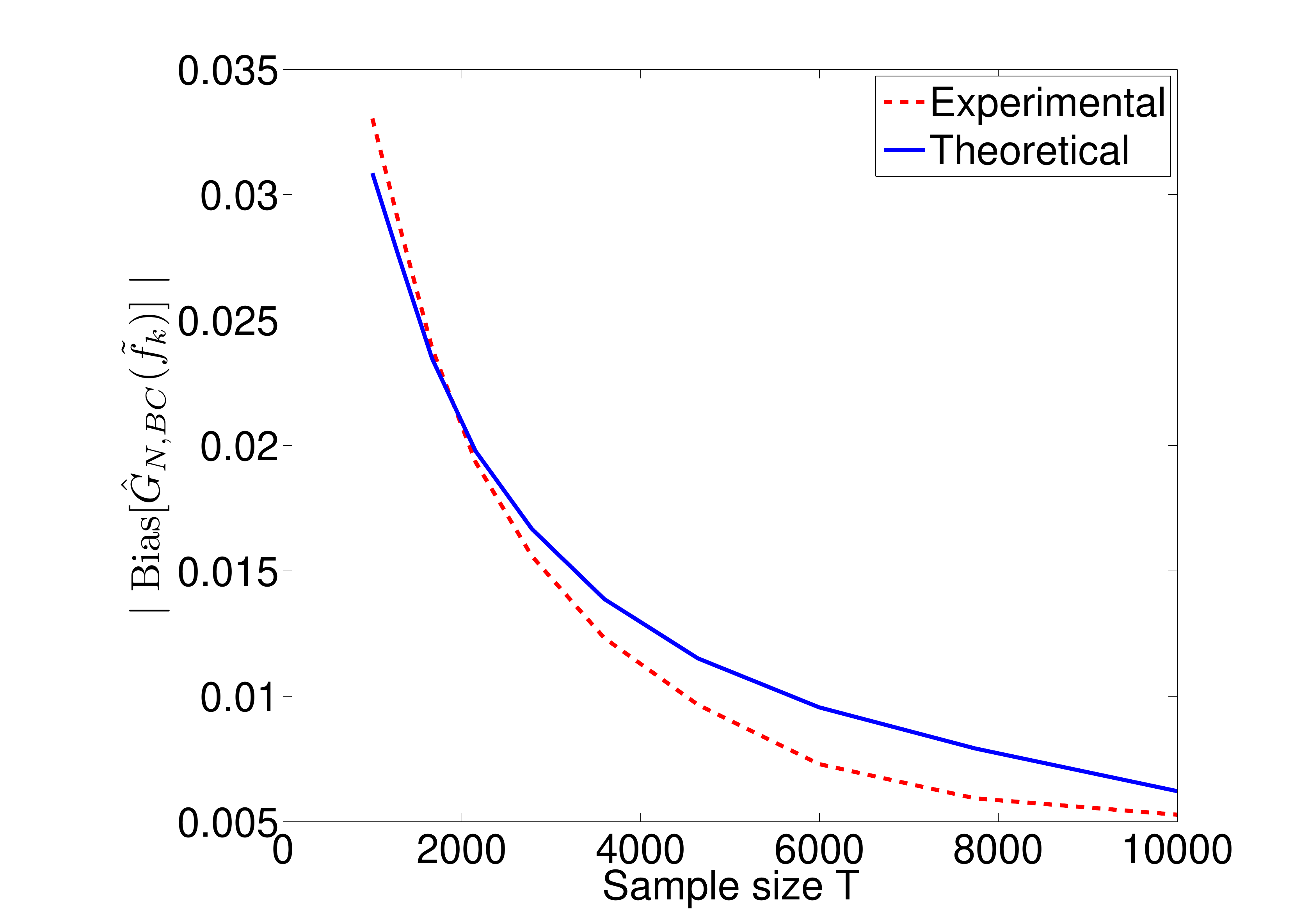}
\caption{Comparison of theoretically predicted bias of BPI estimator $\hat{\mb{G}}_{N,BC}(\mb{\tilde{f}}_k)$ against experimentally observed bias as a function of sample size $T$. The Shannon entropy ($g(u) = -\log(u)$) is estimated using the proposed BPI estimator $\hat{\mb{G}}_{N,BC}(\tilde{\mb{f}}_k)$ on $T$ i.~i.~d.~ samples drawn from the $d=3$ dimensional uniform-beta mixture density (\ref{mixturesimul}). The empirical bias is in agreement with the bias approximations of Theorem IV.~1 and monotonically decreases with $T$.}
\label{S8A}
\end{figure}

\begin{figure}[tb]
\centering
\includegraphics[width=4in]{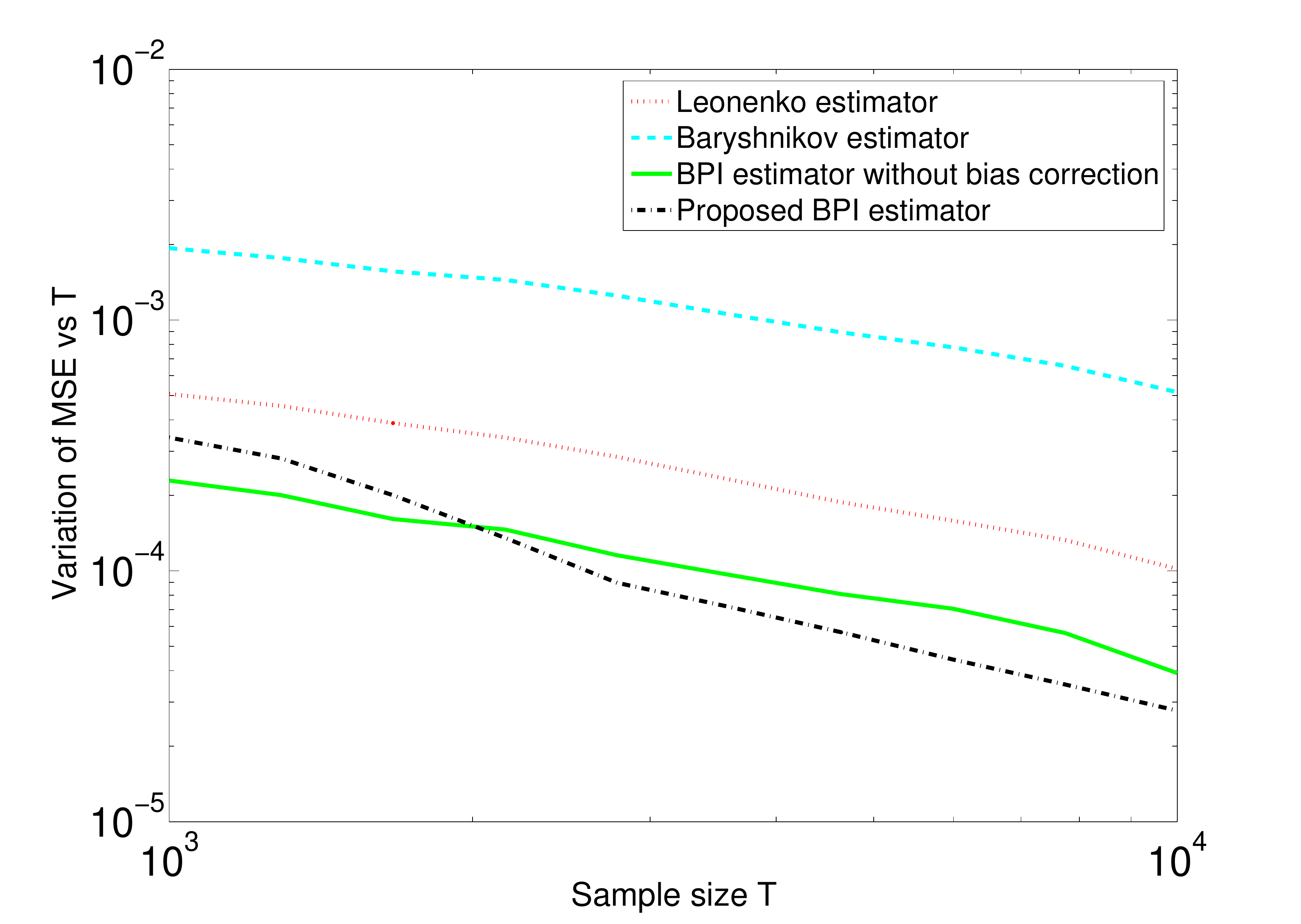}
\caption{Variation of MSE of $k$-nearest neighbor estimator of Leonenko~{\it etal}~\cite{leo} and the $k$-nearest neighbor estimator of Baryshnikov~{\it etal}~\cite{bar} and BPI estimators with and without boundary correction, as a function of sample size $T$. The R\'enyi entropy ($g(u) = u^{\alpha-1}$) is estimated for $\alpha=0.5$ using these estimators on $T$ i.~i.~d.~ samples drawn from the $d=3$ dimensional uniform-beta mixture density (\ref{mixturesimul}). The figure shows that the proposed BPI estimator has the fastest rate of convergence.}
\label{a-compare}
\end{figure} 
{For Shannon entropy ($g(u) = -\log(u)$), the uncompensated and compensated BPI estimators are related by$$\hat{\mb{G}}_{N,BC}(\mb{\tilde{f}}_k) = \hat{\mb{G}}_{N}(\mb{\tilde{f}}_k) + \log(k-1) - \psi(k). $$} {The variance and normalized distribution of these estimators are therefore identical.  Consequently, Fig.~\ref{S5} and Fig.~\ref{S6} also validate Theorem IV.~2 and Theorem IV.~3 respectively.}

{Finally, using the CLT, the $95\%$ coverage intervals of the BPI estimator $\hat{\mb{G}}_{N,BC}(\tilde{\mb{f}}_k)$ are shown as a function of sample size $T$ in Fig.~\ref{S8}. The lengths of the predicted confidence intervals are accurate to within $12\%$ of the true confidence intervals (determined by simulation over the range of $80\%$ to $100\%$ coverage - data not shown). These coverage intervals can be interpreted as confidence intervals on the true entropy, provided that the constants $c_1,..,c_5$ can be accurately estimated. }


\subsection{Experimental comparison of estimators}
The R\'enyi $\alpha$-entropy ($g(u) = u^{\alpha-1}$) is estimated for $\alpha=0.5$, with the same underlying 3 dimensional mixture of the beta and uniform densities defined above. Several estimators are compared: Baryshnikov's estimator $ \hat{\mb{I}}_{\alpha,S}$, the $k$-NN estimator $ \tilde{\mb{I}}_{\alpha,S}$ of Leonenko~{\it etal}~\cite{leo}, the BPI estimator without bias correction $\hat{\mb{G}}_N(\mb{\tilde{f}}_k)$ and the proposed BPI estimator with bias correction $\hat{\mb{G}}_{N,BC}(\mb{\tilde{f}}_k)$. The results are shown in Fig.~\ref{a-compare}. It is clear from the figure that the BPI estimator $\hat{\mb{G}}_{N,BC}(\mb{\tilde{f}}_k)$ has the fastest rate of convergence, consistent with our theory. Note that, in agreement with our analysis in Section~\ref{sec:compareb}, the bias uncompensated BPI estimator $\hat{\mb{G}}_{N}(\mb{\tilde{f}}_k)$ outperforms Baryshnikov's estimator $ \hat{\mb{I}}_{\alpha,S}$.

\section{Application to structure discovery}

Discovering structural dependencies among random variables from a multivariate sample is an important task in signal processing, pattern recognition and machine learning. Based on dependence relationships, the density function of the variables can be modeled using factor graphs. When the sample is highly structured, the corresponding factor graph configuration is sparse. Sparse factor graphs correspond to joint multivariate distributions which separate into a parsimonious product of few lower dimensional distributions. The inherent low-dimensional nature of this product leads to a compact representation of the variables having sparse factor graph configurations.

In practice, these structure dependencies have to be discovered from sample realizations of the multivariate distribution. Discovering dependencies when parametric probability density models are not known a priori is an important restriction of the above problem. For parametric distribution estimates, the errors are of order $O(1/N)$ if the true distribution is included in the parametric model. If not, a non-vanishing bias will dominate the error yielding an even higher error than that of a nonparametric distribution estimate (e.g. $k$NN estimates). In this restricted setting, recourse is therefore taken to nonparametric methods. 

Chow et.al. \cite{chow} proposed an elegant solution to structure discovery of Markov tree distributions and provided a nonparametric algorithm to obtain the optimal tree. Ihler et.al. \cite{ihler} developed the method of nonparametric hypothesis tests for structure discovery. 

Nonparametric methods, while asymptotically consistent, can uncover incorrect factor graph structure when estimated from a finite number of samples. This is distinctly true for small sample sizes. While consistency is an important qualitative property, there is clearly an important motivation for quantitative characterization of performance in structure discovery. In this work, we analyze factor graph structure discovery in the finite sample size setting.

We present a class of $k$-nearest neighbor ($k$NN) based nonparametric geometric algorithms to discover factor graph structure among variables. We provide results on mean square error of the nonparametric estimates, which can be optimized over free parameters, thereby guaranteeing improved correct structure discovery. In addition, we provide confidence intervals on these nonparametric estimates to determine the probability of false error in choosing an incorrect structure model. These results are an direct extension of our work on optimized nonparametric estimates of divergence measures introduced earlier.

As a consequence of our statistical analysis, we introduce the notion of dependence-based \textbf{dimension} for factor graph models and show that comparing models within the same dimension class is an easier task with lower probability of false error as compared to comparing models across different dimensions.

\subsection{Factor graphs}

Factor graphs are bipartite graphs used to represent factorizations of probability density functions. Consider a set of variables
$\underline{X}=\{X_1,X_2,\ldots,X_T\}$ and let $\{S_j \subseteq \{X_1,X_2,\dots,X_n\}, j=1,\ldots,m\}$ be a set of subsets of $\underline{X}$. Let $g(X_1,\dots,X_T)$ denote a probability density function on the random vector $\underline{X}$. For the factorization $g(X_1,\dots,X_T) = \prod_{j=1}^m f_j(S_j)$ of the density function, the corresponding factor graph $G = (\underline{X},\underline{F},E)$ consists of variable vertice's $\underline{X}$ , factor vertices's $\underline{F}=\{f_1,f_2,\dots,f_m\}$, and edges $E$. The edges in the factor graph depend on the factorization as follows: there is an undirected edge between factor vertex $f_j$ and variable vertex $X_k$ when $X_k \subseteq S_j$. 

\subsection{Factor graph discovery}

\paragraph{Problem statement:} Consider a set of factor graphs $\{g_i(X_1,\dots,X_T),i=1,\dots,I\}$. We seek to find the factor graph configuration from this set that best models the data. 

The Kullback-Leibler (KL) divergence measure induces a \textbf{geometry} on the space of probability distributions. On this induced geometry, we naturally define the best factor graph configuration $g_o$ to be the one closest to the actual distribution $p(X_1,\dots,X_T)$ in terms of KL divergence (c.f. \cite{chow}). 
\begin{equation}
g_{o} = \underset{g_i}{\operatorname{arg\,min}} \, KL(p||g_i) = \underset{g_i}{\operatorname{arg\,min}} \, H_c(p,g_i),
\end{equation}
where $H_c(p,g_i) = -\int p \log g_i$ is the cross-entropy between $p$ and $g_i$. In practice, these cross-entropy terms have to be estimated from the finite data sample. \textbf{Errors in estimation of cross-entropy terms can result in incorrect factor graph discovery}. 

The problem considered by \cite{chow} is a specific instance of discovering factor graph structure. For the class of Markov tree factor graphs considered by \cite{chow}, the cross entropy reduces to a sum of pairwise Shannon mutual information terms between variables with edges in the Markov tree. In their work, they empirically estimate the mutual information terms from the data using nonparametric estimators which are consistent. However, they do not take into account the error in the mutual information estimates when estimated from finite samples. 

\subsection{Disjoint factor graph discovery}
In order to illustrate the effect of nonparametric estimation from finite sample size on factor graph discovery, we restrict our attention to disjoint factor graphs (\cite{ihler}). For $i=1,\ldots,I$, let
\begin{equation}
g_i(X_1,X_2,\dots,X_T) = \prod_{j=1}^m p(S^{(i)}_j),
\end{equation}
where $S^{(i)}_j \cap S^{(i)}_k = \phi$ whenever $j \neq k$, and $p(.)$ denotes the marginal density function. In this case of disjoint factor graphs, the cross-entropy takes the following simple form:
\begin{equation}
\label{crossentropy}
H_c(p,g_i) = \sum_j H(S^{(i)}_j),
\end{equation}
where $H(S^{(i)}_j)$ is the Shannon entropy of the variables $S^{(i)}_j$ under the true distribution $p$. 

For example, consider the disjoint factor graph $g(X_1,\ldots,X_5) = p(X_1,X_2)p(X_3)p(X_4,X_5)$. The cross-entropy for this factor graph is given by $H_c(p,g) = H(X_1,X_2) + H(X_3) + H(X_4,X_5)$.

Consider two disjoint factor graph configurations: (a) $n(X_1,\ldots,X_T) = \prod_{i=1}^{m_1} f(R_i)$ and (b) $l(X_1,\ldots,X_T) = \prod_{j=1}^{m_2} f(S_j)$. Denote the dimension of $R_i$ by $d^{n}_i$ and $S_j$ by $d^{l}_j$. We note that $\sum_{i=1}^{m_1} d^{(n)}_{i} = \sum_{j=1}^{m_2} d^{(l)}_{j} = T$. Based on the above formulation, in order to compare the two potential factor graph models $n$ and $l$, we need to compare the respective cross-entropy terms. The cross entropy test is stated below.
\paragraph{Cross entropy test:} The cross entropy test to compare between models $n$ and $l$ is given by 
\begin{equation}
{{H_c}(p,n)-{H_c}(p,l) = \sum_{i=1}^{m_1}{H}(R_i) - \sum_{j=1}^{m_2}{H}(S_j) \gleq 0}.
\end{equation} 
We {estimate} these entropy terms in the test statistic ${H_c}(p,n)-{H_c}(p,l)$ from sample realizations using $k$NN plug-in estimators introduced earlier. 

\subsection{Errors in factor graph discovery}

To illustrate the effect of estimation error in factor graph discovery, again consider the two factor graph models $n(X_1,\ldots,X_T) = \prod_{i=1}^{m_1} f(R_i)$ and $l(X_1,\ldots,X_T) = \prod_{j=1}^{m_2} f(S_j)$. 

The cross entropy test (Eq. \ref{crossentropy}) between models $n$ and $l$ is $H_c(p,n)-H_c(p,l) \gleq 0$. We replace this optimal cross entropy test with the following \textbf{surrogate} cross entropy test:

\begin{equation}
\hat{H_c}(p,n)-\hat{H_c}(p,l) = \sum_{i=1}^{m_1}\hat{H}(R_i) - \sum_{j=1}^{m_2}\hat{H}(S_j) \gleq 0.
\end{equation} 
where we estimate entropy terms $\hat{H}(R_i)$ or $\hat{H}(S_j)$ using independent realizations of the underlying density $p$. To elaborate, if we have $V$ samples $\{\underline{X}^{(1)}, \ldots,\underline{X}^{(V)}\}$ from the density $p$, we partition these $V$ samples into $m_1+m_2$ disjoint subsets of size $N+M$ each. This implies that $N+M \approx V/(m_1+m_2)$. We then use each subset to estimate entropy using the partitioning strategy as discussed earlier. 

Denote the coefficients corresponding to the entropy estimate $\hat{H}(R_i)$ of the subset of variables $R_i$ in the factor graph model $n$  by $c_{n_i1}$, $c_{n_i2}$ and $c_{n_i4}$. Using the theorems established in this report, we have the following results:

\textbf{Mean:} The mean of this surrogate test statistic is then given by
\begin{eqnarray}
{\mathbb{E}_p[\hat{H_c}(p,n)-\hat{H_c}(p,l)]} &{=}& {H_c(p,n)-H_c(p,l)} \nonumber \\
&{+}& {\sum_{i=1}^{m_1} c_{n_i1} \left({\frac{k}{M}}\right)^{2/d^{(n)}_{i}} - \sum_{j=1}^{m_2} c_{l_j1} \left({\frac{k}{M}}\right)^{2/d^{(l)}_j}} \nonumber \\
&{+}& {\sum_{i=1}^{m_1} c_{n_i2}/k - \sum_{j=1}^{m_2} c_{l_j2}/k}.
\end{eqnarray} 
\textbf{Variance:} The variance of the surrogate test statistic is then given by the sum of the variance of the individual entropy estimates (by independence)
\begin{eqnarray}
{\var_p[\hat{H_c}(p,n)-\hat{H_c}(p,l)]} &{=}& \left(\sum_{i=1}^{m_1} c_{n_i4} + \sum_{j=1}^{m_2} c_{l_j4}\right)\left(\frac{1}{N}\right).
\end{eqnarray} 
\textbf{Weak convergence:} Again, by independence of the individual entropy estimates, we have the following weak convergence law
\begin{equation}
{\lim_{N,M \to \infty} Pr\left(\frac{\sqrt{N}(\hat{H_c}(p,n)-\hat{H_c}(p,l)-\mathbb{E}_p[\hat{H_c}(p,n)-\hat{H_c}(p,l)])}{\sqrt{{\var_p[\hat{H_c}(p,n)-\hat{H_c}(p,l)]}}} \leq \alpha \right) = Pr\left({Z} \leq \alpha\right)}, \label{theeq6476}
 \end{equation}
where $Z$ is standard normal.

\subsection{Discussion}

From the above expressions for the mean, variance and weak convergence law of the surrogate test statistic, we make the following observations:

\begin{enumerate}
\item
The bias term is dependent on the dimension of the factors of the factor graph models $d^{(n)}_i$ and $d^{(l)}_j$. The variance term is independent of dimension. Furthermore, it is clear that the bias term dominates the MSE as the dimension of the factors grows.
\item
For better performance in discovering factor graph structure using cross entropy tests, it is clear that we want the MSE of the surrogate test statistic to be small. A significant route to achieving this is to get the bias from each factor graph cross entropy estimate in the estimated test statistic to cancel. This is to say, we want 
\begin{eqnarray}
{\mathbb{E}_p[\hat{H_c}(p,n)-\hat{H_c}(p,l)]} &\approx& {H_c(p,n)-H_c(p,l)} \nonumber \\
\Rightarrow {\mathbb{E}_p[\hat{H_c}(p,n)]-\hat{H_c}(p,n)} &\approx& {\mathbb{E}_p[\hat{H_c}(p,l)]-\hat{H_c}(p,l)} \nonumber \\
\Rightarrow \sum_{i=1}^{m_1} c_{n_i1} \left({\frac{k}{M}}\right)^{2/d^{(n)}_{i}} + \sum_{i=1}^{m_1} c_{n_i2}/k &\approx& \sum_{j=1}^{m_2} c_{l_j1} \left({\frac{k}{M}}\right)^{2/d^{(l)}_j} + \sum_{j=1}^{m_2} c_{l_j2}/k.
\end{eqnarray} 
\item
This cancellation effect will be maximized when the dimensions of the factor graph subsets $R_i$ and $S_j$ match. That is to say, we want $m_1=m_2$ and furthermore $d^{(n)}_i = d^{(l)}_j$. In this case, the bias from each cross entropy estimate are of the same order and will nearly cancel. 

On the other hand, when there is a mismatch in dimension, the bias from one cross entropy estimate will dominate the bias from the other cross entropy estimate, resulting in significant bias in the surrogate test statistic.

In both these cases, the variance of the surrogate test statistic will be of the same order $O(1/N)$.

\item
This gives rise to notion of multivariate dimension for factor graphs. Index the factorizations according to the vector ${ E=[e_1,e_2,...,e_p]}$, where $e_i$ is an integer between $0$ and $T$ that counts the number of factors of order $i$, i.e. involving a marginal density over $i$ variables. The \textbf{dimension} $E$ of factor graph configurations partitions the factor graphs into equivalence classes having nearly constant cross entropy estimate bias. 

For two factor graph models $n$ and $l$ with dimensions $E_n$ and $E_l$, we will refer to $n$ as a higher dimensional model  relative to $l$ if the last non-zero entry of $E_n-E_l$ is positive.

\item
As discussed earlier, the bias will not be a significant factor when comparing models over an equivalence class having fixed values of $E$. On the other hand, the bias will be significant when comparing models across different values of $E$, resulting in higher probability of error in factor graph discovery.
\item
Prior knowledge of the equivalence class will therefore translate into much improved performance in factor graph discovery as compared to prior knowledge that mixes between equivalence classes.
\item
We note that the number of samples required to maintain a constant level of bias grows \textbf{geometrically} with dimension $E$.
\item
Using the expressions for the bias and variance of the surrrogate test statistic, we can optimize over the free parameters: (a) the choice of partition $N$ and $M$ for fixed total sample size $N+M$ and (b) the choice of bandwidth parameter $k$, for minimum MSE. 
\item
Using the weak convergence law, we can theoretically predict the probability of choosing model $n$ over model $l$ using the surrogate cross entropy test.
\end{enumerate}



\subsection{Experiment}

We illustrate the implications of our analysis with a toy example. Let $f_\beta(x,a,b,d)$ denote a beta density of dimension $d$ with parameters $a$ and $b$. Now let $f_\mu(x,d) = 0.5f_\beta(x,5,2,d) + 0.5f_\beta(x,2,5,d)$ be a mixture of beta densities. When $d>1$, the mixing of densities ensures there is strong dependence between the variates.

\begin{figure}[!th]

  \begin{center}
    \includegraphics[width=2in]{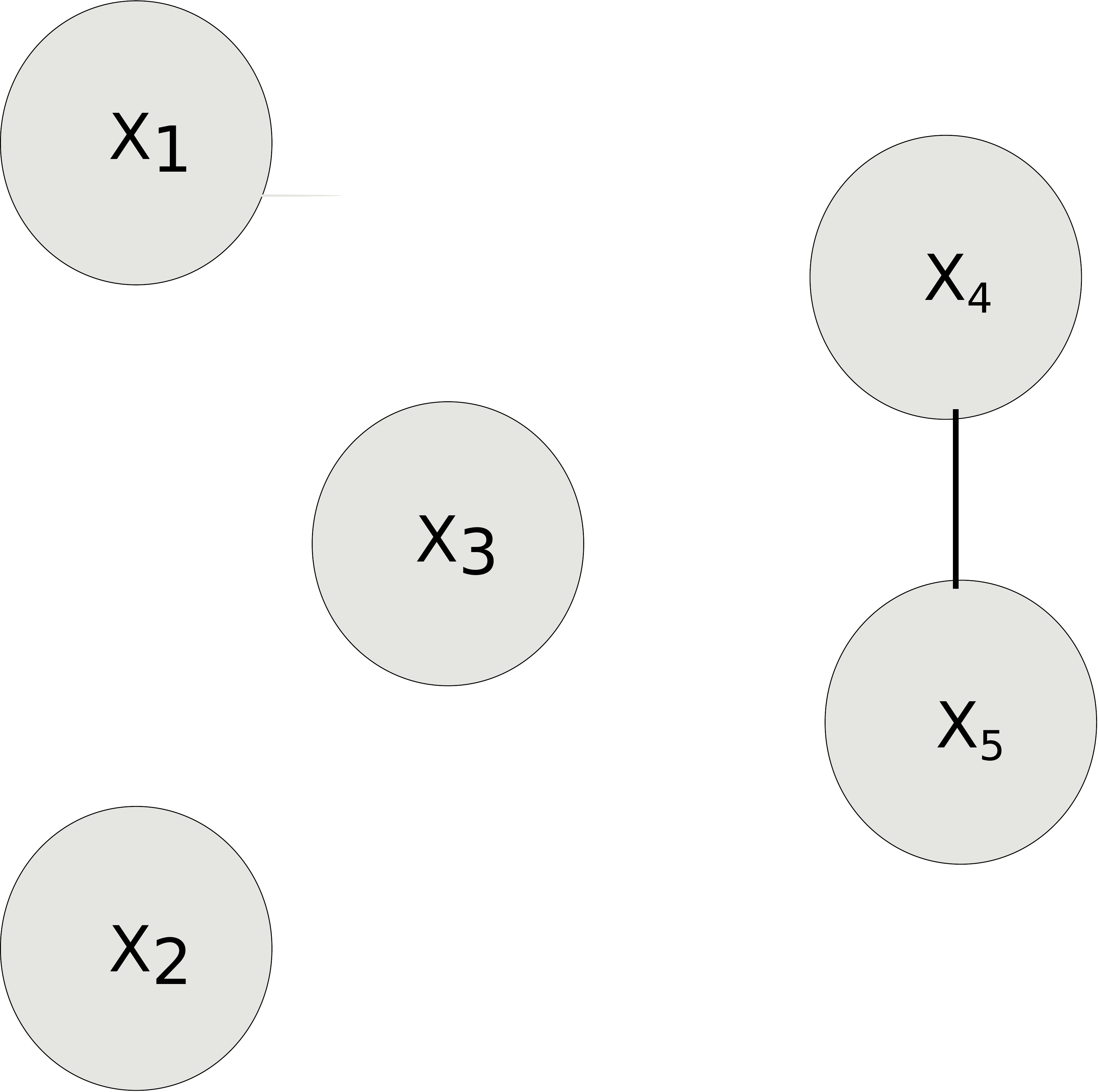}
  \end{center}

  \caption{\small True factor graph representation of the 5-dimensional joint density $p(X_1,\dots,X_5) = f_\mu(X_1,1)f_\mu(X_2,1)f_\mu(X_3,1)f_\mu(X_4,X_5,2)$.}
\label{S10}
  \end{figure}

We draw $V = 10^5$ independent sample realizations from the joint density $p(X_1,\dots,X_5) = f_\mu(X_1,1)f_\mu(X_2,1)f_\mu(X_3,1)f_\mu(X_4,X_5,2)$. 
\begin{center}
\begin{small}
\begin{tabular*}{0.8093\textwidth}{| c | c | c | c | }
  \hline 
	{}	&{} E &{} True &{} False  \\
  \hline
  {}l & {}$[1,0,0,1,0]$ &{} $f(X_1,X_2,X_4,X_5)f(X_3)$ & {}$f(X_1,X_2,X_3,X_4)f(X_5)$  \\
  \hline 
{}  m  & {}$[1,2,0,0,0]$  &{} $f(X_1,X_2)f(X_4,X_5)f(X_3)$ & {}$f(X_1,X_3)f(X_2,X_4)f(X_5)$  \\
  \hline
{}  n & {}$[3,1,0,0,0]$  & {}$f(X_4,X_5)f(X_1)f(X_2)f(X_3)$ & {}$f(X_2,X_4)f(X_1)f(X_3)f(X_5)$ \\
  \hline
\end{tabular*}
\end{small}
\end{center}
\textbf{Experiment} The table above shows six different factor graph models. We compare each true model against each false model. Denote the true models by $l_T$, $m_T$ and $n_T$ and the corresponding false models by $l_F$, $m_F$ and $n_F$. We note that the true cross entropy terms $H_c(p,l_T) = H_c(p,m_T) = H_c(p,n_T)$ and $H_c(p,l_L)=H_c(p,m_L)=H_c(p,n_L)$. This guarantees level playing field when comparing each true model against each false model using the surrogate cross entropy test. 

For the surrogate cross entropy test, we set $N=.2*10^4$, $M=.8*10^4$ and $k=20$. We note that the maximum value of $m_1+m_2$ for the above set of tests is $8$ and that $V/8 > (N+M)$. This choice of $N$ and $M$ therefore ensures that there are enough samples $V$ to guarantee sufficient number of independent samples for estimating individual entropies (see Section 5).

The table below lists the probability (experimental/theoretical prediction\footnote{The theoretical prediction requires estimation of constants $c_{l_i1},c_{l_i2}$ and $c_{l_i3}$. These constants were estimated from the data using oracle Monte Carlo methods which utilized the true form of the density $p$. In practice, when the true form of $p$ is never known, we adopt methods given by \cite{raykar} to estimate these constants from data.}) of choosing the false model over the true model for the various tests. 
\begin{center}
\begin{small}
\begin{tabular*}{0.7309\textwidth}{| c | c | c | c | }
  \hline 
 {} Same true vs Same false  & {} $l_T$ vs $l_F$  & {} $m_T$ vs $m_F$ & {}$n_T$ vs $n_F$  \\
  \hline
  {} Error (Exp/Theor) & {} 0.071/0.032  & {} 0.067/0.066 & {} 0.068/0.028  \\
  \hline 
{}  High true vs Low false  & {} $l_T$ vs $m_F$  & {} $l_T$ vs $n_F$ &{} $m_T$ vs $n_F$  \\
  \hline
  {} Error (Exp/Theor)&{}  0/0  & {} 0/0  & {} 0/0 \\
  \hline
  {} Low true vs High false  & {} $m_T$ vs $l_F$  & {} $n_T$ vs $l_F$ & {} $n_T$ vs $m_F$  \\
  \hline
  {} Error  {} (Exp/Theor)&  {} 0.689/0.732 & {} 0.995/1.000  & {} 0.691/0.665  \\
  \hline
\end{tabular*}
\end{small}
\end{center}
\textbf{Explanation} For the class of models above, the set of constants $\{c_{n_i1},c_{l_j1}\}$ are always negative. As a result, when comparing a high dimensional model to a low dimensional model, the additional bias will strongly tilt the test statistic towards the higher dimensional model. As a result, there is a greater chance of detecting the higher dimension model in the surrogate cross entropy test, irrespective of whether the higher dimensional model is true or false. 

To elaborate, when the high dimensional model is true and the low dimensional model is false, the bias will further tilt the test statistic towards the high dimensional model, resulting in zero false detections. On the other hand, when the low dimensional model is true, the bias in the surrogate test statistic deviates towards the high dimensional model, resulting in a high number of false detections. When we compare factor graph models within the same class of dimension, the bias from the cross entropy estimates for each model nearly cancel, resulting in a surrogate test statistic with much smaller bias as compared to the above two cases. As a result, the number of false detections is correspondingly low when comparing models within the same dimension. 

By the same argument, for factor graph models where the set of constants $\{c_{n_i1},c_{l_j1}\}$ are positive, we can conclude that the surrogate test statistic will be biased towards lower dimensional models. 

%


\section{Application to intrinsic dimension estimation}

In this work we introduce a new dimensionality estimator that is based on fluctuations of the sizes of nearest neighbor balls centered at a subset of the data points.  In this respect it is similar to Costa's $k$-nearest neighbor (kNN) graph dimension estimator \cite{Costa&etal:ICASSP04} and to Farahmand's dimension estimator based on nearest neighbor distances \cite{Farahmand&etal:ICML07}.  The estimator can also be related to the Leonenko's R\'{e}nyi entropy estimator \cite{Leonenko&etal:AnnStat08}.   However, unlike these estimators, our new dimension estimator is derived directly from a mean squared error (M.S.E.) optimality condition for partitioned kNN estimators of multivariate density functionals.  This guarantees that our estimator has the best possible M.S.E. convergence rate among estimators in its class. Empirical experiments are presented that show that this asymptotic optimality translates into improved performance in the finite sample regime.

\subsection{Problem formulation}

Let ${\cal Y} = \{\mb{Y}_1,\ldots,\mb{Y}_{T}\}$ be $T$ independent and identically distributed sample realizations in $\mathbb{R}^D$ distributed according to density $f$. Assume the random vectors in $\cal{Y}$ are constrained to lie on a d-dimensional Riemannian submanifold $\cal{S}$ of $\mathbb{R}^D$ $(d<D)$. We are interested in estimating the intrinsic dimension $d$.

\subsection{Log-length statistics}

Let $\gamma>0$ be any arbitrary number and $\alpha = \gamma/d$. Partition the $T$ samples in $\cal{Y}$ into two disjoint sets ${\cal X}$ and ${\cal Z}$ of size $\lfloor{T/2\rfloor}$ each. Denote the samples of ${\cal X}$ as ${\cal X} = \{\mb{X}_1,\ldots,\mb{X}{_{\lfloor{T/2\rfloor}}}\}$ and ${\cal Z}$ as ${\cal Z} = \{\mb{Z}_{1},\ldots,\mb{Z}{_{\lfloor{T/2\rfloor}}}\}$.

Partition ${\cal X}$ into $N$ 'target' and $M$ 'reference' samples $\{\mb{X}{_{1}},\ldots,\mb{X}{_{N}}\}$ and $\{\mb{X}{_{N+1}},\ldots,\mb{X}{_{\lfloor{T/2\rfloor}}}\}$ respectively with $N+M=\lfloor{T/2\rfloor}$. Partition $\cal{Z}$ in an identical manner. Now consider the following statistics based on the partitioning of sample space:
\begin{equation}
\mb{L_k}({\cal X}) = \frac{\gamma}{N} \sum_{i=1}^{N} \log\left(\mb{R_{k}}(\mb{X}{_i})\right), \nonumber
\end{equation}
where $\mb{R_{k}}(\mb{X}_{i})$ is the Euclidean $k$ nearest neighbor ($k$NN) distance from the target sample $\mb{X}_i$ to the $M$ reference samples $\{\mb{X}{_{N+1}},\ldots,\mb{X}{_{\lfloor{T/2\rfloor}}}\}$ . This partitioning of samples is illustrated in Fig. \ref{fig:unifball}.

\begin{figure}[!ht]
  \begin{center}
    \includegraphics[width=6in]{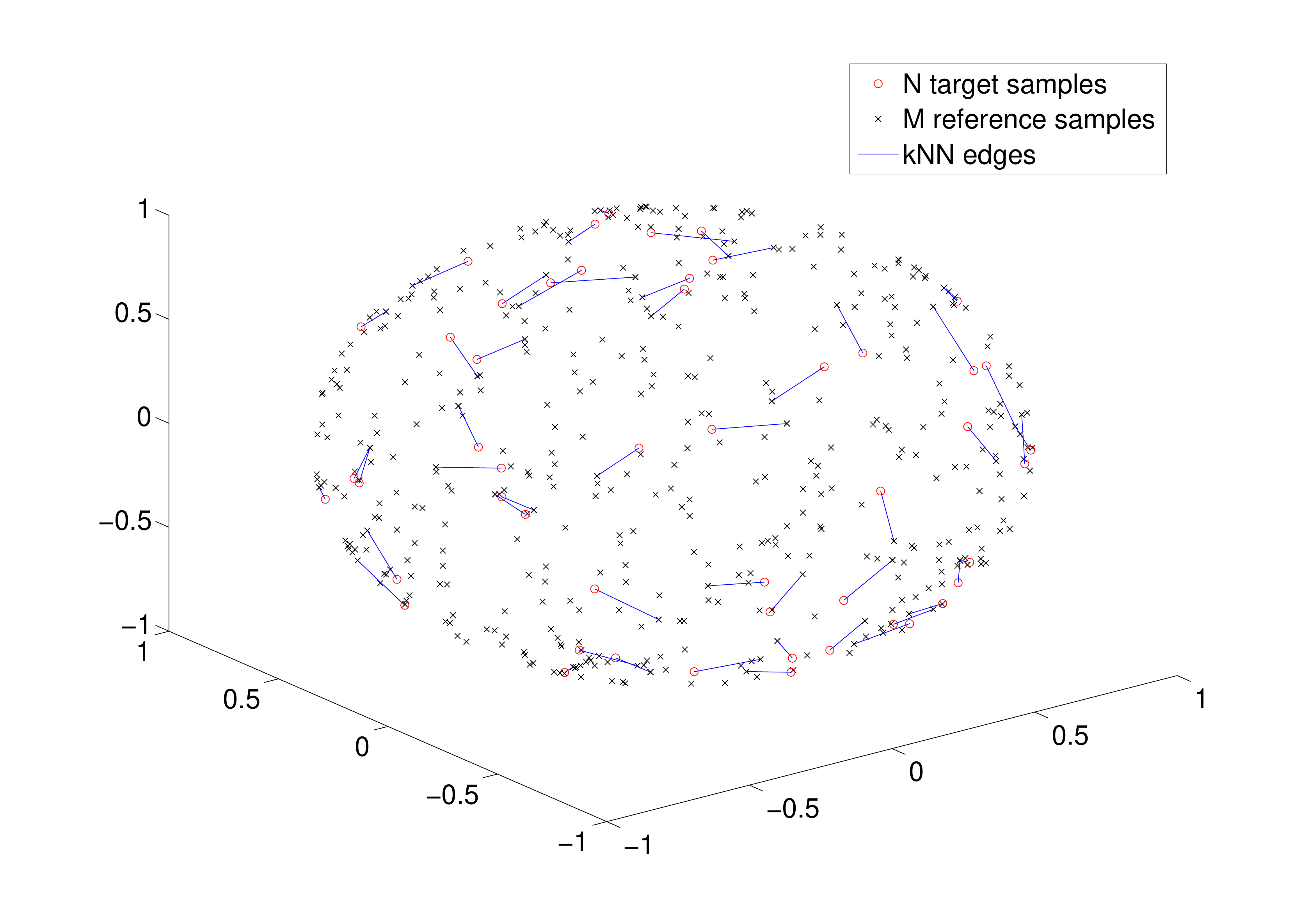}
  \end{center}
\caption{kNN edges on sphere manifold with uniform distribution for $d=2$, $D=3$, and $k=5$.}
\label{fig:unifball}
\end{figure}

\subsection{Relation to $k$NN density estimates}

Under the condition that $k/M$ is small, the Euclidean $k$NN distance $\mb{R_{k}}(\mb{X}{_i)}$ approximates the $k$NN distance on the submanifold $\cal{S}$. The $k$NN density estimate \cite{fuk} of $f$ at $\mb{X}{_i}$ based on the $M$ samples $\mb{X}{_{N+1}},\ldots,\mb{X}{_{N+M}}$ is then given by
\begin{equation}
\mb{\hat{f}_k}(\mb{X_i}) = \frac{k-1}{M}\frac{1}{c_d\mb{R_k}(\mb{X}{_i})^d} = \frac{k-1}{M}\frac{1}{\mb{V_k}(\mb{X}{_i})}, \nonumber
\end{equation}
where $c_d$ is the volume of the unit ball in $d$ dimensions and therefore $\mb{V_k}(\mb{X_i})$ is the volume of the $k$NN ball. This implies that $\mb{L_k}({\cal X})$ can be rewritten as follows:
\begin{eqnarray}
\label{eq:relation}
\mb{L_k}({\cal X}) &=& \frac{\gamma}{N} \sum_{i=1}^{N} \log\left(\mb{R_{k}}(\mb{X}{_i})\right) \nonumber \\
&=& \log\left(\frac{k-1}{Mc_d}\right)^{\alpha} + \frac{1}{N}\sum_{i=1}^{N} \log\left(\mb{\hat{f}_k}(\mb{X}{_i})\right)^{-\alpha} \nonumber \\
&=& \alpha \log (k-1) - \frac{\alpha}{N}\sum_{i=1}^{N} \log\mb{\hat{f}_k}(\mb{X}{_i}) \nonumber \\
&& -\alpha \log(c_dM). 
\end{eqnarray}
As eq.~(\ref{eq:relation}) indicates, the log-length statistics is linear with respect to $\log(k-1)$ with a slope of $\alpha$. This prompts the idea of estimating $\alpha$ (and later $d$) from the slope of $\mb{L_k}({\cal X})$ as a function of $\log(k-1)$.
\subsection{Intrinsic dimension estimate based on varying bandwidth $k$}

Let $k_1$ and $k_2$ be two different choices of bandwidth parameters. Let $\mb{L_{k_1}}({\cal X})$ and $\mb{L_{k_2}}({\cal Z})$ be the length statistics evaluated at bandwidths $k_1$ and $k_2$ using data ${\cal X}$ and ${\cal Z}$ respectively. A natural choice for the estimate of $\alpha$ would then be
\begin{eqnarray}
\mb{\mb{\hat{\alpha}}} &=& \frac{\mb{L_{k_2}}({\cal Z})-\mb{L_{k_1}}({\cal X})}{\log(k_2-1)-\log(k_1-1)} \nonumber \\
&=& \alpha + \frac{\nu}{N}\sum_{i=1}^{N} \left(\log\mb{\hat{f}_{k_2}}(\mb{Z}{_i}) - \log\mb{\hat{f}_{k_1}}(\mb{X}{_i})\right) \nonumber \\
&=& \alpha + \nu (\mb{\hat{E}_{k_2}}({\cal Z})-\mb{\hat{E}_{k_1}}({\cal X})), \nonumber
\end{eqnarray}
where 
\begin{equation}
\mb{\hat{E}_{k}}({\cal X}) = \frac{1}{N}\sum_{i=1}^{N} \log(\mb{\hat{f}_{k}}(\mb{X_i})), \nonumber
\end{equation}
and $\nu = {-\alpha}/{\log((k_2-1)/(k_1-1))}$. The intrinsic dimension estimate is related to $\mb{\hat{\alpha}}$ by the simple relation $\mb{\mb{\hat{d}}} = \gamma/\mb{\mb{\hat{\alpha}}}$. 

\subsection{Statistical properties of intrinsic dimension estimate}
\label{sec:properties}
We can relate the error in estimation of $\alpha$ to the error in dimension estimation as follows:
\begin{eqnarray}
\mb{\hat{d}}-d &=& \gamma\left(\frac{1}{\mb{\hat{\alpha}}}-\frac{1}{\alpha}\right) \nonumber \\
&=& \gamma\frac{\alpha-\mb{\hat{\alpha}}}{\mb{\hat{\alpha}}\alpha} \nonumber \\
&=& -\frac{\gamma}{\alpha^2}(\mb{\hat{\alpha}}-\alpha) + o(\mb{\hat{\alpha}}-\alpha). \nonumber
\end{eqnarray}
Define $\kappa = -{\gamma\nu}/{\alpha^2}$. We recognize that the density functional estimate $\mb{\hat{E}_{k}}({\cal X})$ is in the form of the plug-in estimators introduced in this report. Using the results on the bias, variance and asymptotic distribution of the density functional estimate $\mb{\hat{E}_{k}}({\cal X})$ established in this report and the above relation between the errors $\mb{\hat{d}}-d$ and $\mb{\hat{\alpha}}-\alpha$, we then have the following statistical properties for the estimate $\mb{\hat{d}}$:

\noindent
\textbf{Estimator bias}
\begin{eqnarray}
\expect[\mb{\hat{d}}]-d &=& \kappa c_{b_1}\left(\left({\frac{k_2}{M}}\right)^{2/d}-\left({\frac{k_1}{M}}\right)^{2/d}\right) \nonumber \\
&+& \kappa c_{b_2}\left(\left(\frac{1}{k_2}\right)-\left(\frac{1}{k_1}\right)\right) \nonumber \\
&+& o\left(\frac{1}{k_1} + \frac{1}{k_2} + \left(\frac{k_1}{M}\right)^{2/d} + \left(\frac{k_2}{M}\right)^{2/d}\right). \nonumber 
\end{eqnarray}
\noindent
\textbf{Estimator variance}
\begin{eqnarray}
\var(\mb{\hat{d}}) &=& 2\kappa^2c_v\left(\frac{1}{N}\right) + o\left(\frac{1}{M} + \frac{1}{N}\right). \nonumber
\end{eqnarray}

\noindent
\textbf{Central limit theorem}

Let $\mb{Z}$ be a standard normal random variable. Then, 
\begin{equation}
\lim_{N,M \to \infty} Pr\left(\frac{\mb{\hat{d}}-\mathbb{E}[\mb{\hat{d}}]}{\sqrt{2\kappa^2c_v/N}} \leq \alpha \right) = Pr(\mb{Z} \leq \alpha). \nonumber 
\end{equation}
\subsection{Optimal selection of parameters}
We have theoretical expressions for the mean square error (M.S.E) of the dimension estimate $\mb{\hat{d}}$, which we can optimize over the free parameters $k_1$, $k_2$, $N$ and $M$. We restrict our attention to the case $k_2=2k$; $k_1=k$. The M.S.E. of $\mb{\hat{d}}$ (ignoring higher order terms) is given by
\begin{eqnarray}
\label{M.S.E.}
\textrm{M.S.E.}(\mb{\hat{d}}) &=& (\expect[\mb{\hat{d}}]-d)^2 + \var[\mb{\hat{d}}] \nonumber \\
&=& \left(C_{b_1}\left({\frac{k}{M}}\right)^{2/d} + C_{b_2}\left(\frac{1}{k}\right)\right)^2 \nonumber \\
&+& C_v\left(\frac{1}{N}\right).
\end{eqnarray}
where $C_{b_1} = \kappa2^{({2/d}-1)}$, $C_{b_2} = \kappa/4$ and $C_v = 2\kappa^2c_v$. 
\subsection*{Optimal choice of bandwidth}
The optimal value of $k$ w.r.t the M.S.E. is given by 
\begin{eqnarray}
\label{eq:optkD}
k_{opt} &=& \lfloor{k_0M^{\frac{2}{2+d}}}\rfloor.
\end{eqnarray}
where the constant $k_0 = (|C_{b_2}|d/2|C_{b_1}|)^{\frac{d}{d+2}}$. 
\subsection*{Optimal partitioning of sample space}
Under the constraint that $N+M = \lfloor{T/2\rfloor}$ is fixed, the optimal choice of $N$ as a function of $M$ is then given by 
\begin{equation}
\label{eq:optp}
N_{opt}=\lfloor N_0M^{\frac{6+d}{2(2+d)}} \rfloor,
\end{equation}
where the constant $N_0 = {\frac{\sqrt{C_v(2+d)}}{2b_0}}$.

\subsection{Improved estimator based on correlated error}
Consider the following alternative estimator for $\alpha$:
\begin{eqnarray}
\mb{\tilde{\alpha}} &=& \frac{\mb{L_{k_2}}({\cal X})-\mb{L_{k_1}}({\cal X})}{\log(k_2-1)-\log(k_1-1)} \nonumber \\
&=& \alpha + \kappa (\mb{\hat{E}_{k_2}}({\cal X})-\mb{\hat{E}_{k_1}}({\cal X})), \nonumber
\end{eqnarray}
and the corresponding density estimate $\mb{\tilde{d}}$ which satisfies
\begin{equation}
\mb{\tilde{d}}-d = -\frac{\gamma}{\alpha^2}(\mb{\tilde{\alpha}}-\alpha) + o(\mb{\hat{\alpha}}-\alpha), \nonumber
\end{equation}
where both the length statistics at bandwidths $k_1$ and $k_2$ are evaluated using the same sample $X$. The density functional estimates $\mb{\hat{E}_{k_1}}({\cal X})$ and $\mb{\hat{E}_{k_2}}({\cal X})$ will be highly correlated (as compared to the independent quantities $\mb{\hat{E}_{k_1}}({\cal X})$ and $\mb{\hat{E}_{k_2}}({\cal Z})$). This implies that the variance of the difference $\mb{\hat{E}_{k_2}}({\cal X}) - \mb{\hat{E}_{k_1}}({\cal X})$ will be smaller when compared to $\mb{\hat{E}_{k_2}}({\cal Z}) - \mb{\hat{E}_{k_1}}({\cal X})$, (while the expectation remains the same).

Since the estimator bias is unaffected by this modification, the variance reduction suggests that $\tilde{d}$ will be
an improved estimator as compared to $\mb{\hat{d}}$ in terms of M.S.E.. In order to obtain statistical properties for the improved estimator $\mb{\tilde{d}}$ (equivalent to the properties developed
in Section \ref{sec:properties} for the original estimator $\mb{\hat{d}}$), we need to analyze
the joint distribution between $\mb{\hat{f}_{k_1}}(X_i)$ and $\mb{\hat{f}_{k_2}}(X_j)$ for two distinct
values $k_1$ and $k_2$. Our theory, at present, cannot address the case of distinct bandwidths $k_1$ and $k_2$.

Since the estimate $\mb{\tilde{d}}$ has smaller M.S.E. compared to
$\mb{\hat{d}}$, M.S.E. predictions for the estimate $\mb{\hat{d}}$ can serve as
upper bounds on the M.S.E. performance of the improved estimate $\mb{\tilde{d}}$.

\subsection{Simulations}
\label{sec:illust}

\begin{figure}[!t]
  \begin{center}
    \includegraphics[width=4in]{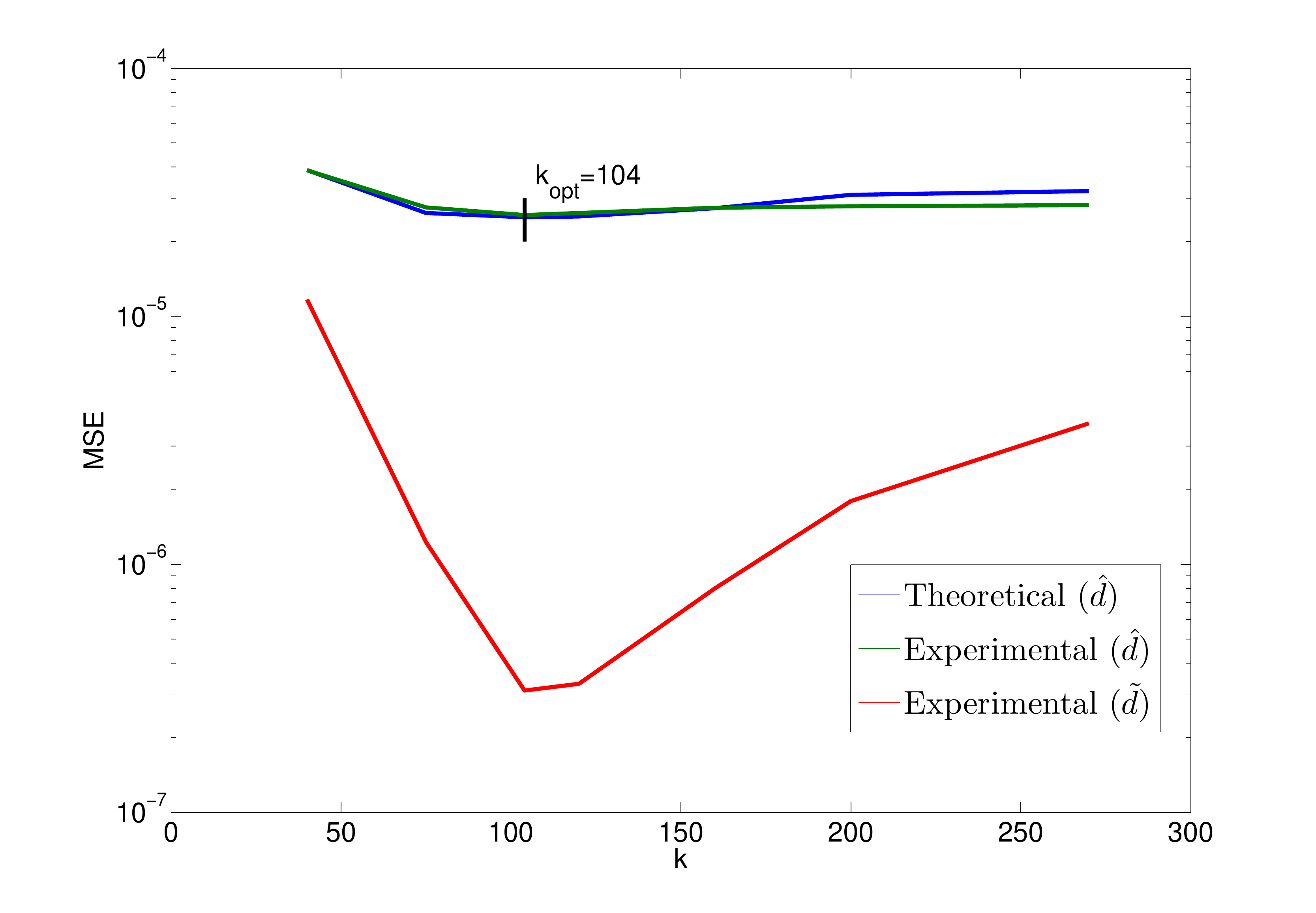}
  \end{center}
\caption{Comparison of theoretically predicted and experimental M.S.E. for varying choices of $k$. The experimental performance of the estimator $\mb{\hat{d}}$ is in excellent agreement with the theoretical expression and, as predicted by our theory, the modified estimator $\mb{\tilde{d}}$ significantly outperforms $\mb{\hat{d}}$.}
\label{fig:res1}
\end{figure}

We generate $T=10^5$ samples ${\cal B}$ drawn from a $d=2$ mixture density $f_m = .8f_\beta + .2f_u$, where $f_\beta$ is the product of two $1$ dimensional marginal beta distributions with parameters $\alpha=2$, $\beta=2$ and $f_u$ is a uniform density in $2$ dimensions. These samples are then projected to a $3$-dimensional hyperplane in $\mathbb{R}^3$ by applying the transformation ${\cal Y}=U{\cal B}$ where $U$ is a $3 \times 2$ random matrix whose columns are orthonormal. We apply our intrinsic dimension estimates on the samples ${\cal Y}$.

\subsection*{Optimal selection of free parameters}
In our first experiment, we theoretically compute the optimal choice of $k$ for a fixed partition with $M=3.5\times10^4$ and $N=1.5\times10^4$. We then show the variation of the theoretical and experimental M.S.E. of the estimate $\mb{\hat{d}}$ and the experimental M.S.E. of the improved estimate $\mb{\tilde{d}}$ with changing bandwidth $k$ in Fig. \ref{fig:res1}. In our second experiment, we compute the optimal partition according to eq.~(\ref{eq:optp}) and show the variation of M.S.E. with varying choices of partition in Fig. \ref{fig:res2}.

From our experiments, we see that there is good agreement between our theory and simulations. As a consequence, we find the theoretically predicted optimal choices of $k,N$and $M$ to minimize the observed M.S.E.. In addition, as predicted by our theory, the modified estimator $\mb{\tilde{d}}$ significantly outperforms $\mb{\hat{d}}$. The theoretically predicted M.S.E. for $\mb{\hat{d}}$ therefore serves as a strict upper bound for the M.S.E. of the improved estimator $\mb{\tilde{d}}$.

\begin{figure}[!t]
  \begin{center}
    \includegraphics[width=4in]{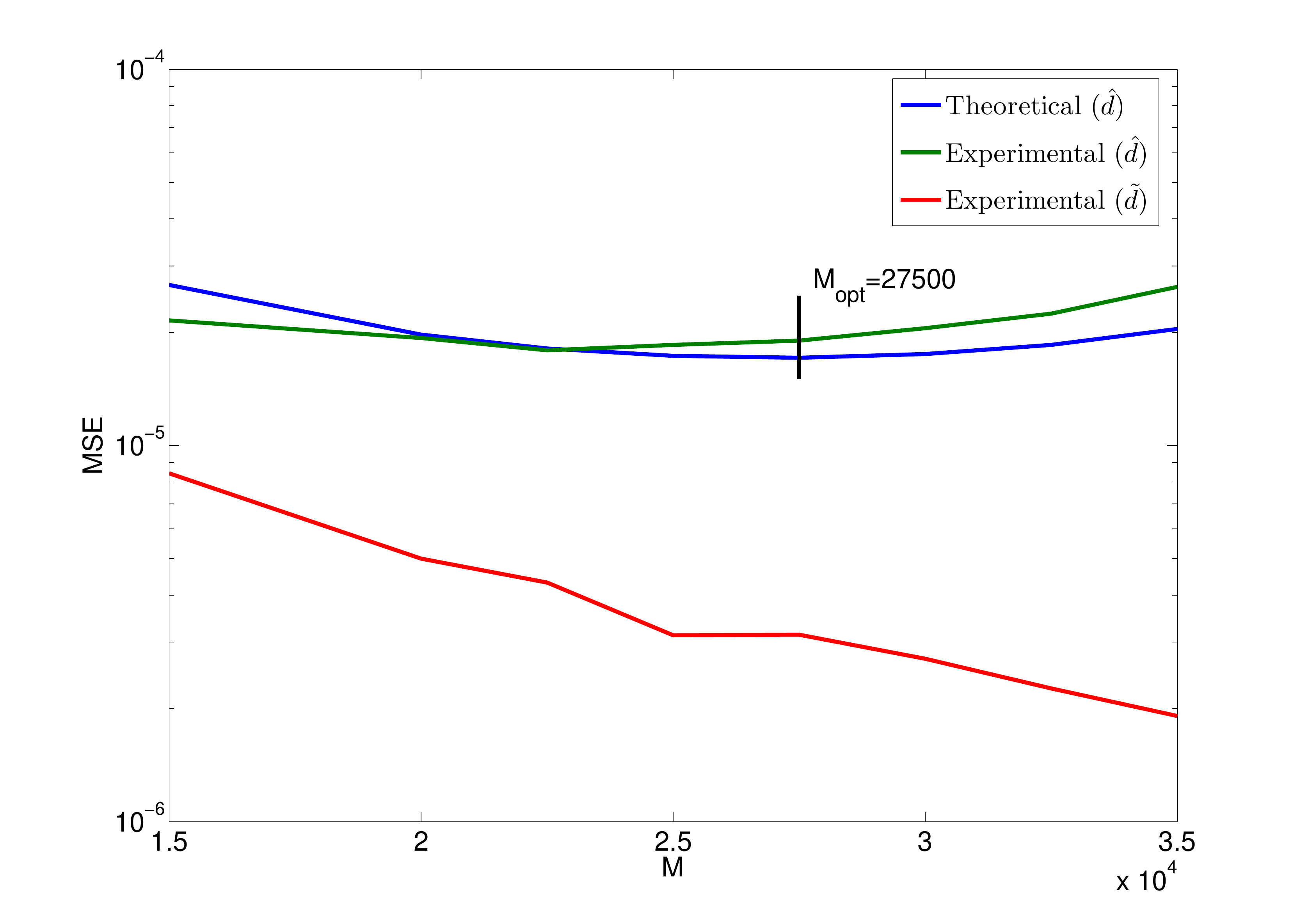}
  \end{center}
\caption{Comparison of theoretically predicted and experimental M.S.E. for varying choices of $M$. The experimental performance of the estimator $\mb{\hat{d}}$ is in excellent agreement with the theoretical expression and, as predicted by our theory, the modified estimator $\mb{\tilde{d}}$ significantly outperforms $\mb{\hat{d}}$.}
\label{fig:res2}
\end{figure}

\subsection*{Comparison of dimension estimation methods}

We compare the performance of our proposed dimension estimators to the estimated proposed by Frahmand et.~al.~\cite{Farahmand&etal:ICML07} (denote as $\mb{\hat{d}}_{f}$) and Costa et.~al.~\cite{Costa&etal:ICASSP04} (denote as $\mb{\hat{d}}_j$). 

Expressions for the optimal bandwidth $k$ (eq.~(\ref{eq:optk})) and partition $N,M$ (eq.~(\ref{eq:optp})) depend on the unknown intrinsic dimension $d$ and constants $c_{b_1}$, $c_{b_2}$ and ${c_v}$ which depend on unknown density $f$. The constants $c_{b_1}$, $c_{b_2}$ and ${c_v}$ can be estimated from the data using plug-in methods similar to the ones used by Raykar et.~al.~\cite{raykar} for optimal bandwidth selection for kernel density estimation . To establish the potential advantages of our dimension estimators we compare an omniscient optimal form of our estimator, for which the true
values of these constants are known, to a suboptimal form of our estimator that does not know the constants.

For the optimal estimator, we theoretically compute the optimal choice for $k$, $N$ and $M$ for different choices of total sample size $T$ (sub-sampled from the initial $10^5$ samples), and use these optimal parameters for the estimators $\mb{\hat{d}}$ and $\mb{\tilde{d}}$. We use this optimal choice of bandwidth $k$ for the estimators $\mb{\hat{d}}_{f}$ and $\mb{\hat{d}}_j$ as well (partitioning not applicable). For the suboptimal estimator, we arbitrarily choose the parameters as follows: fixed $k$ = 20, $N = T/50$, $M=\lfloor{T/2\rfloor}-N$.

\begin{figure}[!t]
  \begin{center}
    \includegraphics[width=4in]{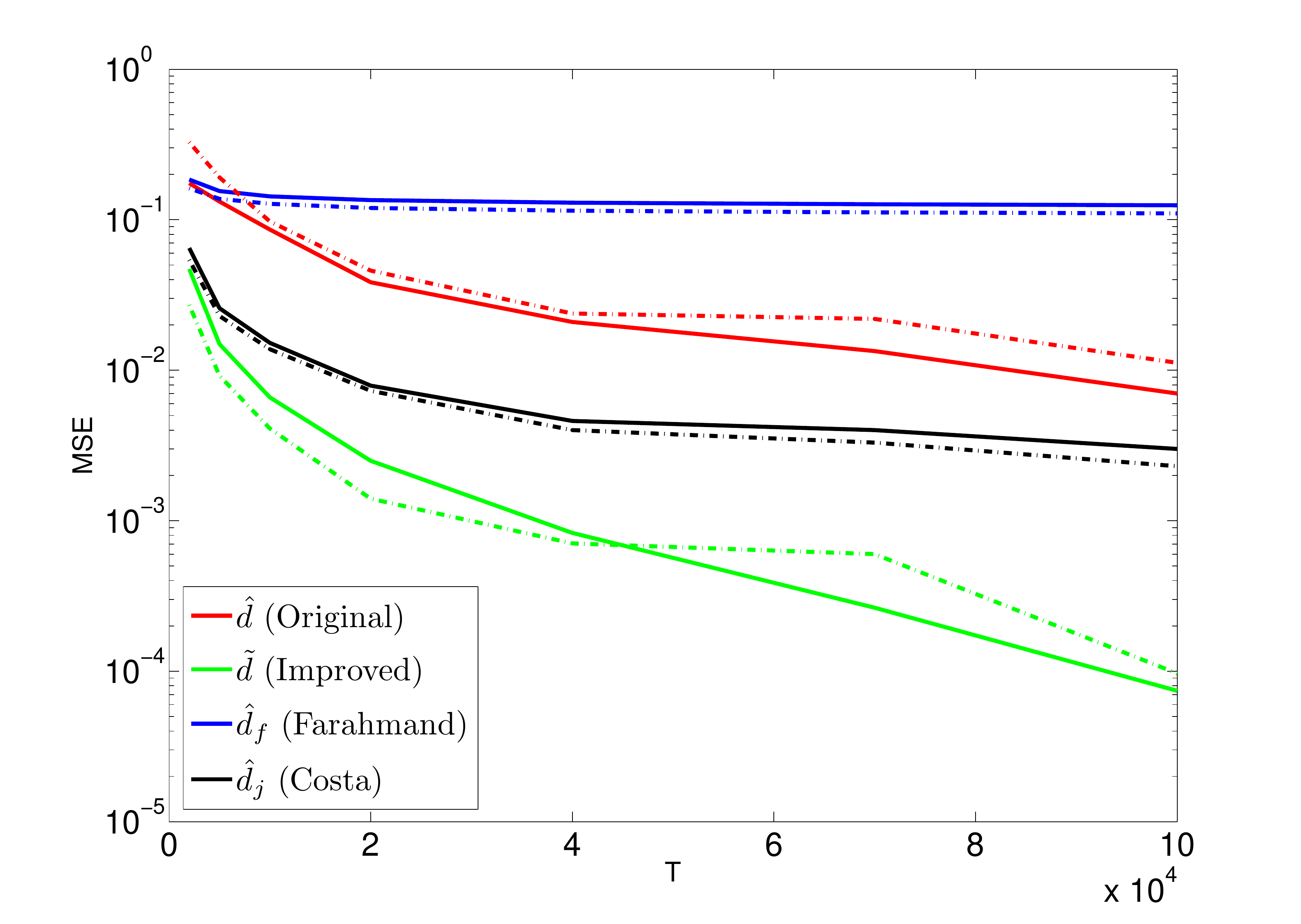}
  \end{center}
\caption{Comparison of performance of dimension estimates (Solid line: Optimal (optimal choice of $k$,$N$ and $M$ as per eq.~(\ref{eq:optk}) and eq.~(\ref{eq:optp})); Dashed line: Suboptimal (fixed $k$ = 20, $N = T/50$, $M=\lfloor{T/2\rfloor}-N$)): The proposed improved kNN distance estimator outperforms all other estimators considered.}
\label{fig:comp}
\end{figure}

The performance of these estimators as a function of sample size $T$ is shown in Fig. \ref{fig:comp}. Estimators with optimal choice of parameters are indicated in solid line, and the suboptimal estimators are indicated in dashed lines.

From our experiments we see that the performance of the original estimator $\mb{\hat{d}}$ with suboptimal choice of parameters is marginally inferior when compared to the estimator with optimal choice of parameters. This does not hold for the other estimators as can be expected since the parameters are optimized w.r.t. the performance of $\mb{\hat{d}}$.

We note that the improved estimator $\mb{\tilde{d}}$ outperforms all other estimators while the performance of our original estimator $\mb{\hat{d}}$ is sandwiched between $\mb{\hat{d}}_f$ and $\mb{\hat{d}}_j$. We conjecture that the performance of $\mb{\hat{d}}_j$ is superior to $\mb{\hat{d}}$ for the same reason that $\mb{\tilde{d}}$ outperforms $\mb{\hat{d}}$: correlated error between different length statistics. 

\subsection*{Anomaly detection in Abilene network data}

Anomalies can be detected in router netowrks by estimating the local dimension at each time point and monitoring change in dimension. The data used is the number of packets sent by each of the 11 routers on the abiline network between January 1-2, 2005. A sample is taken every 5 minutes, leading to 576 samples with an extrinsic dimension pf 11.

\begin{figure}[!t]
  \begin{center}
    \includegraphics[width=4in]{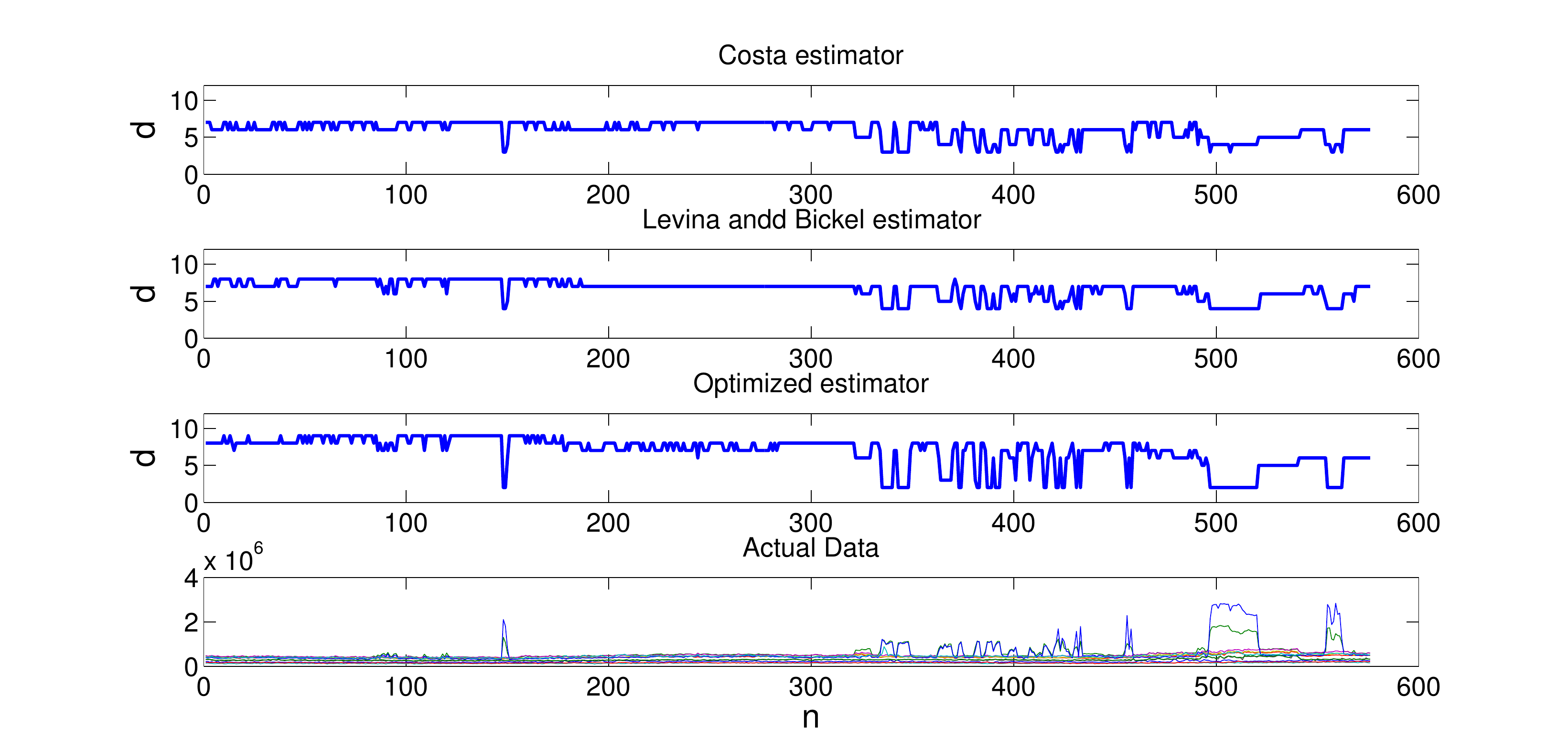}
  \end{center}
\caption{Comparison of performance of dimension estimates for anomaly detection in Abilene network data.}
\label{fig:abilene}
\end{figure}

The performance of different dimension estimators is shown in Fig.~\ref{fig:abilene}. We know that simulataneous peaks in router traffic should imply strong correlation between the routers and therefore lower intrinsic dimension. This behaviour is clearly reflected better by the optimized estimator as compared to the estimator of Costa et.~al.~\cite{Costa&etal:ICASSP04} and Levina and Bickel~\cite{levbick}.

\section{Conclusion}
\label{sec:conc}
A new class of boundary compensated bipartite k-NN density plug-in estimators was proposed for estimation of smooth non-linear functionals of densities that are strictly bounded strictly away from 0 on their finite support. These estimators, called bipartite plug-in (BPI) estimators, correct for bias due to boundary effects and outperform previous $k$-NN entropy estimators in terms of MSE convergence rate. Expressions for asymptotic bias and variance of the estimator were derived estimator in terms of the sample size, the dimension of the samples and the underlying probability distribution. In addition, a central limit theorem was developed for the proposed BPI estimators. The accuracy of these asymptotic results were validated through simulation and it was established that the theory can be used to specify optimal finite sample estimator tuning parameters such as bandwidth and optimal partitioning of data samples.

Our theory has two important by-products: (1) We established similarity between the moments of $k$-NN density estimates and kernel density estimates. This in turn implies that plug-in estimators based on $k$-NN density estimators and kernel density estimators have asymptotically equal rates of convergence. (2) We developed an algorithm for detection and correction of density estimates at boundary points for densities with finite support. This correction helps reduce the bias of density estimates at the boundaries of the support of the density, thereby reducing the overall bias of the plug-in estimators.

Using the theory presented in the paper, one can tune the parameters of the plug-in estimator to achieve
minimum asymptotic estimation MSE. Furthermore, the theory can be used to specify the minimum necessary sample
size required to obtain requisite accuracy. This in turn can be used to predict and optimize performance in
applications like structure discovery in graphical models and dimension estimation for support sets of low intrinsic dimension.
We applied our theory to the problem of estimating Shannon entropy and Shannon mutual information. Furthermore, we used the Shannon entropy estimator to discover structure in high dimensional data and to determine the intrinsic dimension of data samples.


\newpage
For the reader's convenience, the notation used in this paper is listed in the table below.
\begin{center}
  \small{
\begin{tabular}{|c|l|}
  \hline
  Notation & Description \\
  \hline \hline
$\hat{\mb{G}}_N(\mb{\tilde{f}}_k)$ & BPI estimator (\ref{eq:plugin}) \\
  $\hat{\mb{G}}_{N,BC}(\mb{\tilde{f}}_k)$ & BPI estimator with bias compensation (\ref{eq:pluginwithbc}) \\
  $g_1(k,M), g_2(k,M)$ & Bias correction factors \\
  ${\cal S}$ & Support of density $f$ \\
  $d$ & dimension of support ${\cal S}$  \\
  $c_d$ & unit ball volume in $d$ dimensions \\
  $\{\mb{X}_1, \ldots, \mb{X}_T, \mb{Y}, \mb{Z}\}$ & $T+2$ independent realizations drawn from $f$ \\
  ${\cal X}_N$ & $\{\mb{X}_1, \ldots, \mb{X}_N\}$ \\
  ${\cal X}_M$ & $\{\mb{X}_{N+1}, \ldots, \mb{X}_{N+M}\}$ \\
  ${\cal S}_I$ & Interior of support \\
  ${\cal I}_N$ & Interior points subset of ${\cal X}_N$ \\
  ${\cal B}_N$ & Boundary points subset of ${\cal X}_N$ \\
  $\mb{Z}_{-1}$ & Closest interior point to $\mb{Z}$; $\mb{Z}_{-1} = \text{arg} \min_{x \in {\cal S}_I} d(x,{\mb{Z}})$ \\
  $\mb{X}_{n(i)}$ & $\mb{X}_{n(i)} \in \cal{I}_N$ is the interior sample point that is closest to $\mb{X}_{i} \in {\cal B}_N$ \\
  $\delta$ & Constant; $\delta \in (2/3,1)$ \\
  $\epsilon_{BC} = N \exp(-3k^{(1-\delta)})$ & Probability of misclassification of $x \in {\cal S-S}_I$ as interior point \\
  $\mb{d}_k(X)$ & $k$-NN ball radius \\
  $\mb{S}_k(X)$ & $k$-NN ball \\
  $\mb{V}_k(X)$ & $k$-NN ball volume \\
  $\mb{P}(X)$ & Coverage function \\
  $\hat{\mb{f}}_{k}(X)$ & $k$-NN density estimate \\
  $\tilde{\mb{f}}_{k}(X)$ & Boundary corrected $k$-NN density estimate \\
  $g^{(n)}(x,y)$ & $n$-th derivative of $g(x,y)$ wrt $x$ \\
  $\mb{p}$ & beta random variable with parameters $k,M-k+1$ \\
  $\alpha_{frac}$ & Proportionality constant; $M = \alpha_{frac}T$ and $N = (1-\alpha_{frac})T$ \\
  $\epsilon_0$, $\epsilon_\infty$ & constants such that $\epsilon_0 \leq f(x) \leq \epsilon_\infty$ $\forall x \in {\cal S}$ \\
  $2\nu$ & Number of times $f$ is assumed to be differentiable \\
  $\lambda$ & Number of times $g(x,y)$ is assumed to be differentiable wrt $x$ \\
  $c_1,..,c_5$ & Constants appearing in Theorems III.1, III.2, III.3 and IV.1, IV.2, IV.3 \\
  ${\cal C}(k)$ & Function which satisfies the rate of decay condition ${\cal C}(k)= O(e^{-3k^{(1-\delta)}})$ \\
  $k_M$ & $k_M = (k-1)/M$ \\
  $\natural(X)$ & The event $\mb{P}(X)>(1-p_k)k_M$ \\
  $\natural_{-1}(X)$ & The event $\mb{P}(X)<(1+p_k)k_M$ \\
  $\natural \natural(X)$ & The event $(1-p_k)k_M < \mb{P}(X) < (1+p_k)k_M$ \\
  $\mb{e}_k(X)$ & Error function $\mb{e}_k(X) = \hat{\mb{f}}_k(X) - \expect[\hat{\mb{f}}_k(X) \mid X] $ \\
  $\mb{e}(X)$ & Error function $\mb{e}(X) = \tilde{\mb{f}}_k(X) - \expect[\tilde{\mb{f}}_k(X) \mid X] $\\       
\hline
\end{tabular}
}
\end{center}
\newpage

\appendix
\appendixpage

\section{Uniform kernel density estimation}
\label{uniformmoments}

Throughout this section, we will derive results on moments of the uniform kernel density estimates for points in the set ${\cal S'} = \{X: \mb{S_u}(X) \subset {\cal S} \} $. This definition implies that the density $f$ has continuous partial derivatives of order $2r$ in the uniform ball neighborhood for each $X \in {\cal S'}$ where $r$ satisfies the condition $2r(1-t)/d > 1$. This excludes the set of points close to the boundary of the support, where the continuity assumption of the density is not satisfied. We will deal with these points in Appendix C. 

Let $\mb{X}_{1},..,\mb{X}_{M}$ denote $M$ i.i.d realizations of the density f. We will assume that $f$ is continuously differentiable evrywhere in the interior of the sWe seek to estimate the density at $X$ from the $M$ i.i.d realizations $\mb{X}_{1},..,\mb{X}_{M}$. Let $c_d$ denote the volume of a unit hyper-sphere in $d$ dimensions. The uniform kernel density estimator is defined as follows:

\subsection{Uniform kernel density estimator}

The \emph{uniform kernel} density estimator is defined below. The volume of the uniform kernel is given by

\begin{equation}
  V_u(X) = \frac{k}{M},
\end{equation}

and the kernel region is given by

\begin{equation}
S_u(X) = \{Y:c_d||X-Y||^d \leq V_u\}.
\end{equation}

$\mb{l_u}(X)$ denotes the number of points falling in $S_u(X)$ 

\begin{equation}
  \mb{l_u}(X) = \Sigma_{i=1}^{M} 1_{X_i \in S_u(X)},
\end{equation}

and the \emph{uniform kernel} density estimator is defined by

\begin{equation}
  \hat{\mb{f}}_\mb{u}(X) = \frac{\mb{l_u}(X)}{MV_u(X)}.
\end{equation}

The \emph{coverage} of the \emph{uniform kernel} is defined as

\begin{eqnarray}
U(X) = \int_{S_u(X)}f(z) dz = \expect{[1_{\mb{Z} \in S_u(X)}]}.
\end{eqnarray}

We observe that $\mb{l_u}(X)$ is a binomial random variable with parameters $M$ and $U(X)$. Figure \ref{fig-label3} illustrates the \emph{uniform kernel} density estimate.

\begin{figure}[!ht]
  \begin{center}
    \includegraphics[width=6in]{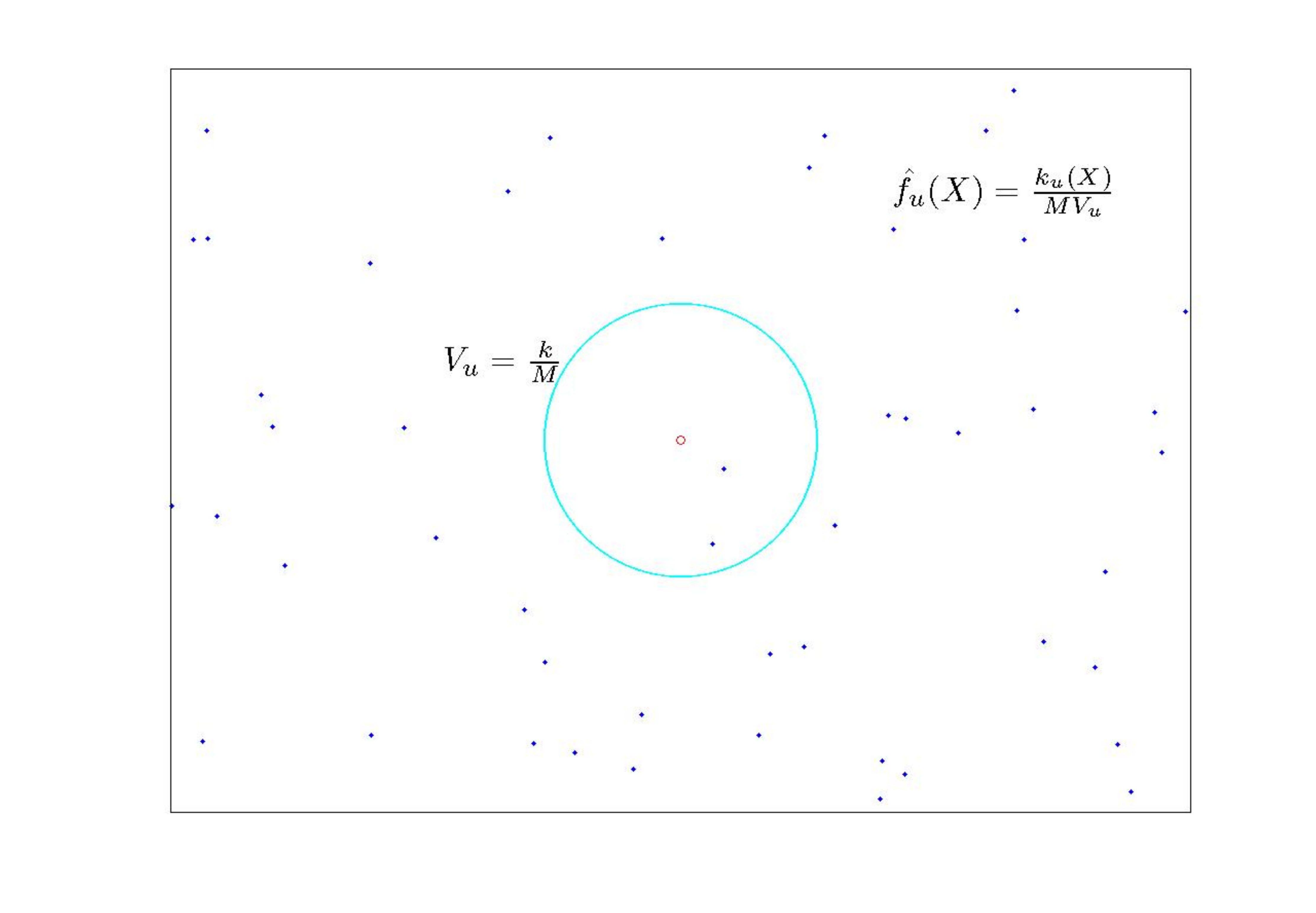}
  \end{center}

  \caption{\small Uniform kernel density estimator.}
  \label{fig-label3}
\end{figure}

\subsection{Taylor series expansion of {coverage}}

\textbf{We assume that the density $f$ has continuous partial derivatives of third order} in a neighborhood of $X$. For small volumes $V_u(X)$ (which is equivalent to the condition that $k/M$ is small), we can represent the {coverage} function $U(X)$ by using a  third order Taylor series expansion of $f$ about about $X$ \cite{fuk}.

\begin{eqnarray}
U(X) &=& \int_{S_u(X)}f(Z) dZ \nonumber \\
&=& f(X)V_u(X) + c(X)V_u^{1+2/d}(X) + o(V_u^{1+2/d}(X)) \nonumber \\
&=& f(X)\frac{k}{M} + c(X){\left(\frac{k}{M}\right)}^{1+2/d} + o\left({\left(\frac{k}{M}\right)}^{1+2/d}\right),  
\end{eqnarray}

where $c(X)=\Gamma^{(2/d)}(\frac{n+2}{2})tr[\nabla^2(f(X))]$. 

\subsection{Concentration inequalities for uniform kernel density}

Because $\mb{l_u}(X)$ is a binomial random variable, we can apply standard Chernoff inequalities to obtain concentration bounds on the density estimate. $\mb{l_u}(X)$ is a binomial random variable with parameters $M$ and $U(X)$.

\subsubsection{Concentration around true density} 

For $0<p<1/2$,
\begin{equation}
Pr({\mb{l_u}(X)>(1+p)MU(X)}) \leq e^{-MU(X)p^2/4}, 
\end{equation}
and 
\begin{equation}
Pr({\mb{l_u}(X)<(1-p)MU(X)}) \leq e^{-MU(X)p^2/4}.
\end{equation}

Using the Taylor expansion of coverage, we then have
\begin{equation}
Pr({\hat{\mb{f}}_\mb{u}(X)>(1+p)(f(X)+O((k/M)^{2/d}))}) \leq \sim e^{-p^2kf(X)/4}, 
\end{equation}
and 
\begin{equation}
Pr({\hat{\mb{f}}_\mb{u}(X)<(1-p)(f(X)+O((k/M)^{2/d}))}) \leq \sim e^{-p^2kf(X)/4}.
\end{equation}

This then implies that 
\begin{equation}
Pr({\hat{\mb{f}}_\mb{u}(X)>(1+p)f(X)}) \leq \sim e^{-p^2kf(X)/4}, 
\end{equation}
and 
\begin{equation}
Pr({\hat{\mb{f}}_\mb{u}(X)<(1-p)f(X)}) \leq \sim e^{-p^2kf(X)/4}.
\end{equation}

Let $\mb{X}$ be a random variable with density $f$ independent of the $M$ i.i.d realizations $\mb{X}_{1},..,\mb{X}_{M}$.
Then,
\begin{eqnarray}
Pr({\hat{\mb{f}}_\mb{u}(\mb{X})>(1+p)f(\mb{X})}) &=& \expect_\mb{X} [Pr({\hat{\mb{f}}_\mb{u}(\mb{X})>(1+p)f(\mb{X})})] \nonumber \\
&\leq& \expect[\sim(e^{-p^2kf(\mb{X})/4})] \nonumber \\
&=& \sim e^{-p^2k/4},
\end{eqnarray}
and
\begin{eqnarray}
Pr({\hat{\mb{f}}_\mb{u}(\mb{X})<(1-p)f(\mb{X})}) &=& \expect_\mb{X} [Pr({\hat{\mb{f}}_\mb{u}(\mb{X})<(1-p)f(\mb{X})})] \nonumber \\
&\leq& \expect[\sim(e^{-p^2kf(\mb{X})/4})] \nonumber \\
&=& \sim e^{-p^2k/4}.
\end{eqnarray}

\subsubsection{Concentration away from $0$}

We can also bound the density estimate away from $0$ as follows:
\begin{eqnarray}
Pr(\hat{\mb{f}}_\mb{u}(\mb{X})=0) &=& \expect_\mb{X} [Pr(\hat{\mb{f}}_\mb{u}(\mb{X})=0] \nonumber \\
&=& \expect[(1-U(X))^M] \nonumber \\
&=& \expect [(1-(kf(X)+o(k)/M)^M] \nonumber \\
&=& \expect [((1-(kf(X)+o(k)/M)^{M/(kf(X)+o(k))})^{kf(X)+o(k)}] \nonumber \\
&=& \expect [\sim (1/e)^{kf(X)+o(k)}] \nonumber \\
&=& \sim e^{-k}. 
\end{eqnarray}

\subsection{Central Moments}

Define the error function of the uniform kernel density,
\begin{eqnarray}
\mb{e_u}(X) &=& \hat{\mb{f}}_\mb{u}(X)-\expect[\hat{\mb{f}}_\mb{u}(X)]. 
\end{eqnarray}
The probability mass function of the binomial random variable $\mb{l_u}(X)$ is given by
\begin{equation}
Pr(\mb{l_u}(X)=l_x) = \binom{M}{l_x} (U(X))^{l_x}(1-U(X))^{M-l_x}. \nonumber \\
\end{equation}
Since $\mb{l_u}(X)$ is a binomial random variable, we can easily obtain moments of the uniform kernel density estimate. These are listed below.

First Moment:
\begin{eqnarray}
\expect{[\hat{\mb{f}}_\mb{u}(X)]}-f(X) &=& \frac{M}{k}U(X) - f(X) \nonumber \\
&=& c(X)\left({\frac{k}{M}}\right)^{2/d} + o\left(\left({\frac{k}{M}}\right)^{2/d}\right).
\end{eqnarray}

Second Moment:
\begin{eqnarray}
\label{secmom}
\var[\hat{\mb{f}}_\mb{u}(X)] &=& \expect{[\mb{e}^2_\mb{u}(X)]}  \nonumber \\
&=& \frac{M}{k^2}{U(X)(1-U(X))} \nonumber \\
&=& f(X)\frac{1}{k} + o\left(\frac{1}{k}\right).
\end{eqnarray}

Higher Moments:
For any integer $r\geq3$, 
\begin{eqnarray}
\label{fourmom}
\expect{[\mb{e}^r_{u}(X)]} &=& O\left(\frac{1}{k^{r/2}}\right).
\end{eqnarray}

\subsection{Covariance}

Let $X$ and $Y$ be two distinct points. Clearly the density estimates at $X$ and $Y$ are not independent. We expect the density estimates to have positive covariance if $X$ and $Y$ are close and have negative covariance if $X$ and $Y$ are far. This is illustrated in Figure \ref{fig-label8}.

\begin{figure}[ht!]
  \begin{center}
    \includegraphics[width=6in]{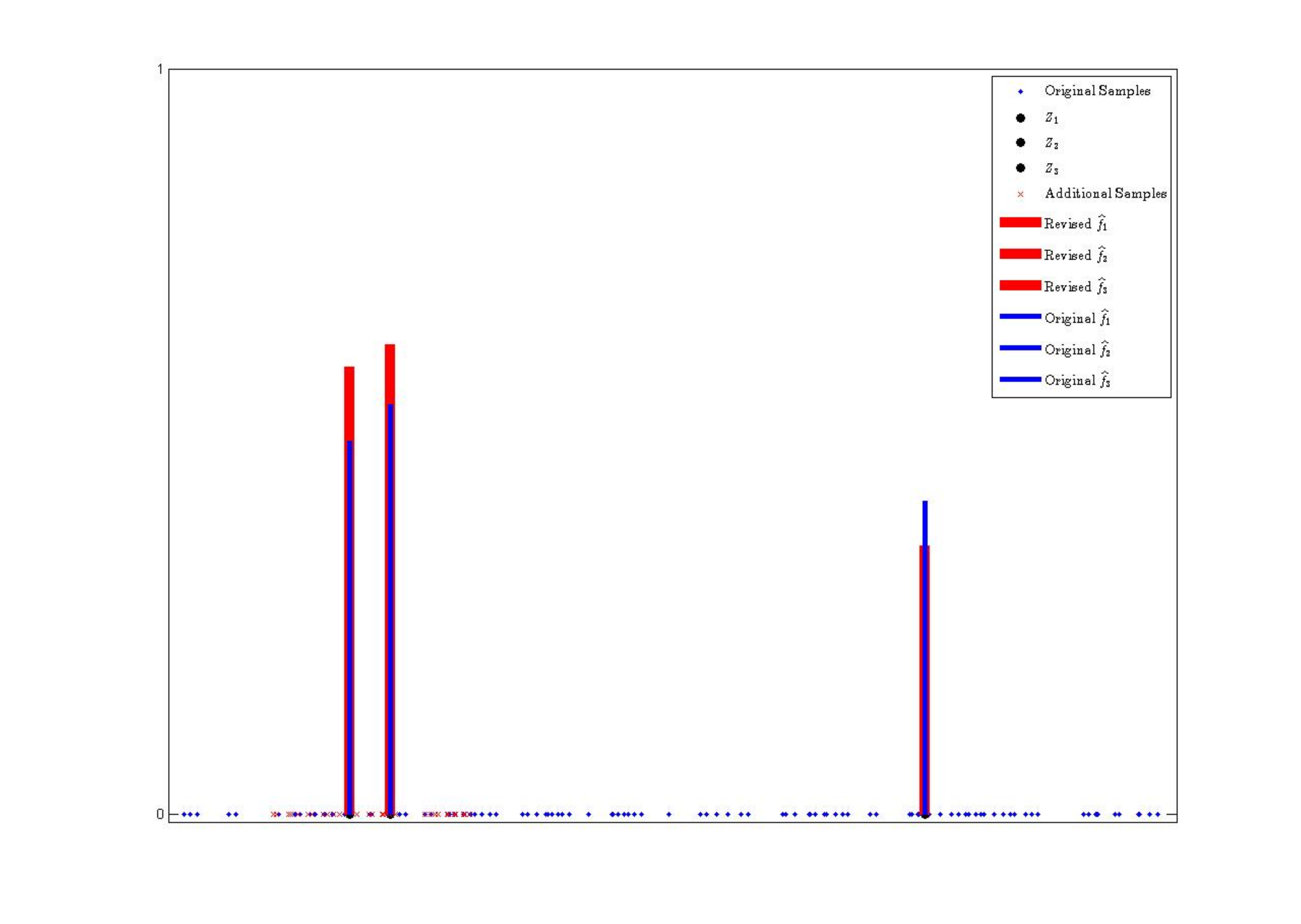}
  \end{center}

  \caption{\small Covariance between uniform kernel density estimates.}
  \label{fig-label8}
\end{figure}

Observe that the uniform kernels are disjoint for the set of points given by $\Psi_u := \{X,Y\}:||X-Y||\geq 2(k/c_dM)^{1/d}$, and have finite intersection on the complement of $\Psi_u$. Indeed we will show that when the uniform balls intersect (and therefore $X$ and $Y$ are close), the density estimates have positive covariance and that they have negative covariance when the uniform kernels are disjoint. Intersecting and disjoint balls are illustrated in Figure \ref{fig-label88}.

\begin{figure}[ht!]
  \begin{center}
    \includegraphics[width=6in]{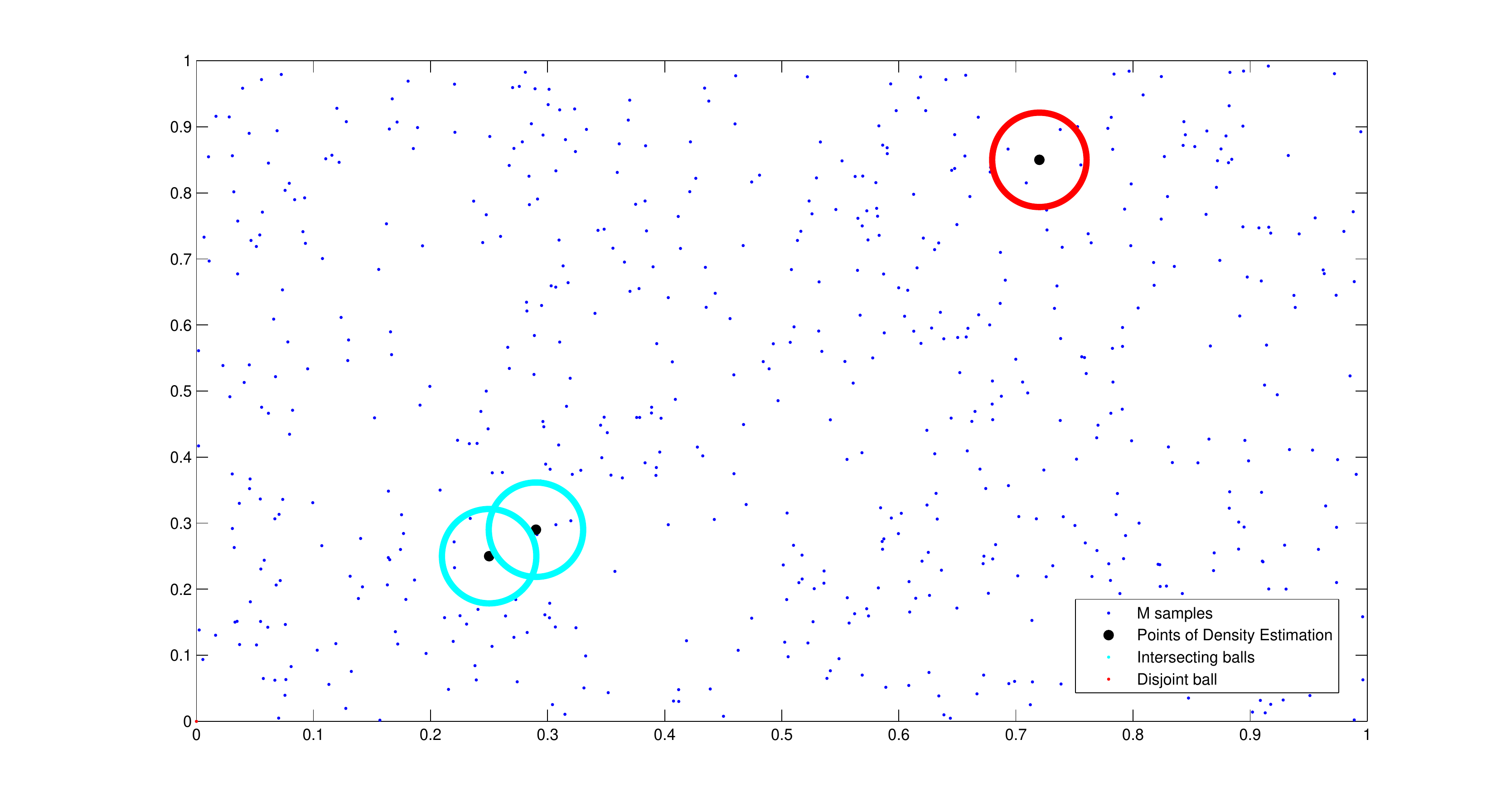}
  \end{center}

  \caption{\small Intersecting and disjoint balls.}
  \label{fig-label88}
\end{figure}

Define,
\begin{equation}
U(X,Y) := \expect[1_{\mb{Z} \in S_u(X)}1_{\mb{Z} \in S_u(Y)}]. 
\end{equation}
%
\paragraph{Intersecting balls}

\begin{lemma}
\label{intersectu}

For a fixed pair of points $\{X,Y\} \in \Psi_u$,

\begin{equation}
Cov[\mb{e_u}(X),\mb{e_u}(Y)] = \frac{-f(X)f(Y)}{M} + o\left(\frac{1}{M}\right). \nonumber 
\end{equation}

\end{lemma}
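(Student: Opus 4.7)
The plan is to exploit the disjointness of the two uniform balls guaranteed by $\{X,Y\}\in\Psi_u$, together with the i.i.d.\ structure of the sample, to reduce the covariance to a single-sample indicator covariance that is automatically of the desired negative product form.

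First I would remove the constant shifts: since $V_u=k/M$ is deterministic, $\hat{\mb{f}}_\mb{u}(X)=\mb{l_u}(X)/k$, so $Cov[\mb{e_u}(X),\mb{e_u}(Y)]=k^{-2}\,Cov[\mb{l_u}(X),\mb{l_u}(Y)]$. Next I would expand $\mb{l_u}(X)=\sum_{i=1}^M 1_{\mb{X}_i\in S_u(X)}$ and similarly for $Y$, so that independence of the $\mb{X}_i$ collapses the double sum to $M$ times the covariance of a single indicator pair. The hypothesis $\|X-Y\|\geq 2(k/(c_dM))^{1/d}$ means the two uniform balls (each of radius $(k/(c_dM))^{1/d}$) are disjoint, so $1_{\mb{X}_i\in S_u(X)}\cdot 1_{\mb{X}_i\in S_u(Y)}\equiv 0$, whence
\[
Cov[\mb{l_u}(X),\mb{l_u}(Y)]\;=\;M\bigl(0-U(X)U(Y)\bigr)\;=\;-M\,U(X)U(Y).
\]

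Finally, I would substitute the Taylor expansion of the coverage derived earlier in this appendix, $U(X)=f(X)(k/M)+c(X)(k/M)^{1+2/d}+o((k/M)^{1+2/d})$, to obtain $U(X)U(Y)=f(X)f(Y)(k/M)^2(1+o(1))$. Combining with the previous display,
\[
Cov[\mb{e_u}(X),\mb{e_u}(Y)]\;=\;-\frac{M\,U(X)U(Y)}{k^2}\;=\;-\frac{f(X)f(Y)}{M}\;+\;o\!\left(\frac{1}{M}\right),
\]
as claimed. There is no real obstacle here: the disjointness hypothesis annihilates the only term that could offset the $-U(X)U(Y)$ contribution, and what remains is a routine substitution of the coverage expansion. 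The genuinely harder companion computation is the intersecting--ball case, in which $\expect[1_{\mb{X}_i\in S_u(X)}\,1_{\mb{X}_i\in S_u(Y)}]=\int_{S_u(X)\cap S_u(Y)}f\,d\mu$ is nonzero and must itself be expanded in $k/M$ about the Lebesgue volume of the lens $S_u(X)\cap S_u(Y)$.
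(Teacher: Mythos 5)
Your proof is correct and follows essentially the same route as the paper's: pass from $\mb{e_u}$ to the binomial counts, use the i.i.d.\ structure to collapse the covariance to $\frac{M}{k^2}\bigl(U(X,Y)-U(X)U(Y)\bigr)$, observe that disjointness of the balls forces $U(X,Y)=0$, and then substitute the Taylor expansion of the coverage.
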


\begin{proof}

For $\{X,Y\} \in \Psi_u$, we have that $1_{\mb{Z} \in S_u(X)}1_{\mb{Z} \in S_u(Y)} = 0$ and therefore $U(X,Y) =0$.

We then have,

\begin{eqnarray}
Cov[\mb{e_u}(X),\mb{e_u}(Y)] &=& \expect[(\hat{\mb{f}}_\mb{u}(X)-\expect[\hat{\mb{f}}_\mb{u}(X)])(\hat{\mb{f}}_\mb{u}(Y)-\expect[\hat{\mb{f}}_\mb{u}(Y)])] \nonumber \\
&=& \frac{M}{k^2}{}\expect[(1_{\mb{Z} \in S_u(X)}-U(X))(1_{\mb{Z} \in S_u(Y)}-U(Y))] \nonumber \\
&=& \frac{M}{k^2}{}\expect[1_{\mb{Z} \in S_u(X)}1_{\mb{Z} \in S_u(Y)}-U(X)U(Y)] \nonumber \\
&=& \frac{M}{k^2}{}(U(X,Y) - U(X)U(Y)) \nonumber \\
&=& -\frac{M}{k^2}{}[U(X)U(Y)] = \frac{-f(X)f(Y)}{M} + o\left(\frac{1}{M}\right). \nonumber
\end{eqnarray}

\end{proof}

\paragraph{Disjoint balls}

For $\{X,Y\} \in \Psi_u^c$, there is no closed form expression for the covariance. However we have the following lemmas:

Let $R_u(X)$ and $R_u(Y)$ denote the (constant and equal) radii of the uniform balls respectively. Define $\aleph(||X-Y||/R_u(X)) = V(S_u(X) \cap S_u(Y))/V_u(X)$ where $V(S_u(X) \cap S_u(Y))$ is the volume of the intersection of the two balls.

We observe that,

\begin{eqnarray}
\aleph(||X-Y||/R_u(X)) &=& V(S_u(X) \cap S_u(Y))/V_u(X) \nonumber \\
&=& \frac{V[1_{\mb{Z} \in B(0,R_u(X))}1_{\mb{Z} \in B(||Y-X||,R_u(Y))}]}{V_u(X)} \nonumber \\
&=& \frac{V[1_{\mb{Z} \in B(0,1)}1_{\mb{Z} \in B(||Y-X||/R_u(X),1)}]}{V[1_{\mb{Z} \in B(0,1)}]} \nonumber \\
&=& O(1). 
\end{eqnarray}

Because $f$ is assumed to be continuous, we have 

\begin{equation}
U(X,Y) = \expect[1_{\mb{Z} \in S_u(X)}1_{\mb{Z} \in S_u(Y)}] = [f(X)+o(1)]V(S_u(X) \cap S_u(Y)).
\end{equation}


\begin{lemma}
\label{disjointu}
For a fixed pair of points $\{X,Y\}\in {\Psi_u}^c$, 

\begin{equation}
Cov[\mb{e_u}(X),\mb{e_u}(Y)] = O(1/k). \nonumber
\end{equation}

\end{lemma}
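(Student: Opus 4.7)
The plan is to follow the structure of Lemma \ref{intersectu}, starting from the same identity
$$Cov[\mb{e_u}(X), \mb{e_u}(Y)] = \frac{M}{k^2}\bigl[U(X,Y) - U(X) U(Y)\bigr],$$
which is obtained by writing $\hat{\mb{f}}_\mb{u}(X) = \mb{l_u}(X)/k$ (using $V_u(X) = k/M$) and expanding $Cov[\mb{l_u}(X), \mb{l_u}(Y)]$ through independence of the $M$ i.i.d.~samples. The distinction from the disjoint case is that $U(X,Y) = \expect[\mb{1}_{\mb{Z} \in S_u(X)}\mb{1}_{\mb{Z} \in S_u(Y)}]$ no longer vanishes, and instead becomes the dominant contribution to the covariance.

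First I would use the identity preceding the lemma to write
$$U(X,Y) = [f(X)+o(1)]\, V(S_u(X) \cap S_u(Y)) = [f(X)+o(1)]\, \aleph(\|X-Y\|/R_u(X))\, V_u(X),$$
with $\aleph \leq 1$ and $V_u(X) = k/M$; this yields $U(X,Y) = O(k/M)$. Next, from the Taylor expansion of the coverage function established earlier in this appendix, $U(X) = f(X)(k/M) + o(k/M) = O(k/M)$ and likewise $U(Y) = O(k/M)$, so $U(X) U(Y) = O((k/M)^2)$. Substituting into the covariance identity gives
$$Cov[\mb{e_u}(X), \mb{e_u}(Y)] = \frac{M}{k^2}\left[O\!\left(\tfrac{k}{M}\right) + O\!\left(\tfrac{k^2}{M^2}\right)\right] = O\!\left(\tfrac{1}{k}\right) + O\!\left(\tfrac{1}{M}\right) = O\!\left(\tfrac{1}{k}\right),$$
using $k \leq M$. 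Moreover, inspecting the leading term yields $\aleph(\|X-Y\|/R_u(X))\, f(X)/k$, which is non-negative and confirms the earlier heuristic that intersecting kernels produce positively correlated density estimates.

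The calculation is elementary, so there is no serious technical obstacle. The one point that warrants care is the uniformity of the $o(1)$ correction in the expansion of $U(X,Y)$. Since $f$ is assumed continuous at $X$ and $\mathrm{diam}(S_u(X) \cap S_u(Y)) \leq 2 R_u(X) = 2(k/c_d M)^{1/d} \to 0$ as $k/M \to 0$, the oscillation of $f$ across the intersection region vanishes, so the substitution $f(\mb{Z}) = f(X) + o(1)$ is legitimate. Everything else is bookkeeping handled identically to the proof of Lemma \ref{intersectu}.
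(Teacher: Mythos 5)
Your proposal is correct and follows essentially the same route as the paper's proof: both start from the identity $Cov[\mb{e_u}(X),\mb{e_u}(Y)] = \frac{M}{k^2}\bigl(U(X,Y) - U(X)U(Y)\bigr)$, expand $U(X,Y)=[f(X)+o(1)]\aleph(\|X-Y\|/R_u(X))V_u(X) = O(k/M)$, note $U(X)U(Y)=O((k/M)^2)$, and conclude $O(1/k)+O(1/M)=O(1/k)$ since $k \le M$. Your remarks on the sign of the leading term and on the uniformity of the $o(1)$ correction are accurate but not needed for the $O(1/k)$ bound.
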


\begin{proof}
\begin{eqnarray}
\frac{M}{k^2}U(X,Y) &=& \frac{M}{k^2}[f(X)+o(1)]V(S_u(X) \cap S_u(Y))  \nonumber \\
&=& \frac{f(X)+o(1)}{k}\frac{V(B_X \cap B_Y)}{V_u(X)} \nonumber \\
&=& \frac{f(X)+o(1)}{k} \aleph(||X-Y||/R_u(X)) \nonumber \\
&=& \frac{f(X)}{k} \aleph(||X-Y||/R_u(X)) + o(1/k) \nonumber \\
&=& O(1/k). \nonumber
\end{eqnarray}

Therefore,

\begin{eqnarray}
Cov[\mb{e_u}(X),\mb{e_u}(Y)] &=& \expect[(\hat{\mb{f}}_\mb{u}(X)-\expect[\hat{\mb{f}}_\mb{u}(X)])(\hat{\mb{f}}_\mb{u}(Y)-\expect[\hat{\mb{f}}_\mb{u}(Y)])]\nonumber \\
&=& \frac{M}{k^2}{}(U(X,Y) - U(X)U(Y)) \nonumber \\
&=& \frac{M}{k^2}U(X,Y) - \frac{M}{k^2}U(X)U(Y) \nonumber \\
&=& O(1/k) - \Theta(1/M) \nonumber \\
&=& O(1/k). \nonumber
\end{eqnarray}

\end{proof}

\begin{lemma}

\begin{equation}
\int_y U(X,y) dy = [f(X)+o(1)]V_u(X)^2. \nonumber
\end{equation}

\end{lemma}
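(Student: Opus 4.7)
The plan is to evaluate the double integral directly by swapping the order of integration, exploiting the symmetry of the Euclidean ball, and then invoking the Taylor expansion of the coverage $U(X)$ already established earlier in the appendix.

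First I would expand $U(X,y)$ using its definition,
$$U(X,y) = \int 1_{z \in S_u(X)}\, 1_{z \in S_u(y)}\, f(z)\, dz,$$
so that
$$\int_y U(X,y)\, dy = \int_y \int_z 1_{z \in S_u(X)}\, 1_{z \in S_u(y)}\, f(z)\, dz\, dy.$$
Applying Fubini's theorem (the integrand is nonnegative, so this is unconditional) I would swap the order of integration and pull out the factors depending only on $z$:
$$\int_y U(X,y)\, dy = \int_z 1_{z \in S_u(X)}\, f(z) \left( \int_y 1_{z \in S_u(y)}\, dy \right) dz.$$

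The key observation is the symmetry $1_{z \in S_u(y)} = 1_{y \in S_u(z)}$, which follows because $z \in S_u(y)$ iff $\|z - y\| \le R_u$ iff $y \in S_u(z)$ (the uniform-kernel radius $R_u$ is the same constant at every point since the kernel volume $V_u = k/M$ does not depend on the center). Consequently the inner integral equals the volume of $S_u(z)$, which is $V_u(X) = k/M$, independent of $z$. Hence
$$\int_y U(X,y)\, dy = V_u(X) \int_z 1_{z \in S_u(X)}\, f(z)\, dz = V_u(X)\, U(X).$$

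Finally I would invoke the Taylor expansion of the coverage derived earlier,
$$U(X) = f(X) V_u(X) + c(X) V_u^{1+2/d}(X) + o\bigl(V_u^{1+2/d}(X)\bigr) = [f(X) + o(1)]\, V_u(X),$$
which immediately yields
$$\int_y U(X,y)\, dy = [f(X) + o(1)]\, V_u^2(X),$$
as claimed. There is no real obstacle here; the only subtle point to verify carefully is the constancy of the kernel radius $R_u$, which is what permits the symmetry step, and the fact that $X$ is assumed to lie in $\mathcal{S}'$ so the Taylor expansion of $U(X)$ from Section A.2 is valid.
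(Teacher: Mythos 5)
Your proof is correct, and it takes a genuinely different route from the paper's. The paper first uses continuity of $f$ to factor out $[f(X)+o(1)]$ from $U(X,y)$, leaving a pure volume integral $\int V(S_u(X)\cap S_u(y))\,dy$, which it then evaluates by normalizing to the intersection-fraction function $\aleph$, changing variables, and appealing to the identity $\int \aleph(\delta)\,d\delta = c_d$ (which the paper asserts but does not prove). You instead keep the weight $f(z)$ intact, apply Fubini to swap the $y$ and $z$ integrals, exploit the symmetry $1_{z\in S_u(y)} = 1_{y\in S_u(z)}$ (valid precisely because the uniform-kernel radius is a constant), and collapse the inner integral to the constant $V_u$. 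This cleanly reduces the claim to $V_u(X)\,U(X)$ and then invokes the Taylor expansion $U(X) = [f(X)+o(1)]V_u(X)$ already established in Section A.2. Your argument is, if anything, tighter: the paper's identity $\int \aleph(\delta)\,d\delta = c_d$ would itself be proved by precisely the Fubini/symmetry step you use, so you are carrying out that computation one layer earlier, before the factor $[f(X)+o(1)]$ is pulled out, and thereby avoid introducing the $\aleph$ normalization machinery at all. Both approaches yield the same $o(1)$ error, coming in the paper's case from continuity of $f$ over the intersection and in yours from the quoted Taylor expansion of the coverage.
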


\begin{proof}

We note that for $U(X,y) \neq 0$, we need $\{X,y\} \in \Psi_u^c$. We therefore have,
$f(y) = f(X) + o(1)$.

\begin{eqnarray}
\int_y U(X,y) dy &=& \int [f(X)+o(1)]V(S_u(X) \cap S_u(Y)) dy \nonumber \\
&=& V_u(X)[f(X)+o(1)] \int \aleph(||X-y||/R_u(X)) dy \nonumber \\
&=& V_u(X)[f(X)+o(1)] R_u(X)^d \int \aleph(||y||/R_u(X))  d(y/R_u(X))  \nonumber \\
&=& V_u(X)[f(X)+o(1)] \frac{V_u(X)}{c_d} \int \aleph(||y||/R_u(X))  d(y/R_u(X)) \nonumber \\
&=& [f(X)+o(1)] \frac{V_u^2(X)}{c_d} \int \aleph(\delta)  d(\delta). \nonumber 
\end{eqnarray}

The integral $\int \aleph(\delta)  d(\delta)$ can be shown to be equal to $c_d$ for all dimensions $d$.

We then have,

\begin{eqnarray}
\int_y U(X,y) dy &=& [f(X)+o(1)] {V_u^2(X)} \nonumber \\
&=& [f(X)+o(1)] \left(\frac{k}{M}\right)^2. \nonumber 
\end{eqnarray}

\end{proof}

\begin{lemma}
\label{covariancelemma}
Let $\gamma_1(X)$, $\gamma_2(X)$ be arbitrary continuous functions. Let $\mb{X}_{1},..,\mb{X}_{M},\mb{X},\mb{Y}$ denote $M+2$ i.i.d realizations of the density $f$. Then,
\begin{equation}
Cov{\left[\gamma_1(\mb{X})\mb{e_u}(\mb{X}),\gamma_2(\mb{Y})\mb{e_u}(\mb{Y})\right]} = \frac{Cov[\gamma_1(\mb{X})f(\mb{X}),\gamma_2(\mb{X})f(\mb{X})]}{M} + o(1/M). \nonumber
\end{equation}

\end{lemma}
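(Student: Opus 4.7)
The plan is to reduce the covariance to an expectation of the conditional covariance and then split the integration domain into the disjoint-ball and intersecting-ball regimes covered by Lemmas~\ref{intersectu} and~\ref{disjointu}, using the preceding integral lemma on $\int U(x,y)\,dy$ to extract the leading constant. First I would observe that the product-of-expectations term vanishes: since $\expect[\mb{e_u}(X)\mid X]=0$ for every fixed $X$, the tower property forces $\expect[\gamma_1(\mb{X})\mb{e_u}(\mb{X})]=0$, and likewise for $\mb{Y}$. Conditioning on $(\mb{X},\mb{Y})=(x,y)$ therefore yields
\begin{equation}
Cov[\gamma_1(\mb{X})\mb{e_u}(\mb{X}),\gamma_2(\mb{Y})\mb{e_u}(\mb{Y})]=\iint \gamma_1(x)\gamma_2(y)\,Cov[\mb{e_u}(x),\mb{e_u}(y)]\,f(x)f(y)\,dx\,dy. \nonumber
\end{equation}
I would then split this integral into the disjoint region $\Psi_u$ and its complement $\Psi_u^c=\{\|x-y\|<2(k/(c_d M))^{1/d}\}$, noting that $\Pr(\{\mb{X},\mb{Y}\}\in\Psi_u^c)=O(k/M)=o(1)$ under the standing assumption $k/M\to 0$, because the enclosing ball has Lebesgue measure $O(k/M)$.

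On $\Psi_u$, Lemma~\ref{intersectu} provides $Cov[\mb{e_u}(x),\mb{e_u}(y)]=-f(x)f(y)/M+o(1/M)$. Writing $1_{\Psi_u}=1-1_{\Psi_u^c}$ and observing that the $\Psi_u^c$ piece contributes at most $O(1/M)\cdot O(k/M)=o(1/M)$, the disjoint contribution reduces to
\begin{equation}
-\frac{1}{M}\,\expect[\gamma_1(\mb{X})f(\mb{X})]\,\expect[\gamma_2(\mb{X})f(\mb{X})]+o(1/M). \nonumber
\end{equation}

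On $\Psi_u^c$ I would rely on the exact decomposition $Cov[\mb{e_u}(x),\mb{e_u}(y)]=\tfrac{M}{k^2}U(x,y)-\tfrac{M}{k^2}U(x)U(y)$. The second piece is $\Theta(1/M)$ pointwise, so its integral over the set $\Psi_u^c$ of measure $O(k/M)$ is $o(1/M)$. For the $\tfrac{M}{k^2}U(x,y)$ piece I would invoke uniform continuity of $\gamma_2 f$ on the compact support $\cal S$ to substitute $\gamma_2(y)f(y)=\gamma_2(x)f(x)+o(1)$ inside the shrinking neighborhood of radius $(k/(c_d M))^{1/d}$, and then apply the preceding lemma $\int U(x,y)\,dy=f(x)(k/M)^2(1+o(1))$ to obtain the contribution $\expect[\gamma_1(\mb{X})\gamma_2(\mb{X})f^2(\mb{X})]/M+o(1/M)$.

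Summing the two contributions and recognising that $\expect[\gamma_1(\mb{X})\gamma_2(\mb{X})f^2(\mb{X})]=\expect[(\gamma_1 f)(\mb{X})(\gamma_2 f)(\mb{X})]$ collapses the answer to $Cov[\gamma_1(\mb{X})f(\mb{X}),\gamma_2(\mb{X})f(\mb{X})]/M+o(1/M)$, as claimed. The main obstacle will be making the substitution $\gamma_2(y)f(y)\mapsto\gamma_2(x)f(x)$ inside the intersecting integral quantitative --- one must ensure the modulus of continuity of $\gamma_2 f$ shrinks fast enough relative to the ball radius $(k/(c_d M))^{1/d}$ so that, after multiplication by the leading $1/M$ rate, the residual error is genuinely $o(1/M)$. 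A symmetric care is required for the implicit $o(1/M)$ in Lemma~\ref{intersectu}, which must be shown to be uniform in $(x,y)\in\Psi_u$ so that integration against $f(x)f(y)$ preserves the rate.
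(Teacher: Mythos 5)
Your proposal is correct and follows essentially the same path as the paper's own proof: both arguments reduce the covariance to $\tfrac{M}{k^2}\bigl(U(X,Y)-U(X)U(Y)\bigr)$, use the fact that $U(x,y)$ vanishes on the disjoint-ball set $\Psi_u$, invoke the preceding lemma $\int U(x,y)\,dy = [f(x)+o(1)]V_u^2(x)$, and apply continuity to replace $\gamma_2(y)f(y)$ by $\gamma_2(x)f(x)+o(1)$ inside the intersecting neighborhood. The only cosmetic difference is that you split the integration domain explicitly and treat the $\Psi_u$ contribution through Lemma~\ref{intersectu}, whereas the paper keeps the two pieces $I$ and $II$ as global integrals and only restricts $I$ to $\Psi_u^c$ because $U(x,y)$ is supported there; the two bookkeepings are interchangeable and lead to the same cancellation. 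Your closing caveat about uniformity of the $o(1/M)$ term is well placed but is covered by the standing assumption $({\cal A}.2)$ (bounded second derivatives of $f$), which makes the correction to $U(x)$ in the Taylor expansion, and hence the $o(1/M)$ in Lemma~\ref{intersectu}, uniform over the support.
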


\begin{proof}
\begin{eqnarray}
Cov{\left[\gamma_1(\mb{X})\mb{e_u}(\mb{X}),\gamma_2(\mb{Y})\mb{e_u}(\mb{Y})\right]} 
&=& \expect{\left[\gamma_1(\mb{X})\gamma_2(\mb{Y})(\hat{\mb{f}}_\mb{u}(X)-\expect[\hat{\mb{f}}_\mb{u}(X)])(\hat{\mb{f}}_\mb{u}(Y)-\expect[\hat{\mb{f}}_\mb{u}(Y)])\right]} \nonumber \\
&=&\frac{1}{MV_u(X)V_u(Y)}\expect{\left[\gamma_1(\mb{X})\gamma_2(\mb{Y})(U(\mb{X},\mb{Y})-U(\mb{X})U(\mb{Y}))\right]} \nonumber \\
&=&\frac{1}{MV_u^2(X)}\expect{\left[\gamma_1(\mb{X})\gamma_2(\mb{Y})U(\mb{X},\mb{Y})\right]} \nonumber \\
&-&\frac{1}{MV_u^2(X)}\expect{\left[\gamma_1(\mb{X})\gamma_2(\mb{Y})U(\mb{X})U(\mb{Y})\right]} \nonumber \\
&=& I - II. \nonumber
\end{eqnarray}

\begin{eqnarray}
II = \frac{1}{M}\left(\expect[\gamma_1(\mb{X})f(\mb{X})]\expect[\gamma_2(\mb{Y})f(\mb{Y})]\right). \nonumber
\end{eqnarray}

\begin{eqnarray}
I &=& \frac{1}{MV_u^2(X)}\expect{\left[\gamma_1(\mb{X})\gamma_2(\mb{Y})U(\mb{X},\mb{Y})\right]} \nonumber \\
&=& \frac{1}{MV_u^2(X)}\int \int \gamma_1(x)\gamma_2(y) f(x) f(y) U(x,y) dx dy. \nonumber
\end{eqnarray}

Now for $U(x,y) \neq 0$, we need $\{x,y\} \in \Psi_u^c$. We therefore have,
$\gamma_2(y)f(y) = \gamma_2(x)f(x) + o(1)$.

We then have,
\begin{eqnarray}
I &=& \frac{1}{MV_u^2(X)}\int \int [\gamma_1(x)\gamma_2(x)f^2(x)+o(1)]  U(x,y) dx dy \nonumber \\
&=& \frac{1}{MV_u^2(X)}\int [\gamma_1(x)\gamma_2(x)f^2(x)+o(1)] \left(\int U(x,y) dy\right) dx  \nonumber \\
&=& \frac{1}{MV_u^2(X)}\int [\gamma_1(x)\gamma_2(x)f^2(x)+o(1)] \left((f(x)+o(1))V_u(x)^2\right) dx  \nonumber \\
&=& \frac{1}{M}\int [\gamma_1(x)\gamma_2(x)f^2(x)+o(1)] (f(x)+o(1)) dx \nonumber \\
&=& \frac{1}{M}\left(\expect[\gamma_1(\mb{X})\gamma_2(\mb{X})f^2(\mb{X})]+o(1)\right) \nonumber \\
&=& \frac{1}{M}\expect[\gamma_1(\mb{X})\gamma_2(\mb{X})f^2(\mb{X})] + o(1/M). \nonumber
\end{eqnarray}

\end{proof}

\subsection{Higher cross moments}
\paragraph{Disjoint balls}

We have the following results concerning higher cross moments for disjoint balls:

\begin{lemma} Let $q$,$r$ be positive integers satisfying $q+r > 2$. For a fixed pair of points $\{X,Y\}\in {\Psi_u}^c$,

\begin{eqnarray}
\label{highercrossmoments}
Cov(\mb{e}^q_\mb{u}(X),\mb{e}^r_{u}(Y)) &=& o(1/M). \nonumber
\end{eqnarray}

\end{lemma}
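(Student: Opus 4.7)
The plan is to compute the covariance by a multi-index expansion of $\expect[(\sum_i W_i)^q(\sum_j V_j)^r]$, isolate the ``pure'' (non-overlapping) contribution (which essentially cancels with $\expect[\mb{e}^q_u(X)]\,\expect[\mb{e}^r_u(Y)]$), and bound what remains using a sharp per-index estimate. Here $W_i = 1_{\mb{X}_i\in S_u(X)} - U(X)$ and $V_i = 1_{\mb{X}_i\in S_u(Y)} - U(Y)$ are centred, with the pairs $(W_i,V_i)$ i.i.d.\ across $i$, so that $(MV_u(X))^q\mb{e}_u^q(X) = (\sum_i W_i)^q$ and similarly for $Y$.

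First I would expand the $q$-th and $r$-th powers and use independence across distinct indices to rewrite each summand as a product $\prod_k \expect[W_k^{a_k}V_k^{b_k}]$, where $a_k,b_k$ are the multiplicities of index $k$ in the $W$- and $V$-tuples. Only patterns with $a_k+b_k\neq 1$ for every used index survive (by centering), so at most $n\le \lfloor(q+r)/2\rfloor$ distinct indices appear. Split the surviving patterns into \emph{pure} ones (no index shared between the $W$- and $V$-factors, so $\prod\expect[W^a V^b]=\prod\expect[W^a]\prod\expect[V^b]$) and \emph{mixed} ones ($n_m\ge 1$ shared indices). The pure patterns reconstruct $\expect[\mb{e}_u^q(X)]\,\expect[\mb{e}_u^r(Y)]$ up to a combinatorial discrepancy coming from the distinct-index constraint being present in the joint sum but not in the product of marginals; this discrepancy carries an extra factor $O(1/M)$ relative to the leading pure term and is immediately $o(1/M)$.

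The crucial estimate is on the shared-index factor $\expect[W_i^aV_i^b]$ for $a,b\ge 1$. For a fixed pair $X\neq Y$ the ball radius $R_u=(k/(c_dM))^{1/d}\to 0$, so eventually $\|X-Y\|>2R_u$ and $S_u(X)\cap S_u(Y)=\emptyset$; thus $1_{\mb{X}_i\in S_u(X)}\,1_{\mb{X}_i\in S_u(Y)}=0$. Writing $U=U(X)$, $U'=U(Y)$ and using $B_X B_Y=0$ yields
\[
\expect[W_i^aV_i^b] = (-U)^a(-U')^b(1-U-U') + (-U)^a(1-U')^b\,U' + (1-U)^a(-U')^b\,U = -UU' + O((k/M)^3),
\]
so $|\expect[W_i^aV_i^b]|=O(UU')=O((k/M)^2)$, which carries an extra $k/M$ relative to a pure factor $\expect[W_i^a]=O(k/M)$ (for $a\ge 2$).

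Finally, assembling the bound, a mixed pattern with $n_m\ge 1$ shared and $n_p$ non-shared indices contributes $O(M^{n_p+n_m})$ index choices times $O((k/M)^{n_p}(k/M)^{2n_m})$, i.e., $O(k^{n_p+2n_m}/M^{n_m})$; normalising by $(MV_u)^{q+r}=k^{q+r}$ and maximising over admissible $(n_p,n_m)$ with $2(n_p+n_m)\le q+r$ and $n_m\ge 1$, the worst case occurs at $n_m=1$ and yields $O(k^{-(q+r-2)/2}/M)$, which is $o(1/M)$ whenever $q+r>2$ and $k\to\infty$. The main obstacle I anticipate is cleanly tracking the distinct-index boundary correction in the pure sector, and justifying the asymptotic reduction $\{X,Y\}\in\Psi_u^c\Rightarrow$ disjoint balls for large $M$; for strictly fixed $X\neq Y$ this is immediate from $R_u\to 0$, while a little extra care is needed if one permits $\|X-Y\|$ to shrink at the borderline rate $\Theta(R_u)$, where one must refine the shared-factor bound using the $\aleph$-function intersection estimate from the earlier covariance lemma.
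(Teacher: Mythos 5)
Your proof is correct and takes a genuinely different route from the paper's. The paper works directly with the multinomial joint pmf of $\mb{l_u}(X),\mb{l_u}(Y)$, restricts to a high-probability event via Chernoff bounds, introduces auxiliary binomial variables to reduce the moment ratios, derives the explicit leading term $Cov(\hat{\mb{f}}_\mb{u}^a(X),\hat{\mb{f}}_\mb{u}^b(Y)) = -ab\,f^a(X)f^b(Y)/M + o(1/M)$, and then recovers the lemma by binomial expansion of $\mb{e}_u^q = (\hat{\mb{f}}_u - \expect\hat{\mb{f}}_u)^q$, where the alternating sum $\sum_a\binom{q}{a}(-1)^a a$ kills every term unless $q=r=1$. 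You instead represent $k^q\mb{e}_u^q(X) = (\sum_i W_i)^q$, expand the joint moment over multi-index patterns, exploit centering to discard single-occurrence indices, separate pure (unshared) from mixed (shared) contributions, and use the per-shared-index bound $|\expect[W_i^aV_i^b]| = O(UU') = O((k/M)^2)$ together with an index-counting argument. This gives a more transparent, cumulant-style bound and avoids the paper's auxiliary binomial bookkeeping, though it produces only an order bound rather than the leading constant. Two small imprecisions worth noting: (i) the expansion $\expect[W_i^aV_i^b] = -UU' + O((k/M)^3)$ is exact only when $\min(a,b)=1$ (for $a,b\ge 2$ there is no $-UU'$ leading term, only $O((k/M)^3)$), but the operative bound $|\expect[W_i^aV_i^b]| = O((k/M)^2)$ that your counting actually uses holds uniformly for $a,b\ge 1$, so this does not affect the argument; and (ii) the discrepancy from the pure sector vanishing relative to $\expect[\mb{e}_u^q]\expect[\mb{e}_u^r]$ is a factor of order $(k/M)^{n_o}$ (where $n_o$ is the number of indices appearing in both marginal patterns), not $O(1/M)$ — but since $n_o \ge 1$ forces $a_k,b_k\ge 2$ at each such index, this residual is in fact even smaller than the mixed contribution and still $o(1/M)$. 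Finally, you are right to flag the $\Psi_u^c$ issue: the paper's paragraph heading and proof clearly target the disjoint-ball set $\Psi_u$, so $\{X,Y\}\in\Psi_u^c$ in the statement is a typo; for a truly fixed pair $X\neq Y$ the balls are eventually disjoint since $R_u\to 0$, and the intersecting case is handled separately via Cauchy--Schwarz in the next lemma of the paper.
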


\begin{proof}
For a fixed pair of points $\{X,Y\}\in {\Psi_u}^c$, the joint probability mass function of the functions $\mb{l_u}(X)$,$\mb{l_u}(Y)$ is given by

\begin{equation}
Pr(\mb{l_u}(X)=l_x,\mb{l_u}(Y)=l_y) = 1_{l_x+l_y \leq M}\binom{M}{l_x,l_y} (U(X))^{l_x}(U(Y))^{l_y}(1-U(X)-U(Y))^{M-l_x-l_y}. \nonumber
\end{equation}
%
%
We also have from chernoff inequalities for binomial random variables that 
\begin{eqnarray}
 Pr((1-p)k<\mb{l_u}(X)<(1+p)k) = 1-e^{-p^2k}, \nonumber \\
 Pr((1-p)k<\mb{l_u}(Y)<(1+p)k) = 1-e^{-p^2k}. \nonumber
\end{eqnarray}
Denote the high probability event $\chi$ by ${(1-p)k<\mb{l_u}(X),\mb{l_u}(Y)<(1+p)k}$. Define $\mb{\hat{l}}_\mb{u}(X)$, $\mb{\hat{l}}_\mb{u}(Y)$ to be binomial random variables with parameters $\{U(X)$,$M-q\}$ and $\{U(Y)$,$M-r\}$ respectively. The covariance between powers of density estimates is then given by 
\begin{eqnarray}
&& Cov(\hat{\mb{f}}_\mb{u}^q(X),\hat{\mb{f}}_\mb{u}^r(Y)) = \frac{1}{k^{q+r}}Cov(\mb{l}_\mb{u}^q(X),\mb{l}_\mb{u}^r(Y)) \nonumber \\
&=& \frac{1}{k^{q+r}} \sum l_x^ql_y^r Pr(\mb{l_u}(X)=l_x,\mb{l_u}(Y)=l_y) - \frac{1}{k^{q+r}} \sum l_x^ql_y^r Pr(\mb{l_u}(X)=l_x)Pr(\mb{l_u}(Y)=l_y) \nonumber \\
&=& \sum_\chi \frac{l_x^ql_y^r}{k^{q+r}} \left[Pr(\mb{l_u}(X)=l_x,\mb{l_u}(Y)=l_y) - Pr(\mb{l_u}(X)=l_x)Pr(\mb{l_u}(Y)=l_y)\right] + O(e^{-p^2k}) \nonumber \\
&=& \sum_\chi \frac{f^q(X)f^r(Y)l_x^ql_y^rU^q(X)U^r(Y)}{k^{q+r}(l_x \times \ldots \times l_x-{q+1})(l_y \times \ldots \times l_y-{r+1})} \times \nonumber \\
& & [(M \times \ldots \times M-(q+r-1))Pr(\mb{\hat{l}_u}(X)=l_x,\mb{\hat{l}_u}(Y)=l_y) \nonumber \\
&& - (M \times \ldots \times M-q+1)(M \times \ldots \times M-r+1) Pr(\mb{\hat{l}_u}(X)=l_x)Pr(\mb{\hat{l}_u}(Y)=l_y)] \nonumber \\
&+& o(1/M) \nonumber \\
&=&\left(\frac{f^q(X)f^r(Y)}{M^{q+r}}+O\left(\frac{1}{kM^{q+r}}\right)\right)  \times \nonumber \\
&& \sum_\chi [(M \times \ldots \times M-(q+r-1)) Pr(\mb{\hat{l}_u}(X)=l_x,\mb{\hat{l}_u}(Y)=l_y) \nonumber \\
&& - (M \times \ldots \times M-(q-1))(M \times \ldots \times M-(r-1)) Pr(\mb{\hat{l}_u}(X)=l_x)Pr(\mb{\hat{l}_u}(Y)=l_y)] \nonumber \\
&+& o(1/M) \nonumber 
\end{eqnarray}
\begin{eqnarray}
&=&\left(\frac{f^q(X)f^r(Y)}{M^{q+r}}+O\left(\frac{1}{kM^{q+r}}\right)\right)  \times \nonumber \\
&& [(M \times \ldots \times M-(q+r-1)) - (M \times \ldots \times M-(q-1))(M \times \ldots \times M-(r-1)) ] \nonumber \\
&+& o(1/M) \nonumber \\
&=&\frac{-qrf^q(X)f^r(Y)}{M}+o\left(\frac{1}{M}\right). \nonumber
\end{eqnarray}

Then, the covariance between the powers of the error function is given by 
\begin{eqnarray}
Cov(\mb{e}^q_\mb{u}(X),\mb{e}^r_{u}(Y)) &=& Cov((\hat{\mb{f}}_\mb{u}(X)-\expect[\hat{\mb{f}}_\mb{u}(X)])^q,(\hat{\mb{f}}_\mb{u}(Y)-\expect[\hat{\mb{f}}_\mb{u}(Y)])^r) \nonumber \\
&=& \sum_{a=1}^{q} \sum_{b=1}^{r} \binom{q}{a} \binom{r}{b} (-\expect[\hat{\mb{f}}_\mb{u}(X)])^a(-\expect[\hat{\mb{f}}_\mb{u}(Y)])^bCov(\hat{\mb{f}}_\mb{u}^a(X),\hat{\mb{f}}_\mb{u}^b(Y)) \nonumber \\
&=& \sum_{a=1}^{q} \sum_{b=1}^{r} \binom{q}{a} \binom{r}{b} [(-f(X))^a(-f(Y))^b+o(1)] Cov(\hat{\mb{f}}_\mb{u}^a(X),\hat{\mb{f}}_\mb{u}^b(Y)) \nonumber \\
&=& -f^{q}(X)f^{r}(Y) \sum_{a=1}^{q} \sum_{b=1}^{r} \binom{q}{a} \binom{r}{b}  \frac{(-1)^aa(-1)^bb}{M}+o\left(\frac{1}{M}\right) \nonumber \\
&=& 1_{\{q=1,r=1\}}\left(\frac{-f(X)f(Y)}{M}\right) + o(1/M) \nonumber \\
&=& o(1/M). \nonumber
\end{eqnarray}
where the last step follows from the condition that $q+r>2$.

\end{proof}

\paragraph{Intersecting balls}

For $\{X,Y\}\in {\Psi_u}^c$, we have the following bounds

\begin{lemma}
\label{covariancelemma2}
Let $\gamma_1(X)$, $\gamma_2(X)$ be arbitrary continuous functions. Let $\mb{X}_{1},..,\mb{X}_{M},\mb{X},\mb{Y}$ denote $M+2$ i.i.d realizations of the density $f$. Also let the indicator function ${1_{\Delta_u}}(X,Y)$ denote the event ${\Delta_u}: \{{X},{Y}\}\in {\Psi_u}^c$. For $q$,$r$ positive integers satisfying $q+r > 1$,

\begin{eqnarray}
\expect{\left[\mb{1_{\Delta_u}}(\mb{X},\mb{Y}) \gamma_1(\mb{X})\gamma_2(\mb{Y}) \mb{e}^q_\mb{u}(\mb{X})\mb{e}^r_\mb{u}(\mb{Y})\right]} &=& o\left(\frac{1}{M}\right), \nonumber \\
\end{eqnarray}

\end{lemma}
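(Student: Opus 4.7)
The plan is to condition on the pair $(\mb{X},\mb{Y})$ and invoke the higher cross moments estimate established just above. By the tower property,
\begin{align*}
\expect[\mb{1_{\Delta_u}}\gamma_1(\mb{X})\gamma_2(\mb{Y})\mb{e}_\mb{u}^q(\mb{X})\mb{e}_\mb{u}^r(\mb{Y})] = \expect_{\mb{X},\mb{Y}}\Big[\mb{1_{\Delta_u}}\gamma_1(\mb{X})\gamma_2(\mb{Y})\,\expect[\mb{e}_\mb{u}^q(\mb{X})\mb{e}_\mb{u}^r(\mb{Y})\mid\mb{X},\mb{Y}]\Big].
\end{align*}
For each fixed pair $\{X,Y\}\in\Psi_u^c$ in the intersecting region, the preceding lemma on higher cross moments gives a pointwise estimate of the form $\expect[\mb{e}_\mb{u}^q(X)\mb{e}_\mb{u}^r(Y)\mid X,Y]=o(1/M)$ (under the scope $q+r\ge 3$ that actually makes the lemma useful here). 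Assumption $({\cal A}.1)$, which forces $0<\epsilon_0\le f\le\epsilon_\infty$, keeps the binomial success parameters $U(X),U(Y)$ of order $\Theta(k/M)$ uniformly in $(X,Y)$, so I would first revisit the Chernoff steps in that lemma to confirm that this $o(1/M)$ rate holds \emph{uniformly} over $\Psi_u^c$.

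Second, I would bound the prior probability of the intersection event $\Delta_u=\{\|\mb{X}-\mb{Y}\|\le 2R_u\}$, which occurs precisely when $\mb{Y}$ lies in a ball of radius $2R_u=2(k/(c_d M))^{1/d}$ around $\mb{X}$. A direct volume computation gives
\begin{align*}
P(\Delta_u)=\expect_\mb{X}\Big[\int_{B_{2R_u}(\mb{X})\cap{\cal S}}f(y)\,dy\Big]\le \epsilon_\infty\, c_d(2R_u)^d = 2^d\epsilon_\infty\,\frac{k}{M}=O(k/M).
\end{align*}
Combining the two estimates with the boundedness of $\gamma_1,\gamma_2$ on the compact set ${\cal S}$,
\begin{align*}
\Big|\expect[\mb{1_{\Delta_u}}\gamma_1(\mb{X})\gamma_2(\mb{Y})\mb{e}_\mb{u}^q(\mb{X})\mb{e}_\mb{u}^r(\mb{Y})]\Big|\le \|\gamma_1\|_\infty\|\gamma_2\|_\infty\cdot P(\Delta_u)\cdot o(1/M) = O(k/M)\cdot o(1/M),
\end{align*}
which is $o(k/M^2)=o(1/M)$ since $k/M\to 0$.

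The main technical obstacle is the uniformity claim in the first step: the preceding higher cross moments lemma was proved for a fixed $(X,Y)$, and one must verify that the implicit constants in its $o(1/M)$ remainder depend only on the global density bounds $\epsilon_0,\epsilon_\infty$ and on $k,M$, not on the local values $U(X),U(Y)$. This boils down to inspecting the Chernoff tail on the high-probability set $\chi=\{(1-p)k<\mb{l_u}(X),\mb{l_u}(Y)<(1+p)k\}$ and the multinomial moment expansion, both of which can be bounded in terms of $\epsilon_0,\epsilon_\infty$ alone. Once that uniform bound is in place, the product of a vanishing conditional cross moment with a vanishing probability of intersection delivers the desired $o(1/M)$ conclusion.
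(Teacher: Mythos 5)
Your high-level structure matches the paper's: condition on $(\mb{X},\mb{Y})$, bound the conditional cross moment, and multiply by the measure $\Pr(\Delta_u)=O(k/M)$ of the intersecting region. Your estimate of $\Pr(\Delta_u)$ and your observation that the lemma only makes sense for $q+r>2$ rather than $q+r>1$ are both correct. However, the key step is wrong.

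You bound the conditional expectation by invoking the preceding ``higher cross moments'' lemma, reading off $\expect[\mb{e}_\mb{u}^q(X)\mb{e}_\mb{u}^r(Y)\mid X,Y]=o(1/M)$ on $\Psi_u^c$. That lemma does not apply on $\Psi_u^c$: its proof writes the joint mass function of $\mb{l}_u(X),\mb{l}_u(Y)$ as a multinomial with $(1-U(X)-U(Y))^{M-l_x-l_y}$, which is valid only when $S_u(X)$ and $S_u(Y)$ are disjoint, so that each sample falls in at most one of the two balls. On the intersecting region the balls overlap, a sample can land in both, and that multinomial form is simply incorrect. (The paper's notation $\Psi_u^c$ inside that lemma's statement is a typo; the heading, the proof, and Lemma~\ref{hcm} all indicate it was meant for the \emph{disjoint} region $\Psi_u$.) In fact the claimed bound is false pointwise: for $q=r=1$, Lemma~\ref{disjointu} gives $\expect[\mb{e}_u(X)\mb{e}_u(Y)]=O(1/k)$ on $\Psi_u^c$, and more generally Cauchy--Schwarz gives only $|\expect[\mb{e}_u^q(X)\mb{e}_u^r(Y)]|\le\sqrt{\expect[\mb{e}_u^{2q}(X)]\expect[\mb{e}_u^{2r}(Y)]}=O\!\left(k^{-(q+r)/2}\right)$ via (\ref{fourmom}), which is not $o(1/M)$ for reasonable choices of $k$ and $M$. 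The paper's proof uses exactly this weaker Cauchy--Schwarz bound and relies on the multiplicative factor $O(k/M)$ from the shrinking intersection region to deliver $O\!\left(k^{1-(q+r)/2}/M\right)=o(1/M)$ once $q+r\ge 3$. So the conclusion is right and the second half of your argument is right, but the intermediate claim of a uniform $o(1/M)$ conditional bound is unsupported and untrue; replace it with the direct Cauchy--Schwarz estimate and the proof closes.
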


\begin{proof}
\label{highercrossepsproof}
For $1_{\Delta_u}({X},{Y}) \neq 0$, we have $\{X,Y\} \in \Psi_u^c$. Then,
\begin{eqnarray}
&&\expect{\left[\mb{1_{\Delta_u}}(\mb{X},\mb{Y}) \gamma_1(\mb{X})\gamma_2(\mb{Y}) \mb{e}^q_\mb{u}(\mb{X})\mb{e}^r_\mb{u}(\mb{Y})\right]}  \nonumber \\
&=& \expect{\left[\mb{1_{\Delta_u}}(\mb{X},\mb{Y}) \gamma_1(\mb{X})\gamma_2(\mb{Y})\expect_{\mb{X},\mb{Y}}[\mb{e}^q_\mb{u}({X})\mb{e}^r_\mb{u}({Y})]\right]} \nonumber \\
&\leq& \expect{\left[\mb{1_{\Delta_u}}(\mb{X},\mb{Y}) \gamma_1(\mb{X})\gamma_2(\mb{Y})\sqrt{\expect_{\mb{X}}[\mb{e}^{2q}_\mb{u}({X})]\expect_{\mb{Y}}[\mb{e}^{2r}_\mb{u}({Y})]}\right]} \nonumber \\
&=& \expect{\left[\mb{1_{\Delta_u}}(\mb{X},\mb{Y}) \gamma_1(\mb{X})\gamma_2(\mb{Y})O\left(\frac{1}{k^{q+r/2}}\right)\right]} \nonumber \\
&=& \int{\left[O\left(\frac{1}{k^{q+r/2}}\right)(\gamma_1(x)\gamma_2(x) + o(1))\right] \left(\int {\Delta_u}({x},{y}) dy \right)} dx  \nonumber \\
&=& \int{\left[O\left(\frac{1}{k^{q+r/2}}\right)(\gamma_1(x)\gamma_2(x) + o(1))\right] \left(2^{d}\frac{k}{M} \right)} dx  \nonumber \\
&=& o\left(\frac{1}{M}\right). \nonumber
\end{eqnarray}
where the bound is obtained using the Cauchy-Schwarz inequality and using Eq.\ref{fourmom}.
\end{proof}

We can succinctly state the results derived in the last two lemmas in the form of the following lemma:

\begin{lemma}
\label{hcm}
Let $\gamma_1(X)$, $\gamma_2(X)$ be arbitrary continuous functions. Let $\mb{X}_{1},..,\mb{X}_{M},\mb{X},\mb{Y}$ denote $M+2$ i.i.d realizations of the density $f$. If $q$,$r$ are positive integers satisfying $q+r>2$

\begin{eqnarray}
Cov{\left[\gamma_1(\mb{X})\mb{e}^q_\mb{u}(\mb{X}), \gamma_2(\mb{Y})\mb{e}^r_\mb{u}(\mb{Y})\right]} &=& o(1/M). \nonumber
\end{eqnarray}

\end{lemma}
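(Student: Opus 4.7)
The plan is to combine the pointwise covariance bound for disjoint pairs established in the preceding lemma with the integrated bound for intersecting pairs given in Lemma~\ref{covariancelemma2}. Set $A = \gamma_1(\mb{X})\mb{e}^q_\mb{u}(\mb{X})$, $B = \gamma_2(\mb{Y})\mb{e}^r_\mb{u}(\mb{Y})$, and expand $Cov[A,B] = \expect[AB] - \expect[A]\expect[B]$. I would split $\expect[AB]$ through the indicator $\mb{1}_{\Delta_u}(\mb{X},\mb{Y})$ of the intersecting configuration $\{\mb{X},\mb{Y}\}\in\Psi_u^c$ and its complement, the disjoint configuration.

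For the intersecting part, Lemma~\ref{covariancelemma2} applies verbatim and yields $\expect[\mb{1}_{\Delta_u}\gamma_1(\mb{X})\gamma_2(\mb{Y})\mb{e}^q_\mb{u}(\mb{X})\mb{e}^r_\mb{u}(\mb{Y})] = o(1/M)$. For the disjoint part, conditioning on $(\mb{X},\mb{Y})$ gives the pointwise identity $\expect_{\cal D}[\mb{e}^q_\mb{u}(X)\mb{e}^r_\mb{u}(Y)] = Cov_{\cal D}(\mb{e}^q_\mb{u}(X),\mb{e}^r_\mb{u}(Y)) + \expect[\mb{e}^q_\mb{u}(X)]\expect[\mb{e}^r_\mb{u}(Y)]$; the first summand is $o(1/M)$ uniformly on the disjoint set by the preceding lemma and integrates against bounded $\gamma_1,\gamma_2$ to $o(1/M)$. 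The remaining cross-term must be reconciled with $\expect[A]\expect[B]$: using $\mb{X}\perp\mb{Y}$ I can write $\expect[A]\expect[B] = \expect[\gamma_1(\mb{X})\gamma_2(\mb{Y})\expect_{\cal D}[\mb{e}^q_\mb{u}(\mb{X})\mid\mb{X}]\expect_{\cal D}[\mb{e}^r_\mb{u}(\mb{Y})\mid\mb{Y}]]$ and then insert the partition $\mb{1}_{\Delta_u}+(1-\mb{1}_{\Delta_u})$; the disjoint cross-terms cancel exactly, leaving the residual $-\expect[\mb{1}_{\Delta_u}\gamma_1(\mb{X})\gamma_2(\mb{Y})\expect_{\cal D}[\mb{e}^q_\mb{u}(\mb{X})\mid\mb{X}]\expect_{\cal D}[\mb{e}^r_\mb{u}(\mb{Y})\mid\mb{Y}]]$.

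Finally I would bound this residual. If $q=1$ or $r=1$ it vanishes outright because $\expect[\mb{e}_\mb{u}(X)\mid X]=0$. For $q,r\geq 2$, equation~(\ref{fourmom}) supplies $|\expect[\mb{e}^q_\mb{u}(X)\mid X]| = O(k^{-q/2})$, while the assumption that $f$ is bounded gives the intersection probability $P(\mb{1}_{\Delta_u}=1) = P(\|\mb{X}-\mb{Y}\|<2(k/(c_dM))^{1/d}) = O(k/M)$. Multiplying, the residual is $O((k/M)\cdot k^{-(q+r)/2}) = O(M^{-1}k^{-((q+r)/2-1)}) = o(1/M)$ since $q+r>2$ and $k\to\infty$. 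The main obstacle lies in this final step: the naive moment bound $k^{-(q+r)/2}$ is strictly larger than $1/M$ under the operating regime $k/M\to 0$, and only the $O(k/M)$ intersection probability salvages the residual. The hypothesis $q+r>2$ is precisely the sharp threshold at which this cancellation takes effect.
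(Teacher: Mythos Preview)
Your proposal is correct and matches the paper's approach: decompose into intersecting and disjoint configurations, invoke Lemma~\ref{covariancelemma2} on the intersecting set, and apply the preceding pointwise $o(1/M)$ covariance lemma on the disjoint set. The paper streamlines your residual step by writing $Cov[A,B]=\expect\bigl[Cov[A,B\mid\mb{X},\mb{Y}]\bigr]$ at the outset (the between-group term in the law of total covariance vanishes because $\mb{X}\perp\mb{Y}$), so the product-of-marginals term you track explicitly never appears and no separate bound on it is needed.
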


\begin{proof}

The result for the case $q=1$, $r=1$ was established earlier in Lemma \ref{covariancelemma}.
\begin{eqnarray}
Cov{\left[\gamma_1(\mb{X})\mb{e}^q_\mb{u}(\mb{X}), \gamma_2(\mb{Y})\mb{e}^r_\mb{u}(\mb{Y})\right]} = I + D, \nonumber
\end{eqnarray}
where '$I$' stands for the contribution form the intersecting balls and '$D$' for the contribution from the dis-joint balls. $I$ and $D$ are given by
\begin{eqnarray}
I &=& \expect{\left[\mb{1_{\Delta_u}}(\mb{X},\mb{Y}) Cov \left[\gamma_1({X})\mb{e}^q_\mb{u}({X}), \gamma_2({Y}) \mb{e}^r_\mb{u}({Y}) \right]\right]},\nonumber \\
D &=& \expect{\left[\mb{(1-\mb{1_{\Delta_u}}(\mb{X},\mb{Y}))} Cov \left[\gamma_1({X})\mb{e}^q_\mb{u}({X}), \gamma_2({Y}) \mb{e}^r_\mb{u}({Y}) \right]\right]}. \nonumber 
\end{eqnarray}

We have already established in the previous lemma that 
\begin{equation}
I = o\left(\frac{1}{M}\right). \nonumber
\end{equation}
Now,
\begin{eqnarray}
D &=&  \expect{\left[(1-\mb{1_{\Delta_u}}(\mb{X},\mb{Y})) \gamma_1(\mb{X})\gamma_2(\mb{Y}) \expect_{\mb{X},\mb{Y}}[Cov(\mb{e}^q_\mb{u}({X}),\mb{e}^r_\mb{u}({Y}))]\right]} \\ \nonumber 
&=&  \expect{\left[(1-\mb{1_{\Delta_u}}(\mb{X},\mb{Y})) \gamma_1(\mb{X})\gamma_2(\mb{Y}) o(1/M) \right]} \nonumber \\
&=&  o\left(\frac{1}{M}\right). \nonumber
\end{eqnarray}
This concludes the proof.
\end{proof}

\section{$k$-NN density estimation}
In this appendix, moment properties of the standard $k$-NN density estimate $\hat{\mb{f}}_{k}(X)$ are derived conditioned on $X_1, \ldots, X_N$. As the samples $X_1, \ldots, X_N, X_{N+1}, \ldots, X_T$, $T=M+N$ are i.i.d., these conditional moments are independent of the $N$ samples $\mb{X}_{1},..,\mb{X}_{N}$.

\subsection{Preliminaries}
Let $d(X,Y)$ denote the Euclidean distance between points $X$ and $Y$ and ${\mb{d}^{(k)}_X}$ denote the Euclidean distance between a point X and its $k$-th nearest neighbor amongst $\mb{X}_{N+1},..,\mb{X}_{N+M}$. Let $c_d$ denote the unit ball volume in $d$ dimensions. The $k$-NN region is  $$\mb{S}_k(X) = \{Y:d(X,Y)\leq{\mb{d}{^{(k)}_X}}\}$$ and the volume of the $k$-NN region is $$\mb{\mb{V}}_k(X) = \int_{\mb{S}_k(X)}{dZ}.$$ The standard {$k$-NN} density estimator~\cite{quu} is defined as $$\hat{\mb{f}}_{k}(X) = \frac{k-1}{M\mb{\mb{V}}_k(X)}.$$ Define the coverage function as $$\mb{P}(X) = \int_{\mb{S}_k(X)} f(Z) dZ.$$ Define spherical regions $$S_r(X) =  \{Y \in {\mathbb{R}^d}:d(X,Y) \leq r\}.$$ 

\subsection{Concentration inequality for coverage probability}
\label{concH}

It has been previously established that $\mb{P}(X)$ has a beta distribution with parameters $k$, $M-k+1$. \cite{fuk}. 
Consider a binomial random variable with parameters $M$ and $P$ with distribution function $Bi(.|M,P)$ and a beta random variable with parameters $k$ and $M-k+1$ with distribution function $Be(.|k,M-k+1)$. We have the following identity,
\begin{equation}
Be(P|k,M-k+1) = 1-Bi(k-1|M,P).
\end{equation}

The following Chernoff bounds for binomial random variables have also been established previously.
When $k<MP$, $Bi(k|M,P) \leq exp{\left[-{(MP-k)^2}/{2PM}\right]}$, and when $k>MP$, $1-Bi(k|M,P) \leq exp{\left[-{(MP-k)^2}/{2PM}\right]}$. We therefore have that for some $0<p<1/2$,
\begin{equation}
\label{concincc}
Pr((1-p)(k-1)/M<\mb{P}(X)<(p+1)(k-1)/M) = O(e^{-p^2k/2}).
\end{equation}

Define $$k_M = (k-1)/M.$$ Let $\natural(X)$ denote the event 
\begin{equation} \mb{P}(X)<(p_k+1)k_M, 
\label{boundonP} \end{equation}
where $p_k = {\sqrt{6}}/(k^{\delta/2})$. Then, $1-Pr(\natural(X)) = O(e^{-p_k^2k/2}) = O(e^{-3k^{(1-\delta)}})$. Equivalently, 
\begin{equation}
1-Pr(\natural(X)) = O({\cal C}(k)),
\label{eq:concincexplicit}
\end{equation}
where ${\cal C}(k)$ is a function which satisfies the rate of decay condition ${\cal C}(k)= O(e^{-3k^{(1-\delta)}})$. Similarly, let $\natural_{-1}(X)$ denote the event 
\begin{equation} \mb{P}(X)>(1-p_k)k_M, 
\label{boundonPinv} \end{equation}
Then 
\begin{equation}
1-Pr(\natural_{-1}(X)) = O({\cal C}(k)),
\label{eq:concincexplicitinv}
\end{equation}
Also let $\natural\natural(X) = \natural(X) \cap \natural_{-1}(X)$. Then 
\begin{equation}
1-Pr(\natural\natural(X)) = O({\cal C}(k)),
\label{eq:concincexplicitjoint}
\end{equation}
Finally, we note that $\Gamma(x+a)/\Gamma(x) = x^a+o(x^a)$. Then for any $a<k$, $\expect[\mb{P}^{-a}(X)]$ exists and is given by
\begin{equation}
\expect[\mb{P}^{-a}(X)] = \frac{\Gamma(k-a)\Gamma(M+1)}{\Gamma(k)\Gamma(M+1-a)} = \Theta((k_M)^{-a}).
\label{eq:existence1}
\end{equation}

\subsubsection{Interior points}
\label{sec:intpnt}
Let ${\cal S'}$ to be any arbitrary subset of ${\cal S}_I$ (\ref{sbdefine}) satisfying the condition $Pr(\mb{Y} \notin {\cal S'}) = o(1)$ where ${\mb{Y}}$ is random variable with density $f$. This implies that given the event $\natural(X)$, the $k$-NN neighborhoods $\mb{S}_k(X)$ of points $X \in {\cal S'}$ will lie completely inside the domain ${\cal S}$. Therefore the density $f$ has continuous partial derivatives of order $2\nu$ in the $k$-NN ball neighborhood $\mb{S}_k(X)$ for each $X \in {\cal S'}$ (assumption $({\cal {A}}.2)$). We will now derive moments for the interior set of points $X \in {\cal S'}$. This excludes the set of points $X$ close to the boundary of the support whose $k$-NN neighborhoods $\mb{S}_k(X)$ intersect with the boundary of the support. We will deal with these points in Appendix B.

\subsubsection{Taylor series expansion of coverage probability}
Let $X \in {\cal S'}$. Given the event $\natural(X)$, the coverage function $\mb{P}(X)$ can be represented in terms of the volume of the $k$-NN ball $\mb{\mb{V}}_k(X)$ by expanding the density $f$ in a Taylor series about $X$ as follows. In particular, for some fixed $x \in {\cal S'}$, let $$p(u) = \int_{S_u(x)} f(z) dz.$$ Using $({\cal {A}}.2)$, we can write, by a Taylor series expansion of $f$ around $x$ using multi-index notation~\cite{multi}
\begin{eqnarray}
f(z) = \sum_{0 \leq |\alpha| \leq 2\nu} \frac{(z-x)^\alpha}{\alpha!} (\partial^\alpha f)(x) + o(||z-x||^{2\nu})
\end{eqnarray}
Assuming $S_u(x) \subset {\cal S}$, we can then write
\begin{eqnarray}
p(u) &=& \int_{S_u(x)} f(z) dz \nonumber \\
&=& \int_{S_u(x)} \left(\sum_{|0 \leq \alpha \leq 2\nu|} \frac{(z-x)^{\alpha}}{\alpha!} (\partial^\alpha f)(x) \right) dz + o(u^{d+2\nu}) \nonumber \\
&=& f(x)c_du^d + \sum_{i=1}^{\nu-1} c_i(x)c_d^{1+2i/d}u^{d+2i} + o(u^{d+2\nu}).
\label{eq:taylorP}
\end{eqnarray}
where $c_i(x)$ are functionals of the derivatives of $f$. Now, denote $v(u)= \int_{S_u(x)} dz$ to be the volume of $S_u(x)$. Let $u^{inv}(v)$ be the inverse function of $v(u)$. Note that this inverse is well-defined since $v(u)$ is monotonic in $u$. Since $S_u(x) \subset {\cal S}$, $v(u) = c_du^d$. This gives $u^{inv}(v) = (v/c_d)^{1/d}$. Define $$P(v) = \int_{S_{u^{inv}(v)}(x)} f(z) dz.$$ Using (\ref{eq:taylorP}), 
\begin{eqnarray}
P(v) &=& f(X)v + \sum_{i=1}^{\nu-1} c_i(X)v^{1+2i/d} + o(v^{1+2\nu/d}).
\label{eq:taylorPV}
\end{eqnarray}
Now denote $V(p) = P^{inv}(p)$ to be the inverse of $P(.)$. Note that this inverse is well-defined since $P(v)$ is monotonic in $v$. Dividing (\ref{eq:taylorPV}) by $vP(v)$ on both sides, we get
\begin{eqnarray}
\frac{1}{v} &=& \frac{f(X)}{P(v)} + \sum_{i=1}^{\nu-1} \frac{c_i(X)}{P(v)}v^{2i/d} + o(v^{2\nu/d}P^{-1}(v))
\label{incompleteeqn}
\end{eqnarray}
By repeatedly substituting the LHS of (\ref{incompleteeqn}) in the RHS of (\ref{incompleteeqn}), we can obtain (\ref{eq:taylorVP}):
\begin{eqnarray}
\frac{1}{V(p)} &=& \frac{f(X)}{p} + \sum_{i=1}^{\nu-1} \frac{{h}_i(X)}{p^{1-2i/d}} + o(p^{2\nu/d-1}), 
\label{eq:taylorVP}
\end{eqnarray}
From our derivation of (\ref{eq:taylorVP}) using (\ref{eq:taylorPV}), it is clear that ${h}_i(X)$ are of the form $${h}_i(X)  = \sum_{\{a_i\} = A; A \in {\cal A}} \frac{\prod_{i=1}^{\nu-1} c_i^{a_i}}{f^{a_0}(X)}$$ where $A$ is a $\nu$-tuple of positive real numbers ${a_0,..,a_{\nu-1}}$ and the cardinality of ${\cal A}$ is finite. By assumptions $({\cal {A}}.1)$ and $({\cal {A}}.2)$, this implies that the constants $h_i(X)$ are {\emph {bounded}}. Also, we note that $h(X) = h_1(X) = c(X)f^{-2/d}(X)$~\cite{fuk2}, where $c(X) := c_1(X)=\Gamma^{(2/d)}(\frac{d+2}{2})tr[\nabla^2(f(X))]$. This then implies that under the event $\natural(X)$
\begin{align}
\label{knnaprr}
\frac{1}{\mb{V}_k(X)} &= \frac{f(X)}{\mb{P}(X)} +  \sum_{t \in {\cal T}} \frac{h_t(X)}{\mb{P}^{1-t}(X)} + \mb{h_r}(X), \end{align}
where ${\cal T} = \{2/d,4/d,6/d..,2\nu/d\}$ and $\mb{h_r}(X) = o(\mb{P}^{2\nu/d-1}(X))$. Now, by $({\cal {A}}.2)$, we have $(k/M)^{2\nu/d} = o(1/M)$. This implies that ${2\nu/d} >1$. Under the event $\natural(X)$, we have $\mb{P}(X) \leq (p_k+1)k/M$, which, in conjunction with the condition ${2\nu/d} >1$ implies that 
\begin{eqnarray}
\mb{h_r}(X) &=& o(\mb{P}^{2\nu/d-1}(X)) = o((k/M)^{2\nu/d-1}) = o(1/k_MM).
\label{eq:hrapprox}
\end{eqnarray}
On the other hand, under the event, $\natural^c(X)$, $(p_k+1)k/M \leq \mb{P}(X) \leq 1$, which gives
\begin{eqnarray}
\mb{h_r}(X) &=& O(1).
\label{eq:hrapprox2}
\end{eqnarray}
\subsubsection{Approximation to the {$k$}-NN  density estimator}
Define the \emph{coverage} density estimate to be,
\begin{equation}
\hat{\mb{f}}_{c}(X) = f(X)\frac{k-1}{M}\frac{1}{\mb{P}(X)}. \nonumber
\end{equation}
The estimate $\hat{\mb{f}}_{c}(X)$ is clearly not implementable. Note also that the two estimates - $\hat{\mb{f}}_{c}(X)$ and $\hat{\mb{f}}_{k}(X)$ - are identical in the case of the uniform density. 
\begin{align}
\frac{1}{\mb{V}_k(X)} &= \frac{f(X)}{\mb{P}(X)} + \frac{h(X)}{\mb{P}^{1-2/d}(X)} + \mb{h_s}(X), 
\end{align}
where $\mb{h_s}(X) = o(1/\mb{P}^{1-2/d}(X))$. This gives, 
\begin{eqnarray}
 \hat{\mb{f}}_{k}(X) &=& \hat{\mb{f}}_{c}(X) + \left(\frac{k-1}{M} \right)\frac{h(X)}{\mb{P}^{1-2/d}(X)} + \frac{k-1}{M}\mb{h_s}(X). 
 \label{densitycoverageapprox}
\end{eqnarray} whenever $\natural(X)$ is true.

\subsubsection{Bounds on $k$-NN density estimates}

Let $X$ be a Lebesgue point of $f$, i.e., an $X$ for which $$\lim_{r \to 0} \frac{\int_{S_r(X)} f(y) dy}{\int_{S_r(x)} dy}  = f(X).$$ Because $f$ is an density, we know that almost all $X \in {\cal S}$ satisfy the above property. Now, fix $\epsilon \in (0,1)$ and find $\delta > 0$ such that 
$$\sup_{0<r\leq \delta} \frac{\int_{S_r(X)} f(y) dy}{\int_{S_r(x)} dy} - f(X) \leq \epsilon f(X).$$ 
This in turn implies that, for $\mb{P}(X) \leq P(\delta)$, 
\begin{eqnarray}
\frac{\mb{P}(X)}{(1+\epsilon)f(X)} \leq \mb{V}_k(X) \leq \frac{\mb{P}(X)}{(1-\epsilon)f(X)}
\label{coverageineqnatural}
\label{coverageineq}
\end{eqnarray}
and in turn implies
\begin{eqnarray}
(1-\epsilon) \hat{\mb{f}}_{c}(X) \leq &\hat{\mb{f}}_{k}(X)& \leq (1+\epsilon)\hat{\mb{f}}_{c}(X).
\label{densityineqnatural}
\end{eqnarray}
Also, because $\delta > 0$ is fixed, we note that the event $\mb{P}(X) \leq P(\delta)$ is a subset of $\natural(X)$ and therefore (\ref{coverageineq}) holds under $\natural(X)$. 

Under the event $\natural^c(X)$, we can bound $\mb{V}_k(X)$ from above by $c_d{\cal D}^d$. Also, since $\mb{V}_k(X)$ is monotone in $\mb{P}(X)$, under the event $\natural^c(X)$, we can bound $\mb{V}_k(X)$ from below by ${(1+p_k)(k-1)}/{M(1-\epsilon)f(X)}$ and therefore by ${(k-1)}/{M(1-\epsilon)f(X)}$. Written explicitly, 
\begin{eqnarray}
\frac{(k-1)}{M(1-\epsilon)f(X)} \leq \mb{V}_k(X) \leq c_d{\cal D}^d
\label{coverageineqnaturalc}
\end{eqnarray}
and in turn implies
\begin{eqnarray}
(k-1)/(Mc_d{\cal D}^d) \leq &\hat{\mb{f}}_{k}(X)& \leq (1-\epsilon)f(X).
\label{densityineqnaturalc}
\end{eqnarray}
Finally, note that $k_M/\mb{P}(X)$ is bounded above by $O(1)$ under the event $\natural(X)$. This implies that for any $a<k$,
\begin{eqnarray}
\expect[\natural^c(X)]k^a_M\mb{P}^{-a}(X) \leq O(1)Pr(\natural^c(X)) = O({\cal C}(k)).
\label{eq:existence2}
\end{eqnarray}


\subsection{Bias of the {$k$}-NN  density estimates} 
\label{sec:bias}
Let $X \in {\cal S'}$. We can analyze the bias of $k$-NN density estimates as follows by using (\ref{densitycoverageapprox})
\begin{eqnarray}
\expect[1_{\natural(X)}\hat{\mb{f}}_{k}(X)] &=& \expect[1_{\natural(X)}\hat{\mb{f}}_{c}(X)] + \expect\left[1_{\natural(X)}\left(\frac{k-1}{M} \right)\frac{h(X)}{\mb{P}^{1-2/d}(X)}\right] + \expect\left[1_{\natural(X)}\frac{k-1}{M} \mb{h_s}(X) \right]  \nonumber \\
&=& \expect[1_{\natural(X)}\hat{\mb{f}}_{c}(X)] + \expect\left[1_{\natural(X)}\left(\frac{k-1}{M} \right)\frac{h(X)}{\mb{P}^{1-2/d}(X)}\right] + o\left(\expect\left[1_{\natural(X)}\frac{k-1}{M} \mb{P}^{2/d-1}(X) \right]\right)  \nonumber \\
&=& \expect[\hat{\mb{f}}_{c}(X)] + \expect\left[\left(\frac{k-1}{M} \right)\frac{h(X)}{\mb{P}^{1-2/d}(X)}\right] + o\left(\frac{k}{M}\right)^{2/d} + O({\cal C}(k)) \nonumber \\
&=& f(X) + h(X)\left(\frac{k}{M} \right)^{2/d} + o\left(\frac{k}{M}\right)^{2/d},
\end{eqnarray}
where we used the fact that under the event $\natural^c(X)$, $((k-1)/M)\mb{P}^{1-t}(X) = O(1)$ for any $t>=0$, which in turn gives $\expect[1_{\natural^c(X)} ((k-1)/M)\mb{P}^{1-t}(X)] = O(Pr(\natural^c(X))) = O({\cal C}(k))$. This implies that
\begin{eqnarray}
\expect[\hat{\mb{f}}_{k}(X)] - f(X) &=& \expect[1_{\natural(X)}\hat{\mb{f}}_{k}(X)] + \expect[1_{\natural^c(X)}\hat{\mb{f}}_{k}(X)]- f(X) \nonumber \\
&=& h(X)\left(\frac{k}{M} \right)^{2/d} + o\left(\frac{k}{M}\right)^{2/d} + O({\cal C}(k)) + \expect[1_{\natural^c(X)}\hat{\mb{f}}_{k}(X)] \nonumber \\
&=& h(X)\left(\frac{k}{M} \right)^{2/d} + o\left(\frac{k}{M}\right)^{2/d} + O({\cal C}(k)),
\label{inbias}
\end{eqnarray}
where the last step follows because , by (\ref{densityineqnaturalc}), $1_{\natural^c(X)}\hat{\mb{f}}_{k}(X) = O(1)$. This expression is true for $k>=3$ by (\ref{eq:existence1}). 
%

Next, assuming that (\ref{eq:gcond}) holds, we evaluate $\expect[g(\hat{\mb{f}}_k(X),X)]$ in an identical fashion to the derivation of (\ref{inbias}).
\begin{eqnarray}
&&\expect[1_{\natural(X)}g(\hat{\mb{f}}_k(X),X)] = \expect \left[1_{\natural}(X) g\left( \hat{\mb{f}}_c(X) + k_Mh(X)(\mb{P}(X))^{2/d-1} + k_M\mb{h_s}(X),X \right) \right] \nonumber \\
&& = \expect \left[1_{\natural(X)} g\left( \hat{\mb{f}}_c(X) + k_M h(X)(\mb{P}(X))^{2/d-1} + k_M o((\mb{P}(X))^{2/d-1}),X \right) \right] \nonumber \\
&& = \expect \left[g \left( \hat{\mb{f}}_c(X) + k_M h(X)(\mb{P}(X))^{2/d-1} + k_M o((\mb{P}(X))^{2/d-1}),X \right) \right] + O({\cal C}(k))\nonumber \\
&&= \expect \left [g(\hat{\mb{f}}_c(X),X) + g'({\hat{\mb{f}}_c(X)},X)k_M h(X)(\mb{P}(X))^{2/d-1} + o(k_M \mb{P}(X))^{2/d-1})  \right] + O({\cal C}(k)) \nonumber \\
&&= g(f(X),X)g_1(k,M) + g_2(k,M) + g'(f(X),X)h(X)(k/M)^{2/d} + o((k/M)^{2/d}) + O({\cal C}(k)). \nonumber
\end{eqnarray}
This gives, 
\begin{eqnarray}
&&\expect[g(\hat{\mb{f}}_k(X),X)] = \expect[1_{\natural(X)}g(\hat{\mb{f}}_k(X),X)] + \expect[1_{\natural^c(X)}g(\hat{\mb{f}}_k(X),X)] \nonumber \\
&& = g(f(X),X)g_1(k,M) + g_2(k,M) + g'(f(X),X)h(X)(k/M)^{2/d} + o((k/M)^{2/d}) + O({\cal C}(k)).
\label{eq:bias:BC}
\end{eqnarray}

\subsection{Moments of error function}
\label{keyidea}
\label{similaritysubsection}


Let $\gamma_1(X)$, $\gamma_2(X)$ be arbitrary continuous functions satisfying the condition: $\sup_X[\gamma_i({X})]$ is finite, $i=1,2$. Also let $\gamma(X) = \gamma_1(X)$. Let $\mb{X}_{1},..,\mb{X}_{M},\mb{X},\mb{Y}$ denote $M+2$ i.i.d realizations of the density $f$. Let $q$, $r$ be arbitrary positive integers less than $k$. Define the error function $$\mb{e}_k(X) = {\hat{\mb{f}}_{k}(X)}-\expect{[\hat{\mb{f}}_{k}(X) \mid X]}.$$ Then, 

\begin{lemma}
\label{centsim}
\begin{eqnarray}
&& \expect{\left[1_{\{\mb{X} \in {\cal S'}\}}\gamma(\mb{X})\mb{e}^q_k(\mb{X})\right]} = O(k^{-q\delta/2}) + o(1/M) + O({\cal C}(k)). 
\label{eqcentsim}
\end{eqnarray}
\end{lemma}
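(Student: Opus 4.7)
The plan is to split the expectation based on whether the coverage $\mb{P}(\mb{X})$ is concentrated near its typical value $k_M = (k-1)/M$. I decompose
\[
\expect[1_{\{\mb{X}\in{\cal S'}\}}\gamma(\mb{X})\mb{e}^q_k(\mb{X})] = I_{\text{good}} + I_{\text{tail}},
\]
where $I_{\text{good}}$ restricts the integration to the event $\natural\natural(\mb{X})= \{(1-p_k)k_M \leq \mb{P}(\mb{X}) \leq (1+p_k)k_M\}$ and $I_{\text{tail}}$ to its complement, which by (\ref{eq:concincexplicitjoint}) has probability $O({\cal C}(k))$.

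For $I_{\text{good}}$, parametrize $\mb{P}(\mb{X}) = k_M(1+\mb{y})$ with $|\mb{y}| \leq p_k = \sqrt{6}/k^{\delta/2}$ almost surely on the event. Substituting into the refined Taylor expansion (\ref{knnaprr}), and using (\ref{eq:hrapprox}) together with assumption $({\cal A}.2)$ to control the remainder as $k_M\mb{h_r}(\mb{X}) = o((k/M)^{2\nu/d}) = o(1/M)$, yields
\[
\hat{\mb{f}}_k(\mb{X}) = \frac{f(\mb{X})}{1+\mb{y}} + \sum_{t\in{\cal T}} k_M^t h_t(\mb{X})(1+\mb{y})^{t-1} + o(1/M)
\]
uniformly on $\natural\natural(\mb{X})$. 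Since $f$ and the $h_t$ are bounded by $({\cal A}.1)$--$({\cal A}.2)$ while $(1+\mb{y})^s = 1 + O(p_k)$ for any fixed $s$, subtracting the conditional mean gives the pointwise bound $|\mb{e}_k(\mb{X})| \leq C p_k + o(1/M)$ for some constant $C$. Raising to the $q$-th power via the binomial theorem---and noting each cross term of $(Cp_k + o(1/M))^q$ is $o(1/M)$ since $p_k \to 0$---gives $|\mb{e}^q_k(\mb{X})| = O(k^{-q\delta/2}) + o(1/M)$, which upon integration against the bounded $\gamma$ and the density $f$ delivers the desired contribution.

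For $I_{\text{tail}}$, I would apply Cauchy--Schwarz:
\[
|I_{\text{tail}}| \leq \sup|\gamma| \sqrt{Pr(\natural\natural^c(\mb{X}))} \sqrt{\expect[\mb{e}^{2q}_k(\mb{X})]}.
\]
The first factor is $O(\sqrt{{\cal C}(k)})$ by (\ref{eq:concincexplicitjoint}). For the second factor, the coverage inequalities (\ref{coverageineqnatural}) and (\ref{coverageineqnaturalc}) reduce $\expect[\mb{e}^{2q}_k(\mb{X})]$ to a finite combination of negative moments of the form $\expect[\mb{P}^{-a}(\mb{X})]$ with $a \leq 2q$, each finite by (\ref{eq:existence1}) provided $2q < k$---a condition guaranteed by $({\cal A}.4)$. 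Since ${\cal C}(k) = O(e^{-3k^{1-\delta}})$, the square root is still exponentially decaying and is absorbed into $O({\cal C}(k))$ after a constant change in the exponent, giving $I_{\text{tail}} = O({\cal C}(k))$.

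Combining the bounds on $I_{\text{good}}$ and $I_{\text{tail}}$ yields $O(k^{-q\delta/2}) + o(1/M) + O({\cal C}(k))$, as claimed. The main obstacle is the tuning of the split threshold $p_k$: a larger $p_k$ degrades the pointwise rate $k^{-q\delta/2}$ in $I_{\text{good}}$, while a smaller $p_k$ weakens the concentration inequality so it no longer dominates the polynomial blow-up of the negative moments of $\mb{P}(\mb{X})$ in $I_{\text{tail}}$; the specific choice $p_k = \sqrt{6}/k^{\delta/2}$ with $\delta \in (2/3,1)$ balances these two competing demands while also keeping the Taylor residual at the level $o(1/M)$ required by the third term of the target bound.
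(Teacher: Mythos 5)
Your decomposition into $I_{\text{good}}$ and $I_{\text{tail}}$ over $\natural\natural(\mb{X})$ is structurally the same split as the paper's, and your treatment of $I_{\text{good}}$ — parametrize $\mb{P}(\mb{X}) = k_M(1+\mb{y})$, bound $|\mb{y}| \leq p_k$, and read off a pointwise bound $|\mb{e}_k| = O(p_k) + o(1/M)$ on the good event — is a legitimate and arguably cleaner alternative to the paper's bookkeeping, which decomposes $\mb{e}_k = \mb{e}_c + \mb{e}_t + \mb{e}_r$ on the event $\dag(\mb{X})$, bounds each component's moments via the explicit negative moments of the beta distribution, and then controls cross terms with Cauchy--Schwarz. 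Both routes deliver the $O(k^{-q\delta/2}) + o(1/M)$ terms.

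The genuine gap is in $I_{\text{tail}}$. Cauchy--Schwarz gives you $|I_{\text{tail}}| \leq \sup|\gamma|\,\sqrt{\Pr(\natural\natural^c(\mb{X}))}\,\sqrt{\expect[\mb{e}^{2q}_k(\mb{X})]} = O(\sqrt{{\cal C}(k)})$, and $\sqrt{{\cal C}(k)} = O(e^{-\frac{3}{2}k^{1-\delta}})$ is \emph{not} $O(e^{-3k^{1-\delta}}) = O({\cal C}(k))$; a ``constant change in the exponent'' changes the decay rate, so you have proved a strictly weaker bound than the lemma states. To recover the full rate you must avoid Cauchy--Schwarz and exploit the beta density more finely, as the paper implicitly does: split $\natural\natural^c$ into $\natural^c$ (large $\mb{P}$) and $\natural\cap\natural_{-1}^c$ (small $\mb{P}$). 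On $\natural^c$, (\ref{densityineqnaturalc}) makes $\hat{\mb{f}}_k$ and hence $\mb{e}_k$ uniformly bounded, so $\expect[1_{\natural^c}\gamma\,\mb{e}_k^q] = O(\Pr(\natural^c)) = O({\cal C}(k))$ directly. On the small-$\mb{P}$ tail, a direct integration against the beta density is needed: because $f_{\mb{P}}(p) \propto p^{k-1}(1-p)^{M-k}$ decays like $p^{k-1}$ near zero, while $|\mb{e}_k^q|$ grows only polynomially like $p^{-q}$, the integral over $\{p < (1-p_k)k_M\}$ is dominated by its boundary value, giving $O(k_M^{-q}{\cal C}(k))$ which the prefactor $k_M^{q}$ cancels, yielding $O({\cal C}(k))$ exactly. (Your weaker $O(\sqrt{{\cal C}(k)})$ rate would in fact still suffice for the paper's downstream consequences, since $N\sqrt{{\cal C}(k)}$ still vanishes under the assumed growth of $k$, but it does not establish the lemma as written.)
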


\begin{lemma}
\label{crosssim}
\begin{eqnarray}
Cov{\left[1_{\{\mb{X} \in {\cal S'}\}}\gamma_1(\mb{X})\mb{e}^q_k(\mb{X}),1_{\{\mb{Y} \in {\cal S'}\}}\gamma_2(\mb{Y})\mb{e}^r_k(\mb{Y})\right]} &=& O\left(\frac{1}{k^{((q+r)\delta /2-1)}M}\right) + O(k^{2/d}_M/M) \nonumber \\ &+&  O(1/M^2) +O({\cal C}(k)). 
\end{eqnarray}
\label{eqcrosssim}
\end{lemma}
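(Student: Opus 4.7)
The plan is to parallel the uniform-kernel cross-moment analysis of Appendix~A (Lemma~\ref{hcm}) by using the coverage approximation~(\ref{densitycoverageapprox}) to reduce the $k$-NN density estimator to the coverage density $\hat{\mb{f}}_c(X) = f(X)k_M/\mb{P}(X)$, whose randomness is determined solely by the beta-distributed coverage $\mb{P}$. First I would truncate onto the high-probability event $\natural(\mb{X}) \cap \natural(\mb{Y})$: by~(\ref{eq:concincexplicit}) the complement has probability $O({\cal C}(k))$, and~(\ref{densityineqnaturalc}) provides a deterministic bound on $\hat{\mb{f}}_k$ there, so the truncated-out contribution is absorbed into the $O({\cal C}(k))$ error term.

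Under the truncation I would write $\hat{\mb{f}}_k = \hat{\mb{f}}_c + k_M h/\mb{P}^{1-2/d} + k_M \mb{h_s}$ using~(\ref{densitycoverageapprox}) and expand $\mb{e}_k^q(\mb{X})\mb{e}_k^r(\mb{Y})$ by the binomial theorem. Every cross-term containing at least one correction factor is handled by Cauchy--Schwarz combined with the single-point central-moment bound of Lemma~\ref{centsim} and the moment identity $\expect[\mb{P}^{-a}\mid X] = \Theta(k_M^{-a})$ from~(\ref{eq:existence1}); these terms together contribute at most $O(k_M^{2/d}/M)$. What remains is the cross-moment of the centered coverage errors $\tilde{\mb{e}}_c(X) := f(X)k_M(\mb{P}(X)^{-1} - \expect[\mb{P}(X)^{-1}\mid X])$.

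To analyze this leading term I would split on whether the $k$-NN neighborhoods are disjoint, introducing $\Psi_k = \{\|\mb{X} - \mb{Y}\| \geq 2(k/(Mc_d\epsilon_0))^{1/d}\}$ in analogy with $\Psi_u$ from Appendix~A; the lower bound $f \geq \epsilon_0$ from $({\cal A}.1)$ guarantees that on $\Psi_k$ the balls $\mb{S}_k(\mb{X}), \mb{S}_k(\mb{Y})$ are almost surely disjoint, up to a stray event (a true $k$-NN crossing into the other ball) whose probability is $O({\cal C}(k))$ by applying the coverage concentration bound to an enclosing deterministic ball. On $\Psi_k$ the two coverages are then determined by disjoint sub-samples, and by conditioning on the random partition of sample indices and invoking exchangeability, $\mb{P}(\mb{X})$ and $\mb{P}(\mb{Y})$ behave as asymptotically independent beta variables; after the centered leading term cancels under independence, the residual finite-sample dependence of the sample partition contributes $O(1/M^2)$. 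On the complement $\Psi_k^c$, I would apply H\"older's inequality with the conditional moment bound $\expect[\mb{e}_k^{2q}(x) \mid X = x] = O(k^{-q\delta})$ (Lemma~\ref{centsim} with exponent $2q$) and integrate the indicator $1_{\Psi_k^c}$ against the joint law of $(\mb{X}, \mb{Y})$ to pick up the volume factor $Pr(\Psi_k^c) = O(k_M)$; together with the product $O(k^{-(q+r)\delta/2})$ this produces the $O(1/(k^{(q+r)\delta/2-1}M))$ contribution.

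The main technical obstacle will be the rigorous justification of asymptotic independence of $\mb{P}(\mb{X})$ and $\mb{P}(\mb{Y})$ on $\Psi_k$: even once the balls are disjoint, the random partition of the $M$ samples between them is itself non-trivial, and exchangeability must be invoked carefully so that the residual dependence collapses cleanly to the $O(1/M^2)$ term, with any ``leakage'' events separately absorbed into $O({\cal C}(k))$ via a further coverage concentration argument applied at the enclosing ball scale.
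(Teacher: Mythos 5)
Your decomposition (truncate on $\natural$ events, replace $\hat{\mb{f}}_k$ by the coverage form, split off $\tilde{\mb{e}}_c$, then split on disjoint vs.\ intersecting balls) mirrors the paper's structure, but two steps in the middle do not work as written. First, you propose handling \emph{all} cross-terms containing a correction factor by Cauchy--Schwarz plus Lemma~\ref{centsim} and claim an $O(k_M^{2/d}/M)$ bound. Cauchy--Schwarz gives $|Cov(A,B)| \leq \sqrt{\var(A)\var(B)}$, which is blind to the ball geometry: with $\var(A) = O(k^{-q\delta})$ and $\var(B) = O(k^{-r\delta})$ you only get $O(k^{-(q+r)\delta/2})$, which is a factor of $M/k$ worse than either term in the lemma. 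The $1/M$ (or $1/M^2$) in the target bound can only come from the intersection probability $Pr(\Psi^c) = O(k/M)$ on the intersecting part, or from near-cancellation via the joint coverage law on the disjoint part — so the disjoint/intersecting split must be applied to \emph{every} term, not just the leading $\tilde{\mb{e}}_c\tilde{\mb{e}}_c$ piece. The paper bounds the correction cross-terms on disjoint balls through equation~(\ref{sumform}), which uses the exact joint distribution of $(\mb{P}(X),\mb{P}(Y))$, not Cauchy--Schwarz.

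Second, your treatment of the disjoint-ball leading term is the step you yourself flag as the ``main technical obstacle,'' and it is indeed a genuine gap as proposed. The coverages $\mb{P}(\mb{X})$ and $\mb{P}(\mb{Y})$ are \emph{not} determined by disjoint sub-samples: both are functions of the same $M$ points, and when the balls are disjoint they are jointly Dirichlet, not independent. The paper supplies the exact joint density $f_{\Upsilon^c}(p_X,p_Y) \propto (p_Xp_Y)^{k-1}(1-p_X-p_Y)^{M-2k}$ and computes $\expect[\mb{P}^{-t}(X)\mb{P}^{-u}(Y)]/(\expect[\mb{P}^{-t}(X)]\expect[\mb{P}^{-u}(Y)]) = \Gamma(M-t)\Gamma(M-u)/[\Gamma(M-t-u)\Gamma(M)] = 1 - tu/M + O(1/M^2)$ via Gamma-ratio expansion (equation~(\ref{eq:indicov})), then leverages the binomial cancellation $\sum_a \binom{q}{a}(-1)^a a = 0$ for $q\geq 2$ in~(\ref{sumext}) to reduce the $-tu/M$ contribution to $O(1/M^2)$ unless $q=r=1$. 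An exchangeability-plus-``asymptotic independence'' heuristic does not identify this cancellation structure, nor does it distinguish the $O(1/M)$ leading dependence (present only when $q=r=1$) from the $O(1/M^2)$ residual. To close the gap you would need either the explicit Dirichlet form, or a careful multinomial conditioning argument that reproduces the same Gamma identities.
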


%

Define the operator ${\cal M}(\mb{Z}) = \mb{Z} - \expect[\mb{Z}]$. Let $\beta$ be any positive real number and define \begin{equation} \mb{E}_\beta(X) = k_M^{\beta}({\cal M}(\mb{P}^{-\beta}(X))). \label{Edefine} \end{equation} Define the terms 
\begin{equation} \mb{e}_c(X) = {\hat{\mb{f}}_{c}(X)}-\expect{[\hat{\mb{f}}_{c}(X) \mid X]}, \label{ecdefine}\end{equation}  \begin{equation} \mb{e}_t(X) = {\cal M} \left( \sum_{t \in {\cal T}} \frac{k_M h_t(X)}{ \mb{P}^{1-t}(X))} \right), \label{etdefine} \end{equation} \begin{equation} \mb{e}_r(X) = {\cal M}(k_M\mb{h_r}(X)). \label{erdefine} \end{equation} Note that \begin{equation} \mb{e}_c(X) = f(X)\mb{E}_1(X) \label{ecE} \end{equation} and  \begin{equation} \mb{e}_t(X) = (\sum_{t \in {\cal T}} k^t_M h_t(X) (\mb{E}_{1-t}(X))) \label{etE}. \end{equation}

Define the event $\{{X} \in {\cal S'}\} \cap \{\natural({X})\}$ by $\dag({X})$. Note that under the event $\dag(X)$, $\mb{e}_k(X) = \mb{e}_c(X) + \mb{e}_t(X) + \mb{e}_r(X) =: \mb{e}_o(X)$. Also, under the event $\natural({X})$, $\mb{P}(X) \leq (1+p_k)k_M$, which implies that under the event $\natural(X)$, the following hold
\begin{equation}
\mb{E}_\beta(X) = O(1), \mb{e}_c(X) = O(1), \mb{e}_t(X) = O(1), \mb{e}_r(X) = O(1), \mb{e}_o(X) = O(1).
\label{Emax}
\end{equation}
Furthermore, by (\ref{densityineqnaturalc}), under the event $\natural(X)$, 
\begin{equation}
\mb{e}_k(X) = O(1).
\label{E2max}
\end{equation}
\begin{proof} of Lemma~\ref{centsim}. 
Since $\mb{P}(X)$ is a beta random variable, the probability density function of $\mb{P}(X)$ is given by
\begin{equation}
{f(p_X)} = {\frac{M!}{(k-1)!(M-k)!}} p_X^{k-1} (1-p_X)^{M-k}. \nonumber
\label{mar}
\end{equation}
By (\ref{eq:existence1}), $\expect [{\mb{P}^{-\beta}(X)}] = \Theta((k/M)^{-\beta})$ if $\beta <k$. We will first show that $\expect[\mb{E}^q_\beta(X)] = O(1)$ if $q\beta <k$. This in turn implies that, by (\ref{ecE}) and (\ref{etE}),  $\expect[\mb{e}^q_c(X)] = O(1)$ and $\expect[\mb{e}^q_t(X)] = O(1)$ for any $q<k$. 
\begin{eqnarray}
\expect[\mb{E}^q_\beta(X)] &=& \expect \left[ k_M^{q\beta}(\mb{P}^{-\beta}(X) - \expect[\mb{P}^{-\beta}(X)])^q  \right]\nonumber \\
&=& k_M^{q\beta} \sum_{i=1}^{q} {q \choose i} (-1)^{q-i} \expect[\mb{P}^{-i\beta}(X)]\expect[\mb{P}^{-(q-i)\beta}(X)] \nonumber \\
&=& k_M^{q\beta} \sum_{i=1}^{q} {q \choose i} (-1)^{q-i} \Theta((k/M)^{-i\beta}) \Theta((k/M)^{-(q-i)\beta}) \nonumber \\
&=& \sum_{i=1}^{q} {q \choose i} (-1)^{q-i} \Theta(1)  = O(1).
\label{Emax3}
\end{eqnarray}
By (\ref{eq:concincexplicitjoint}) and (\ref{Emax3}), $$\expect[1_{\natural\natural^c({X})}\mb{E}^q_\beta({X})] = O({{\cal C}(k)}).$$ By the definition of $\natural\natural(X)$, 
\begin{equation}
1_{\natural\natural({X})}\mb{E}^q_\beta({X}) = O\left({k^{-(\delta q/2)}}\right),
\label{Ebetabound}
\end{equation}
and therefore
$$\expect[1_{\natural\natural({X})}\mb{E}^q_\beta({X})] = O\left({k^{-(\delta q/2)}}\right).$$ This gives,
\begin{equation}
\label{prodform}
\expect[\mb{E}^q_\beta(X)] = O(k^{-\delta q/2}) + O({{\cal C}(k)}).
\end{equation}
From this analysis on $\mb{E}_\beta(X)$, it trivially follows from (\ref{ecE}) that 
\begin{equation}
\expect[\mb{e}_c^l(X)] = O(k^{-\delta l/2}) + O({\cal C}(k)).
\label{eq:momc}
\end{equation}
Also observe that by (\ref{eq:hrapprox}) and (\ref{eq:hrapprox2}), 
\begin{equation}
\expect[\mb{e}_r^l(X)] = \expect[1_{\natural({X})}\mb{e}_r^l(X)] + \expect[1_{\natural^c({X})}\mb{e}_r^l(X)] = o(1/M^l) + O({{\cal C}(k)}).
\label{eq:momr}
\end{equation}
We will now bound $\mb{e}_t^l(X)$. 
Let $L = {\sum_{t \in {\cal T}} l_tt}$. Now, using (\ref{etE}), $\mb{e}_t^l(X)$ can be expressed as a \emph{sum} of terms of the form $(k/M)^L{l \choose l_1,..,l_t}\prod_{t \in {\cal T}} ( h^l_t(X) \mb{E}_t^{l_t}(X))$ where $\sum_{t} l_t = l$. Now, we can bound each of these summands using (\ref{Ebetabound}) as follows:
\begin{eqnarray}
(k/M)^l\expect[\prod_{t \in {\cal T}}\mb{E}_t^{l_t}(X)] &=& (k/M)^L\expect[1_{\natural\natural({X})}\prod_{t \in {\cal T}}\mb{E}_t^{l_t}(X)] + (k/M)^L\expect[1_{\natural\natural^c({X})}\prod_{t \in {\cal T}}\mb{E}_t^{l_t}(X)] \nonumber \\
&=& (k/M)^L\prod_{t \in {\cal T}} O(k^{-l_t \delta/2}) + O({{\cal C}(k)})\nonumber \\
&=& (k/M)^L O(k^{-l \delta/2}) + O({{\cal C}(k)})\nonumber \\
&=& o(k^{-l \delta/2}) + O({{\cal C}(k)}).
\end{eqnarray}
This implies that
\begin{equation}
\expect[\mb{e}_t^l(X)] = o(k^{-l \delta/2}) + O({{\cal C}(k)}).
\label{eq:momt}
\end{equation}

 
Note that $\mb{e}_o^q(X)$ will contain terms of the form $(\mb{e}_c(X) + \mb{e}_t(X))^l(\mb{e}_r(X))^{q-l}$. If $l<q$, the expectation of this term can be bounded as follows
\begin{eqnarray}
&& |\expect[(\mb{e}_c(X) + \mb{e}_t(X))^l(\mb{e}_r(X))^{q-l}]| \nonumber \\ 
&& \leq \sqrt{\expect[(\mb{e}_c(X) + \mb{e}_t(X))^{2l}]\expect[(\mb{e}_r(X))^{2(q-l)}]} \nonumber \\
&& = \sqrt{O(1)^{2l}(o(1/M))^{2(q-l)}} \nonumber \\
&& = O(1) \times (o(1/M))^{q-l}  = o(1/M).
\label{eq:momo1}
\end{eqnarray}
Let us concentrate on the case $l=q$. In this case, $\mb{e}_k^q(X)$ will contain terms of the form $(\mb{e}_c(X))^m(\mb{e}_t(X))^{q-m}$. For $m<q$,  
\begin{eqnarray}
&& |\expect[(\mb{e}_c(X))^m(\mb{e}_t(X))^{q-m}]| \nonumber \\
&& \leq \sqrt{\expect[(\mb{e}_c(X))^{2l}]\expect[(\mb{e}_t(X))^{2(q-l)}]} \nonumber \\
&& = \left(O(k^{-m\delta/2}) \times o(k^{-(q-m)\delta/2})\right) + {\cal C}(k) = o(k^{-q\delta/2})+ O({\cal C}(k)).
\label{eq:momo2}
\end{eqnarray}

This therefore implies that, by (\ref{eq:momc}), (\ref{eq:momr}), (\ref{eq:momt}), (\ref{eq:momo1}) and (\ref{eq:momo2}),
\begin{eqnarray}
\expect[\mb{e}_o^q(X)] &=& \expect[\mb{e}_c^q(X)] + o(k^{-q\delta/2}) + {\cal C}(k) \nonumber \\
&=& O(k^{-q\delta/2}) + o(k^{-q\delta/2}) + o(1/M) + {\cal C}(k) \nonumber \\
&=& O(k^{-q\delta/2}) + o(1/M) + {\cal C}(k).
\end{eqnarray}

This finally implies that
\begin{align}
\expect{\left[1_{\{\mb{X} \in {\cal S'}\}}\gamma(\mb{X})\mb{e}^q_k(\mb{X})\right]}  &= \expect{\left[1_{\dag(\mb{X})}\gamma(\mb{X})\mb{e}^q_k(\mb{X})\right]} +  O({{\cal C}(k)})\hspace{0.2in} \text(by (\ref{E2max}))\nonumber \\
&= \expect{\left[1_{\dag(\mb{X})}\gamma(\mb{X})\mb{e}^q_o(\mb{X})\right]} + O({{\cal C}(k)}) \nonumber \\
&= \expect{\left[1_{\{\mb{X} \in {\cal S'}\}}\gamma(\mb{X})\mb{e}^q_o(\mb{X})\right]} + O({{\cal C}(k)}) \hspace{0.2in} \text(by (\ref{Emax})) \nonumber \\
& = O(k^{-q\delta/2}) + o(1/M) + O({{\cal C}(k)}).
\label{derive1}
\end{align}
This concludes the proof.

\end{proof}
Before proving Lemma~\ref{crosssim}, we seek to answer the following question: for which set of pair of points $\{X,Y\}$ are the $k$-NN balls disjoint?
\subsubsection{Intersecting and disjoint balls} Define $\Psi_{{\es}} := \{X,Y\} \in {\cal S'}:||X-Y|| \geq  R_{{\es}}(X) + R_{{\es}}(Y)$ where $R_{{\es}}(X)$ and $R_{{\es}}(Y)$ are the ball radii of the spherical regions $S_u(X)$ and $S_u(Y)$, such that $\int_{S_u(X)} f(z) dz = \int_{S_u(Y)} f(z) dz = (1+p_k)k_M$. We will now show that for $\{X,Y\}\in \Psi_{{\es}}$, the $k$-NN balls will be disjoint with exponentially high probability. Let $\mb{d^{(k)}_X}$ and $\mb{d^{(k)}_Y}$ denote the $k$-NN distances from $X$ and $Y$ and let $\mb{\Upsilon}$ denote the event that the $k$-NN balls intersect. For $\{X,Y\}\in \Psi_\es$, 
\begin{eqnarray}
Pr(\mb{\Upsilon}) &=& Pr(\mb{d^{(k)}_X} + \mb{d^{(k)}_Y} \geq ||X-Y||) \nonumber \\
&\leq& Pr(\mb{d^{(k)}_X} + \mb{d^{(k)}_Y} \geq R_{{\es}}(X) + R_{{\es}}(Y)). \nonumber \\
&\leq& Pr(\mb{d^{(k)}_X} \geq R_{{\es}}(X)) + Pr(\mb{d^{(k)}_Y} \geq R_{{\es}}(Y)) \nonumber \\
&=& Pr(\mb{P}(X) \geq (p_k+1)((k-1)/M)) \nonumber \\
&& + Pr(\mb{P}(Y) \geq (p_k+1)((k-1)/M)) \nonumber \\
&=& 2{\cal C}(k), \nonumber
\end{eqnarray}
where the last inequality follows from the concentration inequality~(\ref{concincc}). We conclude that for $\{X,Y\}\in \Psi_{{\es}}$, the probability of intersection of $k$-NN balls centered at $X$ and $Y$ decays exponentially in $p_k^2{k}$. Stated in a different way, we have shown that for a given pair of points $\{X,Y\}$, if the ${{\es}}$ balls around these points are disjoint, then the $k$-NN balls will be disjoint with exponentially high probability. Let ${\Delta_\es}({X},{Y})$ denote the event $\{X,Y\} \in \Psi_\es^c$. From the definition of the region $\Psi_\es$, we have $Pr(\{\mb{X},\mb{Y}\} \in \Psi_\es^c) = O(k/M)$.

Let $\{X,Y\} \in \Psi_\es$ and let $q,r$ be non-negative integers satisfying $q+r>1$. The event that the $k$-NN balls intersect is given by $\mb{\Upsilon} := \{\mb{d^{(k)}_X}+\mb{d^{(k)}_Y} > ||X-Y||\}$. The joint probability distribution of $\mb{P}(X)$ and $\mb{P}(Y)$ when the $k$-NN balls do not intersect $=: \mb{\Upsilon^c}$ is given by
\begin{equation} 
{f_{\mb{\Upsilon^c}}(p_X,p_Y)} = M!\frac{(p_Xp_Y)^{k-1}}{(k-1)!^2}\frac{(1-p_X-p_Y)^{M-2k}}{(M-2k)!}. \nonumber
\label{joi}
\end{equation}
Define
\begin{equation}
i(p_X,p_Y) = \frac{\Gamma(t)\Gamma(u)\Gamma(v)}{\Gamma(t+u+v)}p_X^{t-1}p_Y^{u-1}(1-p_X-p_Y)^{v-1}, \nonumber
\end{equation}
and note that
\begin{equation}
\int_{p_X=0}^1 \int_{p_Y=0}^1 1_{\{p_X+p_Y \leq 1\}} i(p_X,p_Y) dp_X dp_Y = 1. \nonumber
\end{equation}

\begin{figure}[ht!]
  \begin{center}
    \includegraphics[width=3.5in]{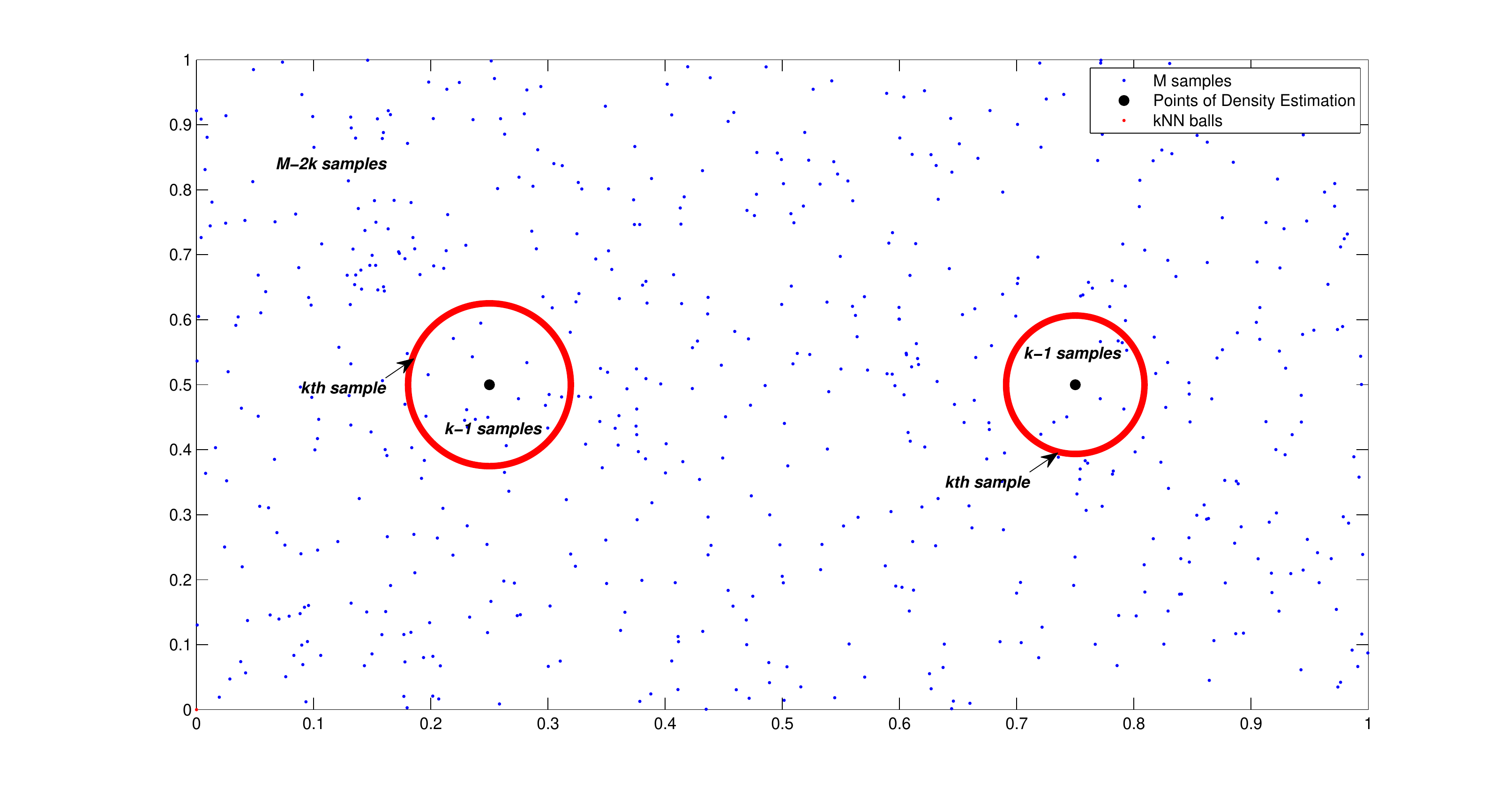}
  \end{center}

  \caption{\small Distribution of samples when $k$-NN balls are disjoint.}
  \label{fig-label8888}
\end{figure}
Figure \ref{fig-label8888} shows the distribution of the $M$ samples when the $k$-NN balls are disjoint. 
Now note that $i(p_X,p_Y)$ corresponds to the density function ${f_{\mb{\Upsilon^c}}(p_X,p_Y)}$ for the choices $t=k$, $u = k$ and $v = M-2k+1$. Furthermore, for $\{X,Y\} \in \Psi_\es$, the set ${\cal Q} := \{p_X,p_Y\}: p_X, p_Y \leq (1+p_k)(k-1)/M$ is a subset of the region ${\cal T} := \{p_X,p_Y\}: 0 \leq p_X,p_Y \leq 1$; $p_X+p_Y \leq 1$. Note that $\expect[1_{\cal Q}] = 1-{\cal C}(k)$. This implies that expectations over the region ${\cal R} := \{p_X,p_Y\}: 0 \leq p_X,p_Y \leq 1$; should be of the same order as the expectations over ${\cal T}$ with differences of order ${\cal C}(k)$. In particular, for $t,u <k$,
\begin{equation}
 \expect[\mb{P}^{-t}(X)\mb{P}^{-u}(Y)] = \expect[1_{\cal T}\mb{P}^{-t}(X)\mb{P}^{-u}(Y)] + {\cal C}(k). \nonumber
\end{equation}
From the joint distribution representation, it follows that
\begin{equation}
 \frac{\expect[1_{\cal T}\mb{P}^{-t}(X)\mb{P}^{-u}(Y)]}{\expect[\mb{P}^{-t}(X)]\expect[\mb{P}^{-u}(Y)]} = \frac{\Gamma(M-t)\Gamma(M-u)}{\Gamma(M-t-u)\Gamma(M)} = -\frac{tu}{M} + O(1/M^2). 
 \label{eq:indicov}
\end{equation}
Now observe that 
\begin{eqnarray}
\label{sumform}
&&(k_M)^{t+u}Cov(\mb{P}^{-t}(X),\mb{P}^{-u}(Y)) \nonumber \\
&& = (k_M)^{t+u}[\expect[\mb{P}^{-t}(X)\mb{P}^{-u}(Y)] - \expect[\mb{P}^{-t}(X)]\expect[\mb{P}^{-u}(Y)]] \nonumber \\
&& = (k_M)^{t+u}\expect[\mb{P}^{-t}(X)]\expect[\mb{P}^{-u}(Y)] \left[\frac{\expect[\mb{P}^{-t}(X)\mb{P}^{-u}(Y)]}{\expect[\mb{P}^{-t}(X)]\expect[\mb{P}^{-u}(Y)]}-1\right] \nonumber \\
&& = (k_M)^{t+u} \Theta(k_M^{-t})\Theta(k_M^{-u}) \left[1-\frac{tu}{M}+o(1/M^2)-1\right] \hspace{0.3in} \text{(by (\ref{eq:existence1}) and (\ref{eq:indicov}))}\nonumber \\
&& = -\left(\frac{tu}{M}\right) + O(1/M^2) .
\end{eqnarray}

Then, the covariance between the powers of the error function $\mb{E}_\beta$, for $qt,ru <k$ is given by 
\begin{eqnarray}
\label{sumext}
Cov(\mb{E}^q_t(X),\mb{E}^r_u(Y)) &=& k_M^{(t q + ur)} Cov \left(\left[{\mb{P}^{-t}(X)}-\expect \left[{\mb{P}^{-t}(X)} \right]\right]^q,\left[{\mb{P}^{-u}(Y)}-\expect \left[{\mb{P}^{-u}(Y)} \right]\right]^r \right) \nonumber \\
&=& \sum_{a=1}^{q} \sum_{b=1}^{r} \binom{q}{a} \binom{r}{b} [(-1)^{a+b}+o(1)] k_M^{(ta + ub)} Cov(\mb{P}^{-t a}(X),\mb{P}^{-u b}(Y)) \nonumber \\
&=& -tu \sum_{a=1}^{q} \sum_{b=1}^{r} \binom{q}{a} \binom{r}{b}  \frac{(-1)^aa(-1)^bb}{M}+O\left(\frac{1}{M^2}\right) \nonumber \\
&=& 1_{\{q=1,r=1\}}\left(\frac{-tu}{M}\right) + O(1/M^2).
\end{eqnarray}

\begin{proof} of Lemma~\ref{crosssim}. 
Let $\mb{X}_{1},..,\mb{X}_{M},\mb{X},\mb{Y}$ denote $M+2$ i.i.d realizations of the density $f$. Then, identical to the derivation of (\ref{derive1}) in the proof of Lemma~\ref{centsim}, 
\begin{eqnarray}
&&Cov{\left[1_{\{\mb{X} \in {\cal S'}\}}\gamma_1(\mb{X})\mb{e}^q_k(\mb{X}),1_{\{\mb{Y} \in {\cal S'}\}}\gamma_2(\mb{Y})\mb{e}^r_k(\mb{Y})\right]} \nonumber \\
&& = Cov{\left[1_{\{\mb{X} \in {\cal S'}\}}\gamma_1(\mb{X})\mb{e}^q_o(\mb{X}),1_{\{\mb{Y} \in {\cal S'}\}}\gamma_2(\mb{Y})\mb{e}^r_o(\mb{Y})\right]} + O({\cal C}(k)). \nonumber
\end{eqnarray}
Using the exact same arguments as in proof of Lemma~A.1, it can be shown that the contribution of terms $\mb{e}_r(\mb{X})$,$\mb{e}_r(\mb{Y})$ to the R.H.S. of the above equation is $o(1/M)$. Define $\sharp(\mb{X},\mb{Y}) := \gamma_1(\mb{X}) \gamma_2(\mb{Y})  Cov_{\{\mb{X},\mb{Y}\}}[(\mb{e}_c({X}) + \mb{e}_t({X}))^q, (\mb{e}_c({Y}) + \mb{e}_t({Y}))^r]$. Thus,
\begin{eqnarray}
&&Cov{\left[1_{\{\mb{X} \in {\cal S'}\}}\gamma_1(\mb{X})\mb{e}^q_k(\mb{X}),1_{\{\mb{Y} \in {\cal S'}\}}\gamma_2(\mb{Y})\mb{e}^r_k(\mb{Y})\right]} \nonumber \\
&& = \expect[1_{\{\mb{X},\mb{Y} \in {\cal S'}\}}\sharp(\mb{X},\mb{Y})] + O({\cal C}(k)) \nonumber \\
&& = \expect[\mb{1_{{\Delta_\es}^c({X},{Y})}} \sharp(\mb{X},\mb{Y})] + \expect[\mb{1_{{\Delta_\es}({X},{Y})}} \sharp(\mb{X},\mb{Y})] + O({\cal C}(k))  \nonumber \\
&& = I + II + O({\cal C}(k)). \nonumber
\end{eqnarray}

\paragraph*{For $\{X,Y\} \in \Psi_\es^c$} The covariance term $Cov_{\{\mb{X},\mb{Y}\}}[(\mb{e}_c({X}) + \mb{e}_t({X}))^q, (\mb{e}_c({Y}) + \mb{e}_t({Y}))^r]$ can be shown to be $O(k^{-(q+r)\delta/2})$ for $q,r<k$ by using Cauchy-Schwarz and (\ref{eq:momo1}), (\ref{eq:momo2}) as follows.
\begin{eqnarray}
|Cov[(\mb{e}_c({X}) + \mb{e}_t({X}))^q, (\mb{e}_c({Y}) + \mb{e}_t({Y}))^r]| &\leq & \sqrt{\var[(\mb{e}_c({X}) + \mb{e}_t({X}))^q]\var[(\mb{e}_c({Y}) + \mb{e}_t({Y}))^r]} \nonumber \\
&\leq & \sqrt{\expect[(\mb{e}_c({X}) + \mb{e}_t({X}))^{2q}]\expect[(\mb{e}_c({Y}) + \mb{e}_t({Y}))^{2r}]} \nonumber \\
&=& \sqrt{O(k^{-(2q) \delta/2})O(k^{-(2r) \delta/2})} \nonumber \\
&=& O(k^{-(q+r) \delta/2}).
\end{eqnarray}
This implies that
\begin{eqnarray}
&&II = \expect[\mb{1_{{\Delta}_\es({X},{Y})}}\sharp(\mb{X},\mb{Y})] = \expect{\left[\mb{1_{{\Delta}_\es({X},{Y})}}  O(k^{-(q+r) \delta/2})\right]} = O\left(\frac{1}{k^{((q+r) \delta/2-1)}M} \right),\nonumber
\end{eqnarray}
where the last but one step follows since the probability $Pr(\{\mb{X},\mb{Y}\} \in \Psi_\es^c) = O(k/M)$.

\paragraph*{For $\{X,Y\} \in \Psi_\es$} Now note that $(\mb{e}_c({X}) + \mb{e}_t({X}))^q$ will contain terms of the form $(\mb{e}_c(X))^m(\mb{e}_t(X))^{q-m}$. For $m<q$, the term $(\mb{e}_c(X))^m(\mb{e}_t(X))^{q-m}$ will be a sum of terms of the form $(k/M)^{(m+u)} \mb{P}^{-(m+v)}(X)$ for arbitrary $v < q-m$ with $u-v>=2/d$. By (\ref{sumform}), the covariance term $Cov[(\mb{e}_c(X))^m(\mb{e}_t(X))^{q-m}, (\mb{e}_c(Y))^n(\mb{e}_t(Y))^{r-m}]$ will be therefore be $O(k^{2/d}_M/M)$ if either $m<q$ or $n<r$.

On the other hand, if $m=q$ and $n=r$, $Cov[(\mb{e}_c(X))^q, (\mb{e}_c(Y))^r] = 1_{\{q=1,r=1\}}O(1/M) + O(1/M^2)$ by noting that the error $\mb{e}_c(X) = f(X)\mb{E}_1(X)$ and subsequently invoking (\ref{sumext}). Therefore
\begin{eqnarray}
&&I = \expect[\mb{1_{{\Delta}^c_\es({X},{Y})}}\sharp(\mb{X},\mb{Y})] \nonumber \\
&& = \expect{\left[\mb{1_{{\Delta}^c_\es({X},{Y})}}\left(1_{\{q=1,r=1\}} O(1/M) + O(k^{2/d}_M/M) + O(1/M^2)\right)\right]} \nonumber \\
&& = 1_{\{q=1,r=1\}} O(1/M) + O(k^{2/d}_M/M) + O(1/M^2), \nonumber
\end{eqnarray}
where the last step follows from the fact that probability $Pr(\{\mb{X},\mb{Y}\} \in \Psi_\es) = 1 - O(k/M) = O(1)$.
\end{proof}


\subsection{Specific cases}
We now focus on evaluating the specific cases $$\expect{\left[1_{\{\mb{X} \in {\cal S'}\}}\gamma(\mb{X})\mb{e}^2_k(\mb{X})\right]}$$ and $$Cov{\left[1_{\{\mb{X} \in {\cal S'}\}}\gamma_1(\mb{X})\mb{e}_k(\mb{X}),1_{\{\mb{Y} \in {\cal S'}\}}\gamma_2(\mb{Y})\mb{e}_k(\mb{Y})\right]}, $$ for $k>2$.

\subsubsection{Evaluation of $\expect{\left[1_{\{\mb{X} \in {\cal S'}\}}\gamma(\mb{X})\mb{e}^2_k(\mb{X})\right]}$}
$\mb{P}(X)$ has a beta distribution with parameters $k, M-k+1$. Therefore for $k>2$
\begin{eqnarray}
\expect[\mb{E}^2_\beta(X)] &=& \expect \left[ k_M^{2\beta}(\mb{P}^{-\beta}(X) - \expect[\mb{P}^{-\beta}(X)])^2  \right]\nonumber \\
&=& k_M^{2\beta} \expect[\mb{P}^{-2\beta}(X)] - \left(\expect[\mb{P}^{-\beta}(X)] \right)^2 \nonumber \\
&=& k_M^{2\beta} \left(\frac{\Gamma(k-2\beta)\Gamma(M+1)}{\Gamma(k)\Gamma(M+1-2\beta)} - \left(\frac{\Gamma(k-\beta)\Gamma(M+1)}{\Gamma(k)\Gamma(M+1-\beta)}\right)^2 \right) \nonumber \\
&=& O(1/k)
\label{eq:centralmomq=2}
\end{eqnarray}
where the last step follows by noting that for any $a>0$, $$\frac{\Gamma(x)}{\Gamma(x+a)}  = x^{-a}(1+o(1/x)).$$ From (~\ref{derive1}), 
\begin{align}
\expect{\left[1_{\{\mb{X} \in {\cal S'}\}}\gamma(\mb{X})\mb{e}^2_k(\mb{X})\right]}  &= 
\expect{\left[1_{\{\mb{X} \in {\cal S'}\}}\gamma(\mb{X})\mb{e}^2_o(\mb{X})\right]} + O({\cal C}(k)).
\end{align}
Note that $\mb{e}_o^2(X) = (\mb{e}_c(X) + \mb{e}_t(X) + \mb{e}_r(X))^2$ is a sum of terms of the form $(\mb{e}_c(X))^{2-l-m}(\mb{e}_t(X))^l(\mb{e}_r(X))^{m}$. Also,
\begin{eqnarray}
\expect[\mb{e}^2_c(X)] &=& f^2(X)\expect \left[ k_M^{2}(\mb{P}^{-1}(X) - \expect[\mb{P}^{-1}(X)])^2  \right]\nonumber \\
&=& f^2(X)k_M^{2} \expect[\mb{P}^{-2}(X)] - \left(\expect[\mb{P}^{-1}(X)] \right)^2 \nonumber \\
&=& f^2(X)k_M^{2\beta} \left(\frac{\Gamma(k-2)\Gamma(M+1)}{\Gamma(k)\Gamma(M+1-2)} - \left(\frac{\Gamma(k-1)\Gamma(M+1)}{\Gamma(k)\Gamma(M)}\right)^2 \right) \nonumber \\
&=& \frac{1}{k} + o\left(\frac{1}{k}\right).
\label{eq:centralmomq=22}
\end{eqnarray}
 Using (\ref{eq:centralmomq=2}), identical to the derivation of (\ref{eq:momo1}) and (\ref{eq:momo2}), it is clear that if  $l+m>0$, $\expect[(\mb{e}_c(X))^{2-l-m}(\mb{e}_t(X))^l(\mb{e}_r(X))^{m}] = o(k^{-1})+ o(1/M) + O({\cal C}(k))$. This implies that
\begin{align}
\expect{\left[1_{\{\mb{X} \in {\cal S'}\}}\gamma(\mb{X})\mb{e}^2_k(\mb{X})\right]}  &= 
\expect{\left[1_{\{\mb{X} \in {\cal S'}\}}\gamma(\mb{X})\mb{e}^2_o(\mb{X})\right]} + O({\cal C}(k)) \nonumber \\
&= {f^2(X)} \left(\frac{1}{k}\right) + o \left(\frac{1}{k}\right).
\label{eqcentsim2}
\end{align}

\subsubsection{Evaluation of $Cov{\left[1_{\{\mb{X} \in {\cal S'}\}}\gamma_1(\mb{X})\mb{e}_k(\mb{X}),1_{\{\mb{Y} \in {\cal S'}\}}\gamma_2(\mb{Y})\mb{e}_k(\mb{Y})\right]} $} We separately analyze disjoint balls and intersecting balls as follows:
\begin{eqnarray}
 &&Cov{\left[1_{\{\mb{X} \in {\cal S'}\}}\gamma_1(\mb{X})\mb{e}_k(\mb{X}), 1_{\{\mb{Y} \in {\cal S'}\}}\gamma_2(\mb{Y})\mb{e}_k(\mb{Y})\right]} \nonumber \\
&&= \expect[\left[1_{\{\mb{X} \in {\cal S'}\}}1_{\{\mb{Y} \in {\cal S'}\}}\gamma_1(\mb{X})\gamma_2(\mb{Y})\mb{e}_k(\mb{X}) \mb{e}_k(\mb{Y})\right]] \nonumber \\
&&= \expect[\left[1_{\{\mb{X} \in {\cal S'}\}}1_{\{\mb{Y} \in {\cal S'}\}}\gamma_1(\mb{X})\gamma_2(\mb{Y})\mb{e}_o(\mb{X}) \mb{e}_o(\mb{Y})\right]] + O({\cal C}(k)) \nonumber \\
&&= \expect[\left[1_{\{\mb{X} \in {\cal S'}\}}1_{\{\mb{Y} \in {\cal S'}\}}\gamma_1(\mb{X})\gamma_2(\mb{Y})(\mb{e}_c(\mb{X})+\mb{e}_t(\mb{X})+\mb{e}_r(\mb{X}))(\mb{e}_c(\mb{Y})+\mb{e}_t(\mb{Y})+\mb{e}_r(\mb{Y}))\right]] + O({\cal C}(k)) \nonumber \\
&&= \expect[\left[1_{\{\mb{X} \in {\cal S'}\}}1_{\{\mb{Y} \in {\cal S'}\}}\gamma_1(\mb{X})\gamma_2(\mb{Y})(\mb{e}_c(\mb{X})+\mb{e}_t(\mb{X}))(\mb{e}_c(\mb{Y})+\mb{e}_t(\mb{Y}))\right]] + O({\cal C}(k)) + o(1/M) \nonumber \\
&& = \expect[\mb{1_{{\Delta_\es}^c({X},{Y})}}\gamma_1(\mb{X}) \gamma_2(\mb{Y})  \expect_{\{\mb{X},\mb{Y}\}}[(\mb{e}_c ({X})+\mb{e}_t ({X}))(\mb{e}_c ({Y})+\mb{e}_t ({Y}))]] \nonumber \\
&& + \expect[\mb{1_{{\Delta_\es}({X},{Y})}}\gamma_1(\mb{X}) \gamma_2(\mb{Y})  \expect_{\{\mb{X},\mb{Y}\}}[(\mb{e}_c ({X})+\mb{e}_t ({X}))(\mb{e}_c ({Y})+\mb{e}_t ({Y}))]] \nonumber \\
&& + O({\cal C}(k)) + o(1/M) \nonumber \\
&& = I + II + O({\cal C}(k)) + o(1/M). \nonumber
\end{eqnarray}

\paragraph*{For $\{X,Y\} \in \Psi_\es$} 
$$\expect[(\mb{e}_c(X))(\mb{e}_c(Y))] = Cov[(\mb{e}_c(X)), (\mb{e}_c(Y))] = \frac{-f(X)f(Y)}{M} + O(1/M^2)$$ by noting that the error $\mb{e}_c(X) = \mb{E}_1(X)/f(X)$ and subsequently invoking (\ref{sumext}) in conjunction with the condition $k>2$. Similarly, using (\ref{ecE}), (\ref{etE}) and (\ref{sumext}), 
$$\expect[(\mb{e}_c(X))(\mb{e}_t(Y))] = O(k^{2/d}_M/M) + O(1/M^2),$$
$$\expect[(\mb{e}_t(X))(\mb{e}_c(Y))] = O(k^{2/d}_M/M) + O(1/M^2),$$
$$\expect[(\mb{e}_t(X))(\mb{e}_t(Y))] = O(k^{4/d}_M/M) + O(1/M^2).$$
This implies that
\begin{eqnarray}
\label{eq:disj}
&&I = \expect[\mb{1_{{\Delta}^c_\es({X},{Y})}}\expect_{\{\mb{X},\mb{Y}\}}[(\mb{e}_c ({X})+\mb{e}_t ({X}))(\mb{e}_c ({Y})+\mb{e}_t ({Y}))]] \nonumber \\
&& = \expect{\left[\mb{1_{{\Delta}^c_\es({X},{Y})}}\left({-f(X)f(Y)}(1/M) + O(k^{2/d}_M/M) + O(1/M^2)\right)\right]} \nonumber \\
&&= \expect[1_{\{\mb{X} \in {\cal S'}\}}1_{\{\mb{Y} \in {\cal S'}\}}\gamma_1(\mb{X})\gamma_2(\mb{Y})(f(\mb{X})f(\mb{Y}))]\left(-1/M + O(k^{2/d}_M/M) + O(1/M^2)\right) \nonumber \\
&&= -\expect[1_{\{\mb{X} \in {\cal S'}\}}\gamma_1(\mb{X})f(\mb{X})]\expect[1_{\{\mb{Y} \in {\cal S'}\}}\gamma_2(\mb{Y})f(\mb{Y})]\frac{1}{M} + O(k^{2/d}_M/M) + O(1/M^2). 
\end{eqnarray}
where the last but one step follows from the fact that probability $Pr(\{\mb{X},\mb{Y}\} \in \Psi_\es) = 1 - O(k/M) = O(1)$.

\paragraph*{For $\{X,Y\} \in \Psi_\es^c$}
First observe that by Cauchy Schwarz, and by (\ref{eq:centralmomq=2})
$|\expect[\mb{E}_t(X)\mb{E}_u(X)]| \leq \sqrt{\expect[\mb{E}^2_t(X)]\expect[\mb{E}^2_u(X)]} = O(1/k)$. This implies that
\begin{eqnarray}
\expect[(\mb{e}_c ({X})+\mb{e}_t ({X}))(\mb{e}_c ({Y})+\mb{e}_t ({Y}))] = \expect[\mb{e}_c ({X})\mb{e}_c ({Y})] + O(k^{2/d}_M/k).
\end{eqnarray}
In subsection~\ref{specificeval}, we will show Lemma~\ref{intersectcoveps}, which states that 
\begin{eqnarray}
 &&\expect[\mb{1_{{\Delta_\es}({X},{Y})}}\gamma_1(\mb{X})\gamma_2(\mb{Y})\mb{e_c}(\mb{X})\mb{e_c}(\mb{Y})] \nonumber \\
&& = {\expect[1_{\{\mb{X} \in {\cal S'}\}}\gamma_1(\mb{X})\gamma_2(\mb{X})f^2(\mb{X})]}\left( \frac{1}{M} + o \left(\frac{1}{M}\right)\right)  \nonumber
\end{eqnarray}

This implies that
\begin{eqnarray}
\label{eq:disj2}
&&II = \expect[\mb{1_{{\Delta}_\es({X},{Y})}}\expect_{\{\mb{X},\mb{Y}\}}[(\mb{e}_c ({X})+\mb{e}_t ({X}))(\mb{e}_c ({Y})+\mb{e}_t ({Y}))]] \nonumber \\
&& = \expect[\mb{1_{{\Delta}_\es({X},{Y})}}\expect_{\{\mb{X},\mb{Y}\}}[\mb{e}_c ({X})\mb{e}_c ({Y})] + O(k^{2/d}_M/k)]   \nonumber \\
&&= \expect[\mb{1_{{\Delta_\es}({X},{Y})}}\gamma_1(\mb{X})\gamma_2(\mb{Y})\mb{e_c}(\mb{X})\mb{e_c}(\mb{Y})] + \expect{\left[\mb{1_{{\Delta}_\es({X},{Y})}}\left(O(k^{2/d}_M/k)\right)\right]} \nonumber \\
&& = {\expect[1_{\{\mb{X} \in {\cal S'}\}}\gamma_1(\mb{X})\gamma_2(\mb{X})/f^2(\mb{X})]}\left( \frac{1}{M} + O(k^{2/d}_M/M)+ o \left(\frac{1}{M}\right)\right)
\end{eqnarray}
where the last step follows from recognizing that $Pr(\{\mb{X},\mb{Y}\} \in \Psi_\es^c) = O(k/M)$ and $O(k/M) \times 1/k = O(1/M)$. This implies that
\begin{eqnarray}
 &&Cov{\left[1_{\{\mb{X} \in {\cal S'}\}}\gamma_1(\mb{X})\mb{e}_k(\mb{X}), 1_{\{\mb{Y} \in {\cal S'}\}}\gamma_2(\mb{Y})\mb{e}_k(\mb{Y})\right]} \nonumber \\
&& = I + II + O({\cal C}(k)) + o(1/M) \nonumber \\
&&= Cov[1_{\{\mb{X} \in {\cal S'}\}}\gamma_1(\mb{X})/f(\mb{X}),1_{\{\mb{Y} \in {\cal S'}\}}\gamma_2(\mb{Y})/f(\mb{Y})]\left( \frac{1}{M} \right)  + o(1/M) + O({\cal C}(k)).
\label{eqcrosssim2}
\end{eqnarray}

\subsection{Summary}
Noting that $\delta>2/3$, the equations (\ref{eqcentsim}), (\ref{eqcrosssim}), (\ref{eqcentsim2}), (\ref{eqcrosssim2}) imply that for positive integers $q,r<k$, 
\label{summarybeforeBC}
\begin{align}
\label{bknncentIapp}
& \expect{\left[1_{\{\mb{X} \in {\cal S'}\}}\gamma(\mb{X})\mb{e}_k^q(\mb{X})\right]}  = 1_{\{q=2\}}\expect{\left[1_{\{\mb{X} \in {\cal S'}\}}\gamma(\mb{X})f^2(\mb{X})\right]} \left(\frac{1}{k}\right) + o \left(\frac{1}{k}\right) +O({\cal C}(k)),
\end{align}
\begin{align}
\label{bknncrossIapp}
 &Cov{\left[1_{\{\mb{X} \in {\cal S'}\}}\gamma_1(\mb{X})\mb{e}_k^q(\mb{X}), 1_{\{\mb{Y} \in {\cal S'}\}}\gamma_2(\mb{Y})\mb{e}_k^r(\mb{Y})\right]} \nonumber \\
& = 1_{\{q,r=1\} }Cov[1_{\{\mb{X} \in {\cal S'}\}}\gamma_1(\mb{X})f(\mb{X}),1_{\{\mb{Y} \in {\cal S'}\}}\gamma_2(\mb{Y})f(\mb{Y})]\left( \frac{1}{M} + o(1/M) \right)  \nonumber \\
& + 1_{\{q+r>2\} }\left(O\left(\frac{1}{k^{((q+r)\delta /2-1)}M}\right) + O(k^{2/d}_M/M) + O(1/M^2)\right)  +O({\cal C}(k)).
\end{align}

\subsection{Evaluation of $\expect[\mb{e}_c ({X})\mb{e}_c ({Y})]$ for $\{X,Y\}\in \Psi_{{\es}}^c$}
\label{specificeval}

For $\{X,Y\}\in \Psi_{{\es}}^c$, it will be shown that the cross-correlations $\expect[\mb{e}_c ({X})\mb{e}_c ({Y})]$ of the coverage density estimator and an oracle uniform kernel density estimator (defined below) are identical up to leading terms (without explicitly evaluating the cross-correlation between the coverage density estimates) and then derive the correlation of the oracle density estimator to obtain corresponding results for the coverage estimate. 

\paragraph*{Oracle $\es$ ball density estimate}
In order to estimate cross moments for the $k$-NN density estimator, the \emph{$\es$ ball} {density estimator} is introduced. The $\epsilon$-ball density estimator is a kernel density estimator that uses a uniform kernel with bandwidth which depends on the unknown density $f$. Let the volume of the kernel be $V_\epsilon(X)$ and the corresponding kernel region be $S_\epsilon(X) = \{Y \in {\cal S}:c_d||X-Y||^d \leq V_\epsilon(X)\}$. The volume is chosen such that the coverage $Q_\es(X) = \int_{S_\epsilon(X)}f(z) dz$ is set to $(1+p_k)k/M$. Let $\mb{l_\es}(X)$ denote the number of points among $\{\mb{X}_1,..,\mb{X}_M\}$ falling in $S_\es(X)$: $\mb{l_\epsilon(X)} = \Sigma_{i=1}^{M} 1_{\mb{X}_i \in S_\epsilon(X)}$. The \emph{$\epsilon$ ball} density estimator is defined as
\begin{equation}
  \mb{\hat{f}_\es}(X) = \frac{\mb{l_\epsilon(X)}}{MV_\epsilon(X)}.
\end{equation}
Also define the error $\mb{{e}}_\es(X)$ as $\mb{{e}}_\es(X) = \hat{\mb{f}}_\es(X)-\expect[\hat{\mb{f}}_\es(X)]$.
It is then possible to prove the following lemma using results on the volumes of intersections of hyper spheres (refer Appendix A for details).
\begin{lemma}
\label{covariancelemmaeps}
Let $\gamma_1(X)$, $\gamma_2(X)$ be arbitrary continuous functions. Let $\mb{X}_{1},..,\mb{X}_{M},\mb{X},\mb{Y}$ denote $M+2$ i.i.d realizations of the density $f$. Then,
\begin{eqnarray}
&&\expect{\left[\mb{1_{{\Delta_\es}({X},{Y})}}\gamma_1(\mb{X})\mb{e_\es}(\mb{X})\gamma_2(\mb{Y})\mb{e_\es}(\mb{Y})\right]} \nonumber \\
&& = {\expect[1_{\{\mb{X} \in {\cal S'}\}}\gamma_1(\mb{X})\gamma_2(\mb{X})f^2(\mb{X})]}\left( \frac{1}{M} + o \left(\frac{1}{M}\right)\right). \nonumber
\end{eqnarray}

\end{lemma}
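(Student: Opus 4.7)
The plan is to adapt the strategy of Lemma~\ref{covariancelemma} (the intersecting-balls covariance lemma for the standard uniform kernel, proven in Appendix A) to the $\epsilon$-ball estimator $\hat{\mb{f}}_\es$. The key structural observation is that $\hat{\mb{f}}_\es(X)$ is a uniform-kernel density estimator whose volume $V_\es(X)$ is deterministically calibrated from $X$ so that the coverage $Q_\es(X)$ is the constant $(1+p_k)k/M$. The complication compared to Appendix~A is that here $V_\es(X)$ depends on $X$ (via $f(X)$), whereas in Appendix~A the bandwidth $V_u$ was independent of $X$; this dependence will need to be tracked carefully.

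First I would write the conditional covariance explicitly. Since $\mb{l}_\es(X)$ and $\mb{l}_\es(Y)$ are jointly binomial with marginals $(M,Q_\es(X))$, $(M,Q_\es(Y))$ and joint second moment $\expect[\mb{l}_\es(X)\mb{l}_\es(Y)] = M Q_\es(X,Y)+M(M-1)Q_\es(X)Q_\es(Y)$, where $Q_\es(X,Y) := \expect[1_{Z\in S_\es(X)}1_{Z\in S_\es(Y)}]$, one obtains
\begin{equation*}
\mathrm{Cov}\big(\mb{e}_\es(X),\mb{e}_\es(Y)\mid X,Y\big) \;=\; \frac{Q_\es(X,Y)-Q_\es(X)Q_\es(Y)}{M\,V_\es(X)V_\es(Y)}.
\end{equation*}
Taking the outer expectation against $f(x)f(y)\,dx\,dy$ and splitting into the piece $I$ arising from $Q_\es(X,Y)$ and the piece $II$ from $-Q_\es(X)Q_\es(Y)$ gives the two integrals to estimate. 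For $II$, note that on $\Delta_\es$ one has $\|X-Y\|=O((k/M)^{1/d})$, so the inner $y$-integral is over a neighborhood of volume $O(k/M)$. Using $Q_\es(X)/V_\es(X) = f(X)+o(1)$, the integrand of $II$ is bounded by $O(f^2(x)f^2(y)/M)$, and the restricted integration yields $II = O(k/M^{2}) = o(1/M)$.

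The substantive work is the evaluation of $I$. On $\Delta_\es$, continuity of $f$ gives $Q_\es(X,Y) = [f(X)+o(1)]\,V(S_\es(X)\cap S_\es(Y))$, and since $f(X)\approx f(Y)$ on this set, $R_\es(X)\approx R_\es(Y)$ so that $V(S_\es(X)\cap S_\es(Y)) = V_\es(X)\aleph(\|X-Y\|/R_\es(X))\,[1+o(1)]$. Substituting, using $1/V_\es(Y) \approx Mf(Y)/((1+p_k)k)\approx Mf(X)/k$, and making the change of variable $u=(y-x)/R_\es(x)$ along with the identity $\int \aleph(\|u\|)\,du = c_d$ from Appendix~A, reduces the inner integral to $[\gamma_2(x)f(x)+o(1)]\,V_\es(x)$. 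Combining the factors and using $V_\es(x)\approx(1+p_k)k/(Mf(x))$ gives
\begin{equation*}
I \;=\; \frac{1}{M}\,\expect\big[1_{\{\mb{X}\in{\cal S'}\}}\gamma_1(\mb{X})\gamma_2(\mb{X})f^2(\mb{X})\big] + o(1/M),
\end{equation*}
which together with $II=o(1/M)$ is the claim.

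The main obstacle I expect is the bookkeeping of the $X$-dependence of $V_\es(X)$ (which Appendix~A sidesteps by taking the bandwidth constant), in particular verifying that the change of variable $u=(y-x)/R_\es(x)$ indeed reduces the $y$-integral to a dimensionless form with integrand $\aleph(\|u\|)\gamma_2(x+R_\es(x)u)f(x+R_\es(x)u)$, so that dominated convergence together with continuity delivers the leading factor $c_d$ which cancels precisely with the $c_d$ hidden in $V_\es(x)=c_d R_\es^d(x)$. Once that cancellation is in hand, the remaining simplifications are routine and mirror the computation in Appendix~A.
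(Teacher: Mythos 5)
Your proposal is correct and follows essentially the same route the paper intends: adapting the Appendix~A intersecting-balls computation (Lemma~\ref{covariancelemma}, the $\aleph$ function, and the identity $\int\aleph(\delta)\,d\delta = c_d$) to the $\epsilon$-ball estimator, while tracking the $X$-dependence of $V_\epsilon$ and exploiting the exact cancellation $R_\epsilon^d(x)\cdot c_d = V_\epsilon(x)$. The paper itself elides the proof with a pointer to Appendix~A; your write-up supplies the same calculation in detail and correctly identifies both the $I$/$II$ decomposition of the conditional covariance and the $O(k/M^2)=o(1/M)$ bound on the $-Q_\epsilon(X)Q_\epsilon(Y)$ contribution coming from $\Pr(\Delta_\epsilon)=O(k/M)$.
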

Next, the cross-correlations of the coverage density estimator and the ${\es}$ ball density estimator are shown to be asymptotically equal. In particular, 
\begin{lemma}
\label{epscov}
\begin{equation}
\expect[\mb{{e}}_c(X)\mb{{e}}_c(Y)] = \expect[\mb{{e}}{_{{\es}}}(X)\mb{{e}}{_{{\es}}}(Y)] + o(1/k). \nonumber
\end{equation}
\end{lemma}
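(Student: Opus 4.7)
The plan is to reduce the lemma to an $L^2$-closeness $\expect[(\mb{e}_c(X)-\mb{e}_\es(X))^2] = o(1/k)$ at each $X$, and then propagate that to the cross-moments via Cauchy--Schwarz. The intuition is that both errors are driven by the same underlying count of the $M$ samples landing in a small ball around $X$, differing only in parametrization: random coverage at a fixed count of $k$ for $\mb{e}_c$, versus random count at a fixed coverage $(1+p_k)k_M$ for $\mb{e}_\es$.

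First, I would linearize both errors on the concentration event $\natural\natural(X)$ from Appendix B.2, which has probability $1-O({\cal C}(k))$ by (\ref{eq:concincexplicitjoint}). Writing $\mb{P}(X) = k_M(1+\mb{\eta}(X))$ with $|\mb{\eta}(X)| \le p_k = O(k^{-\delta/2})$, a first-order Taylor expansion of $1/\mb{P}(X)$, together with $M V_\es(X) = k(1+p_k)/f(X)\,(1+o(1))$ from the definition of the $\es$-ball, yields
\[\mb{e}_c(X) = -\frac{f(X)}{k_M}\bigl(\mb{P}(X)-\expect[\mb{P}(X)]\bigr) + \mb{r}_c(X), \qquad \mb{e}_\es(X) = \frac{f(X)}{k_M}\bigl(\mb{l}_\es(X)/M - Q_\es(X)\bigr) + \mb{r}_\es(X),\]
with residuals satisfying $\expect[\mb{r}_c^2(X)], \expect[\mb{r}_\es^2(X)] = o(1/k)$ (the dominant contribution is of size $(p_k^2/k_M)^2$). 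Next, I would exploit the probability integral transform $\mb{U}_i := F_X(\|\mb{X}_i-X\|)$, where $F_X(r) = \int_{\|z-X\|\le r} f(z)\,dz$, which produces i.i.d.\ Uniform$[0,1]$ variables with $\mb{P}(X) = \mb{U}_{(k)}$ and $\mb{l}_\es(X) = |\{i:\mb{U}_i \le (1+p_k)k_M\}|$. Introducing the centered count $\mb{W}(u) := |\{i:\mb{U}_i \le u\}| - Mu$, a Bahadur--Kiefer-type identity for the $k$-th uniform order statistic gives $M(\mb{P}(X)-\expect[\mb{P}(X)]) = -\mb{W}(k_M) + o_{L^2}(\sqrt k)$, while Bernstein's inequality on the binomial increment $\mb{W}((1+p_k)k_M)-\mb{W}(k_M)$ (variance of order $p_k k$) gives $\mb{l}_\es(X) - M Q_\es(X) = \mb{W}(k_M) + o_{L^2}(\sqrt k)$. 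Setting $\mb{\Delta}(X) := \mb{e}_c(X) - \mb{e}_\es(X)$ therefore yields $\expect[\mb{\Delta}^2(X)] = o(1/k)$.

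Finally, by Cauchy--Schwarz together with $\expect[\mb{e}_c^2(\cdot)], \expect[\mb{e}_\es^2(\cdot)] = \Theta(1/k)$ (from Lemma~\ref{covariancelemmaeps} and (\ref{eq:centralmomq=22})),
\[|\expect[\mb{e}_c(X)\mb{e}_c(Y)] - \expect[\mb{e}_\es(X)\mb{e}_\es(Y)]| \le \sqrt{\expect[\mb{\Delta}^2(X)]\expect[\mb{e}_c^2(Y)]} + \sqrt{\expect[\mb{e}_\es^2(X)]\expect[\mb{\Delta}^2(Y)]} = o(1/k).\]
The contribution from the complementary event $(\natural\natural(X))^c \cup (\natural\natural(Y))^c$ is $O((M/k)^2{\cal C}(k)) = o(1/k)$ via the deterministic bounds $|\mb{e}_c|, |\mb{e}_\es| = O(M/k)$ implied by (\ref{densityineqnaturalc}) and the analogous uniform bound on $\mb{\hat f}_\es$. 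The main obstacle will be the Bahadur--Kiefer step in the growing-$k$ regime, since classical results are stated for fixed quantiles; I would circumvent this by working directly with the exchangeability of the uniform spacings together with finite-sample binomial concentration, thereby establishing the $L^2$ approximation without invoking any invariance principle.
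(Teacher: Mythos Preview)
Your proposal is correct and shares the paper's overall architecture: both reduce to the $L^2$ bound $\expect[(\mb{e}_c(X)-\mb{e}_\es(X))^2]=o(1/k)$ at a single point and then pass to the cross-moment via Cauchy--Schwarz. The genuine difference is in how that $L^2$ bound is obtained. The paper does it by an exact moment computation: it writes down the conditional law of $\mb{l}_\es(X)$ given $\mb{P}(X)$ (binomial in each of the two regimes $\mb{P}>Q$ and $\mb{P}\le Q$), evaluates $\expect[\mb{e}_c(X)\mb{e}_\es(X)]$ directly, finds it equals $f^2(X)/k + o(1/k)$, and then uses the algebraic identity $\expect[\mb{\Delta}^2]=\expect[\mb{e}_c^2]-2\expect[\mb{e}_c\mb{e}_\es]+\expect[\mb{e}_\es^2]$ together with the already-known second moments. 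Your route instead linearizes both errors and matches the leading terms through a Bahadur--Kiefer representation of $\mb{U}_{(k)}$ in the intermediate regime. Your approach is more conceptual --- it makes explicit that both errors are driven by the same empirical-process fluctuation $\mb{W}(k_M)$ --- but it imports a nontrivial order-statistics approximation that you yourself flag as the main obstacle. The paper's approach is strictly more elementary: only exact beta/binomial moment identities are needed, and no invariance principle or Bahadur bound enters. If you want to simplify, note that computing $\expect[\mb{e}_c(X)\mb{e}_\es(X)]$ via conditioning on $\mb{P}(X)$ gives you the $L^2$ bound in one line once the second moments are in hand.
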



\begin{proof}

We begin by establishing the conditional density and expectation of $\hat{\mb{f}}_\mb{{\es}}(X)$ given $\hat{\mb{f}}_\mb{c}(X)$. We drop the dependence on $X$ and denote $\mb{l_{{\es}}} = \Sigma_{i=1}^{M} 1_{\{X_i \in S_{{\es}}(X)\}}$, the $k$-NN coverage by $\mb{\mb{\mb{P}}}$ and the ${{\es}}$ ball coverage $Q_{\es}(X)$ by $Q$. Let $\mb{q}=Q/\mb{\mb{\mb{P}}}$ and $\mb{r} = (Q-\mb{\mb{\mb{P}}})/(1-\mb{\mb{\mb{P}}})$. The following expressions for conditional densities and expectations are derived in \cite{rst}
\begin{eqnarray}
&&\mb{P}r\{\mb{l_{{\es}}}=l | \mb{P};\mb{P} > Q\} \nonumber \\
&& = \left\{ \begin{array}{rl}
 \binom{k-1}{l} \mb{q}^l (1-\mb{q})^{k-1-l} &\mbox{ $l=0,1,\ldots,k-1$} \\
  0 &\mbox{ $l=k,k+1,\ldots,M$}
       \end{array} \right. \nonumber
\end{eqnarray}
\begin{eqnarray}
&&\mb{P}r\{\mb{l_{{\es}}}=l | \mb{P};\mb{P} \leq Q\} \nonumber \\
&& = \left\{ \begin{array}{rl}
 0 &\mbox{ $l=0,1,\ldots,k-1$} \\
  \binom{M-k}{l-k} \mb{r}^{l-k} (1-\mb{r})^{M-l} &\mbox{ $l=k,k+1,\ldots,M$}
       \end{array} \right. \nonumber
\end{eqnarray}
which implies 
\begin{eqnarray}
&&\expect[\mb{l_{{\es}}}=l | \mb{P};\mb{P} > Q] = (k-1)Q/\mb{P} \nonumber \\ 
&&\expect[\mb{l_{{\es}}}=l | \mb{P};\mb{P} \leq Q] = \left(\frac{1-Q}{1-\mb{P}}\right)(k-M) + M \nonumber
\end{eqnarray}
Using the above expressions for conditional expectations, the following marginal expectation are obtained. Denote the density of the coverage $\mb{P}$ by $f_{k,M}(p)$. Also let $\hat{\mb{P}}$ be the coverage corresponding to the $k-2$ nearest neighbor in a total field of $M-3$ points. Then
\begin{eqnarray}
\expect[\mb{\tilde{e}}_c(X) \mb{\tilde{e}}{_{{\es}}}(X)] &=& \expect[\hat{\mb{f}}_\mb{{\es}}(X)\hat{\mb{f}}_\mb{c}(X)]-\expect[\hat{\mb{f}}_\mb{c}(X)]\expect[\hat{\mb{f}}_\mb{{\es}}(X)] \nonumber \\
&=& \expect \left[\left(\left(\frac{1-Q}{\mb{P}(1-\mb{P})}\right)(k-M) + M/\mb{P}\right)1_{\mb{P} \leq Q}\right] \nonumber \\
&& + \frac{f^2(X)(k-1)}{kM} \expect\left[\left((k-1)Q/\mb{P}^2\right)1_{\mb{P} > Q}\right] -\frac{f^2(X)}{k}MQ. \nonumber \\
&=& \frac{f^2(X)}{k} \frac{(M-1)(M-2)}{(k-2)(M-k)} \times \nonumber \\
&& \expect[({1-Q}\hat{\mb{P}})(k-M) + M\hat{\mb{P}}(1-\hat{\mb{P}})] - \frac{f^2(X)}{k}MQ \nonumber \\
&&  + \expect[((k-1)Q(1-\hat{\mb{P}}) - ({1-Q}\hat{\mb{P}})(k-M) + M\hat{\mb{P}}(1-\hat{\mb{P}}))(1_{\hat{\mb{P}} > Q})] \nonumber \\
&=& C \times (I - II + III). \nonumber 
\end{eqnarray}

It can be shown that $C \times (I-II) = \frac{f^2(X)}{k}(1-Q)$ using the fact that $\hat{\mb{P}}$ has a beta distribution. Note that from the definition of $Q = ((1+p_k)(k-1)/M)$, from the concentration inequality we have that $\expect[1_{\hat{\mb{P}} > Q}] = {\cal C}(M)$. The remainder ($C \times III$) can be simplified and bounded using the Cauchy-Schwarz inequality and the concentration inequality to show $ C \times III = o(1/M)$.

Therefore, 
\begin{eqnarray}
\label{eq:epscov}
\expect[\mb{{e}}_c(X) \mb{{e}}{_{{\es}}}(X)] &=& \frac{f^2(X)}{k}(1-Q) + {\cal C}(M). \nonumber \\
&=& \frac{f^2(X)}{k} - \frac{f^2(X)}{M} + o\left(\frac{1}{M}\right) \nonumber \\
&=& f^2(X)\left(\frac{1}{k} + o\left(\frac{1}{k}\right)\right).
\end{eqnarray}

Now denote $\mb{E}(X) = ({\mb{{e}}{_c}(X)-\mb{{e}}{_{{\es}}}(X)})$. Note that $\expect[\mb{E}^2(X)] = \expect[\mb{{e}}{_c}(X)^2]-2E[\mb{{e}}{_c}(X) \mb{{e}}{_{{\es}}}(X)] + \expect[\mb{{e}}{_{{\es}}}(X)^2]$. Since $E[\mb{{e}}{_c}(X)^2] = {f^2(X)} \frac{1}{k} + o(1/k)$ and $E[\mb{{e}}{_{{\es}}}(X)^2] = {f^2(X)}( 1/k + o(1/k) )$ it follows from (\ref{eq:epscov}) that $\expect[E(X)] = o(1/k)$. This result means $\mb{{e}}{_c}(X)$ and $\mb{{e}}{_{{\es}}}(X)$ are almost perfectly correlated. Next express the covariance between the coverage density estimates in terms of the covariance between the ${{\es}}$ ball estimates as follows:
\begin{eqnarray}
&&\expect[\mb{{e}}{_c}(X)\mb{{e}}{_c}(Y)] \nonumber \\
&& = \expect[(\mb{{e}}{_{{\es}}}(X) + \mb{E}(X)) (\mb{{e}}{_{{\es}}}(Y) + \mb{E}(Y))] \nonumber \\
&& = \expect[\mb{{e}}{_{{\es}}}(X)\mb{{e}}{_{{\es}}}(Y)] + \expect[\mb{{e}}{_{{\es}}}(X) (\mb{E}(Y))] \nonumber \\
&& + \expect[\mb{{e}}{_{{\es}}}(Y) (\mb{E}(X))] + \expect[(\mb{E}(X))(\mb{E}(Y))] \nonumber \\
&&= I + II + III +IV. \nonumber
\end{eqnarray}

Using Cauchy-Schwarz, a bound on each of the terms $II$, $III$ and $IV$ is obtained in terms of $\expect[\mb{E}(X)]$: $|II| \leq \sqrt{\expect[\mb{E}(Y)]\expect[\mb{{e}}{_{{\es}}}^2(X)]}$, $|III| \leq \sqrt{\expect[\mb{E}(X)]\expect[\mb{{e}}{_{{\es}}}^2(Y)]}$ and $|IV| \leq \sqrt{\expect[\mb{E}(X)]\expect[\mb{E}(Y)]}$. Note that the above application of Cauchy-Schwarz {\em decouples} the problem of joint expectation of density estimates located at two \emph{different} points $X$and $Y$ to a problem of estimating the error $\mb{E}$ between two different density estimates at the \emph{same} point(s). Therefore all the three terms $II$, $III$ and $IV$ are $o(1/k)$. This concludes the proof of Lemma~\ref{epscov}.
\end{proof}

For Lemma~\ref{epscov} to be useful, $\expect[\mb{{e}}{_{{\es}}}(X)\mb{{e}}{_{{\es}}}(Y)]$ must be orders of magnitude larger than the error $o(1/k)$, which is indeed the case for $\{X,Y\}\in {\Psi_\es}^c$ since $\expect[\mb{{e}}{_{{\es}}}(X)\mb{{e}}{_{{\es}}}(Y)] = O(1/k)$ (Lemma A.2, Appendix .1) for such $X$ and $Y$. This lemma can be used along with previously established results on co-variance of $\es$-ball density estimates (Lemma~\ref{covariancelemmaeps}) to obtain the following result:
\begin{lemma}
\label{intersectcoveps}
Let $\gamma_1(X)$, $\gamma_2(X)$ be arbitrary continuous functions. Let $\mb{X}_{1},..,\mb{X}_{M},\mb{X},\mb{Y}$ denote $M+2$ i.i.d realizations of the density $f$. Then,
\begin{eqnarray}
 &&\expect[\mb{1_{{\Delta_\es}({X},{Y})}}\gamma_1(\mb{X})\gamma_2(\mb{Y})\mb{e_c}(\mb{X})\mb{e_c}(\mb{Y})] \nonumber \\
&& = {\expect[1_{\{\mb{X} \in {\cal S'}\}}\gamma_1(\mb{X})\gamma_2(\mb{X})f^2(\mb{X})]}\left( \frac{1}{M} + o \left(\frac{1}{M}\right)\right)  \nonumber
\end{eqnarray}
\end{lemma}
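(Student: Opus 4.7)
The plan is to bootstrap Lemma~B.4 (the pointwise equivalence of the coverage and $\epsilon$-ball error cross moments) together with Lemma~B.3 (the averaged cross moment for the $\epsilon$-ball estimator). Since Lemma~B.3 is already the exact $\epsilon$-ball analogue of the claim, essentially all of the work consists in controlling the pointwise error $\expect[\mb{e}_c(X)\mb{e}_c(Y)] - \expect[\mb{e}_\es(X)\mb{e}_\es(Y)] = o(1/k)$ guaranteed by Lemma~B.4 when weighted by the indicator $\mb{1}_{\Delta_\es(X,Y)}$ and the bounded test functions $\gamma_1,\gamma_2$.

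First I would decompose
\[
\expect[\mb{1}_{\Delta_\es(\mb{X},\mb{Y})}\gamma_1(\mb{X})\gamma_2(\mb{Y})\mb{e}_c(\mb{X})\mb{e}_c(\mb{Y})] \;=\; T_1 + T_2,
\]
where $T_1 = \expect[\mb{1}_{\Delta_\es(\mb{X},\mb{Y})}\gamma_1(\mb{X})\gamma_2(\mb{Y})\mb{e}_\es(\mb{X})\mb{e}_\es(\mb{Y})]$ and $T_2$ collects the residual. Lemma~B.3 applied directly gives
\[
T_1 = \expect[\mb{1}_{\{\mb{X}\in{\cal S'}\}}\gamma_1(\mb{X})\gamma_2(\mb{X})f^2(\mb{X})]\left(\tfrac{1}{M} + o(1/M)\right),
\]
which is exactly the target leading term; the rest of the proof amounts to showing $T_2 = o(1/M)$.

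For $T_2$, I would condition on $(\mb{X},\mb{Y})$. By Lemma~B.4, for every fixed pair $(X,Y)$,
$\expect_{X,Y}[\mb{e}_c(X)\mb{e}_c(Y) - \mb{e}_\es(X)\mb{e}_\es(Y)] = o(1/k)$, and the Cauchy--Schwarz/concentration bounds used to derive Lemma~B.4 make this $o(1/k)$ uniform in $(X,Y)\in{\cal S}\times{\cal S}$ by virtue of assumptions $({\cal A}.1)$--$({\cal A}.2)$ (bounded density bounded away from zero, bounded derivatives). Consequently
\[
|T_2| \le o(1/k)\cdot \expect[\mb{1}_{\Delta_\es(\mb{X},\mb{Y})}|\gamma_1(\mb{X})\gamma_2(\mb{Y})|].
\]
Since $\gamma_1,\gamma_2$ are bounded and $\{\mb{X},\mb{Y}\}\in\Psi_\es^c$ forces $\|\mb{X}-\mb{Y}\|\le 2R_\es(\mb{X})$ with $R_\es(X)=\Theta((k/M)^{1/d})$ (by the definition of $R_\es$ and $({\cal A}.1)$), we have $\Pr(\{\mb{X},\mb{Y}\}\in\Psi_\es^c)=O(k/M)$, so $T_2 = o(1/k)\cdot O(k/M) = o(1/M)$. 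Adding $T_1$ and $T_2$ yields the lemma.

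The main obstacle is the uniformity of the $o(1/k)$ bound in Lemma~B.4: a careful reading of its proof is needed to verify that each Cauchy--Schwarz step and each $o(1/k)$ remainder is uniform over $(X,Y)$ rather than merely pointwise, so that it can be factored out of the expectation against $\mb{1}_{\Delta_\es(\mb{X},\mb{Y})}$. Given the global boundedness hypotheses on $f$ and its derivatives, this uniformity is routine but is the only nontrivial check; everything else is a simple product of a vanishing pointwise error and the $O(k/M)$ measure of the intersecting-ball event.
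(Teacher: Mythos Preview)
Your proposal is correct and follows essentially the same approach as the paper: decompose into the $\epsilon$-ball term $T_1$ (handled by Lemma~\ref{covariancelemmaeps}) plus a residual $T_2$ controlled by the pointwise $o(1/k)$ bound from Lemma~\ref{epscov}, then multiply by $\Pr(\{\mb{X},\mb{Y}\}\in\Psi_\es^c)=O(k/M)$ to get $T_2=o(1/M)$. The paper's proof is terser and does not explicitly flag the uniformity of the $o(1/k)$ term in $(X,Y)$ that you correctly identify as the only subtle point; otherwise the two arguments are identical.
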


\begin{proof}

%

\begin{eqnarray}
&&\expect[\mb{1_{{\Delta_\es}({X},{Y})}}\gamma_1(\mb{X})\gamma_2(\mb{Y})\expect_{\mb{X},\mb{Y}}[\mb{e_c}({X})\mb{e_c}({Y})]] \nonumber \\
&& =\expect[\mb{1_{{\Delta_\es}({X},{Y})}}\gamma_1(\mb{X})\gamma_2(\mb{Y})\mb{{e}_{{\es}}}(\mb{X})\mb{{e}_{{\es}}}(\mb{Y})] + o(1/k) \nonumber \\
&& = {\expect[1_{\{\mb{X} \in {\cal S'}\}}\gamma_1(\mb{X})\gamma_2(\mb{X})f^2(\mb{X})]}\left( \frac{1}{M} + o \left(\frac{1}{M}\right)\right). \nonumber
\end{eqnarray}
In the second to last step, $o(1/M)$ is obtained for the second term by recognizing that $Pr(\{\mb{X},\mb{Y}\} \in \Psi_\es^c) = O(k/M)$ and $O(k/M) \times o(1/k) = o(1/M)$. 
\end{proof}

\section{Boundary correction for density estimates}
\label{bcorrection}
\label{bouncorrec}

In the previous section, moment results were established for the standard $k$-NN density estimate $\hat{\mb{f}}_k(X)$ for points $X$ in any deterministic set ${\cal S'}$ with respect to the samples ${\cal X}_M = \{\mb{X}_{N+1},..,\mb{X}_{N+M}\}$ satisfying the condition $Pr(\mb{X} \notin {\cal S'}) = o(1)$ and ${\cal S'} \subset {\cal S}_I$, where $\mb{X}$ is an realization from density $f$. In this section, these moment results are extended to boundary corrected $k$-NN density estimate $\tilde{\mb{f}}_k(X)$ for all $X \in {\cal S}$ as follows.

Specify the set ${\cal S'}$ to be ${\cal S'} = {\cal S}_I$ as defined in (\ref{sbdefine}). Exclusively using the set ${\cal X}_N = \{\mb{X}_{1},..,\mb{X}_{N}\}$, a set of interior points $\cal{I}_N \subset \cal{X}_N$ are determined such that $\cal{I}_N \subset \cal{S'}$ with high probability $1-O(N{\cal C}(k))$. Define the set of boundary points $\cal{B}_N = \cal{X}_N - \cal{I}_N$. For points $X \in {\cal I}_N$, the boundary corrected $k$-NN density estimate $\tilde{\mb{f}}_k(X)$ is defined  to be the standard $k$-NN estimate $\hat{\mb{f}}_k(X)$, and we invoke the moment properties of the standard $k$-NN density estimate $\hat{\mb{f}}_k(X)$ derived in the previous section. For points $X \in {\cal B_N}$, the density estimate $\tilde{\mb{f}}_k(X)$ is defined as $\hat{\mb{f}}_k(Y_n)$ for points $Y_n \in {\cal I}_N$, and we invoke the moment properties of the standard $k$-NN density estimate $\hat{\mb{f}}_k(X)$ derived in the previous section. 

\subsection{Bias in the $k$-NN density estimator near boundary}
If a probability density function has bounded support, the $k$-NN balls centered at points close to the boundary are often truncated at the boundary. Let 
\begin{equation}
\mb{\alpha}_k(X) = \frac{\int_{\mb{S}_k(X) \cap {\cal S}}{dZ}}{\int_{\mb{S}_k(X)}{dZ}} \nonumber
\end{equation}
be the fraction of the volume of the $k$-NN ball inside the boundary of the support. Also define $\mb{V}_{k,M}(X)$ to be the $k$-NN ball volume in a sample of size $M$. For interior points $X \in {\cal S'}$, $\mb{\alpha}_k(X) = 1$, while for boundary points $X \in {\cal S-S'}$, $\mb{\alpha}_k(X)$ is closer to $0$ when the points are closer to the boundary. For boundary points we then have
\begin{equation}
\label{outbias}
\expect{[\hat{\mb{f}}_{k}(X)]}-f(X) = (1-\mb{\alpha}_k(X))f(X) + o(1) .
\end{equation}
Therefore the bias is much higher at the boundary of the support ($O(1)$) as compared to its interior ($O((k/M)^{2/d})$) ~(\ref{inbias}). Furthermore, the bias at the support boundary does not decay to $0$ as $k/M \to 0$. 

In the next section, we detect interior points $\cal{I}_N$ which lie in ${\cal S'}$ with high probability $O(N{\cal C}(k))$. The results on bias, variance and cross-moments derived in the previous Appendix for points $X \in {\cal S'}$ therefore carry over to the points $\cal{I}_N$. A density estimate at points $\cal{B}_N$ is then proposed that will reduce the bias of density estimates close to the boundary.

\subsection{Boundary point detection}
\label{bpd}
Define $V_{k,M}(X):=\frac{k}{M\alpha_k(X)f(X)}$. Let $p(k,M)$ be any positive function satisfying $p(k,M) = \Theta((k/M)^{2/d}) + ({\sqrt{6}}/k^{\delta/2})$. From the concentration inequality~(\ref{concincc}) and Taylor series expansion of the coverage function~(\ref{knnaprr}), for small values of $k/M$, we have
\begin{eqnarray}
\label{concincvol1}
1 - Pr\left(\left|\frac{\mb{V}_{k,M}(X)}{V_{k,M}(X)}-1\right| \leq p(k,M) \right) = O({\cal C}(k)). \nonumber
\end{eqnarray}
To determine $\cal{I}_N$ and $\cal{B}_N$, we first construct a $K$-NN graph on the samples ${\cal X}_N$ where $K = \lfloor k \times (N/M) \rfloor.$ For any ${X} \in \cal {X}_N$, from the concentration inequality~(\ref{concincc})
\begin{eqnarray}
\label{concincvol2}
1 - Pr\left(\left|\frac{\mb{V}_{K,N}(X)}{V_{K,N}(X)}-1\right| \leq p(K,N) \right) = O({\cal C}(K)) = O({\cal C}(k)),
\end{eqnarray}
where ${\cal C}(K) = O({\cal C}(k))$ because by $({\cal {A}}.0)$, $K = \theta(k)$. This implies that, with high probability, the radius of the $K$-NN ball at $X$ concentrates around $(V_{K,N}(X)/c_d)^{1/d}$. By this concentration inequality~(\ref{concincvol2}), this choice of $K$ guarantees that the size of the $k$-NN ball in the partitioned sample is the same as the the size of the $K$-NN ball in the pooled sample with high probability $1-{\cal C}(k)$. By the union bound and (\ref{concincvol2}), the probability that $$\left|\frac{\mb{V}_{K,N}(X)}{V_{K,N}(X)}-1\right| \leq p(K,N) $$ is satisfied by every $X_i \in {\cal X}_N$ is lower bounded by $1-O(N{\cal C}(k))$.

Using the $K$-NN graph, for each sample $\mb{X} \in \cal{X}_N$, we compute the number of points in $\cal{X}_N$ that have $\mb{X}$ as a $l$-th nearest neighbor ($l$-NN), $l=\{1,\ldots,K\}$. Denote this count as $count(\mb{X})$.  Let $Y$ be the $l$-nearest neighbor of $X$, $l=\{1,\ldots,K\}$. Then $Y$ can be represented as $Y = X + R_K(X)u$ where $u$ is an arbitrary vector with $||u|| \leq 1$. 

For $X$ to be one of the $K$-NN of $Y$ it is necessary that $R_K(Y) \geq ||Y-X||$ or equivalently, $R_K(Y)/R_K(X) \geq ||u||$. Using the concentration inequality~(\ref{concincvol2}) for $R_K(X)$ and $R_K(Y)$, a sufficient condition for this is
\begin{equation}
\label{eqabove}
\frac{\alpha_K(X)f(X)}{\alpha_K(Y)f(Y)}(1-2p(K,N)) \geq ||u||. 
\end{equation}
Because $f$ is differentiable and has a finite support, $f$ is Lipschitz continuous. Denote the Lipschitz constant by $\mathbb{L}$. Then, we have $|f(Y) - f(X)| \leq \mathbb{L}({K}/{c_dN\epsilon_0})^{1/d}$. Define $q(K,N) = (\mathbb{L}/\epsilon_0)({K}/{c_dN\epsilon_0})^{1/d} + 2{\sqrt{6}}/k^{\delta/2}$. Then (\ref{eqabove}) is satisfied if 
\begin{equation}
\frac{\alpha_K(X)}{\alpha_K(Y)}(1-q(K,N)) \geq ||u||. \nonumber
\end{equation}
For points $X \in \cal{S'}$, $\alpha_K(X) = 1$ with probability $1-{\cal C}(k)$. This implies that $X$ will be one of the $K$-NN of $Y$ if $||u|| \leq 1-q(K,N)$. This implies that, with probability $1-O(N{\cal C}(k))$, $count(\mb{X}) \geq K(1-q(K,N))$ whenever $X \in \cal{S'}$. On the other hand, for $X \in \cal{S-S'}$, $\alpha_K(X) < 1$ with probability $1-{\cal C}(k)$. It is also clear that for small values of $K/N$, $\alpha_K(X) < \alpha_K(Y)$ for at least $K/2$ $l$-NN $Y$ of $X$. This then implies that $count(\mb{X}) < K(1-q(K,N))$ for $X \in \cal{S-S'}$ with probability $1-O(N{\cal C}(k))$. We therefore can apply the threshold $K(1-q(K,N))$ to detect interior points $\cal{I}_N = \cal{X}_N \cap {\cal S'}$ and boundary points ${\cal B}_N = \cal{X}_N - \cal{I}_N = \cal{X}_N \cap ({\cal S-S'})$ with high probability $1-O(N{\cal C}(k))$.  Algorithm 1, shown below, codifies this into a precise procedure. 
\begin{algorithm}                      
\caption{Detect boundary points $\cal{B}_N$}          
\label{alg1H}                           
\begin{algorithmic}                    
\STATE 1. Construct $K$-NN tree on $\cal{X}_N$
\STATE 2. Compute $count(\mb{X})$ for each $\mb{X} \in \cal{X}_N$
\STATE 3. Detect boundary points $\cal{B}_N$:
\FOR {each $\mb{X} \in \cal{X}_N$}
 \IF {$count(\mb{X}) < (1-q(K,N))K$} 
        \STATE ${\cal B}_N \gets \mb{X}$
\ELSE
        \STATE ${\cal I}_N \gets \mb{X}$
\ENDIF 
\ENDFOR
\end{algorithmic}
\end{algorithm}

\subsection{Boundary corrected density estimator}

Here the boundary corrected $k$-NN density estimator is defined and its asymptotic rates are computed. The proposed density estimator corrects the $k$-NN ball volumes  for points that are  close to the boundary. To estimate the density at a boundary point ${\mb{X} \in \cal{B}_N}$, we find a point $\mb{Y} \in {\cal I_N}$ that is close to $\mb{X}$. Because of the proximity of $\mb{X}$ and $\mb{Y}$, $f(\mb{X}) \approx f(\mb{Y})$. We can then estimate the density at $\mb{Y}$ instead and use this as an estimate of $f(\mb{Y})$.  This informal argument is made more precise in what follows.

Consider the corrected density estimator $\tilde{\mb{{f}}}_k$ defined in (\ref{correcdec3}). This estimator has bias of order $O((k/M)^{1/d})$, which can be shown as follows. Let $\mb{X}$ denote $\mb{X}_i$ for some fixed $i \in \{1,..,N\}$. Also, let $\mb{X}_{-1} = \text{arg} \min_{x \in {\cal S}'} d(x,{\mb{X}})$. 

Given ${\cal X}_N$, if $X \in {\cal I}_N$, then by (\ref{inbias}), 
\begin{eqnarray}
\expect[\tilde{\mb{f}}_{k}({X})] &=& \expect[\hat{\mb{f}}_{k}({X})] = f({X}) + O((k/M)^{2/d}) + O({\cal C}(k)). \nonumber
\end{eqnarray}

Next consider the alternative case $X \in {\cal B}_N$. Let ${X}_n \in {\cal I}_N$ be the closest interior point to ${X}$. Define $h = {X} - {X}_n$. $h$ can be rewritten as $h = h_1 + h_2$, where $h_1 = {X} - {X_{-1}}$ and $h_2 = X_{-1} - X_n$. Since $X \in {\cal B}_N$ implies that $X \in {\cal S-S'}$ with probability $1-O(N{\cal C}(k))$, consequently $||h_1|| = ||{X} - {X_{-1}}|| = O((k/M)^{1/d})$ with probability $1-O(N{\cal C}(k))$. 

Again with probability $1-O(N{\cal C}(k))$, $X_n \in {\cal S'}$. Let ${\cal C}_N = \cup_{Y \in {\cal S'}} \text{arg}{\min}_{x \in {\cal I}_N} d(x,Y)$. By construction of ${\cal C}_N$, $X_n \in {\cal C}_N$. Consequently, by (\ref{concincvol2}), $||h_2|| = ||{X}_{-1} - {X_{n}}|| = O((1/N)^{1/d}) = o((k/M)^{1/d})$. 

Because $||h_1|| = ||{X} - {X_{-1}}|| = O((k/M)^{1/d})$ and $||h_2|| = ||{X}_{-1} - {X_{n}}|| = o((k/M)^{1/d})$ with probability $1-O(N{\cal C}(k))$, consequently with probability $1-O(N{\cal C}(k))$, $||h|| = O((k/M)^{1/d})$.  Now,
\begin{equation}
f({X}) = f({X}_{n})+ O(||h||) \nonumber. 
\end{equation}
If ${X}_{n}$ is located in the interior {${\cal S'}$}, by (\ref{inbias}), 
\begin{eqnarray}
\expect[\hat{\mb{f}}_{k}({X}_{n})] &=& f({X}_{n}) + O((k/M)^{2/d}) + O({\cal C}(k)), 
\label{outbiasCC}
\end{eqnarray}
and therefore
\begin{eqnarray}
\expect[\tilde{\mb{f}}_k({X})] &=& \expect[\hat{{f}}_{k}(\mb{X}_{n})] + O(N{\cal C}(k))\nonumber \\
&=& f({X}_{n}) + O((k/M)^{2/d}) + O(N{\cal C}(k))\nonumber \\
&=& f({X}) + O(||h||) + O((k/M)^{2/d})+ O(N{\cal C}(k)) \nonumber \\
&=& f({X}) + O((k/M)^{1/d})+ O(N{\cal C}(k)),
\label{outbiasC}
\end{eqnarray}
where the $O(N{\cal C}(k))$ accounts for error in the case of the event that $X_{n(i)} \notin {\cal S'}$. This implies that the corrected density estimate has lower bias as compared to the standard $k$-NN density estimate (compare to (\ref{inbias}) and (\ref{outbias})). In particular, boundary compensation has reduced the bias of the estimator at points near the boundary from $O(1)$ to $O((k/M)^{1/d})+ O(N{\cal C}(k))$.

\subsection{Properties of boundary corrected density estimator}
By section~\ref{bpd}, $\cal{I}_N \in {\cal S'}$ with probability $1-N{\cal C}(k)$. The results on bias, variance and cross-moments of the standard $k$-NN density estimator $\mb{\hat{f}}_k$ derived in the previous Appendix for points $X \in {\cal S'}$ therefore carry over to the corrected density estimator $\tilde{\mb{{f}}}_k$ for points $\cal{I}_N$ with error of order $O(N{\cal C}(k))$.

In the definition of the corrected estimator $\tilde{\mb{{f}}}_k$ in (\ref{correcdec3}), $\hat{\mb{f}}_{k}(\mb{X}_{n(i)})$ is the standard $k$-NN density estimates and $\mb{X}_{n(i)} \in {\cal S'}$ . It therefore follows that the variance and other central and cross moments of the corrected density estimator $\tilde{\mb{f}}_k$ will continue to decay at the same rate as the standard $k$-NN density estimator in the interior, as given by (\ref{bknncentIapp}) and (\ref{bknncrossIapp}). 

Given these identical rates and that the probability of a point being in the boundary region ${\cal S-\cal S'}$ is $O((k/M)^{1/d}) = o(1)$, the contribution of the boundary region to the overall variance and other cross moments of the boundary corrected density estimator $\tilde{\mb{{f}}}_k$ are asymptotically negligible compared to the contribution from the interior. As a result we can now generalize the results from Appendix A on the central moments and cross moments to include the boundary regions as follows. Denote ${\tilde{\mb{f}}_k({X})}-\expect_{X}[\tilde{\mb{f}}_k({X}) \mid X]$ by $\mb{e}({X})$. 

\subsubsection{Central and cross moments}
For positive integers $q,r<k$
\begin{align}
\label{bknncent}
& \expect{\left[\gamma(\mb{X})\mb{e}^q(\mb{X})\right]} = 1_{\{q=2\}}\expect{\left[\gamma(\mb{X})f^2(\mb{X})\right]} \left(\frac{1}{k}\right) + o \left(\frac{1}{k}\right)+ O(N{\cal C}(k)),
\end{align}
\begin{align}
\label{bknncross}
 &Cov{\left[\gamma_1(\mb{X})\mb{e}^q(\mb{X}), \gamma_2(\mb{Y})\mb{e}^r(\mb{Y})\right]} \nonumber \\
& = 1_{\{q,r=1\} }Cov[1_{\{\mb{X} \in {\cal S'}\}}\gamma_1(\mb{X})f(\mb{X}),1_{\{\mb{Y} \in {\cal S'}\}}\gamma_2(\mb{Y})f(\mb{Y})]\left( \frac{1}{M} + o(1/M) \right)  \nonumber \\
& + 1_{\{q+r>2\} }\left(O\left(\frac{1}{k^{((q+r)\delta /2-1)}M}\right) + O(k^{2/d}_M/M) + O(1/M^2)\right)  +O(N{\cal C}(k)).
\end{align}
Next, we derive the following result on the bias of boundary corrected estimators.
\subsubsection{Bias}
For $k>2$, 
\begin{eqnarray}
&&\expect[\gamma(\expect[\tilde{\mb{f}}_k(\mb{X}) \mid \mb{X}])-\gamma(f(\mb{X})))] = \expect \left[ \expect \left[ (\gamma(\tilde{\mb{f}}_k(\mb{X}))-\gamma(f(\mb{X}))) \mid {\cal X}_N \right] \right]\nonumber \\
&& = \expect \left[\expect \left[ 1_{\{X \in {\cal I}_N\}}(\gamma(\expect[\tilde{\mb{f}}_k({X})])-\gamma(f({X}))) \mid {\cal X}_N \right]\right] + \expect \left[\expect \left[  1_{\{X \in {\cal B}_N\}} (\gamma(\expect[\tilde{\mb{f}}_k({X})])-\gamma(f({X}))) \mid {\cal X}_N\right] \right] \nonumber \\
&& = I+II.
\end{eqnarray}
From (\ref{inbias}), and $Pr(\mb{X} \in {\cal B}_N) = O((k/M)^{1/d})$, we have 
\begin{eqnarray}
I = \expect \left[ \gamma'(f(\mb{X}))h(\mb{X}) \right] \left({\frac{k}{M}}\right)^{2/d} + o\left(\frac{k}{M}\right)^{2/d}+ O(N{\cal C}(k)).
\end{eqnarray}
Next, we will now derive $II$. 
\begin{eqnarray}
II &=&\expect \left[\expect \left[  1_{\{X \in {\cal B}_N\}} (\gamma(\expect[\tilde{\mb{f}}_k({X})])-\gamma(f({X}))) \mid {\cal X}_N\right] \right] \nonumber \\
&& = \expect \left[\expect \left[  1_{\{X \in {\cal B}_N\}} (\gamma(f(X_n))-\gamma(f({X}))) +  O\left(\frac{k}{M}\right)^{2/d} \mid {\cal X}_N\right] \right] + O(N{\cal C}(k)),
\end{eqnarray}
where the last step follows by (\ref{outbiasCC}). Let us concentrate on the inner expectation now. By section~\ref{bpd}, we know that with probability $1-O(N{\cal C}(k))$, if $X \in {\cal B}_N$, then $X \in {\cal S-S'}$ and if $X_n \in {\cal I}_N$, then $X_n \in {\cal S'}$. Furthermore, $||X-X_{-1}|| = O(k/M)^{1/d}$ and $||X_{-1}-X_n|| = o(k/M)^{1/d}$ with probability $1-O(N{\cal C}(k))$. This implies that
\begin{eqnarray}
&&\expect \left[  1_{\{X \in {\cal B}_N\}} (\gamma(f(X_n))-\gamma(f({X}))) +  O\left(\frac{k}{M}\right)^{2/d} \mid {\cal X}_N\right] \nonumber \\
&& = \expect \left[  1_{\{X \in {\cal S-S'}\}} (\gamma(f(X_{-1}))-\gamma(f({X}))) \mid {\cal X}_N\right] + o\left(\frac{k}{M}\right)^{1/d} + O(N{\cal C}(k)). \nonumber
\end{eqnarray}
Since $Pr(\mb{X} \in {\cal S-S'}) = O((k/M)^{1/d})$, this in turn implies that
\begin{eqnarray}
II &=&\expect \left[\expect \left[  1_{\{X \in {\cal B}_N\}} (\gamma(\expect[\tilde{\mb{f}}_k({X})])-\gamma(f({X}))) \mid {\cal X}_N\right] \right] \nonumber \\
&& = \expect [  1_{\{\mb{X} \in {\cal S-S'}\}} (\gamma(f(\mb{X}_{-1}))-\gamma(f(\mb{X})))]  + o\left(\frac{k}{M}\right)^{2/d}+ O(N{\cal C}(k)). 
\label{IIorder}
\end{eqnarray}
We therefore finally get, 
\begin{eqnarray}
\label{bknnbiastotal}
\label{bknnbias2}
&&\expect[\gamma(\expect[\tilde{\mb{f}}_k(\mb{X}) \mid \mb{X}])-\gamma(f(\mb{X})))] = I+II \nonumber \\
&& = \expect \left[ \gamma'(f(\mb{X}))h(\mb{X}) \right] \left({\frac{k}{M}}\right)^{2/d} + \expect [ 1_{\{\mb{X} \in {\cal S-S'}\}} (\gamma(f(\mb{X}_{-1}))-\gamma(f(\mb{X})))]  + o\left(\frac{k}{M}\right)^{2/d}+ O(N{\cal C}(k)).
\end{eqnarray}
Note that $||\mb{X} - \mb{X}_{-1}|| = O((k/M)^{1/d})$ with probability $1-O(N{\cal C}(k))$. This therefore implies that $$c_3 = \expect [ 1_{\{\mb{X} \in {\cal S-S'}\}} (\gamma(f(\mb{X}_{-1}))-\gamma(f(\mb{X})))] = O((k/M)^{1/d}) \times O((k/M)^{1/d}) + O(N{\cal C}(k)) = O((k/M)^{2/d}) + O(N{\cal C}(k)).$$ 

\subsubsection{Optimality of boundary correction}
Comparing (\ref{bknnbias2}), (\ref{bknncent}) and (\ref{bknncross}) with (\ref{inbias}), (\ref{bknncentIapp}) and (\ref{bknncrossIapp}) respectively,  oracle rates of convergence of bias, and central and cross moments for the boundary corrected density estimate are attained. The oracle rates are defined as the rates of MSE convergence attainable by the {\emph {oracle}} density estimate that knows the boundary of $\mathcal S$ $$\tilde{\mb{f}}_{k,o} = \frac{k-1}{M\mb{\mb{V}}_{k,o}(X)},$$ where $\mb{V}_{k,o}(X)$ is the volume of the region $\mb{S}_k(X) \cap {\cal S}$. It follows that the boundary compensated BPI estimator is adaptive in the sense that it's asymptotic MSE rate of convergence is identical to that of a $k$-NN plug-in estimator that knows the true boundary. Equivalent corrections exist for the uniform kernel density estimator and will be left to the reader.

\section{Proof of theorems on bias and variance}

\newcommand{\ttt}{\expect_\mb{Z}[\tilde{\mb{f}}_k(\mb{Z})]}
\newcommand{\ttti}{\expect_{\mb{X}_i}[\tilde{\mb{f}}_k(\mb{X}_i)]}
\newcommand{\tttj}{\expect_{\mb{X}_j}[\tilde{\mb{f}}_k(\mb{X}_j)]}
\newcommand{\tttt}{\expect_{\mb{X}_2}[\tilde{\mb{f}}_k(\mb{X}_2)]}

\begin{lemma}
\label{boundonexpec}

Assume that $U(x,y)$ is any arbitrary functional which satisfies 
$$ (i) \sup_{x \in (\epsilon_0,\epsilon_1) }|U(x,y)|  = G_0 < \infty, $$
$$(ii) \sup_{x \in (q_l,q_u) }|U(x,y)|{\cal C}(k)  = G_1 < \infty,$$ 
$$(iii) \expect[\sup_{x \in (p_l,p_u)}|U(x/\mb{p},y)|]  = G_2 < \infty.$$
Let $\mb{Z}$ denote $\mb{X}_i$ for some fixed $i \in \{1,..,N\}$. Let $\zeta_{\mb{Z}}$ be any random variable which almost surely lies in the range $(f(\mb{Z}),{\tilde{\mb{f}}_k(\mb{Z})})$. Then, $$\expect[|U(\zeta_{\mb{Z}},{\mb{Z}})|] < \infty.$$
\end{lemma}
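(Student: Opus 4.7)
The plan is to dominate $|U(\zeta_{\mb{Z}}, \mb{Z})|$ by a non-negative functional whose expectation is controlled by one of the hypotheses (i)--(iii), after decomposing the probability space according to the concentration event $\natural(\mb{W})$ and its complement. Here we set $\mb{W} := \mb{Z}$ when $\mb{Z} \in {\cal I}_N$ and $\mb{W} := \mb{X}_{n(i)}$ when $\mb{Z} \in {\cal B}_N$, so that in either case $\tilde{\mb{f}}_k(\mb{Z}) = \hat{\mb{f}}_k(\mb{W})$; the boundary-correction analysis of Appendix B guarantees that $f(\mb{W})$ lies in $[\epsilon_0,\epsilon_\infty]$ (up to a perturbation of order $(k/M)^{1/d}$ on an event of probability $1 - O(N{\cal C}(k))$, harmlessly absorbed into constants). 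Since $\zeta_{\mb{Z}}$ lies almost surely between $f(\mb{Z})$ and $\hat{\mb{f}}_k(\mb{W})$, the pointwise bound $|U(\zeta_{\mb{Z}}, \mb{Z})| \le \sup_{y \in I(\mb{Z})}|U(y, \mb{Z})|$ holds, where $I(\mb{Z})$ is the closed interval with endpoints $f(\mb{Z})$ and $\hat{\mb{f}}_k(\mb{W})$.

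On $\natural^c(\mb{W})$, inequality~(\ref{densityineqnaturalc}) gives $\hat{\mb{f}}_k(\mb{W}) \in [q_l,q_u]$, and since $f(\mb{Z}) \in (\epsilon_0,\epsilon_\infty) \subset (q_l,q_u)$ for $k/M$ sufficiently small, the whole interval $I(\mb{Z})$ lies inside $(q_l,q_u)$. Combined with $\Pr(\natural^c(\mb{W})) = O({\cal C}(k))$ from~(\ref{eq:concincexplicit}), hypothesis (ii) gives
\[
\expect\!\left[1_{\natural^c(\mb{W})}|U(\zeta_{\mb{Z}},\mb{Z})|\right] \;\le\; \Pr(\natural^c(\mb{W})) \cdot \sup_{x \in (q_l,q_u)}|U(x,\mb{Z})| \;=\; O(G_1) \;<\; \infty.
\]

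On the complementary event $\natural(\mb{W})$, inequality~(\ref{densityineqnatural}) yields $\hat{\mb{f}}_k(\mb{W}) = \alpha\,(k-1)f(\mb{W})/(M\mb{p})$ for some $\alpha \in [1-\epsilon,1+\epsilon]$, where $\mb{p} := \mb{P}(\mb{W}) \sim \mathrm{Beta}(k,M-k+1)$; writing $\hat{\mb{f}}_k(\mb{W}) = x/\mb{p}$ with $x = \alpha(k-1)f(\mb{W})/M$, one checks $x \in (p_l,p_u)$. Hence every element of $I(\mb{Z})$ is either of the form $x'/\mb{p}$ with $x' \in (p_l,p_u)$ or equals the single value $f(\mb{Z}) \in (\epsilon_0,\epsilon_\infty)$, yielding
\[
|U(\zeta_{\mb{Z}},\mb{Z})|\,1_{\natural(\mb{W})} \;\le\; \sup_{x \in (p_l,p_u)}|U(x/\mb{p},\mb{Z})| + \sup_{x \in (\epsilon_0,\epsilon_\infty)}|U(x,\mb{Z})|.
\]
Taking expectations, hypothesis (iii) controls the first term by $G_2$, while hypothesis (i), invoked together with $({\cal A}.5)$ to cover the full range $(\epsilon_0,\epsilon_\infty)$, controls the second by $G_0$. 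Summing the $\natural$ and $\natural^c$ contributions gives the claim. The principal bookkeeping obstacle is verifying that $I(\mb{Z}) \subset (q_l,q_u)$ under $\natural^c(\mb{W})$ and that $\alpha(k-1)f(\mb{W})/M \in (p_l,p_u)$ under $\natural(\mb{W})$; both reductions rely on the uniform bounds of $({\cal A}.1)$ and, in the boundary case, on $f(\mb{X}_{n(i)}) = f(\mb{Z}) + O((k/M)^{1/d})$ with probability $1 - O(N{\cal C}(k))$.
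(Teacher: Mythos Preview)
Your proposal is correct and follows essentially the same route as the paper's proof: both arguments reduce to $\tilde{\mb f}_k(\mb Z)=\hat{\mb f}_k(\mb W)$ for an appropriate interior point $\mb W$ (you do this up front via the $\mb W$ notation, the paper does it by splitting into Case~1 and Case~2), then decompose on $\natural(\mb W)$ versus $\natural^c(\mb W)$, invoking (\ref{densityineqnaturalc}) with hypothesis~(ii) on the low-probability event and (\ref{densityineqnatural}) with hypotheses~(i),~(iii) on the high-probability event. Your added remarks about $f(\mb X_{n(i)})=f(\mb Z)+O((k/M)^{1/d})$ and the $O(N{\cal C}(k))$ qualification are slightly more explicit than the paper's ``follow the exact procedure as in case~1,'' but the structure is the same.
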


\begin{proof}

We will show that the conditional expectation $\expect[|U(\zeta_{Z},{Z})| \mid{\cal X}_N] < \infty.$ Because $0<\epsilon_0<f(X)<\epsilon_\infty<\infty$ by $({\cal {A}}.1)$, it immediately follows that $$ \expect[|U(\zeta_{\mb{Z}},{\mb{Z}})|] = \expect[\expect[|U(\zeta_{Z},{Z})| \mid {\cal X}_N]] < \infty.$$

For fixed ${\cal X}_N$, $Z \in {\cal I}_N$ or $Z \in {\cal B}_N$. These two cases are handled seperately. 

\paragraph*{Case 1: $Z \in {\cal I}_N$}

In this case, $\tilde{\mb{f}}_k({Z}) = \hat{\mb{f}}_k({Z})$. By (\ref{densityineqnatural}) and $({\cal {A}}.1)$, we know that if $\natural({Z})$ holds, $p_l/{\mb{P}(Z)} < \hat{\mb{f}}_k({Z}) < p_u/{\mb{P}(Z)}$. On the other hand, if $\natural^c({Z})$ holds, by (\ref{densityineqnaturalc}) and $({\cal {A}}.1)$, $q_l < \hat{\mb{f}}_k({Z}) < q_u$. This therefore implies that if $\natural({Z})$ holds, $\min\{\epsilon_0,p_l/{\mb{P}(Z)}\} < \zeta_{Z} < \max\{\epsilon_\infty,p_u/{\mb{P}(Z)}\}$ and if $\natural^c({Z})$ holds, $\min\{\epsilon_0,q_l\} < \zeta_{Z} < \max\{\epsilon_\infty,q_u\}$. Then, 
\begin{eqnarray}
\expect[|U(\zeta_{Z},{Z})| \mid {\cal X}_N] &=& \expect[1_{\natural(Z)}|U(\zeta_{Z},{Z})| \mid {\cal X}_N] + \expect[1_{\natural^c(Z)}|U(\zeta_{Z},{Z})| \mid {\cal X}_N] \nonumber \\
&\leq& G_0 + \expect[1_{\natural(Z)}\sup_{x \in (p_l,p_u)} |U(x/\mb{P}(Z),Z)|] + \max\{G_0,G_1/{\cal C}(k)\}(1-Pr(\natural(Z))) \nonumber \\
&\leq& G_0 + \expect[\sup_{x \in (p_l,p_u)} |U(x/\mb{P}(Z),Z)|] + \max\{G_0,G_1/{\cal C}(k)\}(1-Pr(\natural(Z))) \nonumber \\
&=& G_0 + G_2 + \max\{G_1/{\cal C}(M),G_0\}{\cal C}(k) \nonumber \\
&=& G_0 + G_2 + \max\{G_1,G_0{\cal C}(k)\} < \infty
\end{eqnarray}
where the final step follows from the fact that ${\cal C}(k) = o(1)$.

\paragraph*{Case 2: $Z \in {\cal B}_N$}
If $Z \in {\cal B}_N$, let $Y_n$ be the nearest neighbor of $Z$ in the set ${\cal I}_N$. Then, 
\begin{eqnarray}
\label{correcdec5}
\mb{\tilde{f}}_k(Z) = \mb{\hat{f}}_k(Y_n)
\end{eqnarray}
This implies that we can now condition on the event $\natural(Y_n)$, and follow the exact procedure as in case 1 to obtain
\begin{eqnarray}
\expect[|U(\zeta_{Z},{Z})| \mid {\cal X}_N] &=& \expect[1_{\natural(Y_n)}|U(\zeta_{Z},{Z})| \mid {\cal X}_N] + \expect[1_{\natural^c(Y_n)}|U(1/\zeta_{Z},{Z})| \mid {\cal X}_N] \nonumber \\
&\leq& G_0 + G_2 + \max\{G_1,G_0{\cal C}(k)\} < \infty
\end{eqnarray}
where the final step follows from the fact that ${\cal C}(k) = o(1)$. This concludes the proof.



\end{proof}

\textbf{Proof of Theorem~\ref{knnbiasH}}.
\begin{proof}

Using the continuity of $g'''(x,y)$, construct the following third order Taylor series of $g(\tilde{\mb{f}}_k(\mb{Z}),\mb{Z})$ around the conditional expected value $\expect_Z[\tilde{\mb{f}}_k(\mb{Z})] = \expect[\tilde{\mb{f}}_k(\mb{Z}) \mid \mb{Z}]$. 
\begin{eqnarray}
&&g({\tilde{\mb{f}}_k(\mb{Z})},\mb{Z}) = g(\ttt,\mb{Z})+g'(\ttt,\mb{Z})\mb{e}(\mb{Z}) \nonumber \\
&& + \frac{1}{2}g''(\ttt,\mb{Z})\mb{e}^2(\mb{Z}) + \frac{1}{6}g^{(3)}(\zeta_\mb{Z},\mb{Z})\mb{e}^3(\mb{Z}),  \nonumber
\end{eqnarray}
where $\zeta_\mb{Z} \in (\ttt,{\tilde{\mb{f}}_k(\mb{Z})})$ is defined by the mean value theorem. This gives
\begin{align}
 &\expect{[({g}(\tilde{\mb{f}}_k(\mb{Z}),\mb{Z}) - {g}(\ttt,\mb{Z}))]} \nonumber \\
&= \expect{\left[\frac{1}{2}g''(\ttt,\mb{Z})\mb{e}^2(\mb{Z})\right]} + \expect{\left[\frac{1}{6}g^{(3)}(\zeta_\mb{Z},\mb{Z})\mb{e}^3(\mb{Z})\right]} \nonumber 
\end{align}
 Let $\Delta(\mb{Z}) = \frac{1}{6}g^{(3)}(\zeta_\mb{Z},\mb{Z})$. Direct application of Lemma~\ref{boundonexpec} in conjunction with assumptions $({\cal {A}}.5)$ , $({\cal {A}}.6)$ implies that $\expect[\Delta^2(\mb{Z})] = O(1)$. By Cauchy-Schwarz and  assumption $({\cal {A}}.4)$ applied to (\ref{bknncent}) for the choice $q=6$, 
\begin{eqnarray}
&&\left| \expect{\left[\frac{1}{6}\Delta(\mb{Z})\mb{e}^3(\mb{Z})\right]} \right| \leq \sqrt{\expect{\left[\frac{1}{36}\Delta^2(\mb{Z})\right] \expect \left[\mb{e}^6(\mb{Z})\right]}} = o\left(\frac{1}{k}\right) + O(N{\cal C}(k)). \nonumber
\end{eqnarray}

By observing that the density estimates $\{\tilde{\mb{f}}_k(\mb{X}_i)\}, i=1,\ldots,N$ are identical, we therefore have
\begin{eqnarray}
&&\expect[\hat{\mb{G}}_N(\mb{\tilde{f}}_k)] - G(f) = \expect{[{g}(\tilde{\mb{f}}_k(\mb{Z}),\mb{Z}) - {g}({{f}(\mb{Z})},\mb{Z})]} \nonumber \\
&& = \expect{[{g}(\ttt,\mb{Z}) - {g}({{f}(\mb{Z})},\mb{Z})]} + \expect{\left[\frac{1}{2}g''(\ttt,\mb{Z})\mb{e}^2(\mb{Z})\right]} + o(1/k) + O(N{\cal C}(k)). \nonumber 
\end{eqnarray}
By (\ref{bknnbias2}) and (\ref{bknncent}) for the choice $q=2$, in conjunction with assumption  $({\cal {A}}.4)$,this implies that
\begin{eqnarray}
\expect[\hat{\mb{G}}_N(\mb{\tilde{f}}_k)] - G(f) &=& \expect{[g'(f(\mb{Z}),\mb{Z})h(\mb{Z})]}\left({\frac{k}{M}}\right)^{2/d} + \expect[1_{\{\mb{Z} \in {{\cal S-S}_I}\}} (g(f(\mb{Z}_{-1}),\mb{Z}_{-1}) - g(f(\mb{Z}),\mb{Z}))] \nonumber \\
&& +  \expect{[f^2(\mb{Z})g''(\ttt,\mb{Z})/2]}\left(\frac{1}{k}\right) + O(N{\cal C}(k)) + o\left(\frac{1}{k} + \left(\frac{k}{M}\right)^{2/d}\right) \nonumber \\
&=&\expect{[g'(f(\mb{Z}),\mb{Z})h(\mb{Z})]}\left({\frac{k}{M}}\right)^{2/d} + \expect[1_{\{\mb{Z} \in {{\cal S-S}_I}\}} (g(f(\mb{Z}_{-1}),\mb{Z}_{-1}) - g(f(\mb{Z}),\mb{Z}))] \nonumber \\ 
&& + \expect{[f^2(\mb{Z})g''(f(\mb{Z}),\mb{Z})/2]}\left(\frac{1}{k}\right) + O(N{\cal C}(k)) + o\left(\frac{1}{k} + \left(\frac{k}{M}\right)^{2/d}\right) \nonumber \\
&=&c_1\left({\frac{k}{M}}\right)^{2/d} + c_2\left(\frac{1}{k}\right) + c_3 + O(N{\cal C}(k)) + o\left(\frac{1}{k} + \left(\frac{k}{M}\right)^{2/d}\right), \nonumber
\end{eqnarray}
where the last but one step follows because, by (\ref{inbias}) and (\ref{outbiasC}), we know $\ttt = f(\mb{Z}) + o(1)$. This in turn implies $\expect{[f^2(\mb{Z})g''(\ttt,\mb{Z})/2]} = \expect{[f^2(\mb{Y})g''(f(\mb{Y}),\mb{Y})/2]}$. Finally, by assumption $({\cal {A}}.5)$ and $({\cal {A}}.2)$, the leading constants $c_1$ and $c_2$ are bounded. We have also shown in equation (\ref{IIorder}) that $c_3 = O((k/M)^{2/d})$. This concludes the proof.

\end{proof}

\textbf{Proof of Theorem~\ref{knnbiasHRS}}
\begin{proof}
Let $\mb{X}$ denote $\mb{X}_i$ for some fixed $i \in \{1,..,N\}$. Also, let $\mb{X}_{-1} = \text{arg} \min_{x \in {\cal S}_I} d(x,{\mb{X}})$. Using  (\ref{eq:bias:BC}), we can derive the following in an identical manner to (\ref{bknnbiastotal}):
\begin{eqnarray}
\mathbb{B}(\hat{\mb{G}}_{N,BC}(\mb{\tilde{f}}_k)) &=&  \expect[\hat{\mb{G}}_{N,BC}(\mb{\tilde{f}}_k)] - \int g(f(x),x) f(x) dx \nonumber \\
&=& (\expect[g(\tilde{\mb{f}}_k(\mb{Z}),\mb{Z})]-g_2(k,M))/g_1(k,M) - \int g(f(x),x) f(x) dx \nonumber \\
&=& \expect[\expect[(g(\tilde{\mb{f}}_k(\mb{Z}),\mb{X})-g_2(k,M))/g_1(k,M) \mid {\cal X}_N]] - \int g(f(x),x) f(x) dx \nonumber \\
&=& \expect[\expect[(g(\tilde{\mb{f}}_k(\mb{X}),\mb{X})-g_2(k,M))/g_1(k,M) \mid {\cal X}_N], X \in {\cal I}_N ] \nonumber \\ &&+ \expect[\expect[(g(\tilde{\mb{f}}_k(\mb{X}),\mb{X})-g_2(k,M))/g_1(k,M) \mid {\cal X}_N], X \in {\cal B}_N ] \nonumber \\ && - \int g(f(x),x) f(x) dx  \nonumber \\
&=& \expect[g(f(\mb{X}),\mb{X}) + \frac{g'(f(\mb{X}),\mb{X})h(\mb{X})}{g_1(k,M)}(k/M)^{2/d} \nonumber \\ && + \frac{1_{\{\mb{X} \in {\cal S-S'}\}}}{g_1(k,M)} (g(f(\mb{X}_{-1}),\mb{X}_{-1})-g(f(\mb{X}),\mb{X})) \nonumber \\
&& + o((k/M)^{2/d}) + O(N{\cal C}(k))] - \int g(f(x),x) f(x) dx \nonumber \\
&=& \frac{c_{1}}{g_1(k,M)}\left({\frac{k}{M}}\right)^{2/d} + \frac{c_{3}}{g_1(k,M)} + o\left(\left(\frac{k}{M}\right)^{2/d}\right) + O(N{\cal C}(k)). \nonumber
\end{eqnarray}
Because we assume the logarithmic growth condition $k = O((\log(M))^{2/(1-\delta)})$, it follows that $O(N{\cal C}(k)) = O(N/M^3) = o(1/T)$. Also, by (\ref{eq:gcond}), $g_1(k,M) = 1+o(1)$. This implies that 
\begin{eqnarray}
\mathbb{B}(\hat{\mb{G}}_{N,BC}(\mb{\tilde{f}}_k)) &=&  {c_{1}}\left({\frac{k}{M}}\right)^{2/d} + {c_{3}} + o\left(\left(\frac{k}{M}\right)^{2/d}\right). 
\label{BCbiasproof}
\end{eqnarray}



\end{proof}

\textbf{Proof of Theorem~\ref{knnvarH} and Theorem~\ref{knnvarHRS}}.
\begin{proof}
By the continuity of $g^{(\lambda)}(x,y)$, we can construct the following Taylor series of $g(\tilde{\mb{f}}_k(\mb{Z}),\mb{Z})$ around the conditional expected value $\expect_Z[\tilde{\mb{f}}_k(\mb{Z})]$.
\begin{eqnarray}
g(\tilde{\mb{f}}_k(\mb{Z}),\mb{Z}) &=& g(\ttt,\mb{Z})+ {g'}(\ttt,\mb{Z})\mb{e}(\mb{Z}) \nonumber \\ 
&+& \left(\sum_{i=2}^{\lambda-1}\frac{g^{(i)}(\ttt,\mb{Z})}{i!}\mb{e}^i(\mb{Z})\right) + \frac{g^{(\lambda)}(\xi_\mb{Z},\mb{Z})}{\lambda!}\mb{e}^\lambda(\mb{Z}),  \nonumber
\end{eqnarray}
where $\xi_\mb{Z} \in (g(\expect_Z[\tilde{\mb{f}}_k(\mb{Z})],g(\tilde{\mb{f}}_k(\mb{Z})))$. Denote $(g^{\lambda}(\xi_\mb{Z},\mb{Z}))/\lambda!$ by $\Psi(\mb{Z})$. Further define the operator ${\cal M}(\mb{Z}) = \mb{Z} - \expect[\mb{Z}]$
and
\begin{eqnarray}
p_i &=& {\cal M}(g(\ttti,\mb{X_i})), \nonumber \\
q_i &=& {\cal M}({g'}(\ttti,\mb{X_i})\mb{e}(\mb{X_i})), \nonumber \\
r_i &=& {\cal M}\left(\sum_{i=2}^{\lambda}\frac{g^{(i)}(\ttti,\mb{X_i})}{i!}\mb{e}^i(\mb{X_i})\right) \nonumber \\
s_i &=& {\cal M}\left(\Psi(\mb{X_i})\mb{e}^{\lambda}(\mb{X_i})\right) \nonumber
\end{eqnarray}

The variance of the estimator $\hat{\mb{G}}_N(\mb{\tilde{f}}_k)$ is given by
\begin{eqnarray}
&&\var[\hat{\mb{G}}_N(\mb{\tilde{f}}_k)] = \expect{[({\mb{\hat{G}}}(f)-\expect{[{\mb{\hat{G}}}(f)]})^2]} \nonumber \\
&& = \frac{1}{N}\expect{\left[(p{_1} + q{_1} + r{_1} + s_1)^2\right]} \nonumber \\
&& + \frac{N-1}{N}\expect{\left[(p{_1} + q{_1} + r{_1} + s_1)(p{_2} + q{_2} + r{_2} + s_2)\right]}. \nonumber
\end{eqnarray}
Because $\mb{X}_1$, $\mb{X}_2$ are independent, we have $\expect{\left[(p{_1})(p{_2} + q{_2} + r{_2} + s_2)\right]} = 0$. Furthermore,
\begin{eqnarray}
\expect{\left[(p{_1} + q{_1} + r{_1} + s_1)^2\right]} &=& \expect{[p{_1}^2]} + o(1) = \var[g(\expect_\mb{Z}[\hat{\mb{f}}_\mb{}(\mb{Z})],\mb{Z})] + o(1). \nonumber
\end{eqnarray}
From assumption $({\cal {A}}.4)$ applied to (\ref{bknncent}) and (\ref{bknncross}), in conjunction with assumption $({\cal {A}}.3)$, it follows that
\begin{itemize}
\item $\expect{[p{_1}^2]} =  \var[g(\ttt,\mb{Z})]$
\item $\expect{\left[q{_1}q_{2}\right]} = \var[g'(\ttt,\mb{Z}){{f}}_\mb{}(\mb{Z})]\left(\frac{1}{M}\right) + o\left(\frac{1}{M}\right) + O(N{\cal C}(k))$
\item $\expect{\left[q{_1}r_{2}\right]} = \sum_{i=2}^{\lambda-1} O\left(\frac{1}{k^{((1+i)\delta /2 -1)}M}\right) + O\left(\frac{\lambda(k^{2/d}_M + 1/M)}{M}\right) + O(N{\cal C}(k)) = o\left(\frac{1}{M}\right)  + O(N{\cal C}(k))$
\item $\expect{\left[r{_1}r_{2}\right]} = \sum_{i_1=2}^{\lambda-1} \sum_{i_2=2}^{\lambda-1}O\left(\frac{1}{k^{((i_1+i_2)\delta /2 -1)}M}\right) + O\left(\frac{\lambda^2(k^{2/d}_M + 1/M)}{M}\right) + O(N{\cal C}(k)) = o\left(\frac{1}{M}\right)  + O(N{\cal C}(k))$
\end{itemize}

Since $q_1$ and $s_2$ are $0$ mean random variables
\begin{eqnarray}
&&\expect{\left[q_1s{_2}\right]} = \expect \left[q_1 \Psi(\mb{X_2})(\hat{\mb{f}}_\mb{}(\mb{X_2})-\tttt)^{\lambda} \right] \nonumber \\ 
&& = \expect \left[q_1 \Psi(\mb{X_2})(\hat{\mb{f}}_\mb{}(\mb{X_2})-\tttt)^{\lambda} \right] \nonumber \\ 
&& \leq  \sqrt{\expect \left[ \Psi^2(\mb{X_2})\right]\expect\left[q^2_1(\hat{\mb{f}}_\mb{}(\mb{X_2})-\tttt)^{2\lambda} \right]} \nonumber \\
&& = \sqrt{\expect \left[ \Psi^2(\mb{Z})\right]}\left(o\left(\frac{1}{k^{\lambda}}\right) + O(N{\cal C}(k))\right)\nonumber
\end{eqnarray}
Direct application of Lemma~\ref{boundonexpec} in conjunction with assumptions $({\cal {A}}.5)$, $({\cal {A}}.6)$ implies that $\expect \left[\Psi^2(\mb{Z})\right] = O(1)$. Note that from assumption $({\cal {A}}.3)$, $o\left(\frac{1}{k^{\lambda}}\right) = o(1/M)$ . In a similar manner, it can be shown that $\expect{\left[r{_1}s_{2}\right]} = o\left(\frac{1}{M}\right) + O(N{\cal C}(k))$ and $\expect{\left[s{_1}s_{2}\right]} = o\left(\frac{1}{M}\right) + O(N{\cal C}(k))$. Finally, by (\ref{inbias}) and (\ref{outbiasC}), we know $\ttt = \expect{[\tilde{\mb{f}}_k(\mb{Z})]}=f(\mb{Z}) + o(1)$. This implies that 
\begin{eqnarray}
\var[\hat{\mb{G}}_N(\mb{\tilde{f}}_k)] &=& \frac{1}{N}\expect{\left[p{_1}^2\right]} + \frac{(N-1)}{N}\expect{\left[q{_1}q_{2}\right]} \nonumber + O(N{\cal C}(k))+ o\left(\frac{1}{M}+\frac{1}{N}\right) \nonumber \\
&=& \var[g(\ttt,\mb{Z})]\left(\frac{1}{N}\right)+ \var[g'(\ttt,\mb{Z}){{f}}_\mb{}(\mb{Z})]\left(\frac{1}{M}\right) + O(N{\cal C}(k)) + o\left(\frac{1}{M} + \frac{1}{N}\right) \nonumber \\
&=& \var[g({{f}}_\mb{}(\mb{Z}),\mb{Z})]\left(\frac{1}{N}\right)+ \var[g'({{f}}_\mb{}(\mb{Z}),\mb{Z}){{f}}_\mb{}(\mb{Z})]\left(\frac{1}{M}\right) + O(N{\cal C}(k)) + o\left(\frac{1}{M} + \frac{1}{N}\right) \nonumber \\
&=& c_4\left(\frac{1}{N}\right)+ c_5\left(\frac{1}{M}\right) + O(N{\cal C}(k)) + o\left(\frac{1}{M} + \frac{1}{N}\right), \nonumber
\end{eqnarray}
where the last but one step follows because, by (\ref{inbias}) and (\ref{outbiasC}), we know $\ttt = f(\mb{Z}) + o(1)$. This in turn implies $\var[g(\ttt,\mb{Z})] = \var[g({{f}}_\mb{}(\mb{Z}),\mb{Z})]$ and $ \var[g'(\ttt,\mb{Z}){{f}}_\mb{}(\mb{Z})] = \var[g'({{f}}_\mb{}(\mb{Z}),\mb{Z}){{f}}_\mb{}(\mb{Z})]$. Finally, by assumptions $({\cal {A}}.5)$ and $({\cal {A}}.2)$, the leading constants $c_4$ and $c_5$ are bounded. This concludes the proof of Theorem~\ref{knnvarH}.

Under the logarithmic growth condition $k = O((\log(M))^{2/(1-\delta)})$, $g_2(k,M) = o(1)$ and $g_1(k,M) = 1+o(1)$ by assumption (\ref{eq:gcond}). Theorem~\ref{knnvarHRS} follows by observing that $\hat{\mb{G}}_{N,BC}(\mb{\tilde{f}}_k) = (\hat{\mb{G}}_N(\mb{\tilde{f}}_k)-g_1(k,M))/g_2(k,M)$ 
\end{proof}

\textbf{Bias of Baryshnikov's estimator: Proof of equation (\ref{eq:baryshbias})}
\begin{proof}
We will first prove that 
\begin{equation}
\mathbb{B}(\tilde{\mb{G}}_N(\mb{\hat{f}}_{k})) = \Theta((k/M)^{1/d} + 1/k),
\label{eq:baryshbiaskn}
\end{equation}
Because the standard $k$-NN density estimate $\mb{\hat{f}}_{kS}(\mb{X}_i)$ is identical to the partitioned $k$-NN density estimate $\mb{\hat{f}}_{k}(\mb{X}_i)$ defined on the partition $\{\mb{X}_i\}$ and $\{\mb{X}_1,..,\mb{X}_T\} - \{\mb{X}_i\}$, it follows that
\begin{equation}
\mathbb{B}(\tilde{\mb{G}}_N(\mb{\hat{f}}_{kS})) = \Theta((k/T)^{1/d} + 1/k).
\label{eq:baryshbiask}
\end{equation}

From the definition of set ${\cal S'}$ in section~\ref{sec:intpnt}, we can choose the set ${\cal S'}$, such that $Pr(\mb{Z} \notin {\cal S'}) = O((k/M)^{1/d})$.
\begin{eqnarray}
&&\expect[\hat{\mb{G}}_N(\mb{\hat{f}}_k)] - G(f) = \expect{[{g}(\hat{\mb{f}}_k(\mb{Z}),\mb{Z}) - {g}({{f}(\mb{Z})},\mb{Z})]} \nonumber \\
&& = \expect{[1_{\{\mb{Z} \in {\cal S'}\}}{g}(\hat{\mb{f}}_k(\mb{Z}),\mb{Z}) - {g}({{f}(\mb{Z})},\mb{Z})]} + \expect{[1_{\{\mb{Z} \in {\cal S-S'}\}}{g}(\hat{\mb{f}}_k(\mb{Z}),\mb{Z}) - {g}({{f}(\mb{Z})},\mb{Z})]} \nonumber \\
&& = I+II
\end{eqnarray}

Using the exact same method as in the Proof of Theorem~\ref{knnbiasH}, using (\ref{inbias}) and (\ref{bknncentIapp}), and the fact that $Pr(\mb{Z} \notin {\cal S'}) = O((k/M)^{1/d}) = o(1)$, we have
\begin{eqnarray}
I = \expect{[g'(f(\mb{Z}),\mb{Z})h(\mb{Z})]}\left({\frac{k}{M}}\right)^{2/d} + \expect{[f^2(\mb{Z})g''(f(\mb{Z}),\mb{Z})/2]}\left(\frac{1}{k}\right) + O({\cal C}(k)) + o\left(\frac{1}{k} + \left(\frac{k}{M}\right)^{2/d}\right), \nonumber
\end{eqnarray}

Because we assume that $g$ satisfies assumption $({\cal {A}}.6)$, from the proof of Lemma~\ref{boundonexpec}, for ${Z} \in {\cal S-S'}$, we have $\expect{[{g}(\hat{\mb{f}}_k({Z}),{Z}) - {g}({{f}({Z})},{Z})]} = O(1)$. This implies that, 
\begin{eqnarray}
II &=& \expect{[1_{\{\mb{Z} \in {\cal S-S'}\}}{g}(\hat{\mb{f}}_k(\mb{Z}),\mb{Z}) - {g}({{f}(\mb{Z})},\mb{Z})]} \nonumber \\
&=&\expect\left[\expect{[{g}(\hat{\mb{f}}_k({Z}),{Z}) - {g}({{f}({Z})},{Z})]}\mid 1_{\{\mb{Z} \in {\cal S-S'}\}} \right] \times Pr(\mb{Z} \notin {\cal S'}) \nonumber \\
&=& O(1) \times O((k/M)^{1/d}) = O((k/M)^{1/d}).
\end{eqnarray}
This concludes the proof.

\end{proof}


\section{Asymptotic normality}
Define the random variables $\{\mb{Y}_{M,i}; i = 1,\ldots,N\}$ for any fixed $M$ 
\begin{equation}
\mb{Y}_{M,i} = \frac{g({\mb{\tilde{f}}_k(\mb{X}_i)},\mb{X}_i) - \expect[g({\mb{\tilde{f}}_k(\mb{X}_i)},\mb{X}_i)]}{\sqrt{\var[g({\mb{\tilde{f}}_k(\mb{X}_i)},\mb{X}_i)]}}, \nonumber
\end{equation} and define the sum $\mb{S_{N,M}}$ 
\begin{equation}
\mb{S}_{N,M} = \frac{1}{\sqrt{N}}\sum_{i=1}^N \mb{Y}_{M,i}, \nonumber
\end{equation}
where the indices $N$ and $M$ explicitly stress the dependence of the sum $\mb{S}_{N,M}$ on the number of random variables $N+M$. Observe that the random variables $\{\mb{Y}_{M,i}; i = 1,\ldots,N\}$ belong to an $0$ mean, unit variance, interchangeable process~\cite{chernoff} for all values of $M$. To establish the CLT for $\mb{S}_{N,M}$, we will exploit the fact the random variables $\{\mb{Y}_{M,i}; i = 1,\ldots,N\}$ are interchangeable by appealing to DeFinetti's theorem, which we describe below.

\subsection{De Finetti's Theorem}
Let $\mathcal{F}$ be the class of one dimensional distribution functions and for each pair of real numbers $x$ and $y$ define $\mathcal{F}(x,y) = \{F \in \mathcal{F}|F(x) \leq y\}$. Let $\mathcal{B}$ be the Borel field of subsets of $\mathcal{F}$  generated by the class of sets $\mathcal{F}(x,y)$. Then De Finetti's theorem asserts that for any interchangeable process $\{\mb{Z}_i\}$ there exists a probability measure $\mu$ defined on $\mathcal{B}$ such that
\begin{equation}
Pr\{\mb{B}\} = \int_\mathcal{F} Pr_F\{\mb{B}\} d\mu(F), \label{deFi} 
\end{equation}
for any Borel measurable set defined on the sample space of the sequence $\{\mb{Z}_i\}$. Here $Pr\{\mb{B}\}$ is the probability of the event $\mb{B}$ and $Pr_F\{\mb{B}\}$ is the probability of the event $B$ under the assumption that component random variables $\mb{X}_i$ of the interchangeable process are independent and identically distributed with distribution $F$.

\subsection{Necessary and Sufficient conditions for CLT}
For each $F \in \mathcal{F}$ define $m(F)$ and $\sigma^2(F)$ as $m(F) = \int_{-\infty}^{\infty} x dF(x)$, $\sigma(F) = {\int_{-\infty}^{\infty} x^2 dF(x)} - 1$ and for all real numbers $m$ and non-negative real numbers $\sigma^2$ let $\mathcal{F}_{m,\sigma^2}$ be the set of $F \in \mathcal{F}$ for which $m(F)=m$ and $\sigma^2(F) = \sigma^2$.

Let $\{\mb{Z}_i; i = 1, 2, \ldots\}$ be an interchangeable stochastic process with $0$ mean and variance $1$. Blum~{\it etal}~\cite{chernoff} showed that the random variable $\mb{S}_N = \frac{1}{\sqrt{N}}\sum_{i=1}^N \mb{Z}_i$ converges in distribution to $N(0,1)$ if and only if $\mu(\mathcal{F}_{0,0}) = 1$. Furthermore, they show that the condition $\mu(\mathcal{F}_{0,0}) = 1$ is equivalent to the condition that $Cov(\mb{Z}_1,\mb{Z}_2) = 0$ and $Cov(\mb{Z}_1^2,\mb{Z}_2^2) = 0$. We will {\emph extend} Blum~{\it etal}'s results to interchangeable processes where $Cov(\mb{Z}_1,\mb{Z}_2) = o(1)$ and $Cov(\mb{Z}_1^2,\mb{Z}_2^2) = o(1)$.

In particular, we will show that $Cov(\mb{Y}_{M,1},\mb{Y}_{M,2})$ and $Cov(\mb{Y}_{M,1}^2,\mb{Y}_{M,2}^2)$ are $O(1/M)$. Subsequently we will show that the random variable $\mb{S}_{N,M} = \frac{1}{\sqrt{N}}\sum_{i=1}^N \mb{Y}_{M,i}$ converges in distribution to $N(0,1)$ and conclude that Theorem~\ref{knncltH} holds. 

\subsection{CLT for Asymptotically Uncorrelated processes}
Let $\mb{X}$ be a random variable with density $f$. In the proof of Theorem~\ref{knnvarH}, we showed that
\begin{eqnarray}
Cov(\mb{Y}_{M,i},\mb{Y}_{M,j}) &=& \frac{Cov(g({\mb{\tilde{f}}_k(\mb{X}_i)},\mb{X}_i),g({\mb{\tilde{f}}_k(\mb{X}_j)},\mb{X}_j))}{\sqrt{\var[g({\mb{\tilde{f}}_k(\mb{X}_i)},\mb{X}_i)]\var[g({\mb{\tilde{f}}_k(\mb{X}_j)},\mb{X}_j)]}} \nonumber \\
&=& \frac{Cov(p_i+q_i+r_i+s_i,p_j+q_j+r_j+s_j)}{\sqrt{\var[g({\mb{\tilde{f}}_k(\mb{X}_i)},\mb{X}_i)]\var[g({\mb{\tilde{f}}_k(\mb{X}_j)},\mb{X}_j)]}} \nonumber \\
&=& \frac{Cov(p_i+q_i+r_i+s_i,p_j+q_j+r_j+s_j)}{\sqrt{\var[g({\mb{\tilde{f}}_k(\mb{X}_i)},\mb{X}_i)]\var[g({\mb{\tilde{f}}_k(\mb{X}_j)},\mb{X}_j)]}} \nonumber \\
\nonumber \\
&=& \frac{\var(g'(f(\mb{X}),\mb{X})f(\mb{X}))}{\var[g({{{f}}(\mb{X}_i)},\mb{X}_i)]}\left(\frac{1}{M}\right) + o\left(\frac{1}{M}\right) + O(N{\cal C}(k)) \nonumber \\
&=& \frac{\var(g'(f(\mb{X}),\mb{X})f(\mb{X}))}{\var[g({{{f}}(\mb{X}_i)},\mb{X}_i)]}\left(\frac{1}{M}\right) + o\left(\frac{1}{M}\right),
\label{eq:covG}
\end{eqnarray}
where the last but one step follows by observing that $N{\cal C}(k)/M \to 0$ under the logarithmic growth condition $k = O((\log(M))^{2/(1-\delta)})$. Define the function $d(x,y) = g(x,y)(g(x,y)-c)$, where the constant $c = \expect[g({\mb{\tilde{f}}_k(\mb{X})},\mb{X})]$. Then, similar to the derivation of (\ref{eq:covG}), we have, 
\begin{eqnarray}
Cov(\mb{Y}_{M,i}^2,\mb{Y}_{M,j}^2) &=& \frac{Cov(d({\mb{\tilde{f}}_k(\mb{X}_i)},\mb{X}_i),d({\mb{\tilde{f}}_k(\mb{X}_j)},\mb{X}_j))}{\sqrt{\var[d({\mb{\tilde{f}}_k(\mb{X}_i)},\mb{X}_i)]\var[d({\mb{\tilde{f}}_k(\mb{X}_j)},\mb{X}_j)]}} \nonumber \\
&=& \frac{\var(d'(f(\mb{X}),\mb{X})f(\mb{X}))}{\var[d({{{f}}(\mb{X}_i)},\mb{X}_i)]}\left(\frac{1}{M}\right) + o\left(\frac{1}{M}\right).
\end{eqnarray}
\textbf{Proof of Theorem~\ref{knncltH} and Theorem~\ref{knncltHRS}}.
\begin{proof}

Let $\delta_{\mu}(M)$ and $\delta_\sigma(M)$ be a strictly positive functions parameterized by $M$ such that $\delta_{\mu}(M) = o(1) ; \frac{1}{M\delta_{\mu}(M)} = o(1)$, $\delta_{\sigma}(M) = o(1) ; \frac{1}{M\delta_{\sigma}(M)} = o(1)$. Denote the set of $F \in \mathcal{F}$ with $\mathcal{F}_{m,\delta,M}:=\{m^2(F)\geq \delta_\mu(M)\}$; $\mathcal{F}_{\sigma,\delta,M}: =\{\sigma^2(F)\geq \delta_\sigma(M)\}$; $\mathcal{F}_{m,\delta,M}^{*}:=\{m^2(F)\in (0,\delta_\mu(M))\}$ and $\mathcal{F}_{\sigma,\delta,M}^{*}:=\{\sigma^2(F)\in (0,\delta_\sigma(M))\}$. Denote the measures of these sets by $\mu_{m,\delta,M}$, $\mu_{\sigma,\delta,M}$, $\mu_{m,\delta,M}^*$ and $\mu_{\sigma,\delta,M}^*$ respectively. We have from (\ref{deFi}) that
\begin{eqnarray}
\int_\mathcal{F} m^2(F) d\mu(F) &=& Cov(\mb{Y}_{M,i},\mb{Y}_{M,j}) \nonumber \\
\int_\mathcal{F} \sigma^2(F) d\mu(F) &=& \int_\mathcal{F} [\expect_F[\mb{Z}^2-1]]^2 d\mu(F) = Cov(\mb{Y}_{M,i}^2,\mb{Y}_{M,j}^2).
\label{eq:covrelation}
\end{eqnarray}
Applying the Chebyshev inequality, we get
\begin{eqnarray}
\label{chebineq}
\delta_\mu(M)\mu_{m,\delta,M} \leq Cov(\mb{Y}_{M,i},\mb{Y}_{M,j}), \nonumber \\
\delta_\sigma(M)\mu_{\sigma,\delta,M} \leq Cov(\mb{Y}_{M,i}^2,\mb{Y}_{M,j}^2) \nonumber.
\end{eqnarray}
Because the covariances decay at $O(1/M)$, $\mu_{m,\delta,M}$ and $\mu_{\sigma,\delta,M} \to 0$ as $M \to \infty$. From the definition of $\mathcal{F}_{m,\delta,M}^*$ and $\mathcal{F}_{\sigma,\delta,M}^*$, we also have that $\mu_{m,\delta,M}^*$ and $\mu_{\sigma,\delta,M}^* \to 0$ as $M \to \infty$. We also have
\begin{equation}
1 - (\mu_{m,\delta,M} + \mu_{\sigma,\delta,M} + \mu_{m,\delta,M}^* + \mu_{\sigma,\delta,M}^*) \leq \mu(\mathcal{F}_{0,0}) \leq 1, \nonumber
\end{equation}
and therefore \begin{eqnarray}
\lim_{M \to \infty} \mu(\mathcal{F}_{0,0}) = 1.
\label{mulimit}
\end{eqnarray} 

We will now show that $\tilde{\mb{G}}_N(\mb{\tilde{f}}_k) = ({\hat{\mb{G}}_N(\mb{\tilde{f}}_k)-\expect[\hat{\mb{G}}_N(\mb{\tilde{f}}_k)]})/({{\sqrt{\var[\hat{\mb{G}}_N(\mb{\tilde{f}}_k)]}}})$ converges weakly to $\mathbb{N}(0,1)$. Denote $g({\mb{\tilde{f}}_k(\mb{X}_i)},\mb{X}_i)$ by $\mb{g}_i$. Observe that
\begin{eqnarray}
&& \lim_{\Delta \to 0} Pr\{\tilde{\mb{G}}_N(\mb{\tilde{f}}_k) \leq \alpha\} = \lim_{\Delta \to 0} \int_\mathcal{F} Pr_F\{\tilde{\mb{G}}_N(\mb{\tilde{f}}_k)  \leq \alpha\} d\mu(F) \nonumber \\
&& = \lim_{\Delta \to 0} \int_{\mathcal{F}_{0,0}} Pr_F\{\tilde{\mb{G}}_N(\mb{\tilde{f}}_k) \leq \alpha\} d\mu(F) + \lim_{\Delta \to 0} \int_{\mathcal{F}} 1_{\{F \in \mathcal{F} - \mathcal{F}_{0,0}\}} Pr_F\{\tilde{\mb{G}}_N(\mb{\tilde{f}}_k) \leq \alpha\} d\mu(F) \nonumber \\
&& = \lim_{\Delta \to 0} \int_{\mathcal{F}_{0,0}} Pr_F\{\tilde{\mb{G}}_N(\mb{\tilde{f}}_k) \leq \alpha\} d\mu(F) +  \int_{\mathcal{F}} \lim_{\Delta \to 0} \left(1_{\{F \in \mathcal{F} - \mathcal{F}_{0,0}\}} Pr_F\{\tilde{\mb{G}}_N(\mb{\tilde{f}}_k) \leq \alpha\}\right) d\mu(F) \\
&& = \lim_{\Delta \to 0} \int_{\mathcal{F}_{0,0}} Pr_F\{\tilde{\mb{G}}_N(\mb{\tilde{f}}_k) \leq \alpha\} d\mu(F) \\
&& = \lim_{\Delta \to 0}  \int_{\mathcal{F}_{0,0}} Pr_F \left\{ \frac{1}{N}\sum_{i=1}^{N} \left( \frac{g({\mb{\tilde{f}}_k(\mb{X}_i)},\mb{X}_i) - \expect[g({\mb{\tilde{f}}_k(\mb{X}_i)},\mb{X}_i)]}{\sqrt{\var[\hat{\mb{G}}_{N}(\mb{\tilde{f}}_k)]}} \right) \leq \alpha\ \right\} d\mu(F) \nonumber \\
&& = \lim_{\Delta \to 0} \int_{\mathcal{F}_{0,0}} Pr_F \left\{ \frac{1}{N}\sum_{i=1}^{N} \left( \frac{g({\mb{\tilde{f}}_k(\mb{X}_i)},\mb{X}_i) - \expect[g({\mb{\tilde{f}}_k(\mb{X}_i)},\mb{X}_i)]}{\sqrt{\var[\mb{g}_i]/N+((N-1)/N)Cov[\mb{g}_i,\mb{g}_j]}} \right) \leq \alpha\ \right\} \int_{\mathcal{F}_{0,0}} d\mu(F) \nonumber \\
&& = \lim_{\Delta \to 0} \int_{\mathcal{F}_{0,0}} Pr_F \left\{ \frac{1}{N}\sum_{i=1}^{N} \left( \frac{g({\mb{\tilde{f}}_k(\mb{X}_i)},\mb{X}_i) - \expect[g({\mb{\tilde{f}}_k(\mb{X}_i)},\mb{X}_i)]}{\sqrt{\var[\mb{g}_i]/N+((N-1)/N)\sqrt{\var[\mb{g}_i]\var[\mb{g}_j]}Cov[\mb{Y}_{M,i},\mb{Y}_{M,j}]}} \right) \leq \alpha\  \right\} d\mu(F) \nonumber \\
&& = \lim_{\Delta \to 0} \int_{\mathcal{F}_{0,0}} Pr_F \left\{ \frac{1}{N}\sum_{i=1}^{N} \left( \frac{g({\mb{\tilde{f}}_k(\mb{X}_i)},\mb{X}_i) - \expect[g({\mb{\tilde{f}}_k(\mb{X}_i)},\mb{X}_i)]}{\sqrt{\var[\mb{g}_i]/N}} \right) \leq \alpha\ \right\} d\mu(F) \\
&& = \lim_{\Delta \to 0} \int_{\mathcal{F}_{0,0}} Pr_F \left\{ \frac{1}{\sqrt{N}}\sum_{i=1}^N \mb{Y}_{M,i} \leq \alpha\ \right\} d\mu(F) \nonumber \\
&& = \int_{\mathcal{F}} \lim_{\Delta \to 0} \left(1_{\{F \in \mathcal{F}_{0,0}\}} Pr_F \left\{\frac{1}{\sqrt{N}}\sum_{i=1}^N \mb{Y}_{M,i} \leq \alpha\ \right\}\right) d\mu(F) \nonumber \\
&& = \int_{\mathcal{F}} \phi(\alpha) d\mu(F) = \phi(\alpha),
\end{eqnarray}
where $\phi(.)$ is the distribution function of a Gaussian random variable with mean $0$ and variance $1$. Step (D.6) follows from the Dominated Convergence theorem. By (\ref{mulimit}), $\lim_{\Delta \to 0} 1_{\{F \in \mathcal{F} - \mathcal{F}_{0,0}\}} = 0$ almost surely. This gives Step (D.7). Step (D.8) is obtained by observing that, by (\ref{eq:covrelation}),  $Cov[\mb{Y}_{M,i},\mb{Y}_{M,j}] = 0$ when $F \in \mathcal{F}_{0,0}$. The last step (D.9) follows from the CLT for sums of $0$ mean, unit variance, i.i.d random variables and (\ref{mulimit}). This concludes the proof of Theorem~\ref{knncltH}.

To show Theorem~\ref{knncltHRS}, observe that under the logarithmic growth condition $k = O((\log(M))^{2/(1-\delta)})$, $g_2(k,M) = o(1)$ and $g_1(k,M) = 1+o(1)$ by assumption (\ref{eq:gcond}). Since $\hat{\mb{G}}_{N,BC}(\mb{\tilde{f}}_k) = (\hat{\mb{G}}_N(\mb{\tilde{f}}_k)-g_1(k,M))/g_2(k,M)$, it follows that the asymptotic distribution of  $$\frac{\hat{\mb{G}}_{N,BC}(\mb{\tilde{f}}_k)-\expect[\hat{\mb{G}}_{N,BC}(\mb{\tilde{f}}_k)]}{{\sqrt{\var[\hat{\mb{G}}_{N,BC}(\mb{\tilde{f}}_k)]}}}$$ is equal to the asymptotic distribution of $\tilde{\mb{G}}_N(\mb{\tilde{f}}_k) = ({\hat{\mb{G}}_N(\mb{\tilde{f}}_k)-\expect[\hat{\mb{G}}_N(\mb{\tilde{f}}_k)]})/({{\sqrt{\var[\hat{\mb{G}}_N(\mb{\tilde{f}}_k)]}}})$.

\end{proof}
\subsection{Berry-Esseen bounds}

We now establish Berry-Esseen bounds for the case where $\frac{N}{M} \to 0$. In particular, we assume that there exists a $\delta : 0<\delta<1$, such that $N = O(M^\delta)$. We also assume that the interchangeable process has finite absolute third order moment $E(|\mb{Z}_{M,i}|^3) = \rho_M < \infty$ $\vee M$.

\subsubsection{Details}

Define the subset $\tilde{\digamma}$ of $\digamma$ as follows: 
$\tilde{\digamma} =\digamma - \{\digamma_{m,\delta,M} \bigcup \digamma_{\sigma,\delta,M}\}$. 

We recognize that for $F \in \tilde{\digamma}$, we have 

\begin{eqnarray}
-\sqrt{\delta_\mu(M)} \leq m(F) \leq \sqrt{\delta_\mu(M)}, \nonumber \\
-\sqrt{\delta_\sigma(M)} \leq \sigma(F) \leq \sqrt{\delta_\sigma(M)}. \nonumber 
\end{eqnarray} 

The mean and variance of $Y_{M,i}$ under the distribution $F$ are given by $m(F)$ and $\sigma(F) + \rho -m^2(F)$ respectively.

As in the previous section, let 
$\phi$ be the distribution function of a Gaussian random variable with $0$ mean and $\rho$ variance.

\subsubsection*{Lower bound}

\begin{eqnarray}
&& Pr\{\mb{S}_{N,M} \leq \alpha\} = \int_\digamma Pr_F\{\mb{S}_{N,M} \leq \alpha\} d\mu(F) \nonumber \\
&& \geq \int_{\tilde{\digamma}} Pr_F\{\mb{S}_{N,M} \leq \alpha\} d\mu(F) \nonumber \\
&& \geq \int_{\tilde{\digamma}} \left[\phi\left(\frac{\alpha-\sqrt{N}m(F)}{1+(\sigma(F)-m^2(F))/\rho}\right) -\frac{C \kappa(F)}{(\sigma(F)+\rho-m^2(F))^3\,\sqrt{N}}\right] d\mu(F) \nonumber \\
&& \geq \phi\left(\frac{\alpha-\sqrt{N\delta_\mu(M)}}{1+(\sqrt{\delta_\sigma(M)})/\rho}\right)\mu(\tilde{\digamma}) - 
\int_{\tilde{\digamma}} \frac{C \kappa(F)}{(\rho-\sqrt{\delta_\sigma(M)}-\delta_\mu(M))^3\,\sqrt{N}} d\mu(F) \nonumber \\
&& \geq \phi\left(\frac{\alpha-\sqrt{N\delta_\mu(M)}}{1+(\sqrt{\delta_\sigma(M)})/\rho}\right)\mu(\tilde{\digamma}) - 
\frac{C \kappa}{(\rho-\sqrt{\delta_\sigma(M)}-\delta_\mu(M))^3\,\sqrt{N}}.  \nonumber 
\end{eqnarray}

\subsubsection*{Upper bound}

Denote  $\mu{(\tilde{\digamma}^c)} := \tilde{\mu}$. We note that 
$\tilde{\mu} \leq \mu_{m,\delta,M} + \mu_{\sigma,\delta,M}$.

\begin{eqnarray}
&& Pr\{\mb{S}_{N,M} \leq \alpha\} = \int_\digamma Pr_F\{\mb{S}_{N,M} \leq \alpha\} d\mu(F) \nonumber \\
&& \leq \int_{\tilde{\digamma}} Pr_F\{\mb{S}_{N,M} \leq \alpha\} d\mu(F) + \tilde{\mu} \nonumber\\
&& \leq \int_{\tilde{\digamma}} \left[\phi\left(\frac{\alpha-\sqrt{N}m(F)}{1+(\sigma(F)-m^2(F))/\rho}\right) + \frac{C \kappa(F)}{(\sigma(F)+\rho-m^2(F))^3\,\sqrt{N}}\right] d\mu(F) + \tilde{\mu} \nonumber \\
&& \leq \phi\left(\frac{\alpha+\sqrt{N\delta_\mu(M)}}{1-(\sqrt{\delta_\sigma(M)}+\delta_\mu(M))/\rho}\right)\mu(\tilde{\digamma}) +
\int_{\tilde{\digamma}} \frac{C \kappa(F)}{(\rho+\sqrt{\delta_\sigma(M)})^3\,\sqrt{N}} d\mu(F) + \tilde{\mu} \nonumber \\
&& \leq \phi\left(\frac{\alpha-\sqrt{N\delta_\mu(M)}}{1-(\sqrt{\delta_\sigma(M)}+\delta_\mu(M))/\rho}\right)\mu(\tilde{\digamma}) + 
\frac{C \kappa}{(\rho+\sqrt{\delta_\sigma(M)})^3\,\sqrt{N}} + \mu_{m,\delta,M} + \mu_{\sigma,\delta,M} \nonumber \\
&& \leq \phi\left(\frac{\alpha-\sqrt{N\delta_\mu(M)}}{1-(\sqrt{\delta_\sigma(M)}+\delta_\mu(M))/\rho}\right)\mu(\tilde{\digamma}) + 
\frac{C \kappa}{(\rho+\sqrt{\delta_\sigma(M)})^3\,\sqrt{N}} + \frac{1}{M\delta_\mu(M)} + \frac{1}{M\delta_\sigma(M)}. \nonumber
\end{eqnarray}

We have shown that the appropriately normalized sum $S_{N,M}$ converges in distribution to a normal random variable. Also for the case where $N$ grows slower than $M$, we have established Berry-Esseen type bounds on the error.


\section{Uniform kernel based plug-in estimator}

In this section, we will state the main results concerning uniform kernel plug-in estimators. The proofs for these results rely on the properties of the uniform kernel density estimates established in Appendix A and proofs for equivalent results for the $k$-NN plug-in estimators. Let $\hat{\mb{f}}_u$ denote the boundary corrected uniform kernel density estimate. Denote the uniform kernel plug-in estimator by

\begin{eqnarray}
{\mb{\hat{G}_u}}(f) &=& \left(\frac{1}{N}\sum_{i=1}^N g({\hat{\mb{f}_u}(\mb{X}_i)},\mb{X}_i)\right).
\end{eqnarray}
Let $\mb{Y}$ denote a random variable with density function $f$.

\subsection{Results}

\begin{corollary}
\label{Biasunif}

Suppose that the density $f$, the functional $g$ and the density estimate $\hat{\mb{f}}_u$ satisfy the necessary conditions listed above. The bias of the plug-in estimator $\hat{\mb{G}}_u(f)$ is then given by
\begin{eqnarray}
B_u(f) &=& c_{1}\left({\frac{k}{M}}\right)^{2/d} + c_{2}\left(\frac{1}{k}\right) + o\left(\frac{1}{k} + \left(\frac{k}{M}\right)^{2/d}\right), \nonumber
\end{eqnarray}
where $c_1 = \expect{[g'(f(\mb{Y}),\mb{Y})c(\mb{Y})]}$, $c_2 = \expect{[g''(f(\mb{Y}),\mb{Y})f(\mb{Y})/2]}$ are constants which depend on the underlying density $f$.
\end{corollary}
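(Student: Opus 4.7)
The plan is to mirror the proof of Theorem~\ref{knnbiasH} essentially line for line, substituting the moment bounds for the uniform kernel density estimator derived in Appendix~A in place of the $k$-NN moment bounds from Appendix~B. Writing $\mb{e}_u(X) = \hat{\mb{f}}_u(X) - \expect[\hat{\mb{f}}_u(X) \mid X]$ and choosing an arbitrary $\mb{X}_i = \mb{Z}$, I would begin by Taylor expanding $g(\hat{\mb{f}}_u(\mb{Z}),\mb{Z})$ around $\expect_{\mb{Z}}[\hat{\mb{f}}_u(\mb{Z})]$ to second order with a third-order Lagrange remainder,
\begin{eqnarray}
g(\hat{\mb{f}}_u(\mb{Z}),\mb{Z}) &=& g(\expect_{\mb{Z}}[\hat{\mb{f}}_u(\mb{Z})],\mb{Z}) + g'(\expect_{\mb{Z}}[\hat{\mb{f}}_u(\mb{Z})],\mb{Z})\mb{e}_u(\mb{Z}) \nonumber \\
&& + \tfrac{1}{2}g''(\expect_{\mb{Z}}[\hat{\mb{f}}_u(\mb{Z})],\mb{Z})\mb{e}_u^2(\mb{Z}) + \tfrac{1}{6}g^{(3)}(\zeta_{\mb{Z}},\mb{Z})\mb{e}_u^3(\mb{Z}), \nonumber
\end{eqnarray}
with $\zeta_{\mb{Z}}$ lying between $\expect_{\mb{Z}}[\hat{\mb{f}}_u(\mb{Z})]$ and $\hat{\mb{f}}_u(\mb{Z})$. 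Taking expectations and using the fact that $\expect[\mb{e}_u(\mb{Z})\mid \mb{Z}] = 0$ reduces the bias to the sum of three contributions: the deterministic bias of the kernel estimate, the variance-driven second-order term, and the cubic remainder.

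Next I would identify the leading-order contributions. The first term, $\expect[g(\expect_{\mb{Z}}[\hat{\mb{f}}_u(\mb{Z})],\mb{Z}) - g(f(\mb{Z}),\mb{Z})]$, is handled by expanding $g$ in its first argument around $f(\mb{Z})$ and inserting the pointwise bias $\expect[\hat{\mb{f}}_u(\mb{Z})] - f(\mb{Z}) = c(\mb{Z})(k/M)^{2/d} + o((k/M)^{2/d})$ established in Appendix~A; this yields the $c_1(k/M)^{2/d}$ contribution with $c_1 = \expect[g'(f(\mb{Y}),\mb{Y})c(\mb{Y})]$. The second-order term $\expect[\tfrac{1}{2}g''(\expect_{\mb{Z}}[\hat{\mb{f}}_u(\mb{Z})],\mb{Z})\mb{e}_u^2(\mb{Z})]$ is handled by the uniform kernel variance identity $\expect[\mb{e}_u^2(X)\mid X] = f(X)/k + o(1/k)$ (equation~\eqref{secmom}) together with the continuity of $g''$ evaluated at an argument that converges to $f(\mb{Z})$; this produces $c_2/k$ with $c_2 = \expect[g''(f(\mb{Y}),\mb{Y})f(\mb{Y})/2]$.

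For the cubic remainder, I would apply Cauchy--Schwarz in the same way as in the proof of Theorem~\ref{knnbiasH}: bound $|\expect[g^{(3)}(\zeta_{\mb{Z}},\mb{Z})\mb{e}_u^3(\mb{Z})/6]|$ by $\sqrt{\expect[(g^{(3)})^2/36]\,\expect[\mb{e}_u^6(\mb{Z})]}$, invoke the uniform kernel higher-moment bound $\expect[\mb{e}_u^6(\mb{Z})\mid \mb{Z}] = O(1/k^3)$ from~\eqref{fourmom}, and bound $\expect[(g^{(3)}(\zeta_{\mb{Z}},\mb{Z}))^2]$ by an analogue of Lemma~\ref{boundonexpec} applied to the uniform kernel estimator (the concentration inequalities for $\mb{l}_u(X)$ in Appendix~A supply the required tail control on $\zeta_{\mb{Z}}$). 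The resulting bound is $o(1/k)$, which is absorbed into the stated error term.

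The main obstacle will be the boundary correction. Unlike the statement of Theorem~\ref{knnbiasH}, the corollary absorbs the $c_3$ term into the other constants, so I need to verify that the boundary-detection/extrapolation algorithm of Appendix~B, when transplanted to uniform kernels, produces a boundary contribution of order $(k/M)^{2/d}$ that either merges with $c_1$ or is dominated by it. This requires (i) re-running the concentration argument of Section~\ref{bpd} using $\mb{V}_{u}(X)$ and $\mb{l}_u(X)$ in place of $\mb{V}_k(X)$ and $\mb{P}(X)$, and (ii) showing that the analogue of equation~\eqref{IIorder} goes through verbatim. Once these pieces are in place, the bias expansion follows exactly as in Appendix~D and yields the claimed form.
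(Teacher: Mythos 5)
Your proposal is correct and matches the paper's intended approach exactly: the paper offers no detailed proof of this corollary beyond the one-sentence remark that it follows by combining the Appendix~A moment properties of the uniform kernel estimate with the $k$-NN argument from Appendix~D, and that is precisely what you do. You were also careful to track the two places where the constants change relative to Theorem~\ref{knnbiasH} — $c(\mb{Y})$ replaces $h(\mb{Y})$ in $c_1$ because the uniform kernel bias is $c(X)(k/M)^{2/d}$ without the extra $f^{-2/d}$ factor, and $c_2$ carries $f(\mb{Y})$ rather than $f^2(\mb{Y})$ because $\var[\hat{\mb{f}}_u(X)] = f(X)/k + o(1/k)$ versus $\expect[\mb{e}_k^2(X)] = f^2(X)/k + o(1/k)$ — and you correctly flagged that the corollary as stated silently drops the $c_3$ boundary term, which (as in the $k$-NN case) is of the same order $O((k/M)^{2/d})$ and would need the Appendix~B boundary-correction machinery transplanted to the uniform kernel, something the paper itself leaves implicit.
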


\begin{corollary}
\label{Varianceunif}
Suppose that the density $f$, the functional $g$ and the density estimate $\hat{\mb{f}}_u$ satisfy the necessary conditions listed above. The variance of the plug-in estimator $\hat{\mb{G}}_u(f)$ is given by

\begin{eqnarray}
\var_u(f) &=& c_4\left(\frac{1}{N}\right)+ c_5\left(\frac{1}{M}\right) + o\left(\frac{1}{M} + \frac{1}{N}\right), \nonumber
\end{eqnarray}
where $c_4=\var[g(f(\mb{Y}),\mb{Y})]$ and $c_5=\var[f(\mb{Y})g'(f(\mb{Y}),\mb{Y})]$ are constants which depend on the underlying density $f$.
\end{corollary}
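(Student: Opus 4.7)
The plan is to mimic exactly the proof of Theorem~\ref{knnvarH} given in Appendix~D, substituting the moment and cross-moment properties of the boundary-corrected uniform kernel density estimate $\hat{\mb{f}}_u$ (derived in Appendix~A) for those of the boundary-corrected $k$-NN estimate $\tilde{\mb{f}}_k$. The structural skeleton is identical: expand $g(\hat{\mb{f}}_u(\mb{X}_i),\mb{X}_i)$ in a Taylor series of order $\lambda$ about the conditional mean $\expect_{\mb{X}_i}[\hat{\mb{f}}_u(\mb{X}_i)]$, break the resulting centered random variable into the sum $p_i + q_i + r_i + s_i$ (mean--zero pieces corresponding to the conditional mean term, the linear error term, the higher-order Taylor terms up to order $\lambda-1$, and the Taylor remainder), and then expand $\var[\hat{\mb{G}}_u(f)] = (1/N)\expect[(p_1+q_1+r_1+s_1)^2] + ((N-1)/N)\expect[(p_1+q_1+r_1+s_1)(p_2+q_2+r_2+s_2)]$.

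First, I would identify the leading diagonal contribution. By the independence of $\mb{X}_1$ from $\mb{X}_2,\ldots,\mb{X}_N$, $\expect[p_1(p_2+q_2+r_2+s_2)] = 0$, and the dominant diagonal piece is $\expect[p_1^2] = \var[g(\expect_{\mb{Z}}[\hat{\mb{f}}_u(\mb{Z})],\mb{Z})]$. Since the uniform kernel bias satisfies $\expect[\hat{\mb{f}}_u(\mb{Z})|\mb{Z}] = f(\mb{Z}) + o(1)$ (from the Taylor expansion of coverage in Appendix~A), this reduces to $c_4 = \var[g(f(\mb{Z}),\mb{Z})]$, yielding the $c_4/N$ term.

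Next, I would handle the off-diagonal cross term. Here the crucial input is Lemma~\ref{covariancelemma} of Appendix~A, which states
\[
Cov[\gamma_1(\mb{X})\mb{e}_u(\mb{X}),\gamma_2(\mb{Y})\mb{e}_u(\mb{Y})] = \frac{Cov[\gamma_1(\mb{X})f(\mb{X}),\gamma_2(\mb{X})f(\mb{X})]}{M} + o(1/M).
\]
Applying this with $\gamma_i(x) = g'(f(x),x)$ yields $\expect[q_1 q_2] = \var[f(\mb{Z})g'(f(\mb{Z}),\mb{Z})]/M + o(1/M) = c_5/M + o(1/M)$. The remaining cross terms involving $r_i$ and $s_i$ are of lower order: Lemma~\ref{hcm} controls higher-order cross moments between uniform kernel error functions to be $o(1/M)$, and the Cauchy--Schwarz argument used in the $k$-NN proof (bounding $\expect[q_1 s_2]$, $\expect[r_1 s_2]$, $\expect[s_1 s_2]$ via the central moment bound $\expect[\mb{e}_u^{2r}(X)] = O(k^{-r})$ from (\ref{fourmom})) carries over verbatim under the assumption that $g$ has enough derivatives (Assumption~$({\cal A}.3)$) and $g^{(\lambda)}$ remains bounded on the relevant range.

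The principal obstacle I anticipate is the boundary compensation step for the uniform kernel. Appendix~A derives the moment formulas only on the interior set ${\cal S}' = \{X: S_u(X) \subset {\cal S}\}$; to obtain variance rates valid for expectations over the full support I need the analog of Appendix~B's boundary detection and extrapolation algorithm applied to $\hat{\mb{f}}_u$. The argument is parallel (classify samples into interior/boundary via coverage counts, replace boundary estimates by their nearest interior neighbor's estimate) and reduces to the same probabilistic accounting: the boundary set has measure $O((k/M)^{1/d}) = o(1)$, so its contribution to the variance and cross moments is asymptotically dominated by the interior contribution. Once this transfer is justified, the expression $c_4/N + c_5/M + o(1/N + 1/M)$ follows by collecting the leading terms exactly as in the $k$-NN case.
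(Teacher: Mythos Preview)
Your proposal is correct and follows exactly the approach the paper itself indicates: the paper does not give a separate detailed proof of this corollary but states that ``the proofs for these results rely on the properties of the uniform kernel density estimates established in Appendix~A and proofs for equivalent results for the $k$-NN plug-in estimators,'' which is precisely the transfer you outline (Taylor expansion, the $p_i+q_i+r_i+s_i$ decomposition, Lemma~\ref{covariancelemma} for the $c_5/M$ term, Lemma~\ref{hcm} and (\ref{fourmom}) for the negligible higher-order cross terms, and the boundary-correction argument paralleling Appendix~B). Your identification of the boundary-compensation step as the one place requiring care, and your resolution of it via the $O((k/M)^{1/d})$ boundary-measure argument, also matches the paper's remark that ``equivalent corrections exist for the uniform kernel density estimator and will be left to the reader.''
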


\begin{corollary}
\label{CLTunif}
Suppose that the density $f$, the functional $g$ and the density estimate $\hat{\mb{f}}_u$ satisfy the necessary conditions listed above. Further suppose $\expect[|g(f)|^3]$ is finite. The asymptotic distribution of the plug-in estimator $\hat{\mb{G}}_u(f)$ is given by
\begin{equation}
\lim_{\Delta(k,N,M) \to 0} Pr\left(\frac{\hat{\mb{G}}_u(f)-\expect[\hat{\mb{G}}_u(f)]}{{\sqrt{\var[f(\mb{Y})g'(f(\mb{Y}),\mb{Y})]/N}}} \leq \alpha \right) = Pr(\mb{Z} \leq \alpha), \nonumber
\end{equation}
where $\mb{Z}$ is a standard normal random variable.
\end{corollary}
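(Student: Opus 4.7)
The plan is to mirror the proof of Theorem~\ref{knncltH} essentially verbatim, replacing the $k$-NN density estimate $\tilde{\mb{f}}_k$ and its moment identities (Appendix B) with the boundary-corrected uniform kernel estimate $\hat{\mb{f}}_u$ and the corresponding moment identities of Appendix A. Specifically, define the exchangeable triangular array
\begin{equation*}
\mb{Y}_{M,i} \;=\; \frac{g(\hat{\mb{f}}_u(\mb{X}_i),\mb{X}_i) - \expect[g(\hat{\mb{f}}_u(\mb{X}_i),\mb{X}_i)]}{\sqrt{\var[g(\hat{\mb{f}}_u(\mb{X}_i),\mb{X}_i)]}},
\end{equation*}
and observe that, because the $\mb{X}_i$ are i.i.d.\ and the estimate depends symmetrically on the $M$ reference samples, the process $\{\mb{Y}_{M,i}\}_{i=1}^{N}$ is exchangeable with zero mean and unit variance for every fixed $M$.

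The crux is then to show that the two pairwise covariances $\text{Cov}(\mb{Y}_{M,1},\mb{Y}_{M,2})$ and $\text{Cov}(\mb{Y}_{M,1}^{2},\mb{Y}_{M,2}^{2})$ decay to zero. First I would perform a Taylor expansion of $g(\hat{\mb{f}}_u(\mb{X}_i),\mb{X}_i)$ about $\expect_{\mb{X}_i}[\hat{\mb{f}}_u(\mb{X}_i)]$, exactly as in the proofs of Theorems~\ref{knnvarH} and \ref{knncltH}, decomposing the estimator fluctuation into the components $p_i,q_i,r_i,s_i$. The leading contribution to $\text{Cov}(g(\hat{\mb{f}}_u(\mb{X}_1),\mb{X}_1),g(\hat{\mb{f}}_u(\mb{X}_2),\mb{X}_2))$ comes from the $q_i q_j$ cross term, for which Lemma~\ref{covariancelemma} (applied with $\gamma_1 = g'(f(\cdot),\cdot)$ and $\gamma_2 = g'(f(\cdot),\cdot)$) gives $\var[f(\mb{Y}) g'(f(\mb{Y}),\mb{Y})]/M + o(1/M)$; the remaining cross terms are shown to be $o(1/M)$ by Lemma~\ref{hcm} together with the Cauchy--Schwarz argument from the proof of Theorem~\ref{knnvarH}. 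An identical argument with $g$ replaced by the function $d(x,y) = g(x,y)\bigl(g(x,y) - \expect[g(\hat{\mb{f}}_u(\mb{Y}),\mb{Y})]\bigr)$ gives $\text{Cov}(\mb{Y}_{M,1}^2,\mb{Y}_{M,2}^2) = O(1/M)$.

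Having established the two covariance bounds, the remainder of the argument is a direct transcription of the De~Finetti/Blum--Chernoff step already carried out in the proof of Theorem~\ref{knncltH}: one introduces the measures $\mu_{m,\delta,M}$ and $\mu_{\sigma,\delta,M}$ via Chebyshev applied to the De~Finetti representation
\begin{equation*}
\int_{\mathcal{F}} m^{2}(F)\,d\mu(F) = \text{Cov}(\mb{Y}_{M,1},\mb{Y}_{M,2}), \qquad \int_{\mathcal{F}} \sigma^{2}(F)\,d\mu(F) = \text{Cov}(\mb{Y}_{M,1}^{2},\mb{Y}_{M,2}^{2}),
\end{equation*}
deduces $\mu(\mathcal{F}_{0,0}) \to 1$ as $M \to \infty$, and then invokes dominated convergence together with the classical i.i.d.\ CLT conditional on $F \in \mathcal{F}_{0,0}$. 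The normalization in the statement then follows by comparing with Corollary~\ref{Varianceunif}: the $c_5/M$ boundary-estimation term is dominated by the $c_4/N$ sample-average term in the regime $N/M \to 0$ implicit in $\Delta \to 0$, so that $\var[\hat{\mb{G}}_u(f)] = \var[g(f(\mb{Y}),\mb{Y})]/N\,(1+o(1))$, which absorbs the scaling stated in the corollary.

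The main technical obstacle I anticipate is not the De~Finetti step itself, but verifying the $O(1/M)$ bound on $\text{Cov}(\mb{Y}_{M,1}^{2},\mb{Y}_{M,2}^{2})$ in the presence of boundary correction: Lemma~\ref{hcm} is stated for the uncorrected uniform kernel error $\mb{e_u}$, and one must check that the boundary-compensation step (analogous to Appendix~\ref{bcorrection} for $k$-NN) contributes only $o(1/M)$ to the higher cross moments. The assumed finiteness of $\expect[|g(f)|^{3}]$ plays its usual role in ruling out pathological tail contributions from boundary points and in justifying the uniform integrability needed for the dominated convergence step.
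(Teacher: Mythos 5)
Your approach is essentially the one the paper intends: Appendix~F explicitly defers to the uniform-kernel moment lemmas of Appendix~A together with the $k$-NN proofs, and you correctly identify that the exchangeability/De~Finetti/Blum--Chernoff machinery of Theorem~\ref{knncltH} transfers once the two covariance decays are established via Lemma~\ref{covariancelemma} and Lemma~\ref{hcm}. Your flagging of the boundary-correction gap is also fair---the paper itself only remarks at the end of Appendix~C that ``equivalent corrections exist for the uniform kernel density estimator and will be left to the reader,'' so the $O(1/M)$ bound for the corrected error $\mb{e_u}$ would indeed require an argument paralleling Appendix~C.

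The place where your argument does not quite close is the final normalization step. You invoke ``the regime $N/M \to 0$ implicit in $\Delta \to 0$,'' but under assumption $({\cal A}.0)$ the ratio $N/M = (1-\alpha_{\rm frac})/\alpha_{\rm frac}$ is a fixed constant and $\Delta \to 0$ only requires $k/M \to 0$, $k \to \infty$, $N \to \infty$; the $N/M \to 0$ regime appears in the paper only as an extra hypothesis for the Berry--Esseen section. Consequently $c_4/N$ and $c_5/M$ are of the same order $\Theta(1/T)$, neither dominates, and the correct normalization---exactly as in the proof of Theorem~\ref{knncltH}, steps (D.8)--(D.9)---is $\sqrt{\var[\hat{\mb{G}}_u(f)]} = \sqrt{c_4/N + c_5/M + o(1/T)}$, not $\sqrt{c_4/N}$ alone. (Note also that the corollary as printed normalizes by $\sqrt{\var[f(\mb{Y})g'(f(\mb{Y}),\mb{Y})]/N} = \sqrt{c_5/N}$, which matches neither term; this appears to be a typographical slip in the statement, but your claim that ``$\var[\hat{\mb{G}}_u(f)] = \var[g(f(\mb{Y}),\mb{Y})]/N\,(1+o(1))$ absorbs the stated scaling'' does not resolve it, since $\var[g(f(\mb{Y}),\mb{Y})] = c_4 \neq c_5$.) The fix is straightforward: carry out the D.~Finetti argument with the full variance in the denominator, as in Theorem~\ref{knncltH}, and drop the $N/M\to 0$ claim.
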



\bibliographystyle{plain}	
\bibliography{ref}

\end{document}